\documentclass[10pt,a4paper]{article}
\usepackage[margin=1in]{geometry}
\usepackage{amsmath,amssymb,amsthm,graphicx,amsxtra,setspace}

\usepackage{mathtools}

\usepackage{mathrsfs}
\usepackage{bbm}
\usepackage{bm}
\usepackage{braket}
\usepackage{stmaryrd}
\numberwithin{equation}{section}
\usepackage{graphicx}
\usepackage{enumitem}
\usepackage{xcolor}
\usepackage[
    colorlinks=true,
    linkcolor=blue,
    citecolor=blue,
    urlcolor=blue
]{hyperref}
\usepackage[backend=biber,style=numeric]{biblatex}
\DeclareFieldFormat
  [article,inbook,incollection,inproceedings,patent,thesis,unpublished]
  {title}{#1}
\renewbibmacro{in:}{}
\DeclareFieldFormat{pages}{#1}
\usepackage{titlesec}

\titleformat{\section}
  {\large\bfseries}
  {\thesection}{1em}{}

\titleformat{\subsection}
  {\normalsize\bfseries}
  {\thesubsection}{1em}{}

\theoremstyle{definition}
\newtheorem{definition}{Definition}[section]

\theoremstyle{remark}

\newtheorem{remark}[definition]{Remark}

\theoremstyle{plain}
\newtheorem{theorem}[definition]{Theorem}
\newtheorem{lemma}[definition]{Lemma}

\newtheorem{proposition}[definition]{Proposition}

\title{\Large\bfseries Optimal Control of the 2D Boussinesq Equations with Finite Codimension Constraints}
\author{
Utpal Manna\thanks{School of Mathematics, Indian Institute of Science Education and Research Thiruvananthapuram, Vithura, Thiruvananthapuram 695551, India.\\
\textit{E-mail:} manna.utpal@iisertvm.ac.in.}
\and
Manil T. Mohan\thanks{Department of Mathematics, Indian Institute of Technology Roorkee, Haridwar Highway, Roorkee, Uttarakhand 247667, India.\\
\textit{E-mail:} maniltmohan@ma.iitr.ac.in, maniltmohan@gmail.com.}
\and
Sangram Satpathi\thanks{School of Mathematics, Indian Institute of Science Education and Research Thiruvananthapuram, Vithura, Thiruvananthapuram 695551, India.\\
\textit{E-mail:} sangramsatpathi20@iisertvm.ac.in.\\
\textit{Keywords:} Optimal control problem; Boussinesq equations; constrained dynamics; geometric PDEs.\\
\textit{MSC (2020):} 49J20, 49K20, 35R01, 35Q35, 35Q93
}
}
\date{}

\begin{document}
\maketitle

\begin{abstract}

We study an optimal control problem (OCP) governed by the two-dimensional (2D) Boussinesq equations, where the right-hand sides of the velocity and temperature equations are projected onto the corresponding tangent spaces of the manifolds $\mathcal{M}_1:=\{\bm y\in\mathbb{H}:\|\bm y\|_{\mathbb{H}}=1\}$
and $\mathcal{M}_2:=\{\theta\in H:\|\theta\|_H=1\}$,
respectively. The resulting evolution is subject to
finite-codimensional constraints, under which the state $(\bm y,\theta)$ remains on $\mathcal{M}:=\mathcal{M}_1\times\mathcal{M}_2$
throughout the time interval. A Faedo--Galerkin construction provides a unique, global, strong solution to the constrained Boussinesq system for external controls
$\mathfrak{H}\in L^2(0,T;\mathbb{H})$, along with uniform energy bounds. In addition, we establish the existence of an optimal triplet $(\bm y^*,\theta^*,\mathfrak{H}^*)$
that minimizes the quadratic cost subject to the constrained Boussinesq system. Necessary conditions of the first order are obtained through a coupled state--adjoint--control system. The optimal state $(\bm y^*,\theta^*)$ remains on $\mathcal{M}$, while the optimal control lies in the tangent space:
\[
\mathfrak{H}^*(t)\in T_{\bm y^*(t)}\mathcal{M}_1,
\qquad \text{for all }t\in[0,T].
\]
We also show that the map
\[
\mathfrak{H}\mapsto(\bm y_{\mathfrak{H}},\theta_{\mathfrak{H}})
\quad\text{in}\quad
\bigl(C([0,T];\mathbb{H})\cap L^2(0,T;\mathbb{V})\bigr)
\times
\bigl(C([0,T];H)\cap L^2(0,T;V)\bigr)
\]
is Fr\'echet differentiable. The linearized state is the unique weak solution to the constrained linearized Boussinesq system and satisfies the necessary tangency condition. Hence, the well-posedness and first-order optimality conditions are established for the Boussinesq system under nonlinear manifold constraints.
\end{abstract}
\tableofcontents

\section{Introduction}
\subsection{Constrained PDEs and literature survey}
Geometric partial differential equations (PDEs) describe the evolution
of functions, curves, and surfaces according to their geometry. Their study uses methods from both analysis and differential geometry
to understand flows and fields on manifolds. Constrained PDEs have attracted a considerable amount of attention over the past two decades and have been studied in various contexts, including the classical heat equation~\cite{caf,rybka}, nonlinear heat equations~\cite{anto,bj,jh,lcheng,rybka,bsh}, and the Navier--Stokes equations~\cite{gb}.

Caglioti et al.~\cite{cag} studied the vorticity equation
associated with the 2D Navier--Stokes equations subject to conservation constraints on energy and moment of inertia; their results include well-posedness and a description of the asymptotic behavior of solutions. The governing equation is given by
\[
\frac{\partial \zeta}{\partial t}
+ \bm{y}\cdot\nabla\zeta
= \nu\Delta\zeta
-\nu\,\mathrm{div}\left[
\zeta\nabla\left(
\lambda\phi+\mu\frac{|x|^2}{2}
\right)\right],
\]
or equivalently,
\begin{equation}
\frac{\partial \zeta}{\partial t}
+ \bm{y}\cdot\nabla\zeta
= \nu\,\mathrm{div}\left[
\zeta\nabla\left(
\log\zeta-\lambda\phi
-\mu\frac{|x|^2}{2}
\right)\right].
\label{cagg}
\end{equation}
Here, $\zeta=\mathrm{curl}(\bm{y})$, and
$\lambda=\lambda(\zeta)$ and $\mu=\mu(\zeta)$
are the Lagrange multipliers corresponding to
the conserved quantities
\[
\mathcal{E}(\zeta)
=\int_{\mathbb{R}^2}\phi\,\zeta\,dx,
\qquad
\mathcal{I}(\zeta)
=\int_{\mathbb{R}^2}|x|^2\zeta\,dx,
\qquad
\phi=-\Delta^{-1}\zeta.
\]
Rybka~\cite{rybka} and Caffarelli and Lin~\cite{caf} studied the heat equation under constraints. In particular, Rybka~\cite{rybka} considered the heat flow on the manifold
\[
\mathcal{M} = \left\{ {y} \in L^2(\mathcal{O}) \cap C(\mathcal{O}) : \int_{\mathcal{O}} {y}^k(x)\,dx = C_k,\; k = 1, \dots, N \right\},
\]
where
\[
I_k({y}) = \int_{\mathcal{O}} {y}^k(x)\,dx, \quad k = 1, \dots, N,
\]
are conserved quantities. Here, $\mathcal{O}\subset\mathbb{R}^2$ denotes a bounded and connected domain, and its boundary $\partial\mathcal{O}$ is considered to be smooth.
According to \cite[Theorem 2.5]{rybka}, the projected heat
equation admits a unique global solution.
\begin{equation}
\begin{cases}
\displaystyle \frac{d{y}}{dt} = \Delta {y} - \sum_{k=1}^{N} \lambda_k {y}^{k-1} & \text{in } \mathcal{O}\times (0,T), \\
\displaystyle \frac{\partial {y}}{\partial n} = 0 & \text{on } \partial \mathcal{O}\times (0,T), \\
{y}(0,x) = {y}_0, & \text{in }  \mathcal{O},
\end{cases}
\label{linh}
\end{equation}
where the coefficients $\lambda_k = \lambda_k({y})$ are chosen so that $\frac{d{y}}{d t}$ is orthogonal to $\mathrm{span}\{{y}^{k-1}\}$. Moreover, Rybka~\cite{rybka} showed that solutions to \eqref{linh} approach a steady state as $t\to\infty$. Caffarelli and Lin~\cite{caf} also established that the heat equation admits a unique global solution that preserves energy (see~\cite[Theorem 1.1]{caf}). They also extended these results to a wider class of singularly perturbed nonlocal parabolic equations~\cite[Theorem 3.1]{caf}, and showed strong convergence of solutions to weak solutions of the limiting constrained problem.

Brze\'zniak et al.~\cite{sgb} extended these ideas to the
stochastic setting by considering the Navier--Stokes equations (NSE) studied in~\cite{cag}, with the energy constraint as in~\cite{caf,rybka} and gradient-type Stratonovich noise. They established the existence of a global strong solution on $\mathbb{T}^2$.
For further related results, see also~\cite{Dhariwal2017}. The paper~\cite{stb} addresses a stochastic nonlinear heat equation with a finite-codimensional constraint. With \(H_0^1\) initial data, a fixed-point argument gives the existence of a unique global mild solution driven by Stratonovich noise in a bounded smooth 2D domain. More recently, \cite{bawalia2026existenceuniquenessresultsstochastic} studied a stochastic nonlinear heat equation with a codimension-one constraint and proved its well-posedness. 
In~\cite{cann}, the authors studied the two-dimensional
Navier--Stokes system on $\mathbb{T}^2$, incorporating
a bilinear control term and a nonlinear, nonlocal
perturbation to ensure that the solutions remain
within prescribed hyperplanes. They also proved
the well-posedness of the resulting system.

In fluid mechanics, the problem of controlling fluid flow and turbulence in a specified domain under prescribed initial conditions, boundary controls, or temperature variations has been studied for a long time (see, e.g.,~\cite{abergel,Fursikov2000,Lions1971,Sritharan1998,Gunzburger2003,Troeltzsch2010}).

Recent progress in optimal control for nonlinear infinite-dimensional systems and PDEs has strongly influenced the study of Navier--Stokes control problems. Several variations of these problems have been studied in the existing literature;  see, for example,~\cite{bew,casass,Delos,desai,fatt,FS1994,GM1999,HK2001,Mohan2021,Wang2002,WW2003} and the references therein.
For the constrained Boussinesq equations, the present work serves as an initial step toward studying their optimal control problem. 
Although the optimal control of constrained partial differential equations has been less explored, a related study by the second author ~\cite{MohanNS} addresses the optimal control of constrained NSE. On the other hand, there are several works on the control of the (unconstrained) Boussinesq equations; see, for example,~\cite{BFR2007,LI2000,LI2000b,Belmiloudi2000,LS2000} and the references therein.
\subsection{Deterministic Boussinesq equations and physical motivation.}

The Boussinesq equations are an important model for large-scale atmospheric and oceanic flows, the built environment, the dispersion of dense gases, and the internal dynamics of stars (see, e.g., \cite{ped,rch}). Besides their physical importance, these equations have also attracted much attention in the mathematical community because of their nonlinear and coupled structure.
The physical derivation of the Boussinesq equations goes back to the early twentieth century, particularly to the work of Rayleigh \cite{Rayleigh01121916}. In his study of buoyancy-driven convection, Rayleigh used the equations introduced earlier by Boussinesq \cite{Boussinesq1897} to explain the experimental observations of B\'enard.
From a mathematical point of view, the two-dimensional Boussinesq equations share several important features with the classical Euler and Navier--Stokes equations, such as vortex stretching mechanisms. Moreover, Boussinesq flows can be viewed as an analogue of three-dimensional axisymmetric flows, where vortex stretching terms arise naturally in the vorticity formulation.
The Boussinesq equations, under various boundary conditions and on both bounded and unbounded domains, have been widely studied, and there is a substantial literature on the mathematical theory of the deterministic Boussinesq system (see, e.g., \cite{CHAE1,hou,chae}). One of the main open questions is whether classical solutions of the two-dimensional Boussinesq equations can develop singularities in finite time. This problem remains unresolved and is listed among Yudovich's “eleven great problems of mathematical hydrodynamics” \cite{yud}.
\subsection{Objectives, main difficulties and contributions}

Optimal control of the 2D Boussinesq equations subject to finite-codimensional state constraints on a manifold in \(\mathbb{H}\times{H}\) is investigated here (Section~\ref{s3}). Specifically, the velocity \(y(t)\) and temperature \(\theta(t)\) are constrained, for all \(t\in[0,T]\), to lie on
\[
\mathcal{M}_1=\{y\in\mathbb{H}:\|y\|_{\mathbb{H}}=1\},\ \
\mathcal{M}_2=\{\theta\in H:\|\theta\|_{H}=1\}.
\]
We set
\[
\mathcal{M}=\mathcal{M}_1\times\mathcal{M}_2.
\]
These constraints represent the preservation of kinetic energy and thermal energy during the evolution of the
Boussinesq system. Upon projecting the Boussinesq 
dynamics onto the tangent
space $T_{(\bm{y},\theta)}\mathcal{M}$ leads to a constrained system (see \eqref{eq:constrained_Boussinesq}) whose state
remains on $\mathcal{M}$ and whose evolution is tangent to $\mathcal{M}$.

We prove that the constrained Boussinesq system
\eqref{eq:constrained_Boussinesq} has a unique strong solution, and a suitable quadratic cost functional admits an optimal control. The first-order necessary conditions for optimality are obtained from a coupled state--adjoint--control formulation that enforces the required tangency properties of the constraints.

The analysis involves several difficulties arising from
the equations and the geometry of the constraint manifold.
In particular, the nonlinear constraints
\[
\|\bm{y}(t)\|_{\mathbb{H}}=1,
\ \
\|\theta(t)\|_H=1,
\]
define the nonconvex manifold $\mathcal{M}$.
As a result, the admissible set does not form a linear subspace, so classical convex-analytic methods cannot be applied directly. Furthermore, the geometric structure is closely linked to the system dynamics. In particular, the projection operator onto the tangent space,
\[
\pi_{(\bm{y}(t),\theta(t))} : \mathbb{H} \times H \to T_{(\bm{y}(t),\theta(t))}\mathcal{M},
\]
depends nonlinearly on the evolving state \((\bm{y}(t),\theta(t))\). 
This induces a nonlinear and nonlocal coupling between the evolution equations and the underlying manifold structure. One of the main difficulties we face is that 
\[
( \bm{y}\cdot\nabla\theta, {\mathcal{A}}\theta )_{{H}} \neq 0,
\]
so this term has to be handled with particular care.

A further difficulty is that the control and the linearized perturbations must be tangent to $\mathcal{M}$ at all times.
This tangential invariance property requires delicate consistency arguments, since the tangent spaces themselves vary with time and depend on the solution. 
Such features go beyond the scope of classical approaches for unconstrained fluid control problems and necessitate a careful integration of geometric and analytical techniques.
To address these difficulties, we combine functional analysis, geometry, and variational methods. 
By using a Faedo--Galerkin scheme together with the Banach--Alaoglu theorem, the Aubin--Lions--Simon compactness lemma, and the Lions--Magenes lemma, we obtain a global strong solution and prove that it stays on the constraint manifold $\mathcal{M}$. In contrast, the authors of  \cite{gb} used
semigroup methods and fixed-point techniques to study the two-dimensional constrained NSE. We obtain the adjoint system by variational differentiation and projection onto the tangent spaces of the manifold. The optimal control is then given by the negative tangential projection of the adjoint variable:
\[
\mathfrak{H}^*(t)=-\pi_{\bm{y}^*(t)}(\bm{\vartheta}(t))
=-\bm{\vartheta}(t)+(\bm{\vartheta}(t),\bm{y}^*(t))_{\mathbb{H}}\bm{y}^*(t),
\qquad t\in[0,T].
\]

Thus, the optimal state $(\bm{y}^*(t),\theta^*(t))$ remains on
$\mathcal{M}$, and the optimal control $\mathfrak{H}^*(t)$ belongs to
$T_{\bm{y}^*(t)}\mathcal{M}_1$ (see Theorem~\ref{main}). 
The main contributions of this work are:
\begin{enumerate}
    \item We analyze a Boussinesq optimal control problem where the state is constrained to lie on a nonlinear manifold.
    \item We use methods from geometric PDEs and optimal control
    theory to analyze this problem.
    \item We show how the constraint affects the analysis of the
    system and the necessary conditions for optimality.
\end{enumerate}

To study the first-order optimality conditions, we take
$\mathcal U_{\mathrm{ad}}=L^2(0,T;\mathbb{H})$ as the control space.
This space remains convex and does not depend on the state. Every control \(\mathfrak{H}\in\mathcal{U}_{ad}\) is incorporated into the dynamics only by projecting it onto \(T_{\bm y}\mathcal{M}_1\); such a projection preserves the energy constraint and allows us to derive the adjoint equations, prove that the control-to-state map is differentiable, and show the existence of optimal controls. It appears to be the first study to address these questions together for the constrained two-dimensional Boussinesq system.
\subsection{Outline of the manuscript}
The paper is organized in the following manner. 

Section~\ref{s2} reviews the functional settings and introduces the Stokes operator along with the associated nonlinear terms. We then write the constrained Boussinesq system in abstract form and prove that the
forced two-dimensional system with finite-codimensional constraints has a unique strong solution (see Theorem~\ref{well}).

In Section~\ref{s3}, the optimal control problem is studied. We then introduce the linearized system (Subsection \ref{s3.1}) and the adjoint system (Subsection \ref{s3.2}), and analyze their well-posedness.

The derivation of first-order necessary optimality conditions for the optimal control problem is treated in Section~\ref{s4} (Theorem~\ref{main}). 
As part of this analysis, we establish that the control-to-state map admits a Fr\'echet derivative (Lemma~\ref{incest} and Proposition~\ref{prop4.4}) in the same section.
\section{Well-posedness of the system}{\label{s2}}
In this section, we prove that the forced 2D Boussinesq system with finite-codimensional constraints admits a unique strong solution. We begin by recalling some operators and the function spaces that are required for the proof.
After defining the torus \(\mathbb{T}^2\), we recall certain function spaces on it, which can also be found in \cite{kush,graf}. We then define the mean-zero spaces for both the vector and scalar variables on \(\mathbb{T}^2\). Subsequently, we give a precise definition of the Stokes operator and the convective terms, together with their principal properties. To end the section, we prove the well-posedness of the system using the Faedo–Galerkin approximation method.
\subsection{Functional Setting}

\subsubsection{The Two-Dimensional Torus}

The two-dimensional torus $\mathbb{T}^2$ is represented
by the unit square $[0,1]^2$ with periodic identification
of its opposite edges. Equivalently,
$
\mathbb{T}^2 = \mathbb{R}^2 / \mathbb{Z}^2,$
where two points $x, y \in \mathbb{R}^2$ are identified whenever
$x - y \in \mathbb{Z}^2.$
Functions defined on $\mathbb{T}^2$ can be regarded as functions on $\mathbb{R}^2$ that are periodic with period one in each coordinate, namely,
$
f(x + 1) = f(x), \ \  x \in \mathbb{R}^2. $

The Lebesgue measure on $\mathbb{T}^2$ is induced by the Lebesgue measure on the unit square $[0,1]^2$. Therefore, for any integrable function $f$,
\[
\int_{\mathbb{T}^2} f(x) \, dx
:= \int_{[0,1]^2} f(x) \, dx.
\]
A useful consequence of periodicity is that integration by parts does not produce boundary terms. In particular, for $f, g \in C^1(\mathbb{T}^2)$ and $j = 1,2$, we have 
\[
\int_{\mathbb{T}^2} \partial_j f(x) \, g(x) \, dx
=
-\int_{\mathbb{T}^2} f(x) \, \partial_j g(x) \, dx.
\]
For $m = (m_1, m_2) \in \mathbb{Z}^2$, we define
$
|m| = (m_1^2 + m_2^2)^{1/2}.$
Similarly, for $x = (x_1, x_2) \in \mathbb{T}^2$, $|x|$ denotes the Euclidean norm.

For $k \in \mathbb{N} \cup \{0\}$, the space $C^k(\mathbb{T}^2)$ consists of all functions whose partial derivatives up to order $k$ are continuous. The space $C^\infty(\mathbb{T}^2)$ denotes the collection of infinitely differentiable functions. The Lebesgue spaces $L^p(\mathbb{T}^2)$, $1 \le p \le \infty$, are defined in the usual way. For $s\geq0$, the Sobolev space $H^s(\mathbb{T}^2)$ is
defined through Fourier series by
\[
H^s(\mathbb{T}^2)
=
\left\{
f \in L^2(\mathbb{T}^2) :
\sum_{m \in \mathbb{Z}^2}
(1 + |m|^2)^s
|\widehat{f}(m)|^2
< \infty
\right\},
\]
with norm
\[
\|f\|_{H^s(\mathbb{T}^2)}
=
\left(
\sum_{m \in \mathbb{Z}^2}
(1 + |m|^2)^s
|\widehat{f}(m)|^2
\right)^{1/2},
\]
where $\widehat{f}(m)$ denotes the Fourier coefficient of $f$ corresponding to the frequency $m \in \mathbb{Z}^2$.

\subsubsection{Functional setting on the two-dimensional torus}

For \(1 \le p \le \infty\) and \(k \in \mathbb{N}\), we define the following spaces
\[
\mathbb{L}^p(\mathbb{T}^2) := L^p(\mathbb{T}^2; \mathbb{R}^2), \
\mathbb{H}^{k,p}(\mathbb{T}^2) := W^{k,p}(\mathbb{T}^2; \mathbb{R}^2),
\]
and write \(\mathbb{H}^k(\mathbb{T}^2) := \mathbb{H}^{k,2}(\mathbb{T}^2)\). With the inner product \( ( \bm{y}, \bm{u} )_{\mathbb{L}^2} = \int_{\mathbb{T}^2} \bm{y}(x) \cdot \bm{u}(x) \, dx\), \(\mathbb L^2(\mathbb{T}^2)\) is a Hilbert space, and \(\mathbb{H}^1(\mathbb{T}^2)\) is equipped with the inner product \( ( \bm{y}, \bm{u} )_{\mathbb{H}^1} = ( \bm{y}, \bm{u} )_{\mathbb{L}^2} + ( \nabla \bm{y}, \nabla \bm{u} )_{\mathbb{L}^2}\) and norm \(\|\bm{y}\|_{\mathbb{H}^1} = (\|\bm{y}\|_{\mathbb{L}^2}^2 + \|\nabla \bm{y}\|_{\mathbb{L}^2}^2)^{1/2}\).

\subsubsection{Mean-zero and divergence-free spaces}

We define the mean-zero Lebesgue space
\[
\mathbb{L}_0^2(\mathbb{T}^2) := \left\{ \bm{y} \in \mathbb{L}^2(\mathbb{T}^2) : \int_{\mathbb{T}^2} \bm{y}(x) \, dx = \boldsymbol{0} \right\}.
\]
We define \(\mathbb{H}\) as the collection of velocity fields that satisfy both the zero-mean and divergence-free conditions:
\[
\mathbb{H} := \left\{ \bm{y} \in \mathbb{L}_0^2(\mathbb{T}^2) :
\operatorname{div} \bm{y} = 0 \right\}.
\]
and the corresponding Sobolev space is \(\mathbb{V} := \mathbb{H}^1(\mathbb{T}^2) \cap \mathbb{H}\). 
The inner product on $\mathbb{H}$ is defined by $(\pmb{y}, \pmb{u})_{\mathbb{H}} = (\pmb{y}, \pmb{u})_{\mathbb{L}^2}$, with corresponding norm $\|\pmb{y}\|_{\mathbb{H}} = \|\pmb{y}\|_{\mathbb{L}^2}$. Similarly, for $\mathbb{V}$, we use the equivalent inner product $(\pmb{y}, \pmb{u})_{\mathbb{V}} = (\nabla \pmb{y}, \nabla \pmb{u})_{\mathbb{L}^2}$ and the norm $\|\pmb{y}\|_{\mathbb{V}} = \|\nabla \pmb{y}\|_{\mathbb{L}^2}$.
By the Poincaré-Wirtinger inequality on $\mathbb{T}^2$, there exists a constant $\lambda_1 > 0$ satisfying $\|\pmb{y}\|_{\mathbb{H}} \leq \lambda_1^{-1/2}\|\nabla \pmb{y}\|_{\mathbb{L}^2}$ for all $\pmb{y} \in \mathbb{V}$ (see the following subsection). As a result, the norm $\|\cdot\|_{\mathbb{V}}$ is equivalent to the usual $\mathbb{H}^1$-norm on $\mathbb{V}$.

Similarly, for the temperature equation, which is scalar-valued, we define the mean-zero spaces
\[
H := L_0^2(\mathbb{T}^2) := \left\{ \theta \in L^2(\mathbb{T}^2;\mathbb{R}) : \int_{\mathbb{T}^2} \theta(x) \, dx = 0 \right\}, \ \
V := H^1(\mathbb{T}^2) \cap H.
\]
We denote the norm in $H$ by $\|\theta\|_{H}:=\|\theta\|_{L^2}$. On \(V\), we take \(\|\theta\|_V := \|\nabla\theta\|_{\mathbb H}\). The Poincar\'e--Wirtinger inequality guarantees that this norm is equivalent to the usual \(H^1(\mathbb{T}^2)\)-norm on \(V\). The dual spaces \(\mathbb{V}'\) and \(V'\) are equipped with the norms
\[
\|\bm{g}\|_{\mathbb{V}'} := \sup \left\{ {}_{\mathbb{V}'}\langle \bm{g}, \bm{\phi} \rangle_{\mathbb{V}} : \bm{\phi} \in \mathbb{V},\ \|\bm{\phi}\|_{\mathbb{V}} \le 1 \right\}, \quad
\|g\|_{V'} := \sup \left\{ {}_{V'}\langle g, f \rangle_{V} : f \in V,\ \|f\|_{V} \le 1 \right\}.
\]
Throughout this paper, \({}_{\mathbb{V}'}\langle \cdot, \cdot \rangle_{\mathbb{V}}\) and \({}_{V'}\langle \cdot, \cdot \rangle_{V}\) denote the duality pairings between \(\mathbb{V}'\) and \(\mathbb{V}\), and \(V'\) and \(V\), respectively.
\subsection{Linear operators}
We denote by \({\textbf{Proj}_{\mathbb H}}:\mathbb{L}^{2}(\mathbb{T}^{2})\to\mathbb{H}\) the Leray--Helmholtz projection. The operator \(A : D(A) \subset \mathbb{H} \to \mathbb{H}\) (the Stokes operator) as \(A\bm y := -{\textbf{Proj}_{\mathbb H}}(\Delta \bm y)\), with \(D(A) = H^2(\mathbb{T}^2) \cap \mathbb{V}\),  is positive and self-adjoint on \(\mathbb{H}\) with compact inverse, and satisfies \(D(A^{1/2})=\mathbb{V}\) and \( ( A\bm{y},\bm{y})_{\mathbb{H}}=\|\nabla\bm{y}\|_{\mathbb{H}^{}}^{2}=\|\bm{y}\|_{\mathbb{V}}^{2}\). Its eigenvalues \(0<\lambda_{1}\le\lambda_{2}\le\cdots\to\infty\) and we take the eigenfunctions associated with these eigenvalues \(\{\bm{w}_{k}\}_{k\ge1}\) forming an orthonormal basis of \(\mathbb{H}\) and orthogonal basis of \(\mathbb{V}\). Hence, for every \(\bm{y}\in D(A)\), we have 
\[
\bm{y}=\sum_{k=1}^{\infty}( \bm{y},\bm{w}_{k})_{\mathbb{H}}\bm{w}_{k}, \ \
A\bm{y}=\sum_{k=1}^{\infty}\lambda_{k}( \bm{y},\bm{w}_{k})_{\mathbb{H}}\bm{w}_{k},
\]
and consequently,
\[
\|\nabla\bm{y}\|_{\mathbb{L}^{2}}^{2}
=\sum_{k=1}^{\infty}\lambda_{k}|( \bm{y},\bm{w}_{k})_{\mathbb{H}}|^{2}
\ge \lambda_{1}\|\bm{y}\|_{\mathbb{H}}^{2},
\]
which leads to the Poincar\'e--Wirtinger inequality \(\|\bm{y}\|_{\mathbb{H}}^{2}\le \lambda_{1}^{-1}\|\nabla\bm{y}\|_{\mathbb{L}^{2}}^{2}\) for \(\bm{y}\in\mathbb{V}\).

Similarly, the operator \(\mathcal{A}:D(\mathcal{A})\subset H\to H\), \(\mathcal{A}\theta:=-\Delta\theta\), with \(D(\mathcal{A})=H^{2}(\mathbb{T}^{2})\cap V\), is positive self-adjoint on \(H\) with compact inverse, \(D(\mathcal{A}^{1/2})=V\), and \(( \mathcal{A}\theta,\theta)_{H}=\|\nabla\theta\|_{\mathbb H}^{2}\). Its eigenvalues \(0<\mu_{1}\le\mu_{2}\le\cdots\to\infty\) yield \(\|\nabla\theta\|_{\mathbb H}^{2}\ge \mu_{1}\|\theta\|_{H}^{2}\), or equivalently, \(\|\theta\|_{H}^{2}\le \mu_{1}^{-1}\|\nabla\theta\|_{\mathbb H}^{2}\) for \(\theta\in V\).
\medskip

\subsection{The Bilinear Operator}
We now introduce the trilinear map \(b(\cdot,\cdot,\cdot) : \mathbb{V} \times \mathbb{V} \times \mathbb{V} \to \mathbb{R}\)   by
\[
b(\bm{y},\bm{u},\bm{w}) := \int_{\mathbb{T}^2} (\bm{y} \cdot \nabla)\bm{u} \cdot \bm{w} \, dx
= \sum_{i,j=1}^2 \int_{\mathbb{T}^2} y_i(x)\, \frac{\partial u_j(x)}{\partial x_i}\, w_j(x)\, dx.
\]
Integration by parts, together with the periodic boundary conditions, yields the following skew-symmetry properties:
\begin{align}
b(\bm{y},\bm{u},\bm{w}) &= -b(\bm{y},\bm{w},\bm{u}), \ \ \bm{y},\bm{u},\bm{w} \in \mathbb{V}, \label{eq:skew}\\
b(\bm{y},\bm{u},\bm{u}) &= 0, \ \  \bm{y},\bm{u} \in \mathbb{V}. \label{trii}
\end{align}
From Hölder's, Ladyzhenskaya's (see \cite[Lemma 1, Chapter 1]{M3}), and Poincaré's inequalities, it follows that
\begin{align}{\label{eq:b_bound}}
|b(\bm{y},\bm{u},\bm{w})|
&\le \|\bm{y}\|_{\mathbb{L}^4}\|\bm{u}\|_{\mathbb{V}}\|\bm{w}\|_{\mathbb{L}^4}\leq 2^{1/2} \|\bm{y}\|_{\mathbb{H}}^{1/2}\|\bm{y}\|_{\mathbb{V}}^{1/2} \|\bm{u}\|_{\mathbb{V}} \|\bm{w}\|_{\mathbb{H}}^{1/2}\|\bm{w}\|_{\mathbb{V}}^{1/2} \nonumber\\& \leq C \|\bm{y}\|_{\mathbb{V}}  \|\bm{u}\|_{\mathbb{V}}  \|\bm{w}\|_{\mathbb{V}} 
\ \ \bm{y},\bm{u},\bm{w} \in \mathbb{V}.
\end{align}
For \(\bm y,\bm u\in\mathbb{V}\), if \(b(\bm y,\bm u,\cdot)\) is a continuous linear functional on \(\mathbb{V}\), it is denoted by \(B(\bm y,\bm u)\in\mathbb{V}'\). Consequently, we have 
\[
\|B(\bm{y},\bm{u})\|_{\mathbb{V}'}
\le C \|\bm{y}\|_{\mathbb{V}} \|\bm{u}\|_{\mathbb{V}},
\ \  \bm{y},\bm{u} \in \mathbb{V}.
\] 
For notational convenience, we set
\[
B(\bm{y}) := B(\bm{y},\bm{y}) := {\textbf{Proj}_{\mathbb H}} ((\bm{y}\cdot\nabla)\bm{y}).
\]
Once again, an application of Hölder's, Ladyzhenskaya's (see \cite[Lemma 1, Chapter 1]{M3}), and Poincaré's inequalities leads to 
\begin{align}{\label{eq:b_refined}}
|b(\bm{y},\bm{u},\bm{w})|
\le C \|\bm{y}\|_{\mathbb{H}}^{1/2} \|\bm{y}\|_{\mathbb{V}}^{1/2}
\|\bm{u}\|_{\mathbb{V}}^{1/2} \|A\bm{u}\|_{\mathbb{H}}^{1/2}
\|\bm{w}\|_{\mathbb{H}},
\ \ \bm{y} \in \mathbb{V}, \ \bm{u} \in D(A),\ \bm{w} \in \mathbb{H}. 
\end{align}
Hence, \(b\) possesses a unique extension to a bounded trilinear map \(b:\mathbb{V}\times D(A)\times\mathbb{H}\to\mathbb{R}\). In the same manner, \(B\) can be uniquely extended to a bounded bilinear operator \(B:\mathbb{V}\times D(A)\to\mathbb{H}\). Moreover, for all \(y\in D(A)\), the cancellation identity
\begin{align}{\label{eq:B_A_cancel}}
(B(\bm y,\bm y), A\bm y)_{\mathbb{H}} = 0
\end{align}
is satisfied on \(\mathbb{T}^2\) (see \cite[Lemma 3.1]{M3}). This identity is essential in the energy estimates for the velocity equation

\subsection{Abstract formulation of the constrained Boussinesq system}

The two-dimensional Boussinesq system on the torus $\mathbb{T}^2$ is given by
\begin{equation}\label{eq:Boussinesq_strong}
\begin{cases}
\displaystyle
\frac{\partial \bm{y}(x,t)}{\partial t}
- \nu \Delta \bm{y}(x,t)
+ \big(\bm{y}(x,t) \cdot \nabla\big) \bm{y}(x,t)
+ \nabla p(x,t)
= \beta \,\theta(x,t)\, e_2 + \bm{f}(x,t),
& (x,t) \in \mathbb{T}^2 \times (0,T), \\[1ex]

\displaystyle
\frac{\partial \theta(x,t)}{\partial t}
+ \bm{y}(x,t) \cdot \nabla \theta(x,t)
= \kappa \Delta \theta(x,t),
& (x,t) \in \mathbb{T}^2 \times (0,T), \\[1ex]

\nabla \cdot \bm{y}(x,t) = 0,
& (x,t) \in \mathbb{T}^2 \times (0,T), \\[1ex]

\bm{y}(x,0) = \bm{y}_0(x), \quad \theta(x,0) = \theta_0(x),
& x \in \mathbb{T}^2.
\end{cases}
\end{equation} 
where 
\[
\bm{y} : \mathbb{T}^2 \times [0,T] \to \mathbb{R}^2,\ \ 
\bm{f} : \mathbb{T}^2 \times [0,T] \to \mathbb{R}^2,\ \
p : \mathbb{T}^2 \times (0,T) \to \mathbb{R},\ \
\theta : \mathbb{T}^2 \times [0,T] \to \mathbb{R}
\]
denote the velocity field, external force (control), pressure, and temperature, respectively. 
The constants \( \nu > 0 \) and \( \kappa > 0 \) represent the kinematic viscosity and thermal diffusivity, respectively, and \( \beta = g\alpha > 0 \) is the buoyancy coefficient, with \( e_2 = (0,1) \).
Moreover, \(\bm{y}(\cdot,t),\, p(\cdot,t),\, \theta(\cdot,t)\) and \(\bm{f}(\cdot,t)\) satisfy the periodic conditions
\[
\bm{y}(x+Le_i,t)=\bm{y}(x,t),\; p(x+Le_i,t)=p(x,t),\; \theta(x+Le_i,t)=\theta(x,t),\; \bm{f}(x+Le_i,t)=\bm{f}(x,t),
\]
for all \(x\in\mathbb{R}^2,\; t\in(0,T)\) and \(i=1,2\), where \(\{e_1, e_2\}\) is the canonical basis of \(\mathbb{R}^2\).
Applying the Leray--Helmholtz projection ${\textbf{Proj}_{\mathbb H}}$ onto divergence-free vector fields, the velocity equation can be expressed in the abstract form
\begin{equation}\label{eq:u_abstract}
\begin{cases}
\displaystyle
\frac{d\bm{y}(t)}{dt}
= -\nu A \bm{y}(t) - {B}(\bm{y}(t),\bm{y}(t)) + \beta {\textbf{Proj}_{\mathbb H}}(\theta(t)e_2) + \mathfrak{H}(t), 
\quad t>0, \\[1ex]
\bm{y}(0) = \bm{y}_0,
\end{cases}
\end{equation}
where $\bm{y}_0 \in \mathbb{H}$ and $\mathfrak{H} := {\textbf{Proj}_{\mathbb H}} \bm{f} \in {{L}^2(0,T;\mathbb{H})}$.
We write the temperature equation in the following abstract form
\begin{equation}\label{eq:theta_abstract}
\begin{cases}
\displaystyle
\frac{d\theta(t)}{dt}
= -\kappa \mathcal{A} \theta(t) - (\bm{y}(t)\cdot\nabla)\theta(t), \ t>0,\\[1ex]
\theta(0) = \theta_0,
\end{cases}
\end{equation}

We introduce the constraint sets
\begin{equation}\label{eq:manifolds}
\mathcal{M}_1 := \{\bm{y} \in \mathbb{H} : \|\bm{y}\|_{\mathbb{H}} = 1\}, \quad
\mathcal{M}_2 := \{\theta \in H : \|\theta\|_{H} = 1\}.
\end{equation}
Define $\mathcal{M} := \mathcal{M}_1 \times \mathcal{M}_2 \subset \mathbb{H} \times H$, which is a smooth embedded submanifold.
For $(\bm{y},\theta) \in \mathcal{M}$, the tangent space is given by 
\[
T_{(\bm{y},\theta)} \mathcal{M}
= T_{\bm{y}} \mathcal{M}_1 \times T_\theta \mathcal{M}_2
= \{\bm{\varrho} \in \mathbb{H} : ( \bm{y},\bm{\varrho})_{\mathbb{H}} = 0\}
\times \{\phi \in H : (\phi,\theta)_{H} = 0\}.
\]
Define the projection $\pi_{(\bm{y},\theta)} : \mathbb{H} \times H \to T_{(\bm{y},\theta)} \mathcal{M}$ by
\[
\pi_{(\bm y,\theta)}(\bm{\varrho},\psi)
= \big( \bm{\varrho} - ( \bm{\varrho},\bm{\bm y})_{\mathbb{H}} \bm{y},\ \psi - (\psi,\theta)_{H}\theta \big),
\quad (\bm{\varrho},\psi)\in \mathbb{H} \times H,
\]
which is the orthogonal projection onto $T_{(\bm{y},\theta)} \mathcal{M}$.

For an initial condition $(\bm{y}_0,\theta_0)\in\mathcal{M}$, we aim to ensure that the solution remains on the manifold $\mathcal M$ for all times. To enforce this constraint, we project the dynamics onto the tangent space $T_{(\bm{y},\theta)}\mathcal{M}$.
For the velocity component, we have
\begin{align}\label{eq:projection_velocity_full}
\pi_{\bm{y}}\big(-\nu A\bm{y} - {B}(\bm{y},\bm{y}) + \beta {\textbf{Proj}_{\mathbb H}}(\theta e_2) + \mathfrak{H}\big)
&= -\nu A\bm{y} - {B}(\bm{y},\bm{y}) + \beta {\textbf{Proj}_{\mathbb H}}(\theta e_2) + \mathfrak{H} \notag\\
&\quad - \big( -\nu A\bm{y} - {B}(\bm{y},\bm{y}) + \beta {\textbf{Proj}_{\mathbb H}}(\theta e_2) + \mathfrak{H},\, \bm{y} \big)_{\mathbb{H}}\, \bm{y} \notag\\
&= -\nu A\bm{y} + \nu \|\nabla \bm{y}\|_{\mathbb{L}^2}^2 \bm{y}
- {B}(\bm{y},\bm{y}) + \beta {\textbf{Proj}_{\mathbb H}}(\theta e_2) \notag\\
&\quad - \beta ( \theta e_2,\bm{y})_{\mathbb{H}}\,\bm{y}
+ \mathfrak{H} - ( \mathfrak{H},\bm{y})_{\mathbb{H}}\,\bm{y},
\end{align}
where we have used $( {B}(\bm{y},\bm{y}),\bm{y})_{\mathbb{H}}=0$ and $ ( A\bm{y},\bm{y})_{\mathbb{H}}=\|\nabla \bm{y}\|_{\mathbb{H}}^2$.
Similarly, for the temperature component, we have 
\begin{align}\label{eq:projection_theta_full}
\pi_\theta\big(-\kappa {\mathcal{A}} \theta - \bm{y}\cdot\nabla \theta\big)
&= -\kappa {\mathcal{A}} \theta - \bm{y}\cdot\nabla \theta  - \big( -\kappa {\mathcal{A}} \theta - \bm{y}\cdot\nabla \theta,\, \theta \big)_{H}\, \theta \notag\\
&= -\kappa {\mathcal{A}} \theta + \kappa \|\nabla \theta\|_{{\mathbb H}}^2 \theta
- \bm{y}\cdot\nabla \theta,
\end{align}
where we have used $( \bm{y}\cdot\nabla \theta,\theta)_{H}=0$.

By projecting the equations \eqref{eq:u_abstract}--\eqref{eq:theta_abstract} onto the tangent space \(T_{(\bm{y},\theta)}\mathcal{M}\), we obtain a two-dimensional Boussinesq system subject to finite-codimensional constraints, given by
\begin{equation}\label{eq:constrained_Boussinesq}
\begin{cases}
\displaystyle
\frac{d\bm{y}(t)}{dt}
= -\nu A \bm{y}(t)
+ \nu \|\nabla \bm{y}(t)\|_{\mathbb{L}^2}^2 \bm{y}(t)
- {B}(\bm{y}(t),\bm{y}(t))
+ \beta {\textbf{Proj}_{\mathbb H}}(\theta(t)e_2)
- \beta (\theta(t)e_2,\bm{y}(t))_{\mathbb{H}}\,\bm{y}(t) \\[1ex]
\hspace{2.5cm}
+ \mathfrak{H}(t) - ( \mathfrak{H}(t),\bm{y}(t))_{\mathbb{H}}\,\bm{y}(t), \\[1.5ex]
\displaystyle
\frac{d\theta(t)}{dt}
= -\kappa {\mathcal{A}} \theta(t)
+ \kappa \|\nabla \theta(t)\|_{\mathbb H}^2 \theta(t)
- \bm{y}(t)\cdot\nabla \theta(t), \\[1.2ex]
(\bm{y},\theta)(0) = (\bm{y}_0,\theta_0).
\end{cases}
\end{equation}
where $(\bm{y}_0,\theta_0) \in \mathbb{V}\cap\mathcal{M}_1 \times V \cap \mathcal{M}_2$ and 
$\mathfrak{H} \in {{L}^2(0,T;\mathbb{H})}$ is the control added to the system. The main advantage of considering the problem on $\mathbb{T}^2$ is that,
\[
({B}(\bm{y},\bm{y}), A\bm{y} )_{\mathbb{H}} = 0, 
\  \text{ for all } \ \bm{y} \in D(A),
\]
which plays a crucial role in deriving energy estimates. We also point out that the term
$(\bm y\cdot\nabla\theta,\mathcal{A}\theta)_H$
does not vanish in general for
$\bm y\in\mathbb{V}$ and $\theta\in D(\mathcal{A})$.
\begin{proposition}
Let $\mathbb{H}$ and $H$ be nontrivial real Hilbert spaces. Define
\[
\mathcal{M}_1 := \{\bm{y} \in \mathbb{H} : \|\bm{y}\|_{\mathbb{H}} = 1\}, \ \ 
\mathcal{M}_2 := \{\theta \in H : \|\theta\|_{H} = 1\}.
\]
Then $\mathcal{M} := \mathcal{M}_1 \times \mathcal{M}_2$ is an embedded submanifold of $\mathbb{H} \times H$ of codimension $2$.
\end{proposition}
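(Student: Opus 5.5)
We realize $\mathcal{M}$ as a regular level set and apply the regular value (submersion) theorem in Banach/Hilbert spaces. Define
\[
F:\mathbb{H}\times H\to\mathbb{R}^2,\qquad F(\bm y,\theta):=\bigl(\|\bm y\|_{\mathbb{H}}^2-1,\ \|\theta\|_H^2-1\bigr),
\]
so that $\mathcal{M}=F^{-1}(0,0)$. Each component is a continuous quadratic form minus a constant, hence $C^\infty$ (indeed real analytic) on $\mathbb{H}\times H$. Its Fr\'echet derivative at $(\bm y,\theta)$ is
\[
DF(\bm y,\theta)(\bm\varrho,\psi)=\bigl(2(\bm y,\bm\varrho)_{\mathbb{H}},\ 2(\theta,\psi)_H\bigr).
\]

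The first step is to show that $DF(\bm y,\theta)$ is surjective onto $\mathbb{R}^2$ at every point of $\mathcal{M}$. Given $(a,b)\in\mathbb{R}^2$, take $(\bm\varrho,\psi)=(\tfrac a2\bm y,\tfrac b2\theta)$. Since $\|\bm y\|_{\mathbb{H}}=\|\theta\|_H=1$, this gives $DF(\bm y,\theta)(\bm\varrho,\psi)=(a,b)$. Nontriviality of the spaces is used only to ensure that $\mathcal{M}$ is nonempty.

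The second step is to check that the kernel splits. We have
\[
\ker DF(\bm y,\theta)=\{\bm\varrho:(\bm y,\bm\varrho)_{\mathbb{H}}=0\}\times\{\psi:(\psi,\theta)_H=0\}.
\]
This is a closed subspace, and it has the explicit closed complement $\mathrm{span}\{(\bm y,0),(0,\theta)\}$. Equivalently, the orthogonal complement is available in a Hilbert space. The kernel has codimension exactly $2$, because $DF(\bm y,\theta)$ induces an isomorphism from that complement onto $\mathbb{R}^2$.

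The regular value theorem for $C^\infty$ maps between Banach spaces (via the implicit function theorem) then applies. It says that a level set at which the differential is surjective with split kernel is an embedded $C^\infty$ submanifold, with tangent space equal to the kernel and codimension equal to $\dim\mathbb{R}^2=2$. This also confirms that $T_{(\bm y,\theta)}\mathcal{M}$ is the space displayed earlier and that $\pi_{(\bm y,\theta)}$ is the orthogonal projection onto it.

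For a self-contained alternative, one could build explicit charts. Write $\bm y'=\bm y+\bm\varrho$ with $\bm\varrho\perp\bm y$, and normalize $(\bm y+\bm\varrho)/\|\bm y+\bm\varrho\|_{\mathbb{H}}$, and similarly for $\theta$; this gives a diffeomorphism between a neighborhood in the tangent space and a neighborhood in $\mathcal{M}$.

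The only real point of care is the infinite-dimensional setting: surjectivity alone is not enough there, and one also needs the complemented kernel. In Hilbert spaces this is automatic, so I expect no genuine obstacle.
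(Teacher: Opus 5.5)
Your proposal is correct and follows the paper's proof essentially verbatim. It uses the same map $F(\bm y,\theta)=(\|\bm y\|_{\mathbb H}^2-1,\|\theta\|_H^2-1)$, the same preimage $(\tfrac a2\bm y,\tfrac b2\theta)$ to show surjectivity of $DF$, the same observation that $\ker DF(\bm y,\theta)=\bm y^\perp\times\theta^\perp$ is complemented in a Hilbert space, and the same appeal to the submersion (regular value) theorem; your explicit complement $\mathrm{span}\{(\bm y,0),(0,\theta)\}$ simply makes the codimension-$2$ count a little more transparent than in the paper.
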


\begin{proof}
Let us define the smooth map
\[
F : \mathbb{H} \times H \to \mathbb{R}^2, 
\ \ 
F(\bm{w},\psi) := \big( \|\bm{w}\|_{\mathbb{H}}^2 - 1,\ \|\psi\|_{H}^2 - 1 \big).
\]
Then $\mathcal{M} = F^{-1}(0,0)$. For any $(\bm{y},\theta) \in \mathcal{M}$, the Fr\'echet derivative of $F$ is
\[
DF(\bm{y},\theta)(\bm{\varrho},\phi) = \big(2( \bm{y},\bm{\varrho})_{\mathbb{H}},\ 2( \theta,\phi)_{H} \big),
\ \  (\bm{\varrho},\phi) \in \mathbb{H} \times H.
\]
This linear map is surjective onto $\mathbb{R}^2$.
Indeed, given $(a,b)\in\mathbb{R}^2$, choosing
$\bm \varrho=\frac{a}{2}\bm{y},$ $\phi=\frac{b}{2}\theta,$
yields $DF(\bm{y},\theta)(\bm \varrho,\phi)=(a,b).$ Hence $DF(\bm{y},\theta)$ is surjective for every $(\bm{y},\theta)\in\mathcal{M}$. Moreover,
$\ker DF(\bm{y},\theta)
=\bm{y}^\perp\times\theta^\perp$
is complemented in $\mathbb H\times H$, since $\mathbb{H}\times H$ is a Hilbert space.
Thus, by the Local Submersion Theorem
\cite[Theorem 2.5.13]{AbrahamMarsdenRatiu1988},
$\mathcal{M}=F^{-1}(0,0)$ is an embedded submanifold of
$\mathbb H\times H$ of codimension $2$.
 Moreover, we have 
\[
T_{(\bm{y},\theta)}\mathcal{M}
= \ker DF(\bm{y},\theta) =\bm{y}^\perp\times\theta^\perp
= \{(\bm{\varrho},\phi) \in \mathbb{H} \times H : ( \bm{y},\bm{\varrho})_{\mathbb{H}} = 0,\ ( \theta,\phi)_{H} = 0 \},
\]
which coincides with $T_{\bm{y}} \mathcal{M}_1 \times T_\theta \mathcal{M}_2$.
\end{proof}

\medskip

\begin{lemma}{\label{lemLocal}}
Let $\mathfrak{H}\in L^2(0,T;\mathbb{H})$. The mapping
$D(A) \times D(\mathcal{A}) \ni (\bm{y},\theta) \mapsto \mathcal{F}(\bm{y},\theta) \in \mathbb{H} \times H,$
defined by
\begin{align}
\mathcal{F}(\bm{y},\theta):=(F_1(\bm{y},\theta),F_2(\bm{y}, \theta))
:=
\Big(&-\nu A\bm{y} + \nu \|\nabla \bm{y}\|_{\mathbb{H}}^2 \bm{y} - {B}(\bm{y},\bm{y})
+ \beta {\mathbf{Proj}_{\mathbb H}}(\theta e_2)
- \beta ( \theta e_2,\bm{y})_{\mathbb{H}} \bm{y} \notag\\
&+ \mathfrak{H} - ( \mathfrak{H},\bm{y})_{\mathbb{H}} \bm{y},\ 
-\kappa {\mathcal{A}} \theta + \kappa \|\nabla \theta\|_{\mathbb H}^2 \theta - \bm{y}\cdot\nabla \theta \Big)
\label{eq:F_boussinesq}
\end{align}
is locally Lipschitz. More precisely, for any \(R>0\), a constant \(L_{R,\|\mathfrak{H}\|_{\mathbb{H}}}>0\) exists for which
\begin{align}
\|\mathcal{F}(\bm{y},\theta) - \mathcal{F}(\bm{\varrho},\eta)\|_{\mathbb{H} \times H}
\le L_{R,\|\mathfrak{H}\|_{\mathbb{H}}}
\Big( \|\bm{y}-\bm{\varrho}\|_{D(A)} + \|\theta-\eta\|_{D(\mathcal{A})} \Big),
\label{eq:Lip_boussinesq}
\end{align}
for all $(\bm{y},\theta),(\bm{\varrho},\eta)$ satisfying
\[
\|\bm{y}\|_{D(A)},\ \|\bm{\varrho}\|_{D(A)},\ \|\theta\|_{D(\mathcal{A})},\ \|\eta\|_{D(\mathcal{A})} \le R.
\]
\end{lemma}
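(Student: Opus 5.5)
We split $\mathcal{F}(\bm y,\theta)-\mathcal{F}(\bm\varrho,\eta)$ into its individual terms and bound each in $\mathbb H$ or $H$ by a constant depending on $R$ (and on $\|\mathfrak H\|_{\mathbb H}$, understood pointwise in $t$) times $\|\bm y-\bm\varrho\|_{D(A)}+\|\theta-\eta\|_{D(\mathcal A)}$. Throughout we use the continuous embeddings $D(A)\hookrightarrow\mathbb V\hookrightarrow\mathbb H$ and $D(\mathcal A)\hookrightarrow V\hookrightarrow H$, which follow from the Poincaré--Wirtinger inequalities. In particular $\|\bm y\|_{\mathbb H},\|\bm y\|_{\mathbb V}\le CR$, and similarly for $\theta$, on the ball. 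Each difference of a product is handled by the standard add-and-subtract device $ab-a'b'=(a-a')b+a'(b-b')$.

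\emph{Linear terms.} We have $\|\nu A(\bm y-\bm\varrho)\|_{\mathbb H}\le\nu\|\bm y-\bm\varrho\|_{D(A)}$ and $\|\kappa\mathcal A(\theta-\eta)\|_H\le\kappa\|\theta-\eta\|_{D(\mathcal A)}$. For the buoyancy term, the Leray projection is a contraction on $\mathbb L^2$, so $\|\beta\,\mathbf{Proj}_{\mathbb H}((\theta-\eta)e_2)\|_{\mathbb H}\le\beta\|\theta-\eta\|_H$. The control term $\mathfrak H$ cancels in the difference.

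\emph{Cubic and nonlocal terms.} Write
\[
\|\nabla\bm y\|^2\bm y-\|\nabla\bm\varrho\|^2\bm\varrho=\bigl(\|\nabla\bm y\|^2-\|\nabla\bm\varrho\|^2\bigr)\bm y+\|\nabla\bm\varrho\|^2(\bm y-\bm\varrho).
\]
Using $\bigl|\|\nabla\bm y\|^2-\|\nabla\bm\varrho\|^2\bigr|\le(\|\bm y\|_{\mathbb V}+\|\bm\varrho\|_{\mathbb V})\|\bm y-\bm\varrho\|_{\mathbb V}$, this term is bounded by $CR^2\|\bm y-\bm\varrho\|_{D(A)}$. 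The same splitting handles the temperature term $\kappa\|\nabla\theta\|^2\theta$.

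For $\beta(\theta e_2,\bm y)_{\mathbb H}\bm y$, a three-term telescoping gives a bound of order $CR^2$ times $\|\theta-\eta\|_H+\|\bm y-\bm\varrho\|_{\mathbb H}$. Likewise,
\[
(\mathfrak H,\bm y)\bm y-(\mathfrak H,\bm\varrho)\bm\varrho=(\mathfrak H,\bm y-\bm\varrho)\bm y+(\mathfrak H,\bm\varrho)(\bm y-\bm\varrho),
\]
which is bounded by $2CR\|\mathfrak H\|_{\mathbb H}\|\bm y-\bm\varrho\|_{\mathbb H}$. This is the source of the dependence of $L$ on $\|\mathfrak H\|_{\mathbb H}$.

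\emph{Convective terms (main step).} Write
\[
B(\bm y,\bm y)-B(\bm\varrho,\bm\varrho)=B(\bm y-\bm\varrho,\bm y)+B(\bm\varrho,\bm y-\bm\varrho).
\]
Each piece is estimated in $\mathbb H$ via \eqref{eq:b_refined}, taking the first argument in $\mathbb V$ and the second in $D(A)$. This gives
\[
\|B(\bm y-\bm\varrho,\bm y)\|_{\mathbb H}\le C\|\bm y-\bm\varrho\|_{\mathbb V}\|\bm y\|_{D(A)},
\]
and similarly for the other piece, so the sum is $\le CR\|\bm y-\bm\varrho\|_{D(A)}$.

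For the temperature, write
\[
\bm y\cdot\nabla\theta-\bm\varrho\cdot\nabla\eta=(\bm y-\bm\varrho)\cdot\nabla\theta+\bm\varrho\cdot\nabla(\theta-\eta).
\]
The scalar analogue of \eqref{eq:b_refined} is needed here. By Hölder and Ladyzhenskaya,
\[
\|\bm u\cdot\nabla\phi\|_{L^2}\le\|\bm u\|_{\mathbb L^4}\|\nabla\phi\|_{\mathbb L^4}\le C\|\bm u\|_{\mathbb H}^{1/2}\|\bm u\|_{\mathbb V}^{1/2}\|\nabla\phi\|_{\mathbb H}^{1/2}\|\mathcal A\phi\|_H^{1/2}.
\]
Alternatively one may use $\|\bm u\|_{L^\infty}\lesssim\|\bm u\|_{D(A)}$, since $H^2(\mathbb T^2)\hookrightarrow L^\infty$. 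Either way the two pieces are bounded by $CR\bigl(\|\bm y-\bm\varrho\|_{D(A)}+\|\theta-\eta\|_{D(\mathcal A)}\bigr)$.

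This convective step is the only one needing a genuine Sobolev inequality, namely the 2D $H^1\hookrightarrow L^4$ embedding. We should also check that $\bm y\cdot\nabla\theta$ has zero mean, so that it lies in $H$. This holds because $\int\bm y\cdot\nabla\theta\,dx=-\int(\operatorname{div}\bm y)\theta\,dx=0$.

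Summing all the bounds yields \eqref{eq:Lip_boussinesq} with $L_{R,\|\mathfrak H\|_{\mathbb H}}=C(\nu+\kappa+\beta+R+R^2+R\|\mathfrak H\|_{\mathbb H})$.
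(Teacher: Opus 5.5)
Your proposal is correct and follows essentially the same route as the paper: a term-by-term add-and-subtract splitting, with Poincaré for the nonlocal cubic and coupling terms, and Hölder/Ladyzhenskaya/Agmon-type bounds for the two convective differences. The differences are cosmetic. You bound $B(\bm\varrho,\bm y-\bm\varrho)$ through \eqref{eq:b_refined} instead of the paper's $\|\bm\varrho\|_{\mathbb L^\infty}\|\nabla(\bm y-\bm\varrho)\|_{\mathbb H}$, and you add the (valid) zero-mean check that places $\bm y\cdot\nabla\theta$ in $H$, which the paper omits.
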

\begin{proof}
We estimate each term in \eqref{eq:F_boussinesq} separately.
Since $A$ and ${\mathcal{A}}$ are linear, we have
\begin{align}
\|\nu A\bm{y} - \nu A\bm{\varrho}\|_{\mathbb{H}} 
&= \nu \|A(\bm{y}-\bm{\varrho})\|_{\mathbb{H}} 
\le \nu \|\bm{y}-\bm{\varrho}\|_{D(A)}, \label{eq:lin_u}\\
\|\kappa {\mathcal{A}} \theta - \kappa {\mathcal{A}} \eta\|_{H}
&= \kappa \|{\mathcal{A}}(\theta-\eta)\|_{H}
\le \kappa \|\theta-\eta\|_{D(\mathcal{A})}. \label{eq:lin_theta}
\end{align}
We now focus on the following nonlinear term
\begin{equation}\label{eq:nonlinear_term_estimate}
\begin{aligned}
\left\| \nu \|\nabla \bm{y}\|_{\mathbb{H}}^2 \bm{y} - \nu \|\nabla \bm{\varrho}\|_{\mathbb{H}}^2 \bm{\varrho} \right\|_{\mathbb{H}}
&\le \nu \|\nabla \bm{y}\|_{\mathbb{H}}^2 \|\bm{y} - \bm{\varrho}\|_{\mathbb{H}} 
+ \nu \left| \|\nabla \bm{y}\|_{\mathbb{H}}^2 - \|\nabla \bm{\varrho}\|_{\mathbb{H}}^2 \right| \|\bm{\varrho}\|_{\mathbb{H}} \\
&\le \frac{\nu}{\sqrt{\lambda_1}} 
\left( \|\nabla \bm{y}\|_{\mathbb{H}}^2 
+ \|\nabla \bm{y}\|_{\mathbb{H}} \|\nabla \bm{\varrho}\|_{\mathbb{H}} 
+ \|\nabla \bm{\varrho}\|_{\mathbb{H}}^2 \right)
\|\nabla (\bm{y} - \bm{\varrho})\|_{\mathbb{H}}.
\end{aligned}
\end{equation}
Similarly, we estimate
\begin{align}
\kappa \big\| \|\nabla \theta\|_{\mathbb H}^2 \theta 
- \|\nabla \eta\|_{H}^2 \eta \big\|_{H}
&\le \kappa \|\nabla \theta\|_{\mathbb H}^2 \|\theta-\eta\|_{H}
+ \kappa \big|\|\nabla \theta\|_{\mathbb H}^2 
- \|\nabla \eta\|_{\mathbb H}^2\big| \|\eta\|_{H} \notag\\
&\le \frac{\kappa}{\sqrt{\mu_1}}
\left( \|\nabla \theta\|_{\mathbb H}^2
+ \|\nabla \theta\|_{\mathbb H}\|\nabla \eta\|_{\mathbb H}
+ \|\nabla \eta\|_{\mathbb H}^2 \right)
\|\nabla(\theta-\eta)\|_{\mathbb H}.
\label{eq:quad_theta}
\end{align}
Using Hölder's, Gagliardo--Nirenberg's and Agmon's inequalities, we obtain
\begin{equation}\label{eq:B_estimate}
\begin{aligned}
\|B(\bm{y},\bm{y})-B(\bm \varrho,\bm \varrho)\|_{\mathbb H}
&\le
\|B(\bm{y}-\bm \varrho,\bm{y})\|_{\mathbb H}
+\|B(\bm \varrho,\bm{y}-\bm \varrho)\|_{\mathbb H} \\
&\le
C\|\bm{y}-\bm \varrho\|_{\mathbb L^4}\|\nabla\bm{y}\|_{\mathbb L^4}
+C\|\bm \varrho\|_{L^\infty}\|\nabla(\bm{y}-\bm \varrho)\|_{\mathbb H} \\
&\le
C\|\bm{y}-\bm \varrho\|_{\mathbb H}^{1/2}
\|\nabla(\bm{y}-\bm \varrho)\|_{\mathbb H}^{1/2}
\|\nabla\bm{y}\|_{\mathbb H}^{1/2}
\|A\bm{y}\|_{\mathbb H}^{1/2}  \\
&\quad
+C\|\bm \varrho\|_{\mathbb H}^{1/2}
\|A\bm \varrho\|_{\mathbb H}^{1/2}
\|\nabla(\bm{y}-\bm \varrho)\|_{\mathbb H}.
\end{aligned}
\end{equation}
Moreover, we deduce 
\begin{equation}\label{eq:U_estimate}
\begin{aligned}
\|\mathfrak{H} - ( \mathfrak{H},\bm{y})_{\mathbb{H}} \bm{y} - (\mathfrak{H} - ( \mathfrak{H},\bm{\varrho})_{\mathbb{H}} \bm{\varrho})\|_{\mathbb{H}} 
&= \|( \mathfrak{H},\bm{y})_{\mathbb{H}} \bm{y} - ( \mathfrak{H},\bm{\varrho})_{\mathbb{H}} \bm{\varrho}\|_{\mathbb{H}} \\
&\le |( \mathfrak{H},\bm{y})_{\mathbb{H}}|\,\|\bm{y}-\bm{\varrho}\|_{\mathbb{H}} 
+ |( \mathfrak{H},\bm{y}-\bm{\varrho})_{\mathbb{H}}|\,\|\bm{\varrho}\|_{\mathbb{H}} \\
&\le \|\mathfrak{H}\|_{\mathbb{H}}(\|\bm{y}\|_{\mathbb{H}} + \|\bm{\varrho}\|_{\mathbb{H}})
\|\bm{y}-\bm{\varrho}\|_{\mathbb{H}}.
\end{aligned}
\end{equation}
Since \({\textbf{Proj}_{\mathbb H}}\) is bounded on \(\mathbb{H}\), we have
\begin{equation}\label{eq:coupling1}
\|\beta {\textbf{Proj}_{\mathbb H}}(\theta e_2) - \beta {\textbf{Proj}_{\mathbb H}}(\eta e_2)\|_{\mathbb{H}}
\le C \|\theta-\eta\|_{H}.
\end{equation}
Similar to the above calculations, we find 
\begin{align}
&\|\beta ( \theta e_2,\bm{y})_{\mathbb{H}}\, \bm{y} 
- \beta ( \eta e_2,\bm{\varrho})_{\mathbb{H}}\, \bm{\varrho}\|_{\mathbb{H}} \notag\\
&\quad = \beta \| ( \theta e_2,\bm{y})_{\mathbb{H}}(\bm{y}-\bm{\varrho}) 
+ \big[( \theta e_2,\bm{y})_{\mathbb{H}} 
- ( \eta e_2,\bm{\varrho})_{\mathbb{H}}\big] \bm{\varrho} \|_{\mathbb{H}} \notag\\
&\quad \le \beta |( \theta e_2,\bm{y})_{\mathbb{H}}| \|\bm{y}-\bm{\varrho}\|_{\mathbb{H}}
+ \beta \big|( \theta e_2,\bm{y}-\bm{\varrho})_{\mathbb{H}} 
+ ( (\theta-\eta)e_2,\bm{\varrho})_{\mathbb{H}}\big| \|\bm{\varrho}\|_{\mathbb{H}} \notag\\
&\quad \le \beta \|\theta\|_{H}\|\bm{y}\|_{\mathbb{H}} \|\bm{y}-\bm{\varrho}\|_{\mathbb{H}}
+ \beta \|\theta\|_{H}\|\bm{y}-\bm{\varrho}\|_{\mathbb{H}}\|\bm{\varrho}\|_{\mathbb{H}}
+ \beta \|\theta-\eta\|_{H}\|\bm{\varrho}\|_{\mathbb{H}}^2.
\label{eq:coupling2}
\end{align}
\allowdisplaybreaks
For the bilinear term in the temperature equation, we estimate by applying Hölder's, Gagliardo--Nirenberg's, and Agmon's inequalities as 
\begin{align}
\|\bm{y}\cdot\nabla \theta-\bm{\varrho}\cdot\nabla\eta\|_{H}
&\le
\|(\bm{y}-\bm{\varrho})\cdot\nabla\theta\|_{H}
+\|\bm{\varrho}\cdot\nabla(\theta-\eta)\|_{H}
\notag\\
&\le
C\|\bm{y}-\bm{\varrho}\|_{\mathbb L^4}\|\nabla\theta\|_{\mathbb L^4}
+C\|\bm{\varrho}\|_{\mathbb L^\infty}\|\nabla(\theta-\eta)\|_{\mathbb H}
\notag\\
&\le
C\|\bm{y}-\bm{\varrho}\|_{\mathbb H}^{1/2}
\|\bm{y}-\bm{\varrho}\|_{\mathbb H^1}^{1/2}
\|\nabla\theta\|_{\mathbb H}^{1/2}
\|{\mathcal{A}}\theta\|_{H}^{1/2}
\notag\\
&\quad
+C\|\bm{\varrho}\|_{\mathbb H}^{1/2}\|A\bm{\varrho}\|_{\mathbb H}^{1/2}
\|\nabla(\theta-\eta)\|_{\mathbb H}.
\label{eq:c_theta}
\end{align}
Using the bounds $\|\bm{y}\|_{D(A)},\; \|\bm{\varrho}\|_{D(A)},\; \|\theta\|_{D(\mathcal{A})},\; \|\eta\|_{D(\mathcal{A})} \le R$ and combining \eqref{eq:lin_u}--\eqref{eq:c_theta}, we have \eqref{eq:Lip_boussinesq}.
\end{proof}

\subsection{Well-posedness for the constrained Boussinesq system}
We prove that the constrained Boussinesq system \eqref{eq:constrained_Boussinesq} has a unique strong solution by using the Faedo--Galerkin approximation method, along with the Banach--Alaoglu theorem, the Aubin--Lions--Simon compactness lemma, and the Lions--Magenes lemma. We begin by introducing the definition of a strong solution to \eqref{eq:constrained_Boussinesq}.
\begin{definition}
Let \((\bm{y}_0,\theta_0) \in (\mathbb{V} \cap \mathcal{M}_1) \times (V \cap \mathcal{M}_2)\) be fixed. A pair
\[
(\bm{y},\theta) \in \big( C([0,T]; \mathbb{V} ) \cap L^2(0,T; D({A})) \big)
\times \big( C([0,T]; V) \cap L^2(0,T; D(\mathcal{A})) \big)
\]
with $\bm{y}(t) \in \mathcal{M}_1$ and $\theta(t) \in \mathcal{M}_2$ for all $t \in [0,T]$,
is called a \emph{strong solution} of the constrained Boussinesq system \eqref{eq:constrained_Boussinesq} if:
\begin{itemize}\setlength\itemsep{2pt}\setlength\parskip{0pt}
\item[(i)] \(\frac{d\bm{y}}{dt} \in {{L}^2(0,T;\mathbb{H})}\), \text{ and } \(\frac{d\theta}{dt} \in L^2(0,T;H)\),

\item[(ii)] For all \((\bm{\phi},\psi) \in L^2(0,T; \mathbb{H} )\times L^2(0,T; H)\),
\[
\int_0^T \left(\tfrac{d\bm{y}(t)}{dt} - F_1(\bm{y}(t),\theta(t)), \bm{\phi}(t) \right)_{\mathbb{H}} \, dt = 0,\quad
\int_0^T \left( \tfrac{d\theta(t)}{dt} - F_2(\bm{y}(t),\theta(t)), \psi(t) \right)_{H} \, dt = 0,
\]
where \(F_1(\bm{y},\theta),\, F_2(\bm{y},\theta)\) are defined in \eqref{eq:F_boussinesq}.

\item[(iii)]The initial conditions hold in the following sense:
\[
\bm{y}(0)=\bm{y}_0 \  \text{in } \ \mathbb{H}, \ \
\theta(0)=\theta_0 \  \text{in }\  H.
\]
\end{itemize}
\end{definition}
We now prove that problem \eqref{eq:constrained_Boussinesq} has a unique strong solution.
\begin{theorem}{\label{well}}
Let \((\bm{y}_0,\theta_0) \in (\mathbb{V} \cap \mathcal{M}_1)\times (V\cap \mathcal{M}_2)\) and \(\mathfrak{H} \in {{L}^2(0,T;\mathbb{H})}\). 
Then the problem admits a unique strong solution
\[
(\bm{y},\theta) \in \big(C([0,T];\mathbb{V})\cap L^2(0,T;D(A))\big)
\times \big(C([0,T];V)\cap L^2(0,T;D(\mathcal{A}))\big),
\]
with
\[
\frac{d\bm{y}}{dt} \in {{L}^2(0,T;\mathbb{H})}, \quad
\frac{d\theta}{dt} \in L^2(0,T;H),
\]
such that
\[
\bm{y}(t)\in \mathcal{M}_1, \quad \theta(t)\in \mathcal{M}_2 \  \text{ for all }\  t\in[0,T].
\]
\end{theorem}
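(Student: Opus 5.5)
We will construct the solution by a Faedo--Galerkin scheme. Let $P_n$ be the orthogonal projection of $\mathbb{H}$ onto $\mathrm{span}\{\bm w_1,\dots,\bm w_n\}$, and $Q_n$ the projection of $H$ onto the span of the first $n$ eigenfunctions of $\mathcal{A}$. The approximate system is $\frac{d\bm y_n}{dt}=P_nF_1(\bm y_n,\theta_n)$ and $\frac{d\theta_n}{dt}=Q_nF_2(\bm y_n,\theta_n)$, with data $\bm y_n(0)=P_n\bm y_0/\|P_n\bm y_0\|_{\mathbb H}$ and $\theta_n(0)=Q_n\theta_0/\|Q_n\theta_0\|_H$, which are well defined for $n$ large. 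By the local Lipschitz property of Lemma~\ref{lemLocal} (all norms are equivalent in finite dimensions) and Carath\'eodory's theorem, this ODE system has a unique local solution.

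The first observation is that the constraint is preserved at the Galerkin level. Since $\bm y_n\in P_n\mathbb{H}$, we have $(P_nF_1,\bm y_n)=(F_1,\bm y_n)$. This quantity equals $(1-\|\bm y_n\|^2)\bigl(\nu\|\nabla\bm y_n\|^2-\beta(\theta_ne_2,\bm y_n)+(\mathfrak H,\bm y_n)\bigr)$. Hence $\rho=\|\bm y_n\|^2-1$ satisfies the linear ODE $\rho'=-2g(t)\rho$ with $g\in L^1$ and $\rho(0)=0$, which forces $\rho\equiv0$. The same argument applies to $\theta_n$. Therefore $\|\bm y_n(t)\|_{\mathbb H}=\|\theta_n(t)\|_H=1$, and in particular the solutions cannot blow up in $\mathbb{H}\times H$.

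Next we derive $\mathbb V\times V$ estimates. For the velocity, test with $A\bm y_n$ and use \eqref{eq:B_A_cancel}. The constraint term contributes $\nu\|\nabla\bm y_n\|^2(\bm y_n,A\bm y_n)=\nu\|\nabla\bm y_n\|^4$, which is the dangerous cubic growth. We compensate it by using $\|\nabla \bm y_n\|^2=(A\bm y_n,\bm y_n)\le\|A\bm y_n\|\,\|\bm y_n\|=\|A\bm y_n\|$, so that $\|\nabla\bm y_n\|^4\le\|A\bm y_n\|^2$. This is borderline, so we instead integrate the energy identity. The $\mathbb H$-level identity with $\|\bm y_n\|=1$ does not by itself give $\int_0^T\|\nabla\bm y_n\|^2$. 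The better route is to note that $\|A\bm y_n-\|\nabla\bm y_n\|^2\bm y_n\|^2=\|A\bm y_n\|^2-\|\nabla\bm y_n\|^4\ge0$, which yields
\[
\tfrac12\tfrac{d}{dt}\|\nabla\bm y_n\|^2+\nu\bigl\|A\bm y_n-\|\nabla\bm y_n\|^2\bm y_n\bigr\|^2=\bigl(\beta\mathbf{Proj}_{\mathbb H}(\theta_ne_2)+\mathfrak H,\ \pi_{\bm y_n}A\bm y_n\bigr).
\]
Young's inequality then bounds $\sup_t\|\nabla\bm y_n\|^2+\int_0^T\|\pi_{\bm y_n}A\bm y_n\|^2$ by $\|\nabla \bm y_0\|^2+C(T+\|\mathfrak H\|_{L^2(0,T;\mathbb H)}^2)$, since $\|\theta_n\|=1$.

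For the temperature, the analogous identity reads
\[
\tfrac12\tfrac{d}{dt}\|\nabla\theta_n\|^2+\kappa\|\pi_{\theta_n}\mathcal A\theta_n\|^2=-(\bm y_n\cdot\nabla\theta_n,\mathcal A\theta_n).
\]
Here lies the main obstacle, since this term does not vanish. Because $(\bm y_n\cdot\nabla\theta_n,\theta_n)=0$, we may replace $\mathcal A\theta_n$ by $\pi_{\theta_n}\mathcal A\theta_n$ in this term. We then bound it by $\|\bm y_n\|_{\mathbb L^4}\|\nabla\theta_n\|_{\mathbb L^4}\|\pi\mathcal A\theta_n\|$, apply Ladyzhenskaya's inequality, and use $\|\mathcal A\theta_n\|^2=\|\pi\mathcal A\theta_n\|^2+\|\nabla\theta_n\|^4$. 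After Young's inequality this produces an estimate of the form $\frac{d}{dt}X+\kappa\|\pi\mathcal A\theta_n\|^2\le C\|\bm y_n\|_{\mathbb V}^2 X(1+X)$ with $X=\|\nabla\theta_n\|^2$. This has a superlinear factor $X^2$. To close it, we use the $H$-level identity, which after integration gives $\int_0^T\|\nabla\theta_n\|^2<\infty$? This fails, because the constraint cancels that dissipation. What we will try first is the cleaner bound $|(\bm y\cdot\nabla\theta,\mathcal A\theta)|=|b_{\text{scalar}}(\nabla\theta\text{-terms})|\le C\|\nabla\bm y\|\,\|\nabla\theta\|_{L^4}^2$, obtained on the torus via integration by parts ($\sum_{i,j}\int\partial_j y_i\partial_i\theta\partial_j\theta$). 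Then $\|\nabla\theta\|_{L^4}^2\le C\|\nabla\theta\|\,\|\mathcal A\theta\|$, which gives a right-hand side of $C\|\nabla\bm y\|^2\|\nabla\theta\|^2+\frac\kappa2\|\mathcal A\theta\|^2$. The remaining $\|\nabla\theta\|^4$ part of $\|\mathcal A\theta\|^2$ is handled by the ODE comparison on $\frac{d}{dt}\log(1+X)$, using $\int_0^T\|\nabla\theta_n\|^2dt$, which is bounded via $\frac{d}{dt}\log$-type arguments on $\|\theta\|_{V'}$. If this closing step resists, we will fall back to a Gronwall-type bound valid on $[0,T]$, using that $\sup_t\|\nabla\bm y_n\|$ is already controlled uniformly.

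Given uniform bounds in $L^\infty(\mathbb V)\cap L^2(D(A))$ and $L^\infty(V)\cap L^2(D(\mathcal A))$ (noting $\|A\bm y_n\|^2=\|\pi A\bm y_n\|^2+\|\nabla\bm y_n\|^4$), the time derivatives are bounded in $L^2(\mathbb H)\times L^2(H)$. Banach--Alaoglu and Aubin--Lions--Simon then give strong convergence in $L^2(\mathbb V)\times L^2(V)$ and in $C(\mathbb H)\times C(H)$. This suffices to pass to the limit in the nonlinear and nonlocal terms, including $\|\nabla\bm y_n\|^2\bm y_n$, and to keep $\|\bm y(t)\|=\|\theta(t)\|=1$. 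Lions--Magenes gives the continuity $C([0,T];\mathbb V)\times C([0,T];V)$. Uniqueness follows from an $\mathbb H\times H$ energy estimate on the difference of two solutions, using estimates as in Lemma~\ref{lemLocal} together with the regularity $L^2(D(A))$, and Gronwall's inequality.
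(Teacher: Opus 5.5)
Your constraint-preservation argument, the velocity bound, and the compactness and uniqueness steps are fine. The velocity bound uses the identity with $\pi_{\bm y_n}A\bm y_n$ together with $\|A\bm y_n\|_{\mathbb H}^2=\|\pi_{\bm y_n}A\bm y_n\|_{\mathbb H}^2+\|\nabla\bm y_n\|_{\mathbb H}^4$; it is a cleaner version of the paper's constrained-gradient computation with $\mathcal E_1,\mathcal E_2$. (The bracket multiplying $1-\|\bm y_n\|_{\mathbb H}^2$ should read $-\nu\|\nabla\bm y_n\|_{\mathbb H}^2+\beta(\theta_ne_2,\bm y_n)_{\mathbb H}+(\mathfrak H,\bm y_n)_{\mathbb H}$; this is harmless.)

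\textbf{The gap.} You never establish the uniform bound for $\theta_n$ in $L^\infty(0,T;V)\cap L^2(0,T;D(\mathcal A))$. Both of your estimates of the convection term put second derivatives on $\theta_n$ through $\|\nabla\theta_n\|_{\mathbb L^4}^2\le C\|\nabla\theta_n\|_{\mathbb H}\|\mathcal A\theta_n\|_H$. Only the tangential part is dissipated, and $\|\mathcal A\theta_n\|_H^2=\|\pi_{\theta_n}\mathcal A\theta_n\|_H^2+X^2$ with $X=\|\nabla\theta_n\|_{\mathbb H}^2$. So this route inevitably yields a superlinear inequality such as $X'\le C\|\nabla\bm y_n\|_{\mathbb H}^2X(1+X)$, and neither remedy closes it.
\begin{itemize}
\item The bound on $\int_0^T\|\nabla\theta_n\|_{\mathbb H}^2\,dt$ ``via log-type arguments on $\|\theta\|_{V'}$'' is never supplied. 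The natural computation (testing with $\mathcal A^{-1}\theta_n$) only controls $\int_0^T X\|\theta_n\|_{V'}^2\,dt$. Passing to $\int_0^T X\,dt$ needs a lower bound on $\|\theta_n\|_{V'}$, and interpolation gives only $\|\theta_n\|_{V'}\ge X^{-1/2}$, which is circular.
\item The Gronwall fallback fails too. A Riccati-type inequality $X'\le CX(1+X)$ gives bounds only on a data-dependent time interval, not on an arbitrary $[0,T]$, and knowing $\sup_t\|\nabla\bm y_n\|_{\mathbb H}$ does not change this.
\end{itemize}
The time-derivative bounds, the Aubin--Lions step, the limit passage in $\bm y_n\cdot\nabla\theta_n$ and $\|\nabla\theta_n\|_{\mathbb H}^2\theta_n$, and the regularity of $\theta$ all rest on this estimate. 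So global existence is not proved as written.

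\textbf{The missing idea.} Put the extra regularity on the velocity, using the $L^2(0,T;D(A))$ bound you already have. After your (correct) replacement of $\mathcal A\theta_n$ by $\pi_{\theta_n}\mathcal A\theta_n$, use H\"older in the form $\mathbb L^\infty\times L^2\times L^2$ and Agmon's inequality $\|\bm y_n\|_{\mathbb L^\infty}^2\le C\|\bm y_n\|_{\mathbb H}\|A\bm y_n\|_{\mathbb H}=C\|A\bm y_n\|_{\mathbb H}$:
\[
\bigl|(\bm y_n\cdot\nabla\theta_n,\pi_{\theta_n}\mathcal A\theta_n)_H\bigr|\le\|\bm y_n\|_{\mathbb L^\infty}\|\nabla\theta_n\|_{\mathbb H}\|\pi_{\theta_n}\mathcal A\theta_n\|_H\le\frac{\kappa}{2}\|\pi_{\theta_n}\mathcal A\theta_n\|_H^2+\frac{C}{\kappa}\|A\bm y_n\|_{\mathbb H}\,X .
\]
Hence
\[
\frac{d}{dt}X+\kappa\|\pi_{\theta_n}\mathcal A\theta_n\|_H^2\le\frac{2C}{\kappa}\|A\bm y_n\|_{\mathbb H}\,X .
\]
This is linear in $X$, with coefficient bounded in $L^1(0,T)$ uniformly in $n$. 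Gronwall then gives $\sup_tX+\int_0^T\|\pi_{\theta_n}\mathcal A\theta_n\|_H^2\,dt\le C$ on the whole interval, and $\int_0^T\|\mathcal A\theta_n\|_H^2\,dt\le C+T\sup_tX^2$.

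The paper does exactly this, phrased as testing the $\theta_m$-equation with $\frac{d\theta_m}{dt}=-\kappa\pi_{\theta_m}\mathcal A\theta_m-Q_m(\bm y_m\cdot\nabla\theta_m)$. This test function is orthogonal to $\theta_m$, so the constraint term $\kappa\|\nabla\theta_m\|_{\mathbb H}^2\theta_m$ drops out and one gets $\frac{\kappa}{2}\frac{d}{dt}X+\frac12\|\frac{d\theta_m}{dt}\|_H^2\le\frac12\|\bm y_m\|_{\mathbb L^\infty}^2X$. Only once $\sup_tX$ is known does the paper test with $\mathcal A\theta_m$ to recover the $L^2(0,T;D(\mathcal A))$ bound.
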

\begin{proof}
\textbf{Existence.}
Consider a complete orthonormal basis \(\{w_1, w_2, \dots\}\) of \(\mathbb{H}\) which is orthogonal in \(\mathbb{V}\); for example, the eigenfunctions of the Stokes operator \(A\). Define the finite-dimensional subspace \(\mathbb{H}_m := \operatorname{span}\{w_1, \dots, w_m\} \subset \mathbb{H}\), yielding the chain \(\mathbb{H}_m \subset \mathbb{H}_{m+1} \subset \mathbb{V}\) and take \(\Pi_m : \mathbb{H} \to \mathbb{H}_m\) to be the orthogonal projection
\[
\Pi_m \bm{x}=\sum_{i=1}^m ( \bm{x},w_i)_{\mathbb{H}} w_i.
\]
Furthermore, we take a complete orthonormal basis \(\{e_1, e_2, \dots\}\) of \(H\) orthogonal in \(V\), for example, the eigenfunctions of the Dirichlet Laplacian operator \(A\), and define $H_m:=\mathrm{span}\{e_1,\dots,e_m\}\subset L^2$ with $H_m\subset H_{m+1}\subset V$, with orthogonal projection $Q_m: H\to H_m$ given by
\[
Q_m\theta=\sum_{i=1}^m ( \theta,e_i)_{H} e_i.
\]
Let \(T>0\) and \(m\in\mathbb{N}\). We then seek approximate solutions
\[
\bm{y}_m(t)=\sum_{k=1}^m g_k^m(t)w_k,\ \
\theta_m(t)=\sum_{k=1}^m h_k^m(t)e_k,\ \ t\in[0,T],
\]
satisfying the initial value problem for the finite-dimensional system given below:
\begin{equation}\label{eq:galerkin_u}
\left\{
\begin{aligned}
\frac{d\bm{y}_m}{dt}
&=-\nu A\bm{y}_m(t)
+\nu\|\nabla\bm{y}_m(t)\|_{\mathbb{H}}^2\bm{y}_m(t)
-\Pi_m B(\bm{y}_m(t),\bm{y}_m(t))
+\mathfrak{H}_m(t)\\
&\quad-(\mathfrak{H}_m(t),\bm{y}_m(t))_{\mathbb{H}}\bm{y}_m(t)
+\beta\Pi_m{{\textbf{Proj}_{\mathbb H}}}(\theta_m(t)e_2)
-\beta(\theta_m(t)e_2,\bm{y}_m(t))_{\mathbb{H}}\bm{y}_m(t),
\quad t\in(0,T),\\
\bm{y}_m(0)
&=\frac{\bm{y}_0^m}{\|\bm{y}_0^m\|_{\mathbb{H}}}.
\end{aligned}
\right.
\end{equation}
and
\begin{equation}\label{eq:galerkin_theta}
\left\{
\begin{aligned}
\frac{d\theta_m}{dt}
&=-\kappa\mathcal{A}\theta_m(t)
+\kappa\|\nabla\theta_m(t)\|_{\mathbb H}^2\theta_m(t)
-Q_m(\bm{y}_m(t)\cdot\nabla\theta_m(t)),
\quad t\in(0,T),\\
\theta_m(0)
&=\frac{\theta_0^m}{\|\theta_0^m\|_{H}}.
\end{aligned}
\right.
\end{equation}
where \(\mathfrak{H}_m=\Pi_m\mathfrak{H}\), \(\bm{y}_0^m=\Pi_m\bm{y}_0\), and
\(\theta_0^m=Q_m\theta_0\), and
\[
g_k^m(0)
=\frac{(\bm{y}_0,w_k)_{\mathbb{H}}}
{\|\Pi_m\bm{y}_0\|_{\mathbb{H}}},
\qquad
h_k^m(0)
=\frac{(\theta_0,e_k)_{H}}
{\|Q_m\theta_0\|_{H}},
\qquad k=1,\dots,m.
\]
Since $\Pi_m \bm{y}_0 \to \bm{y}_0$ in $\mathbb{H}$, we have 
$\|\bm{y}_0^m\|_{\mathbb{H}} \to \|\bm{y}_0\|_{\mathbb{H}} = 1$ as $m \to \infty$. Hence,
\begin{align*}
\|\bm{y}_m(0)-\bm{y}_0\|_{\mathbb{H}}
&\le \left\|\frac{\Pi_m \bm{y}_0}{\|\Pi_m \bm{y}_0\|_{\mathbb{H}}}-\Pi_m \bm{y}_0\right\|_{\mathbb{H}}
+ \|\Pi_m \bm{y}_0-\bm{y}_0\|_{\mathbb{H}}\to 0 \  \text{as }\  m \to \infty.
\end{align*}
Similarly, since $Q_m \theta_0 \to \theta_0$ in ${H}$, we obtain
\begin{align*}
\|\theta_m(0)-\theta_0\|_{{H}}
&\le \left\|\frac{Q_m \theta_0}{\|Q_m \theta_0\|_{{H}}}-Q_m \theta_0\right\|_{{H}}
+ \|Q_m \theta_0-\theta_0\|_{{H}}\to 0 \  \text{ as } \ m \to \infty.
\end{align*}
Since
\[
F_1(\bm{y},\theta):=-\nu A\bm{y}+\nu\|\nabla \bm{y}\|_{\mathbb{H}}^2 \bm{y}-B(\bm{y},\bm{y})+\mathfrak{H}
-(\mathfrak{H},\bm{y})_{\mathbb{H}}\bm{y}+\beta {{\textbf{Proj}_{\mathbb H}}}(\theta e_2)
-\beta(\theta e_2,\bm{y})_{\mathbb{H}}\bm{y},
\]
and
\[
F_2(\bm{y},\theta):=-\kappa \mathcal{A}\theta+\kappa\|\nabla \theta\|_{\mathbb H}^2\theta-\bm{y}\cdot\nabla \theta
\]
are locally Lipschitz (Lemma~\ref{lemLocal}), hence, by Carathéodory's existence theorem, there exists a unique local solution
$(\bm{y}_m,\theta_m)\in C([0,T_m^*];\mathbb{V}_m)\times C([0,T_m^*];V_m),$
to the problem \eqref{eq:galerkin_u}-\eqref{eq:galerkin_theta} for some $0<T_m^*\le T$. Moreover, by deriving uniform energy estimates, one can extend the local solution to the whole interval $[0,T]$.
Taking the inner product in $\mathbb{H}$ with $\bm{y}_m(\cdot)$ in the Galerkin equation \eqref{eq:galerkin_u}, and using $(B(\bm{y}_m,\bm{y}_m),\bm{y}_m)_{\mathbb{H}}=0$, we obtain
\[
\frac{1}{2}\frac{d}{dt}\big(\|\bm{y}_m(t)\|_{\mathbb{H}}^2-1\big)
=\Big[\nu\|\nabla \bm{y}_m(t)\|_{\mathbb{H}}^2
-( \mathfrak{H}_m(t),\bm{y}_m(t))_{\mathbb{H}}
-\beta ( \theta_m(t)e_2,\bm{y}_m(t))_{\mathbb{H}}\Big](\|\bm{y}_m(t)\|_{\mathbb{H}}^2-1),
\]
for a.e. $t\in[0,T]$. Hence, we deduce 
\[
\frac{d}{dt}(\|\bm{y}_m(t)\|_{\mathbb{H}}^2-1)
=2\Big[\nu\|\nabla \bm{y}_m(t)\|_{\mathbb{H}}^2
-( \mathfrak{H}(t),\bm{y}_m(t))_{\mathbb{H}}
-\beta ( \theta_m(t)e_2,\bm{y}_m(t))_{\mathbb{H}}\Big](\|\bm{y}_m(t)\|_{\mathbb{H}}^2-1).
\]
Applying the variation of constants formula, we obtain
\[
\|\bm{y}_m(t)\|_{\mathbb{H}}^2-1
=(\|\bm{y}_m(0)\|_{\mathbb{H}}^2-1)
\exp\!\left(2\int_0^t\Big[\nu\|\nabla \bm{y}_m(s)\|_{\mathbb{H}}^2
-( \mathfrak{H}(s),\bm{y}_m(s))_{\mathbb{H}}
-\beta ( \theta_m(s)e_2,\bm{y}_m(s))_{\mathbb{H}}\Big]ds\right).
\]
\allowdisplaybreaks
Since $\bm{y}_m(0)\in \mathcal{M}_1$, it follows that $\|\bm{y}_m(t)\|_{\mathbb{H}}=1$ for all $t\in[0,T]$, provided
\begin{align*}
&\int_0^t \Big|\nu\|\nabla \bm{y}_m(s)\|_{\mathbb{H}}^2
-( \mathfrak{H}(s),\bm{y}_m(s))_{\mathbb{H}}
-\beta ( \theta_m(s)e_2,\bm{y}_m(s))_{\mathbb{H}}\Big|ds
\\&\le \nu \int_0^t \|\nabla \bm{y}_m(s)\|_{\mathbb{H}}^2 ds 
 + \int_0^t \|\mathfrak{H}(s)\|_{\mathbb{H}}\|\bm{y}_m(s)\|_{\mathbb{H}} ds + \beta \int_0^t \|\theta_m(s)\|_{H}\|\bm{y}_m(s)\|_{\mathbb{H}} ds<\infty,
\end{align*}
for all $t\in[0,T_m^*]$. In a similar manner, we consider the \(L^2\) inner product with $\theta_m(\cdot)$, and using $( \bm{y}_m\cdot\nabla \theta_m,\theta_m)_{H}=0$, we find 
\[
\frac{1}{2}\frac{d}{dt}\big(\|\theta_m(t)\|_{H}^2-1\big)
=\kappa\|\nabla \theta_m(t)\|_{\mathbb H}^2(\|\theta_m(t)\|_{H}^2-1).
\]
Therefore, we get 
\[
\|\theta_m(t)\|_{H}^2-1
=(\|\theta_m(0)\|_{H}^2-1)
\exp\!\left(2\kappa\int_0^t \|\nabla \theta_m(s)\|_{\mathbb H}^2 ds\right).
\]
Since $\theta_m(0)\in\mathcal{M}_2$, we have $\theta_m(t)\in\mathcal{M}_2$ for every $t\in[0,T]$, provided that
\[
\int_0^t \|\nabla \theta_m(s)\|_{\mathbb H}^2 ds<\infty.
\]
Since \(\bm{y}_m \in C([0, T_m^*];\mathbb{V}_m)\) and \(\theta_m \in C([0, T_m^*]; V_m)\), and because on finite-dimensional spaces all the norms are equivalent, the corresponding norms are finite for all \(t \in [0,T_m^*]\). 
The energy estimate below allows us to extend the maximal existence time $T_m^*$ to $T$. Consequently, the following invariance property follows
\[
\bm{y}_m(t) \in \mathcal{M}_1,\ \
\theta_m(t) \in \mathcal{M}_2,\ 
\text{ for all } \ t \in [0,T].
\]
To obtain uniform energy estimates, we first define some energy functionals. We use two functionals instead of one to avoid having $\mathfrak{H}_m$ and $\theta_m$ multiplied together. This helps us overcome some technical difficulties.

Let us  define the following energy functionals:
\[
\mathcal{E}_1(\bm{y}_m)
:=\frac{\nu}{4}\int_{\mathbb{T}^2}|\nabla \bm{y}_m|^2\,dx
-\frac{1}{2}\int_{\mathbb{T}^2}( \mathfrak{H}_m,\bm{y}_m)\,dx,
\]
and
\[
\mathcal{E}_2(\bm{y}_m)
=\frac{\nu}{4}\int_{\mathbb{T}^2}|\nabla \bm{y}_m|^2\,dx
-\frac{\beta}{2}\int_{\mathbb{T}^2} ( \theta_m e_2,\bm{y}_m)\,dx.
\]
Before proceeding with the calculations, set
$\bm b_m:=\Pi_m{\textbf{Proj}}_{\mathbb H}(\theta_m e_2)$. Since ${\textbf{Proj}}_{\mathbb H}$ and $\Pi_m$ are orthogonal projections
and $\Pi_m \bm \varrho=\bm \varrho$ for every $\bm{\varrho}\in\mathbb H_m$, we have
\[
(\theta_m e_2,\bm \varrho)_{\mathbb L^2}
=({\textbf{Proj}}_{\mathbb H}(\theta_m e_2),\bm \varrho)_{\mathbb H}
=(\Pi_m{\textbf{Proj}}_{\mathbb H}(\theta_m e_2),\bm \varrho)_{\mathbb H}
=(\bm b_m,\bm \varrho)_{\mathbb H}.
\]
Then, the constrained gradients on $\mathcal{M}_1$ are given by
\[
\begin{aligned}
\nabla_{\mathcal{M}_1}\mathcal{E}_1(\bm{y}_m)
&= \Pi_{\bm{y}_m}(\nabla \mathcal{E}_1(\bm{y}_m)) \\
&= \frac{\nu}{2}A \bm{y}_m - \frac{\nu}{2}\|\nabla \bm{y}_m\|_{\mathbb{H}}^2 \bm{y}_m - \frac{1}{2}\big(\mathfrak{H}_m - ( \mathfrak{H}_m,\bm{y}_m)_{\mathbb{H}}\bm{y}_m\big),
\end{aligned}
\]
and
\[
\begin{aligned}
\nabla_{\mathcal{M}_1}\mathcal{E}_2(\bm{y}_m)
&= \Pi_{\bm{y}_m}(\nabla \mathcal{E}_2(\bm{y}_m)) \\
&= \frac{\nu}{2}A \bm{y}_m - \frac{\nu}{2}\|\nabla \bm{y}_m\|_{\mathbb{H}}^2 \bm{y}_m- \frac{\beta}{2}\big(\Pi_m{{\textbf{Proj}_{\mathbb H}}}\theta_m e_2 - ( \theta_m e_2,\bm{y}_m)_{\mathbb{H}}\bm{y}_m\big)\\
&=\frac{\nu}{2}A\bm{y}_m
-\frac{\nu}{2}\|\nabla\bm{y}_m\|_{\mathbb H}^2\bm{y}_m
-\frac{\beta}{2}
\big(\bm b_m-(\bm b_m,\bm{y}_m)_{\mathbb H}\bm{y}_m\big).
\end{aligned}
\]
\allowdisplaybreaks
Moreover, for $\bm{y}_m \in D(A)\cap \mathcal{M}_1$ (since $\bm{y}_m$ is finite-dimensional), we compute
\begin{align}{\label{e1}}
\|\nabla_{\mathcal{M}_1}\mathcal{E}_1(\bm{y}_m)\|_{\mathbb{H}}^2
&= \Big\| \frac{\nu}{2}A \bm{y}_m - \frac{\nu}{2}\|\nabla \bm{y}_m\|_{\mathbb{H}}^2 \bm{y}_m
- \frac{1}{2}\big(\mathfrak{H}_m - ( \mathfrak{H}_m,\bm{y}_m)_{\mathbb{H}}\bm{y}_m\big) \Big\|_{\mathbb{H}}^2 \notag \\
&= \frac{\nu^2}{4}\|A \bm{y}_m - \|\nabla \bm{y}_m\|_{\mathbb{H}}^2 \bm{y}_m\|_{\mathbb{H}}^2
+ \frac{1}{4}\|\mathfrak{H}_m - ( \mathfrak{H}_m,\bm{y}_m)_{\mathbb{H}}\bm{y}_m\|_{\mathbb{H}}^2 \notag \\
&\quad - \frac{\nu}{2}\Big( A \bm{y}_m - \|\nabla \bm{y}_m\|_{\mathbb{H}}^2 \bm{y}_m,\,
\mathfrak{H}_m - ( \mathfrak{H}_m,\bm{y}_m)_{\mathbb{H}}\bm{y}_m \Big)_{\mathbb{H}} \notag \\
&= \frac{\nu^2}{4}\Big[\|A \bm{y}_m\|_{\mathbb{H}}^2
+ \|\nabla \bm{y}_m\|_{\mathbb{H}}^4 \|\bm{y}_m\|_{\mathbb{H}}^2
- 2\|\nabla \bm{y}_m\|_{\mathbb{H}}^2 ( A \bm{y}_m,\bm{y}_m)_{\mathbb{H}}\Big] \notag \\
&\quad + \frac{1}{4}\Big[\|\mathfrak{H}_m\|_{\mathbb{H}}^2
+ |( \mathfrak{H}_m,\bm{y}_m)_{\mathbb{H}}|^2 \|\bm{y}_m\|_{\mathbb{H}}^2
- 2|( \mathfrak{H}_m,\bm{y}_m)_{\mathbb{H}}|^2\Big] \notag \\
&\quad - \frac{\nu}{2}\Big[( A \bm{y}_m,\mathfrak{H}_m)_{\mathbb{H}}
- ( \mathfrak{H}_m,\bm{y}_m)_{\mathbb{H}} ( A \bm{y}_m,\bm{y}_m)_{\mathbb{H}} \notag \\
&\qquad\qquad - \|\nabla \bm{y}_m\|_{\mathbb{H}}^2 ( \bm{y}_m,\mathfrak{H}_m)_{\mathbb{H}}
+ \|\nabla \bm{y}_m\|_{\mathbb{H}}^2 ( \mathfrak{H}_m,\bm{y}_m)_{\mathbb{H}}\|\bm{y}_m\|_{\mathbb{H}}^2 \Big] \notag \\
&= \frac{\nu^2}{4}\Big[\|A \bm{y}_m\|_{\mathbb{H}}^2 - \|\nabla \bm{y}_m\|_{\mathbb{H}}^4\Big]
+ \frac{1}{4}\Big[\|\mathfrak{H}_m\|_{\mathbb{H}}^2 - |( \mathfrak{H}_m,\bm{y}_m)_{\mathbb{H}}|^2\Big] \notag \\
&\quad - \frac{\nu}{2}\Big[( A \bm{y}_m,\mathfrak{H}_m)_{\mathbb{H}}
- ( \mathfrak{H}_m,\bm{y}_m)_{\mathbb{H}}\|\nabla \bm{y}_m\|_{\mathbb{H}}^2 \Big].
\end{align}
Similarly, we compute
\begin{align}\label{e2}
\|\nabla_{\mathcal M_{1}}\mathcal E_2(\bm{y}_m)\|_{\mathbb H}^2
&= \Big\|
\frac{\nu}{2}A\bm{y}_m
-\frac{\nu}{2}\|\nabla\bm{y}_m\|_{\mathbb H}^2\bm{y}_m
-\frac{\beta}{2}
\big(\bm b_m-(\bm b_m,\bm{y}_m)_{\mathbb H}\bm{y}_m\big)
\Big\|_{\mathbb H}^2 \notag\\[6pt]
&= \frac{\nu^2}{4}
\Big\|A\bm{y}_m-\|\nabla\bm{y}_m\|_{\mathbb H}^2\bm{y}_m
\Big\|_{\mathbb H}^2
+\frac{\beta^2}{4}
\Big\|\bm b_m-(\bm b_m,\bm{y}_m)_{\mathbb H}\bm{y}_m
\Big\|_{\mathbb H}^2 \notag\\[6pt]
&\quad-\frac{\nu\beta}{2}
\Big(
A\bm{y}_m-\|\nabla\bm{y}_m\|_{\mathbb H}^2\bm{y}_m,\,
\bm b_m-(\bm b_m,\bm{y}_m)_{\mathbb H}\bm{y}_m
\Big)_{\mathbb H} \notag\\[6pt]
&= \frac{\nu^2}{4}\Big[
\|A\bm{y}_m\|_{\mathbb H}^2
+\|\nabla\bm{y}_m\|_{\mathbb H}^4\|\bm{y}_m\|_{\mathbb H}^2
-2\|\nabla\bm{y}_m\|_{\mathbb H}^2
(A\bm{y}_m,\bm{y}_m)_{\mathbb H}
\Big] \notag\\[6pt]
&\quad+\frac{\beta^2}{4}\Big[
\|\bm b_m\|_{\mathbb H}^2
+|(\bm b_m,\bm{y}_m)_{\mathbb H}|^2\|\bm{y}_m\|_{\mathbb H}^2
-2|(\bm b_m,\bm{y}_m)_{\mathbb H}|^2
\Big] \notag\\[6pt]
&\quad-\frac{\nu\beta}{2}\Big[
(A\bm{y}_m,\bm b_m)_{\mathbb H}
-(\bm b_m,\bm{y}_m)_{\mathbb H}
(A\bm{y}_m,\bm{y}_m)_{\mathbb H} \notag\\
&\qquad\qquad\quad
-\|\nabla\bm{y}_m\|_{\mathbb H}^2
(\bm{y}_m,\bm b_m)_{\mathbb H}
+\|\nabla\bm{y}_m\|_{\mathbb H}^2
(\bm b_m,\bm{y}_m)_{\mathbb H}\|\bm{y}_m\|_{\mathbb H}^2
\Big] \notag\\[6pt]
&= \frac{\nu^2}{4}\Big[
\|A\bm{y}_m\|_{\mathbb H}^2
-\|\nabla\bm{y}_m\|_{\mathbb H}^4
\Big]
+\frac{\beta^2}{4}\Big[
\|\bm b_m\|_{\mathbb H}^2
-|(\theta_m e_2,\bm{y}_m)_{\mathbb{H}}|^2
\Big] \notag\\[6pt]
&\quad-\frac{\nu\beta}{2}\Big[
(A\bm{y}_m,\theta_m e_2)_{\mathbb{H}}
-(\theta_m e_2,\bm{y}_m)_{\mathbb{H}}
\|\nabla\bm{y}_m\|_{\mathbb H}^2
\Big].
\end{align}
Taking the inner product in \(\mathbb{H}\) of \eqref{eq:galerkin_u} with \(\frac{\nu}{2}A \bm{y}_m(\cdot)\), 
and using $(\Pi_mB(\bm{y}_m,\bm{y}_m),A\bm{y}_m)_{\mathbb H}
=(B(\bm{y}_m,\bm{y}_m),A\bm{y}_m)_{\mathbb H}=0$, 
 together with the facts that \(\bm{y}_m(t)\in \mathcal{M}_1\) and \(\theta_m(t)\in \mathcal{M}_2\) for all \(t\in[0,T]\), we obtain
\begin{align}
\frac{\nu}{4}\frac{d}{dt}\|\nabla\bm{y}_m(t)\|_{\mathbb H}^2
&=\Big(\frac{\nu}{2}A\bm{y}_m(t),
\frac{d\bm{y}_m(t)}{dt}\Big)_{\mathbb H} \notag\\[6pt]
&=\Big(\frac{\nu}{2}A\bm{y}_m(t),
-\nu A\bm{y}_m(t)
+\nu\|\nabla\bm{y}_m(t)\|_{\mathbb H}^2\bm{y}_m(t)
-\Pi_mB(\bm{y}_m(t),\bm{y}_m(t))\Big)_{\mathbb H}
\notag\\[6pt]
&\quad+\Big(\frac{\nu}{2}A\bm{y}_m(t),
\mathfrak{H}_m(t)-(\mathfrak{H}_m(t),\bm{y}_m(t))_{\mathbb H}
\bm{y}_m(t)\Big)_{\mathbb H} \notag\\[6pt]
&\quad+\Big(\frac{\nu}{2}A\bm{y}_m(t),
\beta\bm b_m(t)-\beta(\bm b_m(t),\bm{y}_m(t))_{\mathbb H}
\bm{y}_m(t)\Big)_{\mathbb H} \notag\\[6pt]
&=-\frac{\nu^2}{2}\|A\bm{y}_m(t)\|_{\mathbb H}^2
+\frac{\nu^2}{2}\|\nabla\bm{y}_m(t)\|_{\mathbb H}^4
\notag\\[6pt]
&\quad+\frac{\nu}{2}(A\bm{y}_m(t),\mathfrak{H}_m(t))_{\mathbb H}
-\frac{\nu}{2}(\mathfrak{H}_m(t),\bm{y}_m(t))_{\mathbb H}
\|\nabla\bm{y}_m(t)\|_{\mathbb H}^2 \notag\\[6pt]
&\quad+\frac{\nu\beta}{2}(A\bm{y}_m(t),\bm b_m(t))_{\mathbb H}
-\frac{\nu\beta}{2}(\bm b_m(t),\bm{y}_m(t))_{\mathbb H}
\|\nabla\bm{y}_m(t)\|_{\mathbb H}^2 \notag\\[6pt]
&=-\|\nabla_{\mathcal M_{1}}\mathcal E_1(\bm{y}_m(t))\|_{\mathbb H}^2
-\|\nabla_{\mathcal M_{1}}\mathcal E_2(\bm{y}_m(t))\|_{\mathbb H}^2
\notag\\[6pt]
&\quad+\frac14\Big(
\|\mathfrak{H}_m(t)\|_{\mathbb H}^2
-|(\mathfrak{H}_m(t),\bm{y}_m(t))_{\mathbb H}|^2
\Big)+\frac{\beta^2}{4}\Big(
\|\bm b_m(t)\|_{\mathbb H}^2
-|(\bm b_m(t),\bm{y}_m(t))_{\mathbb H}|^2
\Big) \notag\\[6pt]
&\le
-\|\nabla_{\mathcal M_{1}}\mathcal E_1(\bm{y}_m(t))\|_{\mathbb H}^2
-\|\nabla_{\mathcal M_{1}}\mathcal E_2(\bm{y}_m(t))\|_{\mathbb H}^2
+\frac14\|\mathfrak{H}_m(t)\|_{\mathbb H}^2
+\frac{\beta^2}{4},
\label{eq:inner}
\end{align}
for a.e.\ $t\in(0,T)$, where
$\bm b_m(t):=\Pi_m{\textbf{Proj}}_{\mathbb H}(\theta_m(t)e_2)$.
In the last inequality, we have used
$\|\bm b_m(t)\|_{\mathbb H}\le\|\theta_m(t)\|_H=1$.
Integrating \eqref{eq:inner} over $(0,t)$, we obtain
\begin{align}\label{eq:finll}
\frac{\nu}{4}\|\nabla \bm{y}_m(t)\|_{\mathbb{H}}^2
&\;+\;\int_0^t \Big(
\|\nabla_{\mathcal{M}_1}\mathcal{E}_1(\bm{y}_m(s))\|_{\mathbb{H}}^2
+\|\nabla_{\mathcal{M}_1}\mathcal{E}_2(\bm{y}_m(s))\|_{\mathbb{H}}^2
\Big)\,ds \notag \\
&\le \frac{\nu}{4}\|\nabla \bm{y}_m(0)\|_{\mathbb{H}}^2
+ \frac{1}{4}\int_0^t \|\mathfrak{H}_m(s)\|_{\mathbb{H}}^2\,ds
+ \frac{\beta^2}{4}\, t,
\end{align}
for all $t\in[0,T]$. From \eqref{eq:finll}, we immediately obtain
\begin{align}
\sup_{0\le t\le T}\|\nabla \bm{y}_m(t)\|_{\mathbb{H}}^{2}
&\le
C_{\nu,\beta,T}
\left(
1+\|\nabla \bm{y}_0\|_{\mathbb{H}}^{2}
+\int_{0}^{T}\|\mathfrak{H}(s)\|_{\mathbb{H}}^{2}\,ds
\right),
\label{bd}
\end{align}
where \(C_{\nu,\beta,T}>0\) is a constant whose value depends on \(\nu\), \(\beta\), and \(T\).
Moreover, again by \eqref{eq:finll}, we find 
\begin{align}
\int_{0}^{T}
\Bigl(
\|\nabla_{\mathcal{M}_{1}}\mathcal{E}_{1}(\bm{y}_m(s))\|_{\mathbb{H}}^{2}
+
\|\nabla_{\mathcal{M}_{1}}\mathcal{E}_{2}(\bm{y}_m(s))\|_{\mathbb{H}}^{2}
\Bigr)\,ds
\le
C_{\nu,\beta,T}
\left(
1+\|\nabla \bm{y}_0\|_{\mathbb{H}}^{2}
+\int_{0}^{T}\|\mathfrak{H}(s)\|_{\mathbb{H}}^{2}\,ds
\right).
\label{eq:energy-grad}
\end{align}
Summing \eqref{e1} and \eqref{e2}, we obtain
\begin{align}{\label{e1e2}}
\|\nabla_{\mathcal M_{1}}\mathcal E_1(\bm{y}_m)\|_{\mathbb H}^2
+\|\nabla_{\mathcal M_{1}}\mathcal E_2(\bm{y}_m)\|_{\mathbb H}^2=&\frac{\nu^2}{2}
\Big(\|A\bm{y}_m\|_{\mathbb H}^2
-\|\nabla\bm{y}_m\|_{\mathbb H}^4\Big)
+\frac14\Big(
\|\mathfrak{H}_m\|_{\mathbb H}^2
-|(\mathfrak{H}_m,\bm{y}_m)_{\mathbb H}|^2
\Big)\nonumber\\&+\frac{\beta^2}{4}\Big(
\|\bm b_m\|_{\mathbb H}^2
-|(\bm b_m,\bm{y}_m)_{\mathbb H}|^2
\Big)
-\frac{\nu}{2}(A\bm{y}_m,\mathfrak{H}_m)_{\mathbb H}
-\frac{\nu\beta}{2}(A\bm{y}_m,\bm b_m)_{\mathbb H}\nonumber\\
&+\frac{\nu}{2}(\mathfrak{H}_m,\bm{y}_m)_{\mathbb H}
\|\nabla\bm{y}_m\|_{\mathbb H}^2
+\frac{\nu\beta}{2}(\bm b_m,\bm{y}_m)_{\mathbb H}
\|\nabla\bm{y}_m\|_{\mathbb H}^2.
\end{align}
Rearranging \eqref{e1e2}, we deduce
\begin{align}\label{eqn-est-1}
\frac{\nu^2}{2}\|A\bm{y}_m\|_{\mathbb H}^2
&=\|\nabla_{\mathcal M_{1}}\mathcal E_1(\bm{y}_m)\|_{\mathbb H}^2
+\|\nabla_{\mathcal M_{1}}\mathcal E_2(\bm{y}_m)\|_{\mathbb H}^2
+\frac{\nu^2}{2}\|\nabla\bm{y}_m\|_{\mathbb H}^4
\notag\\
&\quad-\frac14\|\mathfrak{H}_m\|_{\mathbb H}^2
-\frac{\beta^2}{4}\|\bm b_m\|_{\mathbb H}^2
+\frac14|(\mathfrak{H}_m,\bm{y}_m)_{\mathbb H}|^2
+\frac{\beta^2}{4}|(\bm b_m,\bm{y}_m)_{\mathbb H}|^2
\notag\\
&\quad+\frac{\nu}{2}(A\bm{y}_m,\mathfrak{H}_m)_{\mathbb H}
+\frac{\nu\beta}{2}(A\bm{y}_m,\bm b_m)_{\mathbb H}
\notag\\
&\quad-\frac{\nu}{2}(\mathfrak{H}_m,\bm{y}_m)_{\mathbb H}
\|\nabla\bm{y}_m\|_{\mathbb H}^2
-\frac{\nu\beta}{2}(\bm b_m,\bm{y}_m)_{\mathbb H}
\|\nabla\bm{y}_m\|_{\mathbb H}^2.
\end{align}
By Cauchy--Schwarz inequality and the facts that 
\(\|\bm{y}_m\|_{\mathbb{H}}=1\) and \(\|\theta_m\|_{\mathbb{H}}=1\), we estimate
\[
|( \mathfrak{H}_m,\bm{y}_m)_{\mathbb{H}}|
\le \|\mathfrak{H}_m\|_{\mathbb{H}},
\ \
|( \theta_m e_2,\bm{y}_m)_{\mathbb{H}}|
\le 1.
\]
Hence, it is immediate from \eqref{eqn-est-1} that
\begin{align}
\frac{\nu^2}{2}\|A\bm{y}_m\|_{\mathbb H}^2
&\le
\|\nabla_{\mathcal M_{1}}\mathcal E_1(\bm{y}_m)\|_{\mathbb H}^2
+\|\nabla_{\mathcal M_{1}}\mathcal E_2(\bm{y}_m)\|_{\mathbb H}^2
+\frac{\nu^2}{2}\|\nabla\bm{y}_m\|_{\mathbb H}^4
\notag\\
&\quad+\frac{\nu}{2}
|(A\bm{y}_m,\mathfrak{H}_m)_{\mathbb H}|
+\frac{\nu\beta}{2}
|(A\bm{y}_m,\bm b_m)_{\mathbb H}|
\notag\\
&\quad+\frac{\nu}{2}\|\mathfrak{H}_m\|_{\mathbb H}
\|\nabla\bm{y}_m\|_{\mathbb H}^2
+\frac{\nu\beta}{2}\|\nabla\bm{y}_m\|_{\mathbb H}^2.
\label{3.4}
\end{align}
Applying Young's inequality, we obtain
\begin{align}
\frac{\nu}{2}|(A\bm{y}_m,\mathfrak{H}_m)_{\mathbb H}|
&\le\frac{\nu^2}{8}\|A\bm{y}_m\|_{\mathbb H}^2
+\frac12\|\mathfrak{H}_m\|_{\mathbb H}^2,\\
\frac{\nu\beta}{2}|(A\bm{y}_m,\bm b_m)_{\mathbb H}|
&\le\frac{\nu^2}{8}\|A\bm{y}_m\|_{\mathbb H}^2
+\frac{\beta^2}{2}\|\bm b_m\|_{\mathbb H}^2
\le\frac{\nu^2}{8}\|A\bm{y}_m\|_{\mathbb H}^2
+\frac{\beta^2}{2},\\
\frac{\nu}{2}\|\mathfrak{H}_m\|_{\mathbb H}
\|\nabla\bm{y}_m\|_{\mathbb H}^2
&\le\frac14\|\mathfrak{H}_m\|_{\mathbb H}^2
+\frac{\nu^2}{4}\|\nabla\bm{y}_m\|_{\mathbb H}^4,\\
\frac{\nu\beta}{2}\|\nabla\bm{y}_m\|_{\mathbb H}^2
&\le\frac{\nu^2}{4}\|\nabla\bm{y}_m\|_{\mathbb H}^4
+\frac{\beta^2}{4}.
\end{align}
Substituting these estimates into \eqref{3.4} and
absorbing the terms involving $\|A\bm{y}_m\|_{\mathbb H}^2$
into the left-hand side, we obtain
\begin{align}
\frac{\nu^2}{4}\|A\bm{y}_m\|_{\mathbb H}^2
&\le
\|\nabla_{\mathcal M_{1}}\mathcal E_1(\bm{y}_m)\|_{\mathbb H}^2
+\|\nabla_{\mathcal M_{1}}\mathcal E_2(\bm{y}_m)\|_{\mathbb H}^2+\nu^2\|\nabla\bm{y}_m\|_{\mathbb H}^4
+\frac34\|\mathfrak{H}_m\|_{\mathbb H}^2
+\frac{3\beta^2}{4}.
\label{Auest}
\end{align}
Integrating \eqref{Auest} over \((0,T)\) gives
\begin{align}
\frac{\nu^2}{4}\int_0^T\|A\bm{y}_m\|_{\mathbb H}^2\,dt
&\le
\int_0^T
\left(
\|\nabla_{\mathcal M_1}\mathcal{E}_1(\bm{y}_m)\|_{\mathbb H}^2
+
\|\nabla_{\mathcal M_1}\mathcal{E}_2(\bm{y}_m)\|_{\mathbb H}^2
\right)dt \notag\\
&\quad
+\nu^2\int_0^T\|\nabla\bm{y}_m\|_{\mathbb H}^4\,dt
+\frac{3}{4}\int_0^T\|\mathfrak{H}\|_{\mathbb H}^2\,dt
+\frac{3\beta^2 T}{4}.
\label{3.6}
\end{align}
Hence, using
\eqref{bd} and \eqref{eq:energy-grad}, from \eqref{3.6} we get
\begin{align}
\nu^2\int_0^T\|A\bm{y}_m(t)\|_{\mathbb H}^2\,dt
&\le
4C_{\nu,\beta,T}
\left(
1+\|\nabla\bm{y}_0\|_{\mathbb H}^2
+\int_0^T\|\mathfrak{H}(t)\|_{\mathbb H}^2\,dt
\right) \notag\\
&\quad+4\nu^2T C_{\nu,\beta,T}^2
\left(
1+\|\nabla\bm{y}_0\|_{\mathbb H}^2
+\int_0^T\|\mathfrak{H}(t)\|_{\mathbb H}^2\,dt
\right)^2 \notag\\
&\quad+3\int_0^T\|\mathfrak{H}(t)\|_{\mathbb H}^2\,dt
+3\beta^2T \notag\\
&\le C_{\nu,\beta,T}
\left(
1+\|\nabla\bm{y}_0\|_{\mathbb H}^2
+\int_0^T\|\mathfrak{H}(t)\|_{\mathbb H}^2\,dt
\right)^2.
\label{aum-est}
\end{align}
Here, the constant $C_{\nu,\beta,T}$ may vary from line to line and is independent of $m$.
Using the fact that $\bm{y}_m\in \mathcal{M}_1$ for all $t\in[0,T]$, we have
\[
\left(\frac{d\bm{y}_m(t)}{dt},\bm{y}_m(t)\right)_{\mathbb{H}}=0
\quad \text{for a.e. } t\in(0,T).
\]
From \eqref{eq:galerkin_u}, we obtain
\begin{align*}
\frac{\nu}{2}\frac{d}{dt}
\|\nabla\bm{y}_m(t)\|_{\mathbb H}^2
&=
\left(
\nu A\bm{y}_m(t),\frac{d\bm{y}_m(t)}{dt}
\right)_{\mathbb H}\\
&=
\left(
\frac{d\bm{y}_m(t)}{dt},
-\frac{d\bm{y}_m(t)}{dt}
+\nu\|\nabla\bm{y}_m(t)\|_{\mathbb H}^2\bm{y}_m(t)
-\Pi_mB(\bm{y}_m(t),\bm{y}_m(t))
\right)_{\mathbb H}\\
&\quad+
\left(
\frac{d\bm{y}_m(t)}{dt},
\mathfrak{H}_m(t)-(\mathfrak{H}_m(t),\bm{y}_m(t))_{\mathbb H}\bm{y}_m(t)
\right)_{\mathbb H}\\
&\quad+\beta
\left(
\frac{d\bm{y}_m(t)}{dt},
\bm b_m(t)-(\bm b_m(t),\bm{y}_m(t))_{\mathbb H}\bm{y}_m(t)
\right)_{\mathbb H}.
\end{align*}
Thus, by Young's inequality, we write
\begin{align}{\label{fracnab}}
\frac{\nu}{2}\frac{d}{dt}
\|\nabla\bm{y}_m(t)\|_{\mathbb H}^2
&=
-\left\|\frac{d\bm{y}_m(t)}{dt}\right\|_{\mathbb H}^2
-\left(
\frac{d\bm{y}_m(t)}{dt},
B(\bm{y}_m(t),\bm{y}_m(t))
\right)_{\mathbb H}\nonumber\\
&\quad+
\left(
\frac{d\bm{y}_m(t)}{dt},\mathfrak{H}_m(t)
\right)_{\mathbb H}
+\beta\left(
\frac{d\bm{y}_m(t)}{dt},\bm b_m(t)
\right)_{\mathbb H}\nonumber\\
&\le
-\frac12\left\|\frac{d\bm{y}_m(t)}{dt}\right\|_{\mathbb H}^2
+\frac32\|B(\bm{y}_m(t),\bm{y}_m(t))\|_{\mathbb H}^2
+\frac32\|\mathfrak{H}_m(t)\|_{\mathbb H}^2
+\frac32\beta^2.
\end{align}
Using Hölder's inequality, Agmon's inequality in two dimensions, and Young's inequality, we have
\begin{align}{\label{Besst}}
\|B(\bm{y}_m,\bm{y}_m)\|_{\mathbb H}^{2}
&\le
\|\bm{y}_m\|_{\mathbb L^\infty}^{2}
\|\nabla\bm{y}_m\|_{\mathbb H}^{2} \nonumber\\
&\le
C
\|\bm{y}_m\|_{\mathbb H}
\|A\bm{y}_m\|_{\mathbb H}
\|\nabla\bm{y}_m\|_{\mathbb H}^{2} \nonumber\\
&\le
\frac14
\|A\bm{y}_m\|_{\mathbb H}^{2}
+
C
\|\nabla\bm{y}_m\|_{\mathbb H}^{4}.
\end{align}
Combining \eqref{fracnab} and \eqref{Besst}, we deduce
\begin{align}{\label{nab_du}}
\frac{\nu}{2}\frac{d}{dt}\|\nabla\bm{y}_m(t)\|_{\mathbb H}^{2}
+\frac12
\left\|
\frac{d\bm{y}_m}{dt}
\right\|_{\mathbb H}^{2}
&\le
\frac C4
\|A\bm{y}_m\|_{\mathbb H}^{2}
+C\|\nabla\bm{y}_m\|_{\mathbb H}^{4}
+C\|\mathfrak{H}_m\|_{\mathbb H}^{2}
+C\beta^{2}.
\end{align}
for a.e.\ $t\in[0,T]$.
Integrating \eqref{nab_du} over $[0,T]$, and using \eqref{bd},
\eqref{aum-est}, we have
\begin{align}
\int_0^T
\left\|\frac{d\bm{y}_m(t)}{dt}\right\|_{\mathbb H}^{2}\,dt
&\le C_{\nu,\beta,T}
\left(
1+\|\nabla\bm{y}_0\|_{\mathbb H}^{2}
+\int_0^T\|\mathfrak{H}(t)\|_{\mathbb H}^{2}\,dt
\right)^2.
\label{ut-est}
\end{align}
We now derive similar estimates for $\theta_m$. In order to do this, we consider the Galerkin approximation $\theta_m(\cdot)$ which satisfies
\begin{align}{\label{the_m}}
\frac{d\theta_m(t)}{dt}
-\kappa\Delta\theta_m(t)
+Q_m\bigl(\bm{y}_m(t)\cdot\nabla\theta_m(t)\bigr)
=\kappa\|\nabla\theta_m(t)\|_{\mathbb H}^2\theta_m(t),
\end{align}
for a.e.\ $t\in[0,T]$.
Since $\theta_m(t)\in\mathcal M_2$, we have
\[
\frac{d}{dt}\|\theta_m(t)\|_H^2
=2\left(\frac{d\theta_m(t)}{dt},\theta_m(t)\right)_H=0.
\]
Moreover, since $Q_m$ is an orthogonal projection and
$\frac{d\theta_m(t)}{dt}\in H_m$,
\[
\left(
\frac{d\theta_m(t)}{dt},
Q_m(\bm{y}_m(t)\cdot\nabla\theta_m(t))
\right)_H
=
\left(
\frac{d\theta_m(t)}{dt},
\bm{y}_m(t)\cdot\nabla\theta_m(t)
\right)_H.
\]
We therefore obtain
\[
\frac{\kappa}{2}\frac{d}{dt}
\|\nabla\theta_m(t)\|_{\mathbb H}^2
=
-\left\|\frac{d\theta_m(t)}{dt}\right\|_H^2
-\left(
\frac{d\theta_m(t)}{dt},
\bm{y}_m(t)\cdot\nabla\theta_m(t)
\right)_H.
\]
By Young's inequality,
\[
\frac{\kappa}{2}\frac{d}{dt}
\|\nabla\theta_m(t)\|_{\mathbb H}^2
+\frac12\left\|\frac{d\theta_m(t)}{dt}\right\|_H^2
\le
\frac12\|\bm{y}_m(t)\cdot\nabla\theta_m(t)\|_H^2.
\]
Applying the H\"older's inequality, we observe
\begin{align}{\label{thett}}
\kappa\|\nabla\theta_m(t)\|_{\mathbb H}^2
+\int_0^t\left\|\frac{d\theta_m(s)}{ds}\right\|_H^2\,ds
\le
\kappa\|\nabla\theta_m(0)\|_{H}^2
+\int_0^t
\|\bm{y}_m(s)\|_{\mathbb L^\infty}^2
\|\nabla\theta_m(s)\|_{\mathbb H}^2\,ds,
\end{align}
for every $t\in[0,T]$.
By Agmon's inequality in two dimensions, we have 
\begin{align}{\label{agmon}}
\|\bm{y}_m\|_{\mathbb L^\infty}^2
\le
C\|\bm{y}_m\|_{\mathbb H}\|A\bm{y}_m\|_{\mathbb H}.
\end{align}
Substituting \eqref{agmon} into \eqref{thett} yields
\[
\kappa\|\nabla \theta_m(t)\|_{\mathbb H}^2
+ \int_0^t \left\| \frac{d\theta_m(s)}{ds} \right\|_{H}^2 ds
\le
\kappa\|\nabla \theta_m(0)\|_{\mathbb H}^2
+ C\int_0^t \|\bm{y}_m(s)\|_{\mathbb H}\|A\bm{y}_m(s)\|_{\mathbb H}\|\nabla \theta_m(s)\|_{\mathbb H}^2 ds.
\]
Applying Gr\"onwall's and Young's inequalities, and using
\eqref{aum-est}, we obtain
\begin{align}\label{the_st}
&\sup_{0\leq t\leq T}\|\nabla\theta_m(t)\|_{\mathbb H}^2
+\int_0^T\left\|\frac{d\theta_m(s)}{ds}\right\|_{ H}^2\,ds
\nonumber\\
&\quad\leq
C_\kappa\|\nabla\theta_m(0)\|_{\mathbb H}^2
\exp\left(
\frac{C}{\kappa}\int_0^T
\|\bm{y}_m(s)\|_{\mathbb H}\|A\bm{y}_m(s)\|_{\mathbb H}\,ds
\right)
\nonumber\\
&\quad\leq
C_\kappa\|\nabla\theta_0\|_{\mathbb H}^2
\exp\left[
C_{\nu,\beta,\kappa,T}
\left(
1+\|\nabla \bm{y}_0\|_{\mathbb H}^2
+\int_0^T\|\mathfrak{H}(s)\|_{\mathbb H}^2\,ds
\right)
\right].
\end{align}
Taking the inner product in $H$ of \eqref{the_m} with
$\kappa\mathcal A\theta_m(t)$, we find
\begin{align*}
\frac{\kappa}{2} \frac{d}{dt} \|\nabla \theta_m(t)\|_{\mathbb H}^2
&= \left( \kappa {\mathcal{A}}\theta_m(t), \frac{d\theta_m(t)}{dt} \right)_{H}\nonumber\\
&= \left( \kappa {\mathcal{A}}\theta_m(t), -\kappa {\mathcal{A}}\theta_m(t)
+ \kappa \|\nabla \theta_m(t)\|_{\mathbb H}^2 \theta_m(t)
- Q_m(\bm{y}_m(t)\cdot \nabla \theta_m(t) )\right)_{H}
\nonumber\\
&= -\kappa^2 \|{\mathcal{A}}\theta_m(t)\|_{H}^2
+ \kappa^2 \|\nabla \theta_m(t)\|_{\mathbb H}^4
- \kappa ( {\mathcal{A}}\theta_m(t), \bm{y}_m(t)\cdot \nabla \theta_m(t) )_{H},
\end{align*}
for a.e. $t\in[0,T].$
\begin{remark}
On $\mathbb T^2$, the cancellation
$(B(\bm{y},\bm{y}),A\bm{y})_{\mathbb H}=0$ holds for
$\bm{y}\in D(A)$, but it need not hold on bounded domains.
Moreover, the temperature convection term
$(\bm{y}\cdot\nabla\theta,\mathcal A\theta)_H$
does not vanish in general, even on $\mathbb T^2$.
We estimate this term as follows.
\end{remark}
Applying H\"older's and Agmon's inequalities, and then Young's inequality, we get
\begin{align*}
&\big|\kappa ( {\mathcal{A}}\theta_m, \bm{y}_m \cdot \nabla \theta_m )_{H}\big|
\le \kappa \|{\mathcal{A}}\theta_m\|_{H} \|\bm{y}_m\|_{L^\infty} \|\nabla \theta_m\|_{\mathbb H}\nonumber\\
&\le \frac{\kappa^2}{2} \|{\mathcal{A}}\theta_m\|_{H}^2
+ C \|A\bm{y}_m\|_{\mathbb H} \|\bm{y}_m\|_{\mathbb H} \|\nabla \theta_m\|_{\mathbb H}^2.
\end{align*}
Therefore, for all \(t\in[0,T]\), we obtain 
\begin{align}
&\kappa\|\nabla\theta_m(t)\|_{\mathbb H}^{2}
+\kappa^{2}\int_{0}^{t}
\|{\mathcal{A}}\theta_m(s)\|_{H}^{2}\,ds
\nonumber\\&\le
\kappa\|\nabla\theta_m(0)\|_{\mathbb H}^{2}
+2\kappa^{2}\int_{0}^{t}
\|\nabla\theta_m(s)\|_{\mathbb H}^{4}\,ds
+C\int_{0}^{t}
\|A\bm{y}_m(s)\|_{\mathbb H}
\|\bm{y}_m(s)\|_{\mathbb H}
\|\nabla\theta_m(s)\|_{\mathbb H}^{2}\,ds.
\label{theta-A}
\end{align}
Using H\"older's inequality together with $\bm{y}_m\in\mathcal{M}_1$, we get
\begin{align}{\label{hold_um}}
\int_{0}^{t}
\|A\bm{y}_m(s)\|_{\mathbb H}
\|\bm{y}_m(s)\|_{\mathbb H}
\|\nabla\theta_m(s)\|_{\mathbb H}^{2}\,ds
&\le
\left(
\int_{0}^{t}
\|A\bm{y}_m(s)\|_{\mathbb H}^{2}\,ds
\right)^{\frac12}
\left(
\int_{0}^{t}
\|\nabla\theta_m(s)\|_{\mathbb H}^{4}\,ds
\right)^{\frac12}.
\end{align}
Also from \eqref{the_st}, we have 
\begin{align}
\int_{0}^{T}
\|\nabla\theta_m(s)\|_{\mathbb H}^{4}\,ds
&\le
\left(
\sup_{0\le t\le T}
\|\nabla\theta_m(t)\|_{\mathbb H}^{2}
\right)
\int_0^T
\|\nabla\theta_m(s)\|_{\mathbb H}^{2}\,ds \notag\\
&\le
T
\|\nabla\theta(0)\|_{\mathbb H}^{4}
\exp\!\Bigg(
2C_{\nu,\beta,\kappa,T}
\Bigg(
1+
\Bigg(
1+\|\nabla\bm{y}_0\|_{\mathbb H}^{2}
+\int_0^T
\|\mathfrak{H}(s)\|_{\mathbb H}^{2}\,ds
\Bigg)^2
\Bigg)^{\frac12}
\Bigg).
\label{theta-grad4}
\end{align}
Combining \eqref{hold_um} and \eqref{theta-grad4}, we obtain
\begin{align}{\label{ath}}
\kappa^{2}
\int_{0}^{T}
\|{\mathcal{A}}\theta_m(s)\|_{H}^{2}\,ds
&\le
\kappa\|\nabla\theta_m(0)\|_{H}^{2}
+ 2\kappa^{2}\int_{0}^{T}
\|\nabla\theta_m(s)\|_{\mathbb H}^{4}\,ds \notag\\
&\quad
+ C
\left(
\int_{0}^{T}
\|A\bm{y}_m(s)\|_{\mathbb H}^{2}\,ds
\right)^{\frac12}
\left(
\int_{0}^{T}
\|\nabla\theta_m(s)\|_{\mathbb H}^{4}\,ds
\right)^{\frac12}.
\end{align}
By \eqref{aum-est}, \eqref{the_st}, and \eqref{theta-grad4}, and using the Young's inequality, \eqref{ath} yields
\begin{equation}\label{eq:theta_H2_estimate}
\kappa \|\nabla \theta_m(t)\|_{\mathbb H}^2
+ \kappa^2 \int_0^T \|{\mathcal{A}}\theta_m(s)\|_{H}^2 \, ds
\le
C\!\left(
\nu,\beta,\kappa,T,
\|\nabla \bm{y}_0\|_{\mathbb{H}},
\|\nabla \theta_0\|_{\mathbb H},
\|\mathfrak{H}\|_{{{L}^2(0,T;\mathbb{H})}}
\right).
\end{equation}
The uniform bounds in \eqref{aum-est}, \eqref{ut-est}, \eqref{the_st}, and \eqref{eq:theta_H2_estimate},
together with the Banach--Alaoglu theorem, enable us to extract a subsequence, again denoted by $\{\bm{y}_m,\theta_m\}$, such that
\[
\left\{
\begin{aligned}
\bm{y}_m &\stackrel{*}{\rightharpoonup} \bm{y}
&&\text{in }L^\infty(0,T;\mathbb V),\\
\bm{y}_m &\rightharpoonup \bm{y}
&&\text{in }L^2(0,T;D(A)),\\
\frac{d\bm{y}_m}{dt}
&\rightharpoonup
\frac{d\bm{y}}{dt}
&&\text{in }L^2(0,T;\mathbb H),
\end{aligned}
\right.
\qquad
\left\{
\begin{aligned}
\theta_m &\stackrel{*}{\rightharpoonup} \theta
&&\text{in }L^\infty(0,T;V),\\
\theta_m &\rightharpoonup \theta
&&\text{in }L^2(0,T;D(\mathcal{A})),\\
\frac{d\theta_m}{dt}
&\rightharpoonup
\frac{d\theta}{dt}
&&\text{in }L^2(0,T;H).
\end{aligned}
\right.
\]
Furthermore, by the Aubin--Lions--Simon compactness lemma, we infer 
\begin{equation}\label{conv_um_thetam}
\left\{
\begin{aligned}
\bm{y}_m &\to \bm{y}
&&\text{strongly in }L^2(0,T;\mathbb V)\cap C([0,T];\mathbb H),\\
\theta_m &\to \theta
&&\text{strongly in }L^2(0,T;V)\cap C([0,T];H).
\end{aligned}
\right.
\end{equation}
Since $\bm{y}_m(t) \in \mathcal{M}_1$ for all $t \in [0,T]$ and $\bm{y}_m \to \bm{y}$ in $C([0,T];\mathbb H)$, we deduce
\[
\|\bm{y}(t)\|_{\mathbb H} = \lim_{m \to \infty} \|\bm{y}_m(t)\|_{\mathbb H} = 1, 
\quad \text{for all } t \in [0,T].
\]
Similarly, since $\theta_m(t) \in \mathcal{M}_2$ for all $t \in [0,T]$ and $\theta_m \to \theta$ in $C([0,T];H)$, we obtain
\[
\|\theta(t)\|_{H} = \lim_{m \to \infty} \|\theta_m(t)\|_{H} = 1,
\quad \text{for all } t \in [0,T].
\]
Thus, $(\bm{y}(t),\theta(t)) \in \mathcal{M}$ for all $t \in [0,T]$.

The convergence of the linear terms in
\eqref{eq:galerkin_u}-\eqref{eq:galerkin_theta}
is immediate. It remains to justify the convergence of the nonlinear terms. Using Hölder's
inequality, Agmon's inequality, the uniform estimates
\eqref{aum-est}, \eqref{ut-est}, \eqref{the_st}, and \eqref{eq:theta_H2_estimate}, together with the strong convergence given in \eqref{conv_um_thetam}, we estimate
\begin{align}
&\int_0^T
\|B(\bm{y}_m(t),\bm{y}_m(t))
-
B(\bm{y}(t),\bm{y}(t))\|_{\mathbb H}^{2}\,dt
\nonumber\\&\le
2\int_0^T
\|B(\bm{y}_m(t)-\bm{y}(t),\bm{y}_m(t))\|_{\mathbb H}^{2}\,dt
 +2\int_0^T
\|B(\bm{y}(t),\bm{y}_m(t)-\bm{y}(t))\|_{\mathbb H}^{2}\,dt.
\label{nonlinear-conv-1}
\end{align}
The first term can be estimated using H\"older's and Agmon's inequalities as follows:
\begin{align}
&\int_{0}^{T}
\|B(\bm{y}_m(t)-\bm{y}(t),\bm{y}_m(t))\|_{\mathbb H}^{2}\,dt
\notag\\
&\le
C
\int_{0}^{T}
\|\bm{y}_m(t)-\bm{y}(t)\|_{\mathbb L^\infty}^{2}
\|\nabla\bm{y}_m(t)\|_{\mathbb H}^{2}\,dt
\notag\\
&\le
C
\left(
\int_{0}^{T}
\|\bm{y}_m(t)-\bm{y}(t)\|_{\mathbb L^\infty}^{4}\,dt
\right)^{\frac12}
\left(
\int_{0}^{T}
\|\nabla\bm{y}_m(t)\|_{\mathbb H}^{4}\,dt
\right)^{\frac12}
\notag\\
&\le
C
\left(
\int_{0}^{T}
\|\bm{y}_m(t)-\bm{y}(t)\|_{\mathbb H}^{2}
\|A(\bm{y}_m(t)-\bm{y}(t))\|_{\mathbb H}^{2}\,dt
\right)^{\frac12}
\left(
\int_{0}^{T}
\|\nabla\bm{y}_m(t)\|_{\mathbb H}^{4}\,dt
\right)^{\frac12}
\notag\\
&\le
C
\sup_{0\le t\le T}
\|\bm{y}_m(t)-\bm{y}(t)\|_{\mathbb H}
\left(
\int_{0}^{T}
\|A(\bm{y}_m(t)-\bm{y}(t))\|_{\mathbb H}^{2}\,dt
\right)^{\frac12}
\notag\\
&\qquad\times
\left(
\sup_{0\le t\le T}
\|\nabla\bm{y}_m(t)\|_{\mathbb H}^{2}
\right)
\left(
\int_{0}^{T}
\|\nabla\bm{y}_m(t)\|_{\mathbb H}^{2}\,dt
\right)^{\frac12}.
\label{nonlinear-conv-2}
\end{align}
Since \(\{\bm{y}_m\}\) is uniformly bounded in \( L^\infty(0,T;\mathbb V)\cap L^2(0,T;D(A))\) and \(\bm{y}_m\to\bm{y}\) strongly in \(C([0,T];\mathbb H)\), the right-hand side of \eqref{nonlinear-conv-2} tends to zero as \(m\to\infty\).
Similarly, for the second term, we get
\begin{align}
&\int_{0}^{T}\|B(\bm{y}(t),\bm{y}_m(t)-\bm{y}(t))\|_{\mathbb H}^{2}\,dt \notag\\
&\le C\int_{0}^{T}\|\bm{y}(t)\|_{\mathbb L^\infty}^{2}\|\nabla(\bm{y}_m(t)-\bm{y}(t))\|_{\mathbb H}^{2}\,dt \notag\\
&\le C\left(\int_{0}^{T}\|\bm{y}(t)\|_{\mathbb L^\infty}^{4}\,dt\right)^{\frac12}
\left(\int_{0}^{T}\|\nabla(\bm{y}_m(t)-\bm{y}(t))\|_{\mathbb H}^{4}\,dt\right)^{\frac12} \notag\\
&\le C\left(\int_{0}^{T}\|\bm{y}(t)\|_{\mathbb H}^{2}\|A\bm{y}(t)\|_{\mathbb H}^{2}\,dt\right)^{\frac12}
\left(\int_{0}^{T}\|\nabla(\bm{y}_m(t)-\bm{y}(t))\|_{\mathbb H}^{4}\,dt\right)^{\frac12} \notag\\
&\le C\sup_{0\le t\le T}\|\bm{y}(t)\|_{\mathbb H}
\left(\int_{0}^{T}\|A\bm{y}(t)\|_{\mathbb H}^{2}\,dt\right)^{\frac12}
\left(\int_{0}^{T}\|\nabla(\bm{y}_m(t)-\bm{y}(t))\|_{\mathbb H}^{4}\,dt\right)^{\frac12},
\label{conv22}
\end{align}
where we have used Agmon's inequality in the form
$
\|\bm v\|_{\mathbb L^\infty}^{2}\le C\|\bm v\|_{\mathbb H}\|A\bm v\|_{\mathbb H}.
$
Using the interpolation inequality, we obtain 
\[
\int_{0}^{T}\|\nabla(\bm{y}_m(t)-\bm{y}(t))\|_{\mathbb H}^{4}\,dt
\le
\sup_{0\le t\le T}\|\nabla(\bm{y}_m(t)-\bm{y}(t))\|_{\mathbb H}^{2}
\int_{0}^{T}\|\bm{y}_m(t)-\bm{y}(t)\|_{\mathbb V}^{2}\,dt,
\]
so that \eqref{conv22} implies 
\begin{align}
&\int_{0}^{T}\|B(\bm{y}(t),\bm{y}_m(t)-\bm{y}(t))\|_{\mathbb H}^{2}\,dt \notag\\
&\le C\sup_{0\le t\le T}\|\bm{y}(t)\|_{\mathbb H}
\left(\int_{0}^{T}\|A\bm{y}(t)\|_{\mathbb H}^{2}\,dt\right)^{\frac12}
\left(\sup_{0\le t\le T}\|\nabla(\bm{y}_m(t)-\bm{y}(t))\|_{\mathbb H}^{2}\right)^{\frac12}
\left(\int_{0}^{T}\|\bm{y}_m(t)-\bm{y}(t)\|_{\mathbb V}^{2}\,dt\right)^{\frac12}.
\label{conv33}
\end{align}
Using the uniform boundedness of \(\{\bm{y}_m\}\) in \( L^\infty(0,T;\mathbb V)\) and \(\bm{y}\in L^\infty(0,T;\mathbb V)\cap L^2(0,T;D(A))\), together with the strong convergence \(\bm{y}_m\to\bm{y}\) in \(L^2(0,T;\mathbb V)\), we obtain from \eqref{conv33} that
\[
\int_{0}^{T}\|B(\bm{y}(t),\bm{y}_m(t)-\bm{y}(t))\|_{\mathbb H}^{2}\,dt
\le C\left(\int_{0}^{T}\|\bm{y}_m(t)-\bm{y}(t)\|_{\mathbb V}^{2}\,dt\right)^{\frac12}
\to 0\ \ \text{ as } \  m\to\infty.
\]
Combining the above estimates with \eqref{nonlinear-conv-2}, we conclude that
\begin{align}
\int_{0}^{T}\|B(\bm{y}_m(t),\bm{y}_m(t))-B(\bm{y}(t),\bm{y}(t))\|_{\mathbb H}^{2}\,dt
\longrightarrow 0 \ \text{ as }\ 
 m\to\infty.
\label{convfinal}
\end{align}
Next, using \(\bm{y}_m(t)\in \mathcal{M}_1\) for all \(t\in[0,T]\), we estimate
\begin{align}
&\int_0^T \big\|\|\nabla \bm{y}_m(t)\|_{\mathbb H}^2 \bm{y}_m(t)
- \|\nabla \bm{y}(t)\|_{\mathbb H}^2 \bm{y}(t)\big\|_{\mathbb H}^2 dt \notag\\
&\le 2\int_0^T \big|\|\nabla \bm{y}_m(t)\|_{\mathbb H}^2 - \|\nabla \bm{y}(t)\|_{\mathbb H}^2\big|^2 \|\bm{y}_m(t)\|_{\mathbb H}^2 dt + 2\int_0^T \|\nabla \bm{y}(t)\|_{\mathbb H}^4 \|\bm{y}_m(t)-\bm{y}(t)\|_{\mathbb H}^2 dt.
\label{conv-term1}
\end{align}
Using the estimate
\[
\big|\|\nabla \bm{y}_m\|_{\mathbb H}^2 - \|\nabla \bm{y}\|_{\mathbb H}^2\big|
\le \big(\|\nabla \bm{y}_m\|_{\mathbb H} + \|\nabla \bm{y}\|_{\mathbb H}\big) \|\nabla(\bm{y}_m-\bm{y})\|_{\mathbb H},
\]
and the uniform boundedness of \(\{\bm{y}_m\}\) and \(\bm{y}\) in \(L^\infty(0,T;\mathbb V)\), we obtain
\begin{align}
\int_0^T \big|\|\nabla \bm{y}_m(t)\|_{\mathbb H}^2 - \|\nabla \bm{y}(t)\|_{\mathbb H}^2\big|^2 \|\bm{y}_m(t)\|_{\mathbb H}^2 dt 
&\le C\sup_{0\le t\le T}\|\bm{y}_m(t)\|_{\mathbb H}^2
\int_0^T \|\bm{y}_m(t)-\bm{y}(t)\|_{\mathbb V}^2 dt \notag\\
&\le C_T \int_0^T \|\bm{y}_m(t)-\bm{y}(t)\|_{\mathbb V}^2 dt \to 0,
\label{conv-term2}
\end{align}
as \(m\to\infty\), where we have used the strong convergence \(\bm{y}_m\to\bm{y}\) in \(L^2(0,T;\mathbb V)\).
Also, using the uniform boundedness of \(\bm{y}\) in \(L^\infty(0,T;\mathbb V)\), we have
\begin{align}
\int_0^T \|\nabla \bm{y}(t)\|_{\mathbb H}^4 \|\bm{y}_m(t)-\bm{y}(t)\|_{\mathbb H}^2 dt 
&\le \sup_{0\le t\le T}\|\nabla \bm{y}(t)\|_{\mathbb H}^4
\int_0^T \|\bm{y}_m(t)-\bm{y}(t)\|_{\mathbb H}^2 dt \notag\\
&\le C_T \sup_{0\le t\le T}\|\bm{y}_m(t)-\bm{y}(t)\|_{\mathbb H}^2 \to 0,
\label{conv-term3}
\end{align}
as \(m\to\infty\). Consequently, from \eqref{conv-term1}--\eqref{conv-term3}, we conclude  
\[
\int_0^T \big\|\|\nabla \bm{y}_m(t)\|_{\mathbb H}^2 \bm{y}_m(t)
- \|\nabla \bm{y}(t)\|_{\mathbb H}^2 \bm{y}(t)\big\|_{\mathbb H}^2 dt
\longrightarrow 0
\  \text{ as }\  m \to \infty.
\]
Now, we consider the term
\(
\theta_m e_2 - ( \theta_m e_2,\bm{y}_m)_{\mathbb H} \bm{y}_m
\)
and show its convergence. Writing it as
\begin{align}
&\big\|\theta_m e_2 - ( \theta_m e_2,\bm{y}_m)_{\mathbb H} \bm{y}_m
- \theta e_2 + ( \theta e_2,\bm{y})_{\mathbb H} \bm{y}\big\|_{\mathbb H} \notag\\
&\le \|(\theta_m-\theta)e_2\|_{\mathbb H}
+ \big\|( \theta_m e_2,\bm{y}_m)_{\mathbb H} - ( \theta e_2,\bm{y})_{\mathbb H}\big\|_{\mathbb{H}}\|\bm{y}_m\|_{\mathbb H}
+ |( \theta e_2,\bm{y})_{\mathbb H}|\|\bm{y}_m-\bm{y}\|_{\mathbb H}
\nonumber\\&\leq  \|\theta_m-\theta\|_{H}+ \left(\|\theta_m-\theta\|_{H}\|\bm{y}_m\|_{\mathbb H}
+ \|\theta\|_{H}\|\bm{y}_m-\bm{y}\|_{\mathbb H}\right)\|\bm{y}_m\|_{\mathbb H}+\|\theta\|_{H}\|\bm{y}\|_{\mathbb H}\|\bm{y}_m-\bm{y}\|_{\mathbb H}
\nonumber\\&\leq C\big(\|\theta_m-\theta\|_{H} + \|\bm{y}_m-\bm{y}\|_{\mathbb H}\big)\nonumber\\&
\to  0\ \text{ as }\ m\to\infty, 
\label{theta-conv1}
\end{align}
since \((\bm{y}_m,\theta_m)\in {\mathcal{M}}\) for all $t \in [0,T]$ and using the strong convergence given in \eqref{conv_um_thetam}.
The convergence of the terms in \eqref{eq:galerkin_theta} follows by arguments similar to those used above. Indeed, the term \(\|\nabla\theta_m(t)\|_{\mathbb H}^2\theta_m(t)\) can be handled exactly as in \eqref{conv-term1}--\eqref{conv-term3}. 
For the convective term \(\bm{y}_m\cdot\nabla\theta_m\), we proceed as in the estimate for \(B(\bm{y}_m,\bm{y}_m)\). First, we write
\[
\bm{y}_m\cdot\nabla\theta_m - \bm{y}\cdot\nabla\theta
= (\bm{y}_m-\bm{y})\cdot\nabla\theta_m + \bm{y}\cdot\nabla(\theta_m-\theta).
\]
Thus, for all \(t\in[0,T]\), we infer
\begin{align}
&\int_0^T \|\bm{y}_m(t)\cdot\nabla\theta_m(t) - \bm{y}(t)\cdot\nabla\theta(t)\|_{H}^2 dt \notag\\
&\le 2\int_0^T \|(\bm{y}_m(t)-\bm{y}(t))\cdot\nabla\theta_m(t)\|_{H}^2 dt
+ 2\int_0^T \|\bm{y}(t)\cdot\nabla(\theta_m(t)-\theta(t))\|_{H}^2 dt.
\label{conv-theta-main}
\end{align}
Using Hölder's inequality and Agmon's inequality, we estimate
\begin{align}
&\int_0^T
\|(\bm{y}_m(t)-\bm{y}(t))\cdot\nabla\theta_m(t)\|_{H}^{2}\,dt
\notag\\
&\le
C
\int_0^T
\|\bm{y}_m(t)-\bm{y}(t)\|_{\mathbb L^\infty}^{2}
\|\nabla\theta_m(t)\|_{\mathbb H}^{2}\,dt
\notag\\
&\le
C
\sup_{0\le t\le T}
\|\bm{y}_m(t)-\bm{y}(t)\|_{\mathbb H}
\left(
\int_0^T
\|A(\bm{y}_m(t)-\bm{y}(t))\|_{\mathbb H}^{2}\,dt
\right)^{\frac12}
\left(
\int_0^T
\|\nabla\theta_m(t)\|_{\mathbb H}^{4}\,dt
\right)^{\frac12}
\notag\\
&\le
C
\sup_{0\le t\le T}
\|\bm{y}_m(t)-\bm{y}(t)\|_{\mathbb H}
\left(
\int_0^T
\|A\bm{y}_m(t)\|_{\mathbb H}^{2}\,dt
+
\int_0^T
\|A\bm{y}(t)\|_{\mathbb H}^{2}\,dt
\right)^{\frac12}
\left(
\int_0^T
\|\nabla\theta_m(t)\|_{\mathbb H}^{4}\,dt
\right)^{\frac12}
\longrightarrow0,
\label{conv-theta1}
\end{align}
as \(m\to\infty\), where we have used the uniform boundedness of $\{\theta_m\}$ in $L^\infty(0,T;V)$ and \(\bm{y}\) in
\( L^\infty(0,T;\mathbb V)\cap L^2(0,T;D(A))\),
and the strong convergence
\[
\bm{y}_m\to\bm{y}
\quad\text{in }L^2(0,T;\mathbb V)\cap C([0,T];\mathbb H).
\]
Similarly, we have
\begin{align}\label{conv-theta2}
&\int_0^T
\|\bm{y}(t)\cdot\nabla(\theta_m(t)-\theta(t))\|_H^2\,dt
\nonumber\\
&\quad\leq
\left(\int_0^T\|\bm{y}(t)\|_{\mathbb{L}^\infty}^4\,dt\right)^{1/2}
\left(\int_0^T\|\theta_m(t)-\theta(t)\|_V^4\,dt\right)^{1/2}
\nonumber\\
&\quad\leq
C\left(
\operatorname*{ess\,sup}_{0<t<T}
\|\theta_m(t)-\theta(t)\|_V^2
\right)^{1/2}
\left(
\int_0^T\|\theta_m(t)-\theta(t)\|_V^2\,dt
\right)^{1/2}
\longrightarrow0.
\end{align}
as \(m\to\infty\). Combining \eqref{conv-theta-main}--\eqref{conv-theta2}, we conclude that
\[
\int_0^T \|\bm{y}_m(t)\cdot\nabla\theta_m(t) - \bm{y}(t)\cdot\nabla\theta(t)\|_{H}^2 dt
\longrightarrow 0,
\qquad m\to\infty.
\]
For any $\varphi\in C([0,T];\mathbb V_n)$ with $n<m$, we have
\begin{align}\label{eq:control-convergence}
&\left|\int_0^T \Big(
\big(\mathfrak{H}_m(t)-(\mathfrak{H}_m(t),\bm{y}_m(t))_{\mathbb H}\bm{y}_m(t)\big)
-\big(\mathfrak{H}(t)-(\mathfrak{H}(t),\bm{y}(t))_{\mathbb H}\bm{y}(t)\big),
\varphi(t)
\Big)_{\mathbb H}\,dt\right| \notag\\
&\le
\left|\int_0^T
(\mathfrak{H}_m(t)-\mathfrak{H}(t),\varphi(t))_{\mathbb H}\,dt\right| \notag\\
&\quad+
\left|\int_0^T
\big(
(\mathfrak{H}_m(t),\bm{y}_m(t))_{\mathbb H}
-(\mathfrak{H}(t),\bm{y}(t))_{\mathbb H}
\big)
(\bm{y}(t),\varphi(t))_{\mathbb H}\,dt\right| \notag\\
&\quad+
\left|\int_0^T
(\mathfrak{H}_m(t),\bm{y}_m(t))_{\mathbb H}
(\bm{y}_m(t)-\bm{y}(t),\varphi(t))_{\mathbb H}\,dt\right| \notag\\
&=:I_{1,m}+I_{2,m}+I_{3,m}.
\end{align}
It follows directly from the strong convergence of $\bm{y}_m\to \bm{y}$ (see \eqref{conv_um_thetam})  and the fact that $\mathfrak{H}_m = \Pi_m \mathfrak{H}\to \mathfrak{H}$, that
\[
I_{1,m}\to 0 \ \ \text{and}\ \ I_{2,m}\to 0
\ \ \text{as } m\to\infty.
\]
Moreover, by the convergence in \eqref{conv_um_thetam}, the fact that $\mathfrak{H}\in L^2(0,T;\mathbb H)$, and the uniform boundedness of $\bm{y}_m$, we obtain
\[
I_{3,m}\to 0 \qquad\text{as } m\to\infty.
\]
Therefore, we write
\[
\int_0^T \Big(
\big(\mathfrak{H}_m(t)-( \mathfrak{H}_m(t),\bm{y}_m(t))_{\mathbb H}\bm{y}_m(t)\big),\varphi(t) \Big)_{\mathbb H}\,dt
\to  \int_0^T \Big( \big(\mathfrak{H}(t)-( \mathfrak{H}(t),\bm{y}(t))_{\mathbb H}\bm{y}(t)\big),
\,\varphi(t)
\Big)_{\mathbb H}\,dt
\]
for every $\varphi\in C([0,T];\mathbb V_n)$ and every $n\in\mathbb N$. By density of \(C([0,T]; \mathbb V_n)\) in \(L^2(0,T; \mathbb H)\), the above convergence extends to all $\varphi\in L^2(0,T;\mathbb H)$.
Taking the limit in \eqref{eq:galerkin_u}--\eqref{eq:galerkin_theta}, we find that $(\bm{y},\theta)$ is a strong solution of the constrained Boussinesq system \eqref{eq:constrained_Boussinesq}. By the convergences established above and the weak lower semicontinuity of the norms, we infer
\begin{equation}\label{reg-um}
\bm{y} \in L^\infty(0,T; \mathbb V) \cap L^2(0,T; D(A)), \qquad 
\frac{d\bm{y}}{dt} \in L^2(0,T; \mathbb H),
\end{equation}
and
\begin{equation}\label{reg-theta}
\theta \in L^\infty(0,T; V) \cap L^2(0,T; D(\mathcal{A})), \qquad 
\frac{d\theta}{dt} \in L^2(0,T; H).
\end{equation}
By the Lions--Magenes lemma, the norms
\[
[0,T] \ni t \mapsto \|\nabla \bm{y}(t)\|_{\mathbb H}^2\in[0,\infty)
\ \text{ and }\ 
[0,T] \ni t \mapsto \|\nabla \theta(t)\|_{\mathbb H}^2\in[0,\infty),
\]
are absolutely continuous.

\noindent\textbf{Uniqueness of solutions.}
Let $(\bm{y}_1,\theta_1)$ and $(\bm{y}_2,\theta_2)$ be two strong solutions to \eqref{eq:constrained_Boussinesq} corresponding to the same initial data $(\bm{y}_0,\theta_0)\in\mathcal{M}$ and the same control $\mathfrak{H}\in L^2(0,T;\mathbb{H})$. Define \(\bm{w}:=\bm{y}_1-\bm{y}_2\) and \(\xi:=\theta_1-\theta_2\). Then \((\bm{w},\xi)\) satisfies
\begin{align}{\label{eqw}}
\frac{d\bm{w}(t)}{dt}
&= -\nu A \bm{w}(t) 
+ \nu\big(\|\nabla \bm{y}_1(t)\|_{\mathbb{H}}^2 \bm{y}_1(t) - \|\nabla \bm{y}_2(t)\|_{\mathbb{H}}^2 \bm{y}_2(t)\big)
- \big(B(\bm{y}_1(t),\bm{y}_1(t)) - B(\bm{y}_2(t),\bm{y}_2(t))\big)\nonumber \\
&\quad + \beta {\textbf{Proj}_{\mathbb H}}\big((\theta_1(t)-\theta_2(t))e_2\big)
- \beta\big( (\theta_1(t)e_2,\bm{y}_1(t))_{\mathbb{H}} \bm{y}_1(t) -(\theta_2(t)e_2,\bm{y}_2(t))_{\mathbb{H}} \bm{y}_2(t)\big)\nonumber \\
&\quad - \big(( \mathfrak{H}(t),\bm{y}_1(t))_{\mathbb{H}} \bm{y}_1(t) - ( \mathfrak{H}(t),\bm{y}_2(t))_{\mathbb{H}} \bm{y}_2(t)\big),
\end{align}
and
\begin{align}{\label{eqxi}}
\frac{d\xi(t)}{dt}
= -\kappa {\mathcal{A}} \xi(t)
+ \kappa\big(\|\nabla \theta_1(t)\|_{\mathbb H}^2 \theta_1(t) - \|\nabla \theta_2(t)\|_{\mathbb H}^2 \theta_2(t)\big)
- \big(\bm{y}_1(t) \cdot \nabla \theta_1(t) - \bm{y}_2(t) \cdot \nabla \theta_2(t)\big),
\end{align}
for a.e. $t\in (0,T)$. 
Taking the $\mathbb{H}$-inner product with $\bm{w}(\cdot)$ in \eqref{eqw}, we obtain
\begin{equation}\label{eqn-est-2}
\begin{aligned}
\frac{1}{2}\frac{d}{dt}\|\bm{w}(t)\|_{\mathbb{H}}^2 + \nu \|\nabla \bm{w}(t)\|_{\mathbb{H}}^2
&= -( B(\bm{w}(t),\bm{y}_1(t)), \bm{w}(t) )_{\mathbb{H}}
- ( B(\bm{y}_2(t),\bm{w}(t)), \bm{w}(t) )_{\mathbb{H}} \\
&\quad + \nu( \|\nabla \bm{y}_1(t)\|_{\mathbb H}^2 \bm{y}_1(t) - \|\nabla \bm{y}_2(t)\|_{\mathbb H}^2 \bm{y}_2(t), \bm{w}(t) )_{\mathbb{H}} \\
&\quad + \beta ( (\theta_1(t)-\theta_2(t))e_2, \bm{w}(t) )_{\mathbb{H}} \\
&\quad - \beta ( ( \theta_1(t)e_2,\bm{y}_1(t))_{\mathbb H}\bm{y}_1(t) - ( \theta_2(t)e_2,\bm{y}_2(t))_{\mathbb H}\bm{y}_2(t), \bm{w}(t) )_{\mathbb{H}} \\
&\quad - ( ( \mathfrak{H}(t),\bm{y}_1(t))_{\mathbb H}\bm{y}_1(t) - ( \mathfrak{H}(t),\bm{y}_2(t))_{\mathbb H}\bm{y}_2(t), \bm{w}(t) )_{\mathbb{H}},
\end{aligned}
\end{equation}
for a.e. \(t\in(0,T)\). By applying H\"older's, Ladyzhenskaya's, and Young's inequalities, we bound the term \(( B(\bm{w},\bm{y}_1), \bm{w} )_{\mathbb H}\) as follows. Since \(( B(\bm{y}_2,\bm{w}), \bm{w} )_{\mathbb H} = 0\), we have
\[
\begin{aligned}
|( B(\bm{y}_1,\bm{y}_1) - B(\bm{y}_2,\bm{y}_2), \bm{w} )_{\mathbb H}|
=|( B(\bm{w},\bm{y}_1), \bm{w} )_{\mathbb H}|
&\le C \|\nabla \bm{y}_1\|_{\mathbb H} \|\bm{w}\|_{\mathbb L^4}^2 \\
&\le C \|\nabla \bm{y}_1\|_{\mathbb H} \|\nabla \bm{w}\|_{\mathbb H} \|\bm{w}\|_{\mathbb H} \\
&\le \frac{\nu}{6}\|\nabla \bm{w}\|_{\mathbb H}^2 + C_{\nu}\|\nabla \bm{y}_1\|_{\mathbb H}^2 \|\bm{w}\|_{\mathbb H}^2.
\end{aligned}
\]
Writing
\[
\|\nabla \bm{y}_1\|_{\mathbb{H}}^2 \bm{y}_1
- \|\nabla \bm{y}_2\|_{\mathbb{H}}^2 \bm{y}_2
= \|\nabla \bm{y}_1\|_{\mathbb{H}}^2 \bm{w}
+ \big(\|\nabla \bm{y}_1\|_{\mathbb{H}}^2
- \|\nabla \bm{y}_2\|_{\mathbb{H}}^2\big)\bm{y}_2,
\]
and then, applying the Cauchy--Schwarz, triangle, and Young inequalities, we obtain the following estimate:
\[
\begin{aligned}
&\left|\nu\left( \|\nabla \bm{y}_1\|_{\mathbb{H}}^2 \bm{y}_1 - \|\nabla \bm{y}_2\|_{\mathbb{H}}^2 \bm{y}_2, \bm{w} \right)_{\mathbb{H}}\right| \\
&\le \nu\|\nabla \bm{y}_1\|_{\mathbb{H}}^2 \|\bm{w}\|_{\mathbb{H}}^2
+ \nu(\|\nabla \bm{y}_1\|_{\mathbb{H}} + \|\nabla \bm{y}_2\|_{\mathbb{H}})
\|\nabla \bm{w}\|_{\mathbb{H}} \|\bm{w}\|_{\mathbb{H}} \\
&\le \frac{\nu}{6}\|\nabla \bm{w}\|_{\mathbb{H}}^2
+ C_\nu\left(\|\nabla \bm{y}_1\|_{\mathbb{H}}^2 + \|\nabla \bm{y}_2\|_{\mathbb{H}}^2\right)\|\bm{w}\|_{\mathbb{H}}^2.
\end{aligned}
\]
Since $\bm{y}_1(t), \bm{y}_2(t) \in \mathcal{M}_1$ for all $t \in [0,T]$, the Cauchy--Schwarz inequality yields
\[
\left| -( \mathfrak{H},\bm{y}_1)_{\mathbb H}\|\bm{w}\|_{\mathbb H}^2 - ( \mathfrak{H},\bm{w})_{\mathbb H}( \bm{y}_2,\bm{w})_{\mathbb H} \right|
\le 2\|\mathfrak{H}\|_{\mathbb H}\|\bm{w}\|_{\mathbb H}^2.
\]
For the coupling term, we have
\[
\left|\beta( (\theta_1-\theta_2)e_2, \bm{w} )_{\mathbb H}\right|
\le \beta\|\xi\|_{H}\|\bm{w}\|_{\mathbb H}\leq \frac{1}{2}\|\xi\|_{H}^2+\frac{\beta^2}{2}\|\bm{w}\|_{\mathbb{H}}^2.
\]
Applying the Cauchy--Schwarz inequality followed by Young's inequality, together with the facts that
\(\bm{y}_i(t)\in\mathcal{M}_1\) and \(\theta_i(t)\in\mathcal{M}_2\) for \(i=1,2\) and for all $t\in[0,T],$ we have
\begin{align*}
&\left|\beta\left(
(\theta_1 e_2,\bm{y}_1)_{\mathbb H}\bm{y}_1
-(\theta_2 e_2,\bm{y}_2)_{\mathbb H}\bm{y}_2,
\bm w\right)_{\mathbb H}\right|\\
&\quad\le
\left|\beta\left(
(\theta_1 e_2,\bm{y}_1)_{\mathbb H}\bm w,
\bm w\right)_{\mathbb H}\right|
+\left|\beta\left(
((\theta_1-\theta_2)e_2,\bm{y}_1)_{\mathbb H}\bm{y}_2,
\bm w\right)_{\mathbb H}\right|\\
&\qquad+
\left|\beta\left(
(\theta_2 e_2,\bm w)_{\mathbb H}\bm{y}_2,
\bm w\right)_{\mathbb H}\right|\\
&\quad\le
2|\beta|\|\bm w\|_{\mathbb H}^2
+|\beta|\|\xi\|_H\|\bm w\|_{\mathbb H}\\
&\quad\le
\left(2|\beta|+\frac{\beta^2}{2}\right)
\|\bm w\|_{\mathbb H}^2
+\frac12\|\xi\|_H^2.
\end{align*}
Thus,  we infer from \eqref{eqn-est-2} that 
\begin{align}{\label{weqq}}
\frac{1}{2}\frac{d}{dt}\|\bm{w}(t)\|_{\mathbb{H}}^2 + \frac{2\nu}{3} \|\nabla \bm{w}(t)\|_{\mathbb{H}}^2
\le C_{\beta,\nu}\big(1 + \|\nabla \bm{y}_1(t)\|_{\mathbb{H}}^2 + \|\nabla \bm{y}_2(t)\|_{\mathbb{H}}^2 + \|\mathfrak{H}(t)\|_{\mathbb{H}}\big)\|\bm{w}(t)\|_{\mathbb{H}}^2
+ C\|\xi(t)\|_{H}^2,
\end{align}
for a.e. $t\in(0,T)$. Taking the inner product in $H$ of \eqref{eqxi} with $\xi(\cdot)$, we obtain
\[
\begin{aligned}
\frac{1}{2}\frac{d}{dt}\|\xi(t)\|_{H}^2 + \kappa \|\nabla \xi(t)\|_{\mathbb H}^2
&= \kappa\left( \|\nabla \theta_1(t)\|_{\mathbb H}^2\theta_1(t) - \|\nabla \theta_2(t)\|_{\mathbb H}^2\theta_2(t), \xi(t) \right)_{H} \\
&\quad - \left( \bm{y}_1(t)\cdot\nabla\theta_1(t) - \bm{y}_2(t)\cdot\nabla\theta_2(t), \xi(t) \right)_{H},
\end{aligned}
\]
for a.e. \(t \in (0,T)\).
We write $
\bm{y}_1 \cdot \nabla \theta_1 - \bm{y}_2 \cdot \nabla \theta_2
= \bm{w} \cdot \nabla \theta_1 + \bm{y}_2 \cdot \nabla \xi,
$
and using $\nabla \cdot \bm{y}_2 = 0$, we have
$
( \bm{y}_2 \cdot \nabla \xi, \xi )_{H} = 0.
$
Consequently, we write
\[
\begin{aligned}
\frac{1}{2}\frac{d}{dt}\|\xi(t)\|_{H}^2 + \kappa \|\nabla \xi(t)\|_{\mathbb H}^2
&= \kappa\left( \|\nabla \theta_1(t)\|_{\mathbb H}^2\theta_1(t) - \|\nabla \theta_2(t)\|_{\mathbb H}^2\theta_2(t), \xi(t) \right)_{H}
- \left( \bm{w}(t) \cdot \nabla \theta_1(t), \xi(t) \right)_{H},
\end{aligned}
\]
for a.e. $t\in[0,T]$. Hence, by Ladyzhenskaya's inequality and the properties of the trilinear map \eqref{eq:B_A_cancel}, we have
\begin{align}
|( \bm{w} \cdot \nabla \theta_1, \xi )_{H}|
&=|( \bm{w} \cdot \nabla \xi,  \theta_1)_{H}|
\nonumber\\&\leq \|\bm{w}\|_{\mathbb{L}^4}\|\nabla\xi\|_{\mathbb{H}}\|\theta_1\|_{L^4}
\nonumber\\&\leq \frac{\kappa}{4}\|\nabla\xi\|_{\mathbb{H}}^2+C_{\kappa}\|\bm{w}\|_{\mathbb{H}}\|\nabla\bm{w}\|_{\mathbb{H}}\|\theta_1\|_{L^4}^2
\nonumber\\&\leq\frac{\kappa}{4}\|\nabla\xi\|_{\mathbb{H}}^2+\frac{\nu}{6}\|\nabla\bm{w}\|_{\mathbb{H}}^2+C_{\nu,\kappa}\|\theta_1\|_{L^4}^4\|\bm{w}\|_{\mathbb{H}}^2.
\end{align}
Similarly, using the identity
\[
\|\nabla\theta_1\|_{\mathbb H}^2\theta_1
-\|\nabla\theta_2\|_{\mathbb H}^2\theta_2
=
\|\nabla\theta_1\|_{\mathbb H}^2\xi
+
\bigl(\|\nabla\theta_1\|_{\mathbb H}^2
-\|\nabla\theta_2\|_{\mathbb H}^2\bigr)\theta_2.
\]
we estimate
\begin{align*}
&\left|
\kappa\bigl(
\|\nabla\theta_1\|_{\mathbb H}^2\theta_1
-\|\nabla\theta_2\|_{\mathbb H}^2\theta_2,\xi
\bigr)_H
\right|\\
&\quad\leq
\kappa\|\nabla\theta_1\|_{\mathbb H}^2\|\xi\|_H^2
+\kappa\bigl(\|\nabla\theta_1\|_{\mathbb H}+\|\nabla\theta_2\|_{\mathbb H}\bigr)
\|\nabla\xi\|_H\|\xi\|_H\\
&\quad\leq
\frac{\kappa}{4}\|\nabla\xi\|_H^2
+C_\kappa
\bigl(\|\nabla\theta_1\|_{\mathbb H}^2+\|\nabla\theta_2\|_{\mathbb H}^2\bigr)
\|\xi\|_H^2.
\end{align*}
Thus,  we have
\begin{align}
\frac{1}{2}\frac{d}{dt}\|\xi(t)\|_{H}^2 + \frac{\kappa}{2} \|\nabla \xi(t)\|_{\mathbb H}^2
&\le C_{\nu,\kappa}\big(1 + \|\nabla \theta_1(t)\|_{\mathbb H}^2 + \|\nabla \theta_2(t)\|_{\mathbb H}^2\big)\|\xi(t)\|_{H}^2 \notag\\
&\quad +C_{\nu,\kappa}\|\theta_1\|_{L^4}^4\|\bm{w}(t)\|_{\mathbb{H}}^2+\frac{\nu}{6}\|\nabla \bm{w}(t)\|_{\mathbb{H}}^2,
\label{xii}
\end{align}
for a.e. $t\in(0,T)$. Adding \eqref{weqq} and \eqref{xii}, we obtain for a.e. $t\in(0,T)$ that 
\[
\frac{d}{dt}\big(\|\bm{w}(t)\|_{\mathbb{H}}^2 + \|\xi(t)\|_{H}^2\big)
+ \nu\|\nabla \bm{w}(t)\|_{\mathbb{H}}^2 + \kappa\|\nabla \xi(t)\|_{\mathbb H}^2
\le C_{\bm{y},\theta}(t)\big(\|\bm{w}(t)\|_{\mathbb{H}}^2 + \|\xi(t)\|_{H}^2\big),
\]
where
\[
C_{\bm{y},\theta}(t) = C_{\nu,\kappa,\beta}\big(1 + \|\nabla \bm{y}_1(t)\|_{\mathbb{H}}^2 + \|\nabla \bm{y}_2(t)\|_{\mathbb{H}}^2+\|\nabla\theta_1(t)\|_{\mathbb{H}}^2+\|\nabla\theta_2(t)\|_{\mathbb H}^2+ \|\theta_1(t)\|_{L^4}^4 + \|\mathfrak{H}(t)\|_{\mathbb{H}}\big).
\]
Since $\bm{w}(0)=\bm 0$ and $\xi(0)=0$, an application of Grönwall's inequality yields
\[
\|\bm{w}(t)\|_{\mathbb{H}}^2 + \|\xi(t)\|_{H}^2 = 0 \quad \text{for all } t \in [0,T].
\]
Hence, $\bm{y}_1(t)=\bm{y}_2(t)$ and $\theta_1(t)=\theta_2(t)$ for all $t\in[0,T]$, which proves the uniqueness of the strong solution.
\end{proof}
\section{Optimal control problem}{\label{s3}}
Here, we show the existence of an optimal control that minimizes the cost functional given by
\begin{align}
J(\bm{y},\theta,\mathfrak{H}) 
:=\;& \frac{1}{2} \int_0^T \|\bm{y}(t) - \bm{y}_d(t)\|_{\mathbb{H}}^2 \, dt 
+ \frac{1}{2} \int_0^T \|\nabla \bm{y}(t)\|_{\mathbb{H}}^2 \, dt \notag\\
&+ \frac{1}{2} \int_0^T \|\theta(t) - \theta_d(t)\|_{H}^2 \, dt
+ \frac{1}{2} \int_0^T \|\mathfrak{H}(t)\|_{\mathbb{H}}^2 \, dt \notag\\
&+ \frac{1}{2} \|\bm{y}(T) - \bm{y}_f\|_{\mathbb{H}}^2
+ \frac{1}{2} \|\theta(T) - \theta_f\|_{H}^2,
\end{align}
subject to the system \eqref{eq:constrained_Boussinesq}.
Here, $\bm{y}_d \in {{L}^2(0,T;\mathbb{H})}$ and $\theta_d \in L^2(0,T;H)$ denote the desired velocity and temperature fields, respectively, while $\bm{y}_f \in \mathbb{H}$ and $\theta_f \in H$ represent the prescribed target states at the terminal time $T$. The objective is to find a control $\mathfrak{H}$ such that the corresponding state variables $(\bm{y},\theta)$ remain as close as possible to the desired states $(\bm{y}_d,\theta_d)$, while simultaneously minimizing turbulence and thermal fluctuations.
For Banach spaces $\mathbb{X}$ and $\mathbb{Y}$, we define
\[
W^{1,2}(0,T;\mathbb{X},\mathbb{Y})
:=
\left\{
v \in L^2(0,T;\mathbb{X})
:\,
\frac{dv}{dt}\in L^2(0,T;\mathbb{Y})
\right\}.
\]
The Lions--Magenes lemma (\cite{temmm}) gives, the continuous embeddings
\[
W^{1,2}(0,T;\mathbb{V},\mathbb{V}')
\hookrightarrow C([0,T];\mathbb{H})
\]
and
\[
W^{1,2}(0,T;V,V')
\hookrightarrow C([0,T];H)
\]
holds. Furthermore, if additional spatial regularity is available, then
\[
W^{1,2}(0,T;D(A),\mathbb{H})
\hookrightarrow C([0,T];\mathbb{V}).
\]

\begin{definition}[Admissible class]
The admissible class $\mathcal{A}_{ad}$ consists of all triplets $(\bm{y},\theta,\mathfrak{H})$ such that
\[
(\bm{y},\theta,\mathfrak{H}) \in \mathcal{X}
:= \Big( W^{1,2}(0,T;D(A),\mathbb{H}) \Big)
\times \Big( W^{1,2}(0,T;D(\mathcal{A}),H) \Big)
\times {{L}^2(0,T;\mathbb{H})},
\]
where $(\bm{y},\theta)$ is the unique strong solution of the Boussinesq system \eqref{eq:constrained_Boussinesq} associated with the control $\mathfrak{H} \in \mathcal{U}_{ad} := {{L}^2(0,T;\mathbb{H})}$. That is,
\[
\mathcal{A}_{ad}
:= \Big\{ (\bm{y},\theta,\mathfrak{H}) \in \mathcal{X} : (\bm{y},\theta) \text{ solves } \eqref{eq:constrained_Boussinesq} \text{ with } \mathfrak{H}\in \mathcal U_{ad}, \text{ and } (\bm{y}(t),\theta(t))\in\mathcal M,\,\text{ for every }t\in[0,T] \Big\}.
\]
\end{definition}

\noindent\hspace*{2em}We endow the product space $\mathcal{X}$ with its natural weak topology. The admissible set $\mathcal{A}_{ad}$ is nonempty, since for each $\mathfrak{H} \in \mathcal{U}_{ad}$, there is a unique strong solution $(\bm{y},\theta)$ to the system \eqref{eq:constrained_Boussinesq}. Note that $\mathcal{A}_{ad}$ is not convex; however, it is sequentially weakly closed. Let $\{(\bm{y}_n,\theta_n,\mathfrak{H}_n)\} \subset \mathcal{A}_{ad}$ be a sequence such that
\[
(\bm{y}_n,\theta_n,\mathfrak{H}_n) \rightharpoonup (\bm{y}^*,\theta^*,\mathfrak{H}^*)
\quad \text{weakly in } \mathcal{X}.
\]
Then one can show that
$(\bm{y}^*,\theta^*,\mathfrak{H}^*)\in\mathcal{A}_{ad}$.
This means that $(\bm{y}^*,\theta^*)$ is a strong solution of the constrained Boussinesq system corresponding to the control
$\mathfrak{H}^*$, with $\|\bm{y}^*(t)\|_{\mathbb{H}}=1$ and
$\|\theta^*(t)\|_H=1$ for all $t\in[0,T]$.
This solution is also unique.
Thus, we formulate the OCP as follows:
\begin{equation}\label{opt}
\min_{(\bm{y},\theta,\mathfrak{H})\in \mathcal{A}_{ad}} J(\bm{y},\theta,\mathfrak{H}).
\end{equation}
We refer to a solution of \eqref{opt} as an \emph{optimal solution}. The triplet $(\bm{y}^*,\theta^*,\mathfrak{H}^*)$ is then called
an \emph{optimal triplet}, with $\mathfrak{H}^*$ as the
\emph{optimal control}.

\noindent\hspace*{2em} We now prove that the problem \eqref{opt} admits a solution.

\begin{theorem}{\label{optt}}
The optimal control problem \eqref{opt} admits at least one solution 
$(\bm{y}^*,\theta^*,\mathfrak{H}^*) \in \mathcal{A}_{ad}$.
\end{theorem}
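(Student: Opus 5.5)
We argue by the direct method of the calculus of variations. Since $J\ge 0$ and $\mathcal{A}_{ad}\neq\emptyset$ (by Theorem~\ref{well}, every $\mathfrak{H}\in\mathcal U_{ad}$ produces an admissible triplet), the infimum $\mathcal{J}:=\inf_{\mathcal{A}_{ad}}J$ is finite. We take a minimizing sequence $\{(\bm y_n,\theta_n,\mathfrak{H}_n)\}\subset\mathcal{A}_{ad}$ with $J(\bm y_n,\theta_n,\mathfrak{H}_n)\to\mathcal J$. The term $\frac12\int_0^T\|\mathfrak{H}_n\|_{\mathbb H}^2\,dt$ in $J$ bounds $\{\mathfrak{H}_n\}$ in $L^2(0,T;\mathbb H)$. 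All the solutions share the same initial data $(\bm y_0,\theta_0)$, so the energy estimates of Theorem~\ref{well} apply to each $n$, with constants depending only on $\|\mathfrak{H}_n\|_{L^2(0,T;\mathbb H)}$. These are the analogues of \eqref{bd}, \eqref{aum-est}, \eqref{ut-est}, \eqref{the_st} and \eqref{eq:theta_H2_estimate} for the limit solutions, obtained by weak lower semicontinuity or by redoing the computation directly on strong solutions. They give uniform bounds for $\bm y_n$ in $L^\infty(0,T;\mathbb V)\cap L^2(0,T;D(A))$ with $\frac{d\bm y_n}{dt}$ bounded in $L^2(0,T;\mathbb H)$, and likewise for $\theta_n$.

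By Banach--Alaoglu we extract a subsequence with $\mathfrak{H}_n\rightharpoonup\mathfrak{H}^*$ in $L^2(0,T;\mathbb H)$, $\bm y_n\rightharpoonup\bm y^*$ in $W^{1,2}(0,T;D(A),\mathbb H)$ and $\theta_n\rightharpoonup\theta^*$ in $W^{1,2}(0,T;D(\mathcal A),H)$, the state sequences also converging weak-$*$ in $L^\infty(0,T;\mathbb V)$, $L^\infty(0,T;V)$. The Aubin--Lions--Simon lemma then gives strong convergence in $L^2(0,T;\mathbb V)\cap C([0,T];\mathbb H)$ and $L^2(0,T;V)\cap C([0,T];H)$, exactly as in \eqref{conv_um_thetam}. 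Uniform convergence in $C([0,T];\mathbb H)$ and in $C([0,T];H)$ immediately yields $\|\bm y^*(t)\|_{\mathbb H}=\|\theta^*(t)\|_H=1$ for all $t$, and $(\bm y^*(0),\theta^*(0))=(\bm y_0,\theta_0)$.

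The main obstacle is passing to the limit in the state equation, because the control now converges only weakly. Every term not involving $\mathfrak{H}_n$ is handled verbatim as in the existence proof, namely \eqref{convfinal}, \eqref{conv-term1}--\eqref{conv-term3}, \eqref{theta-conv1} and \eqref{conv-theta-main}--\eqref{conv-theta2}, which only used strong convergence of the states together with uniform bounds. For the projected control term we test against $\varphi\in L^2(0,T;\mathbb H)$. The linear part $\int_0^T(\mathfrak{H}_n,\varphi)\,dt$ converges by weak convergence. For the nonlinear part we write
\[
(\mathfrak{H}_n,\bm y_n)_{\mathbb H}\bm y_n-(\mathfrak{H}^*,\bm y^*)_{\mathbb H}\bm y^*=(\mathfrak{H}_n,\bm y_n-\bm y^*)_{\mathbb H}\bm y_n+(\mathfrak{H}_n,\bm y^*)_{\mathbb H}(\bm y_n-\bm y^*)+(\mathfrak{H}_n-\mathfrak{H}^*,\bm y^*)_{\mathbb H}\bm y^*.
\]
The first two pieces are bounded by $\|\mathfrak{H}_n\|_{L^2(0,T;\mathbb H)}\,\sup_t\|\bm y_n-\bm y^*\|_{\mathbb H}\,\|\varphi\|_{L^2(0,T;\mathbb H)}$ up to a constant, using $\|\bm y_n\|_{\mathbb H}=\|\bm y^*\|_{\mathbb H}=1$, and so tend to zero. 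The third equals $\int_0^T(\mathfrak{H}_n-\mathfrak{H}^*,(\bm y^*,\varphi)_{\mathbb H}\bm y^*)\,dt$. Here $(\bm y^*,\varphi)_{\mathbb H}\bm y^*\in L^2(0,T;\mathbb H)$, so this term vanishes by weak convergence. Hence $(\bm y^*,\theta^*)$ is a strong solution associated with $\mathfrak{H}^*$, and by the uniqueness part of Theorem~\ref{well}, $(\bm y^*,\theta^*,\mathfrak{H}^*)\in\mathcal A_{ad}$.

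Finally, we show $J(\bm y^*,\theta^*,\mathfrak{H}^*)\le\liminf J(\bm y_n,\theta_n,\mathfrak{H}_n)=\mathcal J$. The tracking terms $\int\|\bm y_n-\bm y_d\|^2$ and $\int\|\theta_n-\theta_d\|^2$, the gradient term $\int\|\nabla\bm y_n\|^2$, and the terminal terms $\|\bm y_n(T)-\bm y_f\|^2$ and $\|\theta_n(T)-\theta_f\|^2$ all converge, by the strong convergences in $L^2(0,T;\mathbb V)$ and $C([0,T];\mathbb H)$ (and the corresponding ones for $\theta$). The control term is weakly lower semicontinuous. Therefore $J(\bm y^*,\theta^*,\mathfrak{H}^*)=\mathcal J$, and $(\bm y^*,\theta^*,\mathfrak{H}^*)$ is an optimal triplet.
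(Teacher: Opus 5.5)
Your proposal is correct and follows essentially the same route as the paper's proof. Both take a minimizing sequence with controls bounded in $L^2(0,T;\mathbb H)$, use the uniform strong-solution bounds with Banach--Alaoglu and Aubin--Lions--Simon, and split the projected control term so that $\mathfrak{H}_n\rightharpoonup\mathfrak{H}^*$ is paired with $\bm y_n\to\bm y^*$ in $C([0,T];\mathbb H)$. They then conclude admissibility by uniqueness and optimality by lower semicontinuity of $J$.

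Your handling of the terminal terms through convergence in $C([0,T];\mathbb H)$ and $C([0,T];H)$ is slightly more careful than the paper's appeal to continuity of $J$ on $L^2(0,T;\mathbb V)\times L^2(0,T;V)\times\mathcal{U}_{ad}$. On that space the point values $\bm y(T)$ and $\theta(T)$ are not controlled.
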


\begin{proof}
Let $\{(\bm{y}_n,\theta_n,\mathfrak{H}_n)\} \subset \mathcal{A}_{ad}$ be a minimizing sequence for the optimal control problem \eqref{opt}, that is,
\[
\inf J \le J(\bm{y}_n,\theta_n,\mathfrak{H}_n) \le \inf J + \frac{1}{n}.
\]
Here, $(\bm{y}_n,\theta_n)$ is the unique strong solution of the Boussinesq system associated with the control $\mathfrak{H}_n \in {{L}^2(0,T;\mathbb{H})}$, satisfying
\[
\begin{cases}
\dfrac{d\bm{y}_n(t)}{dt}
= -\nu A \bm{y}_n(t) + \nu \|\nabla \bm{y}_n(t)\|_{\mathbb H}^2 \bm{y}_n(t) - B(\bm{y}_n(t),\bm{y}_n(t)) \\
\qquad\quad + \beta {\textbf{Proj}}_{\mathbb H}(\theta_n(t)e_2) - \beta ( \theta_n(t)e_2,\bm{y}_n(t))_{\mathbb{H}}\, \bm{y}_n(t) \\
\qquad\quad + \mathfrak{H}_n(t) - ( \mathfrak{H}_n(t),\bm{y}_n(t))_{\mathbb{H}}\, \bm{y}_n(t), \\[8pt]
\dfrac{d\theta_n(t)}{dt}
= -\kappa {\mathcal{A}} \theta_n(t) + \kappa \|\nabla \theta_n(t)\|_{H}^2 \theta_n(t) - \bm{y}_n(t)\cdot \nabla \theta_n(t), \\[8pt]
(\bm{y}_n,\theta_n)(0) = (\bm{y}_0,\theta_0),
\end{cases}
\]
where $(\bm{y}_0,\theta_0) \in (\mathbb V\cap\mathcal{M}_1) \times (V \cap \mathcal{M}_2)$.

\noindent\hspace*{2em}Using calculations similar to those in estimates \eqref{bd}--\eqref{ut-est} and \eqref{the_st}--\eqref{eq:theta_H2_estimate} for the Boussinesq system, along with the Banach--Alaoglu theorem, we deduce the existence of subsequences (still denoted by the same symbols) such that
\begin{equation}
\left\{
\begin{aligned}
\begin{aligned}
\bm{y}_n &\rightharpoonup^* \bm{y}^* && \text{in } L^\infty(0,T;\mathbb{V}), \\
\bm{y}_n &\rightharpoonup \bm{y}^* && \text{in } L^2(0,T;D(A)), \\
\dfrac{d\bm{y}_n}{dt} &\rightharpoonup \dfrac{d\bm{y}^*}{dt} && \text{in } L^2(0,T;\mathbb{H}),
\end{aligned}
\quad
\begin{aligned}
\theta_n &\rightharpoonup^* \theta^* && \text{in } L^\infty(0,T;V), \\
\theta_n &\rightharpoonup \theta^* && \text{in } L^2(0,T;D(\mathcal{A})), \\
\dfrac{d\theta_n}{dt} &\rightharpoonup \dfrac{d\theta^*}{dt} && \text{in } L^2(0,T;H), \\
\mathfrak{H}_n &\rightharpoonup \mathfrak{H}^* && \text{in } L^2(0,T;\mathbb{H}).
\end{aligned}
\end{aligned}
\right.
\end{equation}
By the Aubin--Lions lemma, we also obtain the strong convergences
\begin{equation}
\left\{
\begin{aligned}
\bm{y}_n &\to \bm{y}^* && \text{in } L^2(0,T;\mathbb{V}), \\
\bm{y}_n &\to \bm{y}^* && \text{in } C([0,T];\mathbb{H}), \\
\theta_n &\to \theta^* && \text{in } L^2(0,T;V), \\
\theta_n &\to \theta^* && \text{in } C([0,T];H).
\end{aligned}
\right.
\end{equation}
\noindent Since $(\bm{y}_n(t), \theta_n(t)) \in \mathcal{M}$ for all $t \in [0,T]$, by the strong convergences, we infer
\[
\|\bm{y}^*(t)\|_{\mathbb{H}} = 1  \ \text{ and }\ 
\|\theta^*(t)\|_{H} = 1 \  \text{ for all }\  t \in [0,T].
\]
The above convergences allow us to pass to the limit in both the linear and nonlinear terms of the Boussinesq system. Note that
\[
\mathfrak{H}_n \rightharpoonup \mathfrak{H}^* \text{ in } {{L}^2(0,T;\mathbb{H})}, \ \
\bm{y}_n \to \bm{y}^* \text{ in } C([0,T];\mathbb{H}), \ \
\theta_n \to \theta^* \text{ in } L^2(0,T;H).
\]
We claim that
\[
( \mathfrak{H}_n,\bm{y}_n)_{\mathbb{H}} \rightharpoonup ( \mathfrak{H}^*,\bm{y}^*)_{\mathbb{H}}
\quad \text{in } L^2(0,T).
\]
Indeed, for any $\psi \in C_c^\infty(0,T)$,
\[
\int_0^T ( \mathfrak{H}_n(t),\bm{y}_n(t))_{\mathbb{H}} \psi(t)\,dt
= \int_0^T ( \mathfrak{H}_n(t),\bm{y}^*(t))_{\mathbb{H}} \psi(t)\,dt
+ \int_0^T ( \mathfrak{H}_n(t),\bm{y}_n(t)-\bm{y}^*(t))_{\mathbb{H}} \psi(t)\,dt.
\]
The first term converges to $\int_0^T ( \mathfrak{H}^*(t),\bm{y}^*(t))_{\mathbb{H}} \psi(t)\,dt$ by weak convergence of $\mathfrak{H}_n$, while the second term tends to zero due to the boundedness of $\{\mathfrak{H}_n\}$ in ${{L}^2(0,T;\mathbb{H})}$ and the strong convergence of $\bm{y}_n$. Hence, we get
\[
( \mathfrak{H}_n,\bm{y}_n)_{\mathbb{H}} \rightharpoonup ( \mathfrak{H}^*,\bm{y}^*)_{\mathbb{H}} \quad \text{in } L^2(0,T).
\]
Since $C_c^\infty(0,T)$ is dense in $L^2(0,T)$, the above mentioned convergence holds for all $\psi \in L^2(0,T)$. In particular, $(\bm{y}^*,\phi)_{\mathbb{H}} \in L^2(0,T)$ for all $\phi \in L^2(0,T;\mathbb{H})$. 

For every $\phi\in L^2(0,T;\mathbb H)$, we have
\[
\begin{aligned}
&\int_0^T
\big((\mathfrak{H}_n(t),\bm{y}_n(t))_{\mathbb H}\bm{y}_n(t),
\phi(t)\big)_{\mathbb H}\,dt\\
&\quad=
\int_0^T
(\mathfrak{H}_n(t),\bm{y}_n(t))_{\mathbb H}
\big(\bm{y}_n(t)-\bm{y}^*(t),\phi(t)\big)_{\mathbb H}\,dt\\
&\qquad+
\int_0^T
\Big[
(\mathfrak{H}_n(t),\bm{y}_n(t))_{\mathbb H}
-(\mathfrak{H}^*(t),\bm{y}^*(t))_{\mathbb H}
\Big]
(\bm{y}^*(t),\phi(t))_{\mathbb H}\,dt.
\end{aligned}
\]
The first term tends to zero by the boundedness of $\{\mathfrak{H}_n\}$ in $L^2(0,T;\mathbb H)$ and the strong convergence $\bm{y}_n \to \bm{y}^*$ in $C([0,T];\mathbb H)$. The second term tends to zero by the weak convergence of $( \mathfrak{H}_n,\bm{y}_n)_{\mathbb H}$ in $L^2(0,T)$ and the fact that $( \bm{y}^*,\phi)_{\mathbb H} \in L^2(0,T)$. Therefore, we deduce
\[
( \mathfrak{H}_n,\bm{y}_n)_{\mathbb H}\, \bm{y}_n \rightharpoonup ( \mathfrak{H}^*,\bm{y}^*)_{\mathbb H}\, \bm{y}^* 
\quad \text{in } L^2(0,T;\mathbb H).
\]
Since the strong solution corresponding to the control
$\mathfrak{H}^*\in L^2(0,T;\mathbb H)$ is unique, the limit
$(\bm{y}^*,\theta^*)$ is the unique strong solution associated with $\mathfrak{H}^*$. Consequently,
$(\bm{y}^*,\theta^*,\mathfrak{H}^*)\in\mathcal A_{\rm ad}$.
Finally, since the cost functional $J$ is convex and continuous on 
\[
L^2(0,T;\mathbb{V}) \times L^2(0,T;V) \times \mathcal{U}_{ad}
\]
(see Propositions III.1.6 and III.1.10 in~\cite{EkelandTurnbull1983}), it is weakly lower semicontinuous. Therefore, for any minimizing sequence $(\bm{y}_n,\theta_n,\mathfrak{H}_n)$ converging weakly to $(\bm{y}^*,\theta^*,\mathfrak{H}^*)$, we have
\[
J(\bm{y}^*,\theta^*,\mathfrak{H}^*) \le \liminf_{n\to\infty} J(\bm{y}_n,\theta_n,\mathfrak{H}_n).
\]
Consequently, we derive 
\[
\inf J \le J(\bm{y}^*,\theta^*,\mathfrak{H}^*) \le \liminf_{n\to\infty} J(\bm{y}_n,\theta_n,\mathfrak{H}_n) = \lim_{n\to\infty} J(\bm{y}_n,\theta_n,\mathfrak{H}_n) = \inf J,
\]
which shows that $(\bm{y}^*,\theta^*,\mathfrak{H}^*) \in \mathcal{A}_{ad}$ is an optimal triplet for the control problem \eqref{opt}.
\end{proof}
\subsection{The Linearized Problem}{\label{s3.1}}
Now we study the linearized system and prove its global well-posedness. 
Let $(\bm{y}_{\mathfrak{H}},\theta_{\mathfrak{H}})$ be a strong solution of \eqref{eq:constrained_Boussinesq} corresponding to 
$\mathfrak{H} \in L^2(0,T;\mathbb{H})$, and let $\bm{W} \in L^2(0,T;\mathbb{H})$. If we choose $\bm{W} = \mathfrak{H} - ( \mathfrak{H},\bm{y}_{\mathfrak{H}})_{\mathbb{H}}\,\bm{y}_{\mathfrak{H}}$, then the corresponding functions $(\bm{\eta},\xi)$ 
belong to the tangent cone of admissible perturbations. We consider the linearized system:
\begin{equation}\label{linearized-system-a}
\left\{
\begin{aligned}
\frac{d\bm{\eta}(t)}{dt}
&= -\nu A\bm{\eta}(t) + \nu \|\nabla \bm{y}_{\mathfrak{H}}(t)\|_{\mathbb{H}}^{2}\bm{\eta}(t)
+ 2\nu ( \nabla \bm{y}_{\mathfrak{H}}(t), \nabla \bm{\eta}(t) )_{\mathbb{H}}\,\bm{y}_{\mathfrak{H}}(t)
- B'(\bm{y}_{\mathfrak{H}}(t))\eta(t) \\
&\quad + \beta {\textbf{Proj}_{\mathbb H}}(\xi(t) e_2)
- \beta\Big[( \xi(t) e_2, \bm{y}_{\mathfrak{H}}(t))_{\mathbb{H}}\bm{y}_{\mathfrak{H}}(t)
+ ( \theta_{\mathfrak{H}}(t) e_2, \bm{\eta}(t))_{\mathbb{H}}\bm{y}_{\mathfrak{H}}(t)
+ ( \theta_{\mathfrak{H}}(t) e_2, \bm{y}_{\mathfrak{H}}(t))_{\mathbb{H}}\bm{\eta}(t)\Big] \\
&\quad + \bm{W}(t) - \Big[( \mathfrak{H}(t),\bm{y}_{\mathfrak{H}}(t))_{\mathbb{H}}\bm{\eta}(t)
+ ( \mathfrak{H}(t),\bm{\eta}(t))_{\mathbb{H}}\bm{y}_{\mathfrak{H}}(t)\Big], \\
\bm{\eta}(0) &= \bm{\eta}_0,
\end{aligned}
\right.
\end{equation}
\begin{equation}\label{linearized-system-b}
\left\{
\begin{aligned}
\frac{d\xi(t)}{dt}
&= -\kappa A\xi(t) + \kappa \|\nabla \theta_{\mathfrak{H}}(t)\|_{\mathbb H}^{2}\xi(t)
+ 2\kappa ( \nabla \theta_{\mathfrak{H}}(t), \nabla \xi(t))_{\mathbb H}\,\theta_{\mathfrak{H}}(t)
- \bm{y}_{\mathfrak{H}}(t)\cdot \nabla \xi(t) - \bm{\eta}(t)\cdot \nabla \theta_{\mathfrak{H}}(t), \\
\xi(0) &= \xi_0,
\end{aligned}
\right.
\end{equation}
for a.e.\ $t\in[0,T]$, where $B'(\bm{y}_{\mathfrak{H}})\bm{\eta}= B(\bm{y}_{\mathfrak{H}},\bm{\eta}) + B(\bm{\eta},\bm{y}_{\mathfrak{H}})$. Let $\bm{\eta}_0\in\mathbb{H}$ and $\xi_0\in H$. We first derive a formal \emph{a priori} estimate for \eqref{linearized-system-a}--\eqref{linearized-system-b}. A rigorous justification follows from the Faedo--Galerkin method. Taking the $\mathbb{H}$-inner product of \eqref{linearized-system-a} with $\bm{\eta}(\cdot)$, and using $( B(\bm{y}_{\mathfrak{H}},\bm{\eta}),\bm{\eta})_{\mathbb{H}}=0$ and $\bm{y}_{\mathfrak{H}}(t)\in\mathcal{M}_1$ for all $t\in[0,T]$, we obtain
\begin{align*}
\frac{1}{2}\frac{d}{dt}\|\bm{\eta}(t)\|_{\mathbb{H}}^2 + \nu \|\nabla \bm{\eta}(t)\|_{\mathbb{H}}^2
&= \nu \|\nabla \bm{y}_{\mathfrak{H}}(t)\|_{\mathbb{H}}^2 \|\bm{\eta}(t)\|_{\mathbb{H}}^2
+ 2\nu ( \nabla \bm{y}_{\mathfrak{H}}(t),\nabla \bm{\eta}(t))_{\mathbb{H}} ( \bm{y}_{\mathfrak{H}}(t),\bm{\eta}(t))_{\mathbb{H}} \\
&\quad - ( B(\bm{\eta}(t),\bm{y}_{\mathfrak{H}}(t)),\bm{\eta}(t))_{\mathbb{H}} \\
&\quad + \beta\Big[( {\textbf{Proj}_{\mathbb H}}(\xi(t) e_2),\bm{\eta}(t))_{\mathbb{H}}
- ( \xi(t) e_2,\bm{y}_{\mathfrak{H}}(t))_{\mathbb{H}} ( \bm{y}_{\mathfrak{H}}(t),\bm{\eta}(t))_{\mathbb{H}} \\
&\qquad\quad - ( \theta_{\mathfrak{H}}(t) e_2,\bm{\eta}(t))_{\mathbb{H}} ( \bm{y}_{\mathfrak{H}}(t),\bm{\eta}(t))_{\mathbb{H}}
- ( \theta_{\mathfrak{H}}(t) e_2,\bm{y}_{\mathfrak{H}}(t))_{\mathbb{H}} \|\bm{\eta}(t)\|_{\mathbb{H}}^2\Big] \\
&\quad + ( \bm{W}(t),\bm{\eta}(t))_{\mathbb{H}}
- ( \mathfrak{H}(t),\bm{y}_{\mathfrak{H}}(t))_{\mathbb{H}} \|\bm{\eta}(t)\|_{\mathbb{H}}^2
- ( \mathfrak{H}(t),\bm{\eta}(t))_{\mathbb{H}} ( \bm{y}_{\mathfrak{H}}(t),\bm{\eta}(t))_{\mathbb{H}},
\end{align*}
for a.e. $t\in[0,T]$. By the Cauchy--Schwarz and Young inequalities, we have
\begin{align*}
2\nu ( \nabla \bm{y}_{\mathfrak{H}},\nabla \bm{\eta})_{\mathbb{H}} ( \bm{y}_{\mathfrak{H}},\bm{\eta})_{\mathbb{H}}
&\le 2\nu \|\nabla \bm{y}_{\mathfrak{H}}\|_{\mathbb{H}} \|\nabla \bm{\eta}\|_{\mathbb{H}} \|\bm{\eta}\|_{\mathbb{H}} \le \frac{\nu}{6}\|\nabla \bm{\eta}\|_{\mathbb{H}}^2
+ C_{\nu} \|\nabla \bm{y}_{\mathfrak{H}}\|_{\mathbb{H}}^2 \|\bm{\eta}\|_{\mathbb{H}}^2.
\end{align*}
Using the properties of the trilinear map \eqref{eq:b_refined} and Ladyzhenskaya's inequality, we deduce
\begin{align*}
|( B(\bm{\eta},\bm{y}_{\mathfrak{H}}),\bm{\eta})_{\mathbb{H}}|
&\le \|\bm{\eta}\|_{\mathbb L^4}^2 \|\nabla \bm{y}_{\mathfrak{H}}\|_{\mathbb{H}} \\
&\le C \|\bm{\eta}\|_{\mathbb{H}} \|\nabla \bm{\eta}\|_{\mathbb{H}} \|\nabla \bm{y}_{\mathfrak{H}}\|_{\mathbb{H}} \\
&\le \frac{\nu}{6}\|\nabla \bm{\eta}\|_{\mathbb{H}}^2
+ C_{\nu}\|\nabla \bm{y}_{\mathfrak{H}}\|_{\mathbb{H}}^2 \|\bm{\eta}\|_{\mathbb{H}}^2.
\end{align*}
Using $(\bm{y}_{\mathfrak{H}},\theta_{\mathfrak{H}})\in\mathcal M$, together with H\"older's and
Young's inequalities, we infer
\[
\begin{aligned}
&|( \bm{W},\bm{\eta})_{\mathbb H}|
+\beta|( {\textbf{Proj}}_{\mathbb H}(\xi e_2),\bm{\eta})_{\mathbb H}|
+\beta|( \xi e_2,\bm{y}_{\mathfrak{H}})_{\mathbb H}( \bm{y}_{\mathfrak{H}},\bm{\eta})_{\mathbb H}| \\
&\quad
+\beta|( \theta_{\mathfrak{H}} e_2,\bm{\eta})_{\mathbb H}( \bm{y}_{\mathfrak{H}},\bm{\eta})_{\mathbb H}|
+\beta|( \theta_{\mathfrak{H}} e_2,\bm{y}_{\mathfrak{H}})_{\mathbb H}|\,\|\bm{\eta}\|_{\mathbb H}^2 \\
&\quad
+|( \mathfrak{H},\bm{y}_{\mathfrak{H}})_{\mathbb H}|\,\|\bm{\eta}\|_{\mathbb H}^2
+|( \mathfrak{H},\bm{\eta})_{\mathbb H}( \bm{y}_{\mathfrak{H}},\bm{\eta})_{\mathbb H}| \\
&\le
\frac12\|\bm{W}\|_{\mathbb H}^2
+\frac12\|\bm{\eta}\|_{\mathbb H}^2
+2\beta\|\xi\|_{H}\|\bm{\eta}\|_{\mathbb H}
+2\beta\|\bm{\eta}\|_{\mathbb H}^2
+2\|\mathfrak{H}\|_{\mathbb H}\|\bm{\eta}\|_{\mathbb H}^2 \\
&\le
\frac12\|\bm{W}\|_{\mathbb H}^2
+\beta^2\|\xi\|_{H}^2 
+\left(\frac32+2\beta+2\|\mathfrak{H}\|_{\mathbb H}\right)\|\bm{\eta}\|_{\mathbb H}^2.
\end{aligned}
\]
Combining the above estimates, we obtain
\begin{equation}\label{etaf-estimate}
\frac{1}{2}\frac{d}{dt}\|\bm{\eta}(t)\|_{\mathbb{H}}^2 + \frac{2\nu}{3}\|\nabla \bm{\eta}(t)\|_{\mathbb{H}}^2
\le C_{\nu,\beta}\left(1 + \|\nabla \bm{y}_{\mathfrak{H}}(t)\|_{\mathbb{H}}^2 + \|\mathfrak{H}(t)\|_{\mathbb{H}}\right)\|\bm{\eta}(t)\|_{\mathbb{H}}^2
+ C_{\beta}\|\xi(t)\|_{H}^2 + \frac12\|\bm{W}(t)\|_{\mathbb{H}}^2,
\end{equation}
where \(C_{\nu,\beta}, C_{\beta} > 0\) are constants depending on \(\nu\) and \(\beta\).

Now taking the \(L^2\)-inner product of \eqref{linearized-system-b} with \(\xi(\cdot)\), we obtain
\begin{align}
\frac{1}{2}\frac{d}{dt}\|\xi(t)\|_{H}^2
&= -\kappa \|\nabla \xi(t)\|_{\mathbb H}^2
+ \kappa \|\nabla \theta_{\mathfrak{H}}(t)\|_{\mathbb H}^2 \|\xi(t)\|_{H}^2  + 2\kappa ( \nabla \theta_{\mathfrak{H}}(t),\nabla \xi(t))_{H} ( \theta_{\mathfrak{H}}(t),\xi(t))_{H}
\nonumber\\&\quad- ( \bm{\eta}(t)\cdot\nabla \theta_{\mathfrak{H}}(t),\xi(t))_{H},
\label{xi-step-1}
\end{align}
for a.e. $t\in(0,T)$, since \(( \bm{y}_{\mathfrak{H}}\cdot\nabla \xi,\xi)_{H}=0\).
Using Hölder's, Ladyzhenskaya's, and Young's inequalities, we estimate
\begin{align}
\left|(\bm{\eta}\cdot\nabla\theta_{\mathfrak{H}},\xi)_{H}\right|
&=\left|(\bm{\eta}\cdot\nabla\xi,\theta_{\mathfrak{H}})_{H}\right|
\nonumber\\
&\leq \|\bm{\eta}\|_{\mathbb{L}^{4}}
\|\nabla\xi\|_{H}
\|\theta_{\mathfrak{H}}\|_{L^{4}}
\nonumber\\
&\leq \frac{\kappa}{4}\|\nabla\xi\|_{H}^{2}
+C_{\kappa}\|\bm{\eta}\|_{\mathbb{H}}
\|\nabla\bm{\eta}\|_{\mathbb{H}}
\|\theta_{\mathfrak{H}}\|_{L^{4}}^{2}
\nonumber\\
&\leq \frac{\kappa}{4}\|\nabla\xi\|_{H}^{2}
+\frac{\nu}{6}\|\nabla\bm{\eta}\|_{\mathbb{H}}^{2}
+C_{\nu,\kappa}\|\theta_{\mathfrak{H}}\|_{L^{4}}^{4}
\|\bm{\eta}\|_{\mathbb{H}}^{2},
\label{xi-term-1}\\
2\kappa|(\nabla\theta_{\mathfrak{H}},\nabla\xi)_{H}|
(\theta_{\mathfrak{H}},\xi)_{H}
&\leq 2\kappa\|\nabla\theta_{\mathfrak{H}}\|_{\mathbb H}
\|\nabla\xi\|_{\mathbb H}\|\xi\|_{H}
\nonumber\\
&\leq \frac{\kappa}{4}\|\nabla\xi\|_{H}^{2}
+C_{\kappa}\|\nabla\theta_{\mathfrak{H}}\|_{\mathbb H}^{2}
\|\xi\|_{H}^{2}.
\label{xi-term-2}
\end{align}
Combining the estimates \eqref{xi-term-1} and \eqref{xi-term-2}, we get for a.e. \(t\in[0,T]\)
\begin{equation}\label{xi-estimate}
\frac{1}{2}\frac{d}{dt}\|\xi(t)\|_{H}^2 + \frac{\kappa}{2}\|\nabla \xi(t)\|_{\mathbb H}^2
\le C_{\kappa}\left(1 + \|\nabla \theta_{\mathfrak{H}}(t)\|_{\mathbb H}^2\right)\|\xi(t)\|_{H}^2
+ \frac{\nu}{6}\|\nabla\bm{\eta}(t)\|_{\mathbb H}^2
+ C_{\nu,\kappa}\|\theta_{\mathfrak{H}}(t)\|_{L^4}^4\|\bm{\eta}(t)\|_{\mathbb H}^2.
\end{equation}
Adding \eqref{etaf-estimate} and \eqref{xi-estimate}, we obtain
\begin{align}
&\frac{d}{dt}\Big(\|\bm{\eta}(t)\|_{\mathbb{H}}^2 + \|\xi(t)\|_{H}^2\Big)
+ \nu\|\nabla \bm{\eta}(t)\|_{\mathbb{H}}^2
+ \kappa\|\nabla \xi(t)\|_{\mathbb H}^2 \notag\\
&\quad\le C_{\nu,\kappa,\beta}\Big(1 +\|\nabla\bm{y}_{\mathfrak{H}}(t)\|_{\mathbb H}^2+ \|\nabla \theta_{\mathfrak{H}}(t)\|_{\mathbb H}^2+\|\theta_{\mathfrak{H}}(t)\|_{L^4}^4+\|\mathfrak{H}(t)\|_{\mathbb{H}}\Big)
\Big(\|\bm{\eta}(t)\|_{\mathbb{H}}^2 + \|\xi(t)\|_{H}^2\Big) + \|\bm{W}(t)\|_{\mathbb{H}}^2,
\label{coupled-final}
\end{align}
for a.e. \(t\in[0,T]\) and \(C_{\nu,\kappa,\beta}>0\) is a constant depending on \(\nu\), \(\kappa\), and \(\beta\). Let us define for a.e. $t\in(0,T)$
\[
A(t):=C_{\nu,\kappa,\beta}
\Big(
1+\|\nabla\bm{y}_{\mathfrak{H}}(t)\|_{\mathbb H}^2
+\|\nabla\theta_{\mathfrak{H}}(t)\|_{\mathbb H}^2
+\|\theta_{\mathfrak{H}}(t)\|_{L^4}^4
+\|\mathfrak{H}(t)\|_{\mathbb H}
\Big).
\]
Then, from \eqref{coupled-final}, we have for a.e. \(t\in[0,T]\)
\begin{equation}\label{coupled-ineq}
\frac{d}{dt}\Big(\|\bm{\eta}(t)\|_{\mathbb{H}}^2 + \|\xi(t)\|_{H}^2\Big)
+ \nu\|\nabla \bm{\eta}(t)\|_{\mathbb{H}}^2
+ \kappa\|\nabla \xi(t)\|_{\mathbb H}^2
\le A(t)\Big(\|\bm{\eta}(t)\|_{\mathbb{H}}^2 + \|\xi(t)\|_{H}^2\Big)
+ \|\bm{W}(t)\|_{\mathbb{H}}^2.
\end{equation}
Integrating \eqref{coupled-ineq} from $0$ to $t$, we obtain
\[
\begin{aligned}
&\|\bm{\eta}(t)\|_{\mathbb{H}}^2 + \|\xi(t)\|_{H}^2
+ \nu\int_0^t \|\nabla \bm{\eta}(s)\|_{\mathbb{H}}^2\,ds
+ \kappa\int_0^t \|\nabla \xi(s)\|_{H}^2\,ds \\
&\le \|\bm{\eta}(0)\|_{\mathbb{H}}^2 + \|\xi(0)\|_{H}^2
+ \int_0^t A(s)\left(\|\bm{\eta}(s)\|_{\mathbb{H}}^2 + \|\xi(s)\|_{H}^2\right)ds
+ \int_0^t \|\bm{W}(s)\|_{\mathbb{H}}^2\,ds.
\end{aligned}
\]
Applying Grönwall's inequality to \eqref{coupled-ineq}, it follows that, for all $t\in[0,T]$, 
\begin{align}
\|\bm{\eta}(t)\|_{\mathbb{H}}^2 + \|\xi(t)\|_{H}^2
&\le \Big(\|\bm{\eta}(0)\|_{\mathbb{H}}^2 + \|\xi(0)\|_{H}^2
+ \int_0^t \|\bm{W}(s)\|_{\mathbb{H}}^2\,ds\Big) \exp\Big(\int_0^t A(s)\,ds\Big).
\label{final-bound}
\end{align}
Moreover, for all $\phi \in L^2(0,T;\mathbb{V})$, we have from \eqref{linearized-system-a} that 
\[
\begin{aligned}
&\left|
\int_0^T
{}_{\mathbb{V}'}\langle
{\partial_t\bm{\eta}(t)},
\phi(t)\rangle_{\mathbb{V}}
\,dt
\right| \\
&=
\Bigg|
\int_0^T
\Big(
-\nu A\bm{\eta}(t) + \nu \|\nabla \bm{y}_{\mathfrak{H}}(t)\|_{\mathbb{H}}^{2}\bm{\eta}(t)
+ 2\nu ( \nabla \bm{y}_{\mathfrak{H}}(t), \nabla \bm{\eta}(t) )_{\mathbb{H}}\,\bm{y}_{\mathfrak{H}}(t)
- B'(\bm{y}_{\mathfrak{H}}(t))\bm{\eta}(t) \\
&\quad + \beta {\textbf{Proj}_{\mathbb H}}(\xi(t) e_2)
- \beta\Big(( \xi(t) e_2, \bm{y}_{\mathfrak{H}}(t))_{\mathbb{H}}\bm{y}_{\mathfrak{H}}(t)
+ ( \theta_{\mathfrak{H}}(t) e_2, \bm{\eta}(t))_{\mathbb{H}}\bm{y}_{\mathfrak{H}}(t)
+ ( \theta_{\mathfrak{H}}(t) e_2, \bm{y}_{\mathfrak{H}}(t))_{\mathbb{H}}\bm{\eta}(t)\Big) \\
&\quad \left. + \bm{W}(t) - \Big(( \mathfrak{H}(t),\bm{y}_{\mathfrak{H}}(t))_{\mathbb{H}}\bm{\eta}(t)
+ ( \mathfrak{H}(t),\bm{\eta}(t))_{\mathbb{H}}\bm{y}_{\mathfrak{H}}(t)\Big),
\phi(t)
\right)_{\mathbb{H}}\,dt
\Bigg|.
\end{aligned}
\]
Using the Cauchy-Schwarz inequality, the Poincaré-Wirtinger inequality, the uniform bounds on $\bm{y}_{\mathfrak{H}}$ and $\theta_{\mathfrak{H}}$, and the facts that $\bm{y}_{\mathfrak{H}}\in\mathcal{M}_{1},\theta_{\mathfrak{H}}\in\mathcal M_2$, we estimate
\begin{align*}
&\left|
\int_0^T
{}_{\mathbb{V}'}\langle
{\partial_t\bm{\eta}(t)},
\phi(t)\rangle_{\mathbb{V}}
\,dt
\right| \\
&\le C_{\nu,\beta,\lambda_1,T}\Bigg[
\|\nabla \bm{\eta}\|_{L^2(0,T;\mathbb{H})}
+ \|\nabla \bm{y}_{\mathfrak{H}}\|_{L^\infty(0,T;\mathbb{H})}^2 \|\bm{\eta}\|_{L^\infty(0,T;\mathbb{H})} \\
&\quad + \|\nabla \bm{y}_{\mathfrak{H}}\|_{L^\infty(0,T;\mathbb{H})}\|\nabla\bm{\eta}\|_{L^2(0,T;\mathbb{H})}
+ \|\bm{y}_{\mathfrak{H}}\|_{L^{\infty}(0,T;\mathbb{V})} \|\nabla\bm{\eta}\|_{L^2(0,T;\mathbb{H})}  + \|\bm{\eta}\|_{L^4(0,T;\mathbb{L}^4)} \|\nabla\bm{y}_{\mathfrak{H}}\|_{L^4(0,T;\mathbb{L}^4)} \\
&\quad + \|\bm{W}\|_{L^2(0,T;\mathbb{H})}
+ \|\mathfrak{H}\|_{L^2(0,T;\mathbb{H})} \|\bm{\eta}\|_{L^\infty(0,T;\mathbb{H})}
+ \|\xi\|_{L^2(0,T;H)}
+ \|\bm{\eta}\|_{L^2(0,T;\mathbb{H})}
\Bigg] \|\phi\|_{L^2(0,T;\mathbb{V})},
\end{align*}
where $\lambda_1$ is the first eigenvalue of the Stokes operator.
Hence, $\frac{d\bm{\eta}}{dt} \in L^2(0,T;\mathbb{V}')$, and by the Lions--Magenes lemma, $\bm{\eta} \in C([0,T];\mathbb{H})$.
Similarly, for all $\psi \in L^2(0,T;V)$, we have from \eqref{linearized-system-b} that 
\begin{align*}
&\left|
\int_0^T
{}_{V'}\langle
\partial_t\xi(t),\psi(t)\rangle_V
\,dt
\right| \\
&=
\left|
\int_0^T
\Big(
-\kappa A\xi(t)
+\kappa\|\nabla\theta_{\mathfrak{H}}(t)\|_{H}^{2}\xi(t)
+2\kappa(\nabla\theta_{\mathfrak{H}}(t),\nabla\xi(t))_{H}
\,\theta_{\mathfrak{H}}(t)
-\bm{y}_{\mathfrak{H}}(t)\cdot\nabla\xi(t)
-\bm{\eta}(t)\cdot\nabla\theta_{\mathfrak{H}}(t),
\psi(t)
\Big)_{H}
\,dt
\right| \\
&\leq C_{\kappa,\mu_1,T}\Bigg[
\|\nabla\xi\|_{L^2(0,T;H)}
+\|\nabla\theta_{\mathfrak{H}}\|_{L^\infty(0,T;H)}^2
\|\xi\|_{L^\infty(0,T;H)}
+\|\nabla\theta_{\mathfrak{H}}\|_{L^\infty(0,T;H)}
\|\nabla\xi\|_{L^2(0,T;H)}
\\
&\quad+\|\bm{y}_{\mathfrak{H}}\|_{L^{\infty}(0,T;\mathbb{V})}
\|\nabla\xi\|_{L^2(0,T;H)}
+\|\bm{\eta}\|_{L^4(0,T;\mathbb{L}^4)}
\|\nabla\theta_{\mathfrak{H}}\|_{L^4(0,T;L^4)}
\Bigg]
\|\psi\|_{L^2(0,T;V)},
\end{align*}
where \(\mu_1\) is the first eigenvalue of the operator \(\mathcal{A}\) (see Section 2 for the definition of the operator).
Hence,
\(\displaystyle \frac{d\xi}{dt} \in L^2(0,T;V') \  \text{and hence }  \ \xi \in C([0,T];H)\).
The preceding analysis leads to the following well-posedness result (since the problem is linear) for weak solutions to problem \eqref{linearized-system-a}--\eqref{linearized-system-b}.

\begin{theorem}
Let \(\bm{\eta}_0 \in \mathbb{H}\), \(\xi_0 \in H\), and \(\bm{W}, \mathfrak{H} \in L^2(0,T;\mathbb{H})\) be given. Thus, we obtain a weak solution $(\bm{\eta},\xi)$, which is unique and satisfies 
\(\bm{\eta} \in L^\infty(0,T;\mathbb{H}) \cap L^2(0,T;\mathbb{V})\), 
\(\xi \in L^\infty(0,T;H) \cap L^2(0,T;V)\), with 
\(\frac{d\bm{\eta}}{dt} \in L^2(0,T;\mathbb{V}')\), 
\(\frac{d\xi}{dt} \in L^2(0,T;V')\), and hence 
\(\bm{\eta} \in C([0,T];\mathbb{H})\) and \(\xi \in C([0,T];H)\).
\allowdisplaybreaks
Moreover, for all \(t\in[0,T]\), we have the following energy identities:
\begin{align}
&\|\bm{\eta}(t)\|_{\mathbb{H}}^2 + 2\nu \int_0^t \|\nabla \bm{\eta}(s)\|_{\mathbb{H}}^2 ds
\nonumber\\&= \|\bm{\eta}_0\|_{\mathbb{H}}^2 
+ 2\nu \int_0^t \|\nabla \bm{y}_{\mathfrak{H}}(s)\|_{\mathbb{H}}^2 \|\bm{\eta}(s)\|_{\mathbb{H}}^2 ds + 4\nu \int_0^t ( \nabla \bm{y}_{\mathfrak{H}}(s), \nabla \bm{\eta}(s) )_{\mathbb{H}}
( \bm{y}_{\mathfrak{H}}(s), \bm{\eta}(s) )_{\mathbb{H}} ds
\notag\\
&\quad - 2 \int_0^t ( B(\bm{\eta}(s),\bm{y}_{\mathfrak{H}}(s)), \bm{\eta}(s))_{\mathbb{H}} ds  + 2\beta \int_0^t ( \xi(s) e_2, \bm{\eta}(s) )_{\mathbb{H}} ds
\notag\\
&\quad- 2\beta \int_0^t \Big[
( \xi(s) e_2,\bm{y}_{\mathfrak{H}}(s))_{\mathbb{H}} ( \bm{y}_{\mathfrak{H}}(s),\bm{\eta}(s))_{\mathbb{H}}  + ( \theta_{\mathfrak{H}}(s) e_2,\bm{\eta}(s))_{\mathbb{H}} ( \bm{y}_{\mathfrak{H}}(s),\bm{\eta}(s))_{\mathbb{H}}
+ ( \theta_{\mathfrak{H}}(s) e_2,\bm{y}_{\mathfrak{H}}(s))_{\mathbb{H}} \|\bm{\eta}(s)\|_{\mathbb{H}}^2
\Big] ds \notag\\
&\quad + 2 \int_0^t ( \bm{W}(s), \bm{\eta}(s) )_{\mathbb{H}} ds - 2 \int_0^t \Big[
( \mathfrak{H}(s),\bm{y}_{\mathfrak{H}}(s))_{\mathbb{H}} \|\bm{\eta}(s)\|_{\mathbb{H}}^2  + ( \mathfrak{H}(s),\bm{\eta}(s))_{\mathbb{H}} ( \bm{y}_{\mathfrak{H}}(s),\bm{\eta}(s))_{\mathbb{H}}
\Big] ds,
\label{energy-eta-correct}
\end{align}
and
\begin{align}
&\|\xi(t)\|_{H}^2 + 2\kappa \int_0^t \|\nabla \xi(s)\|_{H}^2 ds
\nonumber\\&= \|\xi_0\|_{H}^2 
+ 2\kappa \int_0^t \|\nabla \theta_{\mathfrak{H}}(s)\|_{\mathbb H}^2 \|\xi(s)\|_{H}^2 ds  + 4\kappa \int_0^t ( \nabla \theta_{\mathfrak{H}}(s), \nabla \xi(s) )_{\mathbb H}
( \theta_{\mathfrak{H}}(s), \xi(s) )_{H} ds
\notag\\
&\quad- 2 \int_0^t (\bm{\eta}(s)\cdot \nabla \theta_{\mathfrak{H}}(s), \xi(s))_{H}\, ds,
\label{energy-xi-correct}
\end{align}
for all \(t\in[0,T].\)
Consequently, we obtain the estimate given below:
\begin{align}
\sup_{t\in[0,T]} \big(\|\bm{\eta}(t)\|_{\mathbb{H}}^2 + \|\xi(t)\|_{H}^2\big)
&+ \nu \int_0^T \|\nabla \bm{\eta}(t)\|_{\mathbb{H}}^2 dt
+ \kappa \int_0^T \|\nabla \xi(t)\|_{\mathbb H}^2 dt \notag\\
&\le C_{\nu,\kappa,\beta,T}\left( \|\bm{\eta}_0\|_{\mathbb{H}}^2 + \|\xi_0\|_{H}^2
+ \int_0^T \|\bm{W}(t)\|_{\mathbb{H}}^2 dt \right).
\end{align}
\end{theorem}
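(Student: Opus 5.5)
Since \eqref{linearized-system-a}--\eqref{linearized-system-b} is linear in $(\bm{\eta},\xi)$ with coefficients built from the strong solution $(\bm{y}_{\mathfrak{H}},\theta_{\mathfrak{H}})$ of Theorem~\ref{well}, I will prove the statement by a Faedo--Galerkin argument that makes the formal estimate \eqref{final-bound} rigorous, followed by a limit passage, energy identities, and uniqueness. With the bases $\{w_k\}$, $\{e_k\}$ and projections $\Pi_m,Q_m$ from the proof of Theorem~\ref{well}, I seek $\bm{\eta}_m\in\mathbb H_m$, $\xi_m\in H_m$ solving the projected linear system with data $\Pi_m\bm{\eta}_0$, $Q_m\xi_0$. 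This is a linear ODE system whose coefficients lie in $L^1(0,T)$, because they involve $\|\nabla\bm{y}_{\mathfrak{H}}\|^2_{\mathbb H}$, $\|\nabla\theta_{\mathfrak{H}}\|^2_{\mathbb H}$, $\|\mathfrak{H}\|_{\mathbb H}$, $\|\bm{y}_{\mathfrak{H}}\|_{\mathbb L^\infty}$ and the like. Carathéodory's theorem therefore gives a unique absolutely continuous solution on all of $[0,T]$, with no blow-up possible for linear equations.

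For the Galerkin solutions, the computations leading to \eqref{etaf-estimate}, \eqref{xi-estimate} and \eqref{coupled-ineq} hold verbatim. The projections disappear when testing with $\bm{\eta}_m$ and $\xi_m$, and the only structural facts used are $b(\bm{y}_{\mathfrak{H}},\bm{\eta}_m,\bm{\eta}_m)=0$, $(\bm{y}_{\mathfrak{H}}\cdot\nabla\xi_m,\xi_m)_H=0$, and $(\bm{y}_{\mathfrak{H}},\theta_{\mathfrak{H}})\in\mathcal M$. The weight $A(t)$ is integrable: $\int_0^T\|\theta_{\mathfrak{H}}\|_{L^4}^4\,dt\le C\sup_t\|\theta_{\mathfrak{H}}\|_H^2\int_0^T\|\nabla\theta_{\mathfrak{H}}\|^2\,dt$ by Ladyzhenskaya, and the remaining terms are controlled by Theorem~\ref{well} and $\mathfrak{H}\in L^2$. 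Grönwall then gives the uniform bound of the final estimate for $(\bm{\eta}_m,\xi_m)$ in $L^\infty(0,T;\mathbb H\times H)\cap L^2(0,T;\mathbb V\times V)$. Repeating the duality computation preceding the theorem bounds $d\bm{\eta}_m/dt$ in $L^2(0,T;\mathbb V')$ and $d\xi_m/dt$ in $L^2(0,T;V')$ uniformly in $m$. The delicate terms there are $B(\bm{\eta}_m,\bm{y}_{\mathfrak{H}})$ and $\bm{\eta}_m\cdot\nabla\theta_{\mathfrak{H}}$, which are handled through $\|\bm{\eta}_m\|_{L^4(0,T;\mathbb L^4)}$ via Ladyzhenskaya. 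By Banach--Alaoglu, a weak-$*$/weak limit $(\bm{\eta},\xi)$ exists in these spaces. Since the equation is linear with fixed coefficients, every term converges weakly against test functions $\varphi\in C^1([0,T];\mathbb V_n)$, so no compactness is needed. Density then gives the weak formulation, and the initial data are recovered in the usual way.

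Given $\bm{\eta}\in L^2(0,T;\mathbb V)$ with $\partial_t\bm{\eta}\in L^2(0,T;\mathbb V')$, and the analogous statement for $\xi$, the Lions--Magenes lemma yields $\bm{\eta}\in C([0,T];\mathbb H)$, $\xi\in C([0,T];H)$, together with $\frac{d}{dt}\|\bm{\eta}\|_{\mathbb H}^2=2\,{}_{\mathbb V'}\langle\partial_t\bm{\eta},\bm{\eta}\rangle_{\mathbb V}$. Inserting the equation into this identity and integrating gives exactly \eqref{energy-eta-correct}, and likewise \eqref{energy-xi-correct}. This step uses that each right-hand side term defines an element of $L^1(0,T)$ when paired with $\bm{\eta}$ or $\xi$, which the bounds above guarantee. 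From the energy identities, the estimates \eqref{etaf-estimate}--\eqref{xi-estimate} now hold rigorously, and Grönwall gives the final estimate with $C_{\nu,\kappa,\beta,T}$ absorbing $\exp(\int_0^TA)$. Uniqueness follows by linearity: the difference of two weak solutions solves the system with zero data and $\bm{W}=0$, so the estimate forces it to vanish.

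The main obstacle is making sure the uniform bound on the time derivatives, and the legitimacy of the energy identity, hold at the weak level. In particular, $(\bm{\eta}\cdot\nabla\theta_{\mathfrak{H}},\xi)_H$ must be rewritten as $-(\bm{\eta}\cdot\nabla\xi,\theta_{\mathfrak{H}})_H$, using $\operatorname{div}\bm{\eta}=0$, to stay within the $L^2(0,T;V)$ regularity. I will treat this term with the Ladyzhenskaya interpolation already used in \eqref{xi-term-1}.
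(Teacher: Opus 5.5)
Your proposal is correct and follows the paper's route. The paper derives the same formal inequality \eqref{coupled-ineq}, appeals to Faedo--Galerkin for rigor, bounds the time derivatives by duality, applies Lions--Magenes, and obtains uniqueness from linearity; you fill in the Galerkin limit passage, the energy identities and the uniqueness argument that the paper leaves implicit. The one step you leave unstated is that the uniform $L^2(0,T;\mathbb V')$ and $L^2(0,T;V')$ bounds on $d\bm\eta_m/dt$ and $d\xi_m/dt$ need $\Pi_m$ and $Q_m$ to be stable on $\mathbb V$ and $V$, which holds because the bases are eigenfunctions of $A$ and $\mathcal A$.
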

\begin{remark}
In the linearized problem, we may choose
\[
\bm{W}  = \mathfrak{H} - ( \mathfrak{H},\bm{y}_{\mathfrak{H}} )_{\mathbb{H}}\,\bm{y}_{\mathfrak{H}} ,
\ \  \mathfrak{H} \in L^2(0,T;\mathbb{H}).
\]
Since \(\bm{y}_{\mathfrak{H}}(t) \in \mathcal{M}_1\) for all \(t \in [0,T]\), we have
\[
\|\bm{W}(t)\|_{\mathbb{H}}^2 
= \|\mathfrak{H}(t)\|_{\mathbb{H}}^2 - |( \mathfrak{H}(t),\bm{y}_{\mathfrak{H}}(t))_{\mathbb{H}}|^2 
\le \|\mathfrak{H}(t)\|_{\mathbb{H}}^2,
\]
and consequently,
\[
\int_0^T \|\bm{W}(t)\|_{\mathbb{H}}^2 dt 
\le \int_0^T \|\mathfrak{H}(t)\|_{\mathbb{H}}^2 dt < \infty.
\]
Thus, \(\bm{W} \in L^2(0,T;\mathbb{H})\), which shows that the problem \eqref{linearized-system-a}--\eqref{linearized-system-b} admits a weak solution .
\end{remark}
\subsection{The adjoint system.}{\label{s3.2}}
It is necessary to introduce the adjoint system corresponding to the solutions of the Boussinesq system \eqref{eq:constrained_Boussinesq} in order to derive the necessary condition for first-order optimality,
which constitutes the main result of this work. To this end, let \((\bm{\vartheta}(\cdot), q(\cdot))\) be the adjoint variables given by the solution to the following system:
\begin{equation}{\label{adjj}}
\begin{cases}
-\dfrac{d\bm{\vartheta}(t)}{dt}
= -\nu A \bm{\vartheta}(t) + \nu \|\nabla \bm{y}_{\mathfrak{H}}(t)\|_{\mathbb{H}}^2 \bm{\vartheta}(t)
+ 2\nu ( \bm{y}_{\mathfrak{H}}(t),\bm{\vartheta}(t))_{\mathbb{H}}\, A \bm{y}_{\mathfrak{H}}(t) - (B'(\bm{y}_{\mathfrak{H}}(t)))^* \bm{\vartheta}(t) \\
\qquad - \beta ( \theta_{\mathfrak{H}}(t) e_2, \bm{y}_{\mathfrak{H}}(t))_{\mathbb{H}}\, \bm{\vartheta}(t)
- \beta ( \bm{y}_{\mathfrak{H}}(t),\bm{\vartheta}(t))_{\mathbb{H}}\,\theta_{\mathfrak{H}}(t) e_2
- ( \mathfrak{H}(t),\bm{y}_{\mathfrak{H}}(t))_{\mathbb{H}}\, \bm{\vartheta}(t) \\
\qquad - ( \bm{y}_{\mathfrak{H}}(t),\bm{\vartheta}(t))_{\mathbb{H}}\,\mathfrak{H}(t)
+ (\bm{y}_{\mathfrak{H}}(t)-\bm{y}_d) + A\bm{y}_{\mathfrak{H}}(t) - (\nabla \theta_{\mathfrak{H}}(t))^{\top} q(t), 
\quad \text{in } \mathbb{V}', \\[4pt]
-\dfrac{dq(t)}{dt}
= -\kappa {\mathcal{A} q(t)} + \kappa \|\nabla \theta_{\mathfrak{H}}(t)\|_{\mathbb H}^2 q(t)
+ 2\kappa ( \theta_{\mathfrak{H}}(t),q(t))_{H}\, \mathcal{A}\theta_{\mathfrak{H}}(t) + \bm{y}_{\mathfrak{H}}(t)\cdot\nabla q(t) \\
\qquad + \beta (\bm{\vartheta}(t)\cdot e_2) - \beta ( \bm{y}_{\mathfrak{H}}(t),\bm{\vartheta}(t))_{\mathbb{H}} (\bm{y}_{\mathfrak{H}}(t)\cdot e_2)
+ (\theta_{\mathfrak{H}}(t)-\theta_d),
\quad \text{in } V', \\[4pt]
\bm{\vartheta}(T) = \bm{y}_{\mathfrak{H}}(T) - \bm{y}_f, \ \  q(T) = \theta_{\mathfrak{H}}(T) - \theta_f,
\end{cases}
\end{equation} 
for a.e.\ $t \in [0,T]$, where $((B'(\bm{y}_{\mathfrak{H}}))^* \bm{\vartheta},\bm{k})_{\mathbb{H}}=(\bm{\vartheta},B'(\bm{y}_{\mathfrak{H}})\bm{k})_{\mathbb{H}}=(\bm{\vartheta},B(\bm{y}_{\mathfrak{H}},\bm{k}))_{\mathbb{H}}+(\bm{\vartheta},B(\bm{k},\bm{y}_{\mathfrak{H}}))_{\mathbb{H}}$.
We first establish an \emph{a priori} energy estimate for \(\bm{\vartheta}\). Taking the \(\mathbb{H}\)-inner product of the first adjoint equation with \(\bm{\vartheta}(\cdot)\), and using
$(B(\bm{y}_{\mathfrak{H}},\bm{\vartheta}),\bm{\vartheta})_{\mathbb{H}}=0$ together with \((\bm{y}_{\mathfrak{H}},\theta_{\mathfrak{H}})\in\mathcal{M}\), we obtain
\begin{align}
&-\frac{1}{2}\frac{d}{dt}\|\bm{\vartheta}(t)\|_{\mathbb{H}}^2 + \nu \|\nabla \bm{\vartheta}(t)\|_{\mathbb{H}}^2\nonumber
\\&= \nu \|\nabla \bm{y}_{\mathfrak{H}}(t)\|_{\mathbb{H}}^2 \|\bm{\vartheta}(t)\|_{\mathbb{H}}^2
+ 2\nu ( \bm{y}_{\mathfrak{H}}(t),\bm{\vartheta}(t))_{\mathbb{H}} ( \nabla \bm{y}_{\mathfrak{H}}(t),\nabla \bm{\vartheta}(t))_{\mathbb{H}}  - ( B(\bm{\vartheta}(t),\bm{y}_{\mathfrak{H}}(t)),\bm{\vartheta}(t))_{\mathbb{H}}
\notag\\
&\quad- \beta ( \theta_{\mathfrak{H}}(t) e_2, \bm{y}_{\mathfrak{H}}(t))_{\mathbb{H}} \|\bm{\vartheta}(t)\|_{\mathbb{H}}^2  - \beta ( \bm{y}_{\mathfrak{H}}(t),\bm{\vartheta}(t))_{\mathbb{H}} ( \theta_{\mathfrak{H}}(t) e_2,\bm{\vartheta}(t))_{\mathbb{H}}
- ( \mathfrak{H}(t),\bm{y}_{\mathfrak{H}}(t))_{\mathbb{H}} \|\bm{\vartheta}(t)\|_{\mathbb{H}}^2 \notag\\
&\quad -( \bm{y}_{\mathfrak{H}}(t),\bm{\vartheta}(t))_{\mathbb{H}} ( \mathfrak{H}(t),\bm{\vartheta}(t))_{\mathbb{H}}
+ ( \bm{y}_{\mathfrak{H}}(t)-\bm{y}_d,\bm{\vartheta}(t))_{\mathbb{H}} +  ( A\bm{y}_{\mathfrak{H}}(t),\bm{\vartheta}(t))_{\mathbb{H}}
- ( (\nabla \theta_{\mathfrak{H}}(t))^{\top} q(t), \bm{\vartheta}(t))_{\mathbb{H}},
\label{energy_p_1}
\end{align}
for a.e. \(t\in[0,T].\)
We next derive estimates for each term appearing on the right-hand side.
Using $\|\bm{y}_{\mathfrak{H}}\|_{\mathbb H}=1$, together with
the Cauchy--Schwarz and Young inequalities, we have
\begin{align}{\label{1st}}
2\nu\left|
(\bm{y}_{\mathfrak{H}},\bm{\vartheta})_{\mathbb H}
(\nabla\bm{y}_{\mathfrak{H}},\nabla\bm{\vartheta})_{\mathbb H}
\right|
&\le
2\nu\|\bm{\vartheta}\|_{\mathbb H}
\|\nabla\bm{\vartheta}\|_{\mathbb H}
\|\nabla\bm{y}_{\mathfrak{H}}\|_{\mathbb H}\le
\frac{\nu}{6}\|\nabla\bm{\vartheta}\|_{\mathbb H}^2
+C_\nu\|\nabla\bm{y}_{\mathfrak{H}}\|_{\mathbb H}^2
\|\bm{\vartheta}\|_{\mathbb H}^2.
\end{align}
Using the properties of the trilinear form \eqref{trii},
Ladyzhenskaya's inequality, and Young's inequality, we get
\begin{align}{\label{2nd}}
|(B(\bm{\vartheta},\bm{y}_{\mathfrak{H}}),\bm{\vartheta})_{\mathbb H}|
&\le
C\|\bm{\vartheta}\|_{\mathbb L^4}^2
\|\nabla\bm{y}_{\mathfrak{H}}\|_{\mathbb H}\notag\\
&\le
C\|\bm{\vartheta}\|_{\mathbb H}
\|\nabla\bm{\vartheta}\|_{\mathbb H}
\|\nabla\bm{y}_{\mathfrak{H}}\|_{\mathbb H}\notag\\
&\le
\frac{\nu}{6}\|\nabla\bm{\vartheta}\|_{\mathbb H}^2
+C_\nu\|\nabla\bm{y}_{\mathfrak{H}}\|_{\mathbb H}^2
\|\bm{\vartheta}\|_{\mathbb H}^2.
\end{align}
Using
$\|\bm{y}_{\mathfrak{H}}\|_{\mathbb H}=\|\theta_{\mathfrak{H}}\|_H=1$,
we estimate the coupling, control, and cost terms as follows:
\begin{align}{\label{3rd}}
&-\beta(\theta_{\mathfrak{H}}e_2,\bm{y}_{\mathfrak{H}})_{\mathbb H}
\|\bm{\vartheta}\|_{\mathbb H}^2
-\beta(\bm{y}_{\mathfrak{H}},\bm{\vartheta})_{\mathbb H}
(\theta_{\mathfrak{H}}e_2,\bm{\vartheta})_{\mathbb H}\notag\\
&\quad
-(\mathfrak{H},\bm{y}_{\mathfrak{H}})_{\mathbb H}\|\bm{\vartheta}\|_{\mathbb H}^2
-(\bm{y}_{\mathfrak{H}},\bm{\vartheta})_{\mathbb H}
(\mathfrak{H},\bm{\vartheta})_{\mathbb H}\notag\\
&\quad
+(\bm{y}_{\mathfrak{H}}-\bm{y}_d,\bm{\vartheta})_{\mathbb H}
+(A\bm{y}_{\mathfrak{H}},\bm{\vartheta})_{\mathbb H}
-((\nabla\theta_{\mathfrak{H}})^\top q,\bm{\vartheta})_{\mathbb H}\notag\\
&\le
2|\beta|\|\bm{\vartheta}\|_{\mathbb H}^2
+2\|\mathfrak{H}\|_{\mathbb H}\|\bm{\vartheta}\|_{\mathbb H}^2
+\|\bm{y}_{\mathfrak{H}}-\bm{y}_d\|_{\mathbb H}\|\bm{\vartheta}\|_{\mathbb H}\notag\\
&\quad
+\|\nabla\bm{y}_{\mathfrak{H}}\|_{\mathbb H}
\|\nabla\bm{\vartheta}\|_{\mathbb H}
+\|\theta_{\mathfrak{H}}\|_{L^4}
\|\bm{\vartheta}\|_{\mathbb L^4}\|\nabla q\|_{\mathbb H}.
\end{align}
By Ladyzhenskaya's and Young's inequalities, we write
\begin{align}{\label{4th}}
\|\theta_{\mathfrak{H}}\|_{L^4}
\|\bm{\vartheta}\|_{\mathbb L^4}\|\nabla q\|_{\mathbb H}
&\le
\frac{\kappa}{4}\|\nabla q\|_{\mathbb H}^2
+C_\kappa\|\theta_{\mathfrak{H}}\|_{L^4}^2
\|\bm{\vartheta}\|_{\mathbb H}\|\nabla\bm{\vartheta}\|_{\mathbb H}\nonumber\\
&\le
\frac{\kappa}{4}\|\nabla q\|_{\mathbb H}^2
+\frac{\nu}{12}\|\nabla\bm{\vartheta}\|_{\mathbb H}^2
+C_{\nu,\kappa}\|\theta_{\mathfrak{H}}\|_{L^4}^4
\|\bm{\vartheta}\|_{\mathbb H}^2.
\end{align}
Moreover, Young's inequality yields
\begin{align}
\|\nabla\bm{y}_{\mathfrak{H}}\|_{\mathbb H}
\|\nabla\bm{\vartheta}\|_{\mathbb H}
&\le
\frac{\nu}{12}\|\nabla\bm{\vartheta}\|_{\mathbb H}^2
+C_\nu\|\nabla\bm{y}_{\mathfrak{H}}\|_{\mathbb H}^2,
\label{5th}\\
\|\bm{y}_{\mathfrak{H}}-\bm{y}_d\|_{\mathbb H}\|\bm{\vartheta}\|_{\mathbb H}
&\le
\frac12\|\bm{y}_{\mathfrak{H}}-\bm{y}_d\|_{\mathbb H}^2
+\frac12\|\bm{\vartheta}\|_{\mathbb H}^2.
\label{6th}
\end{align}
Combining the estimates \eqref{1st}--\eqref{6th}, we obtain
\begin{equation}\label{energy-p-estimate}
\begin{aligned}
-\frac12\frac{d}{dt}\|\bm{\vartheta}(t)\|_{\mathbb H}^2
+\frac{\nu}{2}\|\nabla\bm{\vartheta}(t)\|_{\mathbb H}^2
&\le
C_{\nu,\kappa,\beta}
\Big(
1+\|\nabla\bm{y}_{\mathfrak{H}}(t)\|_{\mathbb H}^2
+\|\mathfrak{H}(t)\|_{\mathbb H}
+\|\theta_{\mathfrak{H}}(t)\|_{L^4}^4
\Big)\|\bm{\vartheta}(t)\|_{\mathbb H}^2\\
&\quad
+\frac{\kappa}{4}\|\nabla q(t)\|_{\mathbb H}^2
+C\|\bm{y}_{\mathfrak{H}}(t)-\bm{y}_d(t)\|_{\mathbb H}^2
+C_\nu\|\nabla\bm{y}_{\mathfrak{H}}(t)\|_{\mathbb H}^2,
\end{aligned}
\end{equation}
for a.e.\ $t\in(0,T)$.
In a similar manner, taking the $H$-inner product of the second adjoint equation with $q(\cdot)$, we have
\begin{align}
-&\frac{1}{2}\frac{d}{dt}\|q(t)\|_{H}^2 + \kappa \|\nabla q(t)\|_{\mathbb H}^2
\nonumber\\&= \kappa \|\nabla \theta_{\mathfrak{H}}(t)\|_{\mathbb H}^2 \|q(t)\|_{H}^2
+ 2\kappa ( \theta_{\mathfrak{H}}(t),q(t))_{H} ( \nabla \theta_{\mathfrak{H}}(t),\nabla q(t))_{H}  + ( \bm{y}_{\mathfrak{H}}(t)\cdot \nabla q(t), q(t))_{H}
\notag\\
&\quad+ \beta (\bm{\vartheta}(t)\cdot e_2, q(t))_{H}
- \beta ( \bm{y}_{\mathfrak{H}}(t),\bm{\vartheta}(t))_{\mathbb{H}} ( \bm{y}_{\mathfrak{H}}(t)\cdot e_2, q(t))_{H}  + ( \theta_{\mathfrak{H}}(t)-\theta_d,q(t))_{H},
\label{energy_q_1}
\end{align}
for a.e. \(t\in[0,T].\)
Using the fact that
$(\bm{y}_{\mathfrak{H}},\theta_{\mathfrak{H}})\in\mathcal M$,
together with the Cauchy--Schwarz and Young inequalities,
we obtain
\begin{align}{\label{q1}}
2\kappa\left|
(\theta_{\mathfrak{H}},q)_H
(\nabla\theta_{\mathfrak{H}},\nabla q)_H
\right|
&\le
2\kappa\|q\|_H
\|\nabla\theta_{\mathfrak{H}}\|_{\mathbb H}\|\nabla q\|_{\mathbb H}\le
\frac{\kappa}{2}\|\nabla q\|_{\mathbb H}^2
+C_\kappa\|\nabla\theta_{\mathfrak{H}}\|_{\mathbb H}^2\|q\|_H^2.
\end{align}
Similarly, we also have
\begin{align}{\label{q2}}
&\left|\beta(\bm{\vartheta}\cdot e_2,q)_H
-\beta(\bm{y}_{\mathfrak{H}},\bm{\vartheta})_{\mathbb H}
(\bm{y}_{\mathfrak{H}}\cdot e_2,q)_H\right|
+\left|(\theta_{\mathfrak{H}}-\theta_d,q)_H\right|\notag\\
&\quad\le
2|\beta|\|\bm{\vartheta}\|_{\mathbb H}\|q\|_H
+\|\theta_{\mathfrak{H}}-\theta_d\|_H\|q\|_H\notag\\
&\quad\le
\|\bm{\vartheta}\|_{\mathbb H}^2
+\left(\beta^2+\frac12\right)\|q\|_H^2
+\frac12\|\theta_{\mathfrak{H}}-\theta_d\|_H^2.
\end{align}
Using $(\bm{y}_{\mathfrak{H}}\cdot\nabla q,q)_H=0$ and
the estimates \eqref{q1} and \eqref{q2} in
\eqref{energy_q_1}, we obtain
\begin{align}
-\frac{d}{dt}\|q(t)\|_H^2
+\kappa\|\nabla q(t)\|_{\mathbb H}^2
&\le
C_{\kappa,\beta}
\big(1+\|\nabla\theta_{\mathfrak{H}}(t)\|_{\mathbb H}^2\big)\|q(t)\|_H^2
+C\Big(
\|\bm{\vartheta}(t)\|_{\mathbb H}^2
+\|\theta_{\mathfrak{H}}(t)-\theta_d(t)\|_H^2
\Big),
\label{energy_q_final}
\end{align}
for a.e.\ $t\in(0,T)$.
Combining \eqref{energy-p-estimate} and \eqref{energy_q_final}, we deduce
\begin{align}
-&\frac{d}{dt}\big(\|\bm{\vartheta}(t)\|_{\mathbb{H}}^2 + \|q(t)\|_{H}^2\big)
+ \nu \|\nabla \bm{\vartheta}(t)\|_{\mathbb{H}}^2 + \kappa \|\nabla q(t)\|_{\mathbb H}^2 \notag\\
&\le C_{\nu,\kappa,\beta}\big(1 + \|\nabla \bm{y}_{\mathfrak{H}}(t)\|_{\mathbb{H}}^2 + \|\nabla \theta_{\mathfrak{H}}(t)\|_{\mathbb H}^2 + \|\mathfrak{H}(t)\|_{\mathbb{H}}\big)
\big(\|\bm{\vartheta}(t)\|_{\mathbb{H}}^2 + \|q(t)\|_{H}^2\big) \notag\\
&\quad + C\big(\|\bm{y}_{\mathfrak{H}}(t)-\bm{y}_d\|_{\mathbb{H}}^2 + \|\theta_{\mathfrak{H}}(t)-\theta_d\|_{H}^2\big) + C_\nu \|\nabla \bm{y}_{\mathfrak{H}}(t)\|_{\mathbb{H}}^2,
\label{energy_combined}
\end{align}
for a.e. \(t\in[0,T].\)
Integrating \eqref{energy_combined} over \([t,T]\), we have we obtain for all \(t \in [0, T ]\),
\begin{align}
&\|\bm{\vartheta}(t)\|_{\mathbb{H}}^2 + \|q(t)\|_{H}^2
+ \nu \int_t^T \|\nabla \bm{\vartheta}(s)\|_{\mathbb{H}}^2 ds
+ \kappa \int_t^T \|\nabla q(s)\|_{\mathbb H}^2 ds \notag\\
&\le \|\bm{\vartheta}(T)\|_{\mathbb{H}}^2 + \|q(T)\|_{H}^2
+ C\int_t^T \Big[\|\bm{y}_{\mathfrak{H}}(s)-\bm{y}_d\|_{\mathbb{H}}^2 + \|\theta_{\mathfrak{H}}(s)-\theta_d\|_{H}^2\Big]ds \notag\\
&\quad + C_{\nu,\kappa,\beta} \int_t^T \Big[
\big(1+\|\nabla \bm{y}_{\mathfrak{H}}(s)\|_{\mathbb{H}}^2 + \|\nabla \theta_{\mathfrak{H}}(s)\|_{\mathbb H}^2 + \|\mathfrak{H}(s)\|_{\mathbb{H}}\big)
\big(\|\bm{\vartheta}(s)\|_{\mathbb{H}}^2 + \|q(s)\|_{H}^2\big)\Big]ds \notag\\
&\quad + C_\nu \int_t^T \|\nabla \bm{y}_{\mathfrak{H}}(s)\|_{\mathbb{H}}^2 ds.
\label{adjoint-backward}
\end{align}
Applying the backward Gr\"onwall inequality, we deduce
\begin{align}
\|\bm{\vartheta}(t)\|_{\mathbb H}^2+\|q(t)\|_H^2
&\le
\Bigg[
\|\bm{\vartheta}(T)\|_{\mathbb H}^2+\|q(T)\|_H^2
\notag\\
&\quad+C\int_t^T
\Big(
\|\bm{y}_{\mathfrak{H}}(s)-\bm{y}_d(s)\|_{\mathbb H}^2
+\|\theta_{\mathfrak{H}}(s)-\theta_d(s)\|_H^2
\Big)\,ds+C_\nu\int_t^T
\|\nabla\bm{y}_{\mathfrak{H}}(s)\|_{\mathbb H}^2\,ds
\Bigg]
\notag\\
&\quad\times
\exp\!\Bigg(
C_{\nu,\kappa,\beta}\int_t^T
\Big(
1+\|\nabla\bm{y}_{\mathfrak{H}}(s)\|_{\mathbb H}^2
+\|\nabla\theta_{\mathfrak{H}}(s)\|_{\mathbb H}^2
+\|\mathfrak{H}(s)\|_{\mathbb H}
\Big)\,ds
\Bigg),
\label{adjoint-gronwall1}
\end{align}
for every $t\in[0,T]$.
Hence, we infer 
\[
\bm{\vartheta} \in L^\infty(0,T;\mathbb{H}) \cap L^2(0,T;\mathbb{V}), 
\ \
q \in L^\infty(0,T;H) \cap L^2(0,T;V).
\]
Also, for all $\phi \in L^2(0,T;\mathbb{V})$, we have from \eqref{adjj} that 
\begin{align}
& -\int_0^T {}_{\mathbb V'}\!
\left\langle \partial_t \bm{\vartheta}(t),\varphi(t)\right\rangle_{\mathbb V}\,dt
\nonumber\\&=\int_0^T \Big[
-\nu ( \nabla \bm{\vartheta}(t), \nabla \phi(t) )_{\mathbb{H}}
+ \nu \|\nabla \bm{y}_{\mathfrak{H}}(t)\|_{\mathbb{H}}^2 (\bm{\vartheta}(t), \phi(t) )_{\mathbb{H}}  + 2\nu ( \bm{y}_{\mathfrak{H}}(t),\bm{\vartheta}(t))_{\mathbb{H}} ( \nabla \bm{y}_{\mathfrak{H}}(t), \nabla \phi(t) )_{\mathbb{H}}
\notag\\
&\quad-( B'(\bm{y}_{\mathfrak{H}}(t))^*\bm{\vartheta}(t), \phi(t) )_{\mathbb{H}}  - \beta ( \theta_{\mathfrak{H}}(t) e_2,\bm{y}_{\mathfrak{H}}(t))_{\mathbb{H}} (\bm{\vartheta}(t), \phi(t) )_{\mathbb{H}}
- \beta ( \bm{y}_{\mathfrak{H}}(t),\bm{\vartheta}(t))_{\mathbb{H}} ( \theta_{\mathfrak{H}}(t) e_2, \phi(t) )_{\mathbb{H}} \notag\\
&\quad - ( \mathfrak{H}(t),\bm{y}_{\mathfrak{H}}(t))_{\mathbb{H}} (\bm{\vartheta}(t), \phi(t) )_{\mathbb{H}}
- ( \bm{y}_{\mathfrak{H}}(t),\bm{\vartheta}(t))_{\mathbb{H}} ( \mathfrak{H}(t), \phi(t) )_{\mathbb{H}} \notag\\
&\quad + ( \bm{y}_{\mathfrak{H}}(t)-\bm{y}_d, \phi(t) )_{\mathbb{H}}
+  ( A\bm{y}_{\mathfrak{H}}(t), \phi(t) )_{\mathbb{H}}
- ( (\nabla \theta_{\mathfrak{H}}(t))^{\top} q(t), \phi(t) )_{\mathbb{H}}
\Big] dt.
\label{weak_p_1}
\end{align}
Using Hölder's and Young's inequalities, together with the uniform bounds on \(\bm{y}_{\mathfrak{H}}\), \(\theta_{\mathfrak{H}}\), and the facts that \(\|\bm{y}_{\mathfrak{H}}\|_{\mathbb{H}}=\|\theta_{\mathfrak{H}}\|_{H}=1\), we estimate
\begin{align*}
&\left|
\int_0^T
{}_{\mathbb V'}\langle\partial_t\bm{\vartheta}(t),\phi(t)\rangle_{\mathbb V}
\,dt
\right|\nonumber\\&\quad\le C_{\nu,\beta,\lambda_1,T}\Bigg[
\|\nabla\bm{\vartheta}\|_{L^2(0,T;\mathbb H)}
+\|\nabla\bm{y}_{\mathfrak{H}}\|_{L^4(0,T;\mathbb H)}^2
\|\bm{\vartheta}\|_{L^\infty(0,T;\mathbb H)}
+\|\nabla\bm{y}_{\mathfrak{H}}\|_{L^2(0,T;\mathbb H)}
\|\bm{\vartheta}\|_{L^\infty(0,T;\mathbb H)}
\\
&\qquad+\|\bm{y}_{\mathfrak{H}}\|_{L^4(0,T;\mathbb L^4)}
\|\bm{\vartheta}\|_{L^4(0,T;\mathbb L^4)}
+\|\nabla\bm{y}_{\mathfrak{H}}\|_{L^\infty(0,T;\mathbb H)}
\|\bm{\vartheta}\|_{L^2(0,T;\mathbb L^4)}
+\|\bm{\vartheta}\|_{L^2(0,T;\mathbb H)}
+\|\mathfrak{H}\|_{L^2(0,T;\mathbb H)}
\|\bm{\vartheta}\|_{L^\infty(0,T;\mathbb H)}
\\
&\qquad+\|\bm{y}_{\mathfrak{H}}-\bm{y}_d\|_{L^2(0,T;\mathbb H)}
+\|A\bm{y}_{\mathfrak{H}}\|_{L^2(0,T;\mathbb H)}
+\|\nabla\theta_{\mathfrak{H}}\|_{L^4(0,T;\mathbb H)}
\|q\|_{L^4(0,T;L^4)}
\Bigg]\|\phi\|_{L^2(0,T;\mathbb V)}.
\end{align*}
\allowdisplaybreaks
This implies that \(\partial_t \bm{\vartheta} \in L^2(0,T;\mathbb{V}')\), and hence, by the Lions--Magenes lemma, \(\bm{\vartheta} \in C([0,T];\mathbb{H})\).
Again for all $\psi \in L^2(0,T;V)$, similarly we have
\begin{align}
&-\int_0^T {}_{V'}\left\langle \partial_t q(t), \psi(t) \right\rangle_{V} \, dt
\nonumber\\&= \int_0^T \Big[
-\kappa (\nabla q(t),\nabla\psi(t))_{H}
+ \kappa \|\nabla\theta_{\mathfrak{H}}(t)\|_{H}^{2}
(q(t),\psi(t))_{H}
\nonumber\\
&\quad + 2\kappa(\theta_{\mathfrak{H}}(t),q(t))_{H}
(\nabla\theta_{\mathfrak{H}}(t),\nabla\psi(t))_{H}
+ (\bm{y}_{\mathfrak{H}}(t)\cdot\nabla q(t),\psi(t))_{H}
\nonumber\\
&\quad + \beta(\bm{\vartheta}(t)\cdot e_2,\psi(t))_{H}
- \beta(\bm{y}_{\mathfrak{H}}(t),\bm{\vartheta}(t))_{\mathbb{H}}
(\bm{y}_{\mathfrak{H}}(t)\cdot e_2,\psi(t))_{H}
+ (\theta_{\mathfrak{H}}(t)-\theta_d,\psi(t))_{H}
\Big]\,dt
\nonumber\\
&\leq  C_{\kappa,\beta,T,\mu_1}\Big[
\|\nabla q\|_{L^2(0,T;H)}
+\|\nabla\theta_{\mathfrak{H}}\|_{L^4(0,T;H)}^2
\|q\|_{L^\infty(0,T;H)}
+\|\nabla\theta_{\mathfrak{H}}\|_{L^2(0,T;H)}
\|q\|_{L^\infty(0,T;H)}
\nonumber\\
&\qquad+\|\bm{\vartheta}\|_{L^2(0,T;\mathbb H)}
+\|\theta_{\mathfrak{H}}-\theta_d\|_{L^2(0,T;H)}
+\|\bm{y}_{\mathfrak{H}}\|_{L^4(0,T;\mathbb L^4)}
\|q\|_{L^4(0,T;L^4)}
\Big]\|\psi\|_{L^2(0,T;V)}.
\label{weak_q_final}
\end{align}
From the above discussion, we obtain the following result on the well-posedness of the adjoint problem \eqref{adjj} in the weak sense.
\begin{theorem}
Let \(\mathfrak{H} \in L^2(0,T;\mathbb{H})\). Then the adjoint system \eqref{adjj} admits a unique weak solution \((\bm{\vartheta},q)\) such that
\[
\bm{\vartheta} \in L^\infty(0,T;\mathbb{H}) \cap L^2(0,T;\mathbb{V}), \quad
q \in L^\infty(0,T;H) \cap L^2(0,T;V),
\]
with \(\partial_t \bm{\vartheta} \in L^2(0,T;\mathbb{V}')\), \(\partial_t q \in L^2(0,T;V')\). Consequently, \(\bm{\vartheta} \in C([0,T];\mathbb{H})\) and \(q \in C([0,T];H)\).

Moreover, applying Grönwall's inequality, we obtain, for all \(t \in [0,T]\),
\[
\begin{aligned}
&\sup_{0\le t\le T}\big(\|\bm{\vartheta}(t)\|_{\mathbb{H}}^2 + \|q(t)\|_{H}^2\big)
+ \nu \int_0^T \|\nabla \bm{\vartheta}(t)\|_{\mathbb{H}}^2 dt
+ \kappa \int_0^T \|\nabla q(t)\|_{\mathbb H}^2 dt \\
&\le \Big(
\|\bm{\vartheta}(T)\|_{\mathbb{H}}^2 + \|q(T)\|_{H}^2
+ C \int_0^T \big(\|\bm{y}_{\mathfrak{H}}(s)-\bm{y}_d\|_{\mathbb{H}}^2 + \|\theta_{\mathfrak{H}}(s)-\theta_d\|_{H}^2\big)\,ds \\
&\quad + C_\nu \int_0^T \|\nabla \bm{y}_{\mathfrak{H}}(s)\|_{\mathbb{H}}^2\,ds
\Big)  \exp\Big(
C_{\nu,\kappa,\beta,T} \int_0^T \big(1 + \|\nabla \bm{y}_{\mathfrak{H}}(s)\|_{\mathbb{H}}^2 + \|\nabla\theta_{\mathfrak{H}}(s)\|_{\mathbb H}^2 + \|\mathfrak{H}(s)\|_{\mathbb{H}}\big)\,ds
\Big).
\end{aligned}
\]
Furthermore, for all \(t\in[0,T]\), the energy estimate is given by
\begin{align}
&\|\bm{\vartheta}(t)\|_{\mathbb H}^2
+2\nu\int_t^T\|\nabla\bm{\vartheta}(s)\|_{\mathbb H}^2\,ds
\notag\\
&=\|\bm{\vartheta}(T)\|_{\mathbb H}^2
+2\nu\int_t^T
\|\nabla\bm{y}_{\mathfrak{H}}(s)\|_{\mathbb H}^2
\|\bm{\vartheta}(s)\|_{\mathbb H}^2\,ds
+4\nu\int_t^T
(\bm{y}_{\mathfrak{H}}(s),\bm{\vartheta}(s))_{\mathbb H}
(\nabla\bm{y}_{\mathfrak{H}}(s),\nabla\bm{\vartheta}(s))_{\mathbb H}\,ds
\notag\\
&\quad
-2\int_t^T
(B(\bm{\vartheta}(s),\bm{y}_{\mathfrak{H}}(s)),\bm{\vartheta}(s))_{\mathbb H}\,ds
-2\beta\int_t^T
(\theta_{\mathfrak{H}}(s)e_2,\bm{y}_{\mathfrak{H}}(s))_{\mathbb H}
\|\bm{\vartheta}(s)\|_{\mathbb H}^2\,ds\notag\\
&\quad
-2\beta\int_t^T
(\bm{y}_{\mathfrak{H}}(s),\bm{\vartheta}(s))_{\mathbb H}
(\theta_{\mathfrak{H}}(s)e_2,\bm{\vartheta}(s))_{\mathbb H}\,ds
-2\int_t^T
(\mathfrak{H}(s),\bm{y}_{\mathfrak{H}}(s))_{\mathbb H}
\|\bm{\vartheta}(s)\|_{\mathbb H}^2\,ds
\notag\\
&\quad
-2\int_t^T
(\mathfrak{H}(s),\bm{\vartheta}(s))_{\mathbb H}
(\bm{y}_{\mathfrak{H}}(s),\bm{\vartheta}(s))_{\mathbb H}\,ds
+2\int_t^T
(\bm{y}_{\mathfrak{H}}(s)-\bm{y}_d,\bm{\vartheta}(s))_{\mathbb H}\,ds
\notag\\
&\quad
+2\int_t^T
(\nabla\bm{y}_{\mathfrak{H}}(s),\nabla\bm{\vartheta}(s))_{\mathbb H}\,ds
-2\int_t^T
((\nabla\theta_{\mathfrak{H}}(s))^\top q(s),\bm{\vartheta}(s))_{\mathbb H}\,ds.
\label{adjoint-energy-p}
\end{align}
\begin{align}
&\|q(t)\|_{H}^2 + 2\kappa \int_t^T \|\nabla q(s)\|_{\mathbb H}^2 ds
\nonumber\\&= \|q(T)\|_{H}^2 
+ 2\kappa \int_t^T \|\nabla \theta_{\mathfrak{H}}(s)\|_{\mathbb H}^2 \|q(s)\|_{H}^2 ds + 4\kappa \int_t^T ( \theta_{\mathfrak{H}}(s),q(s))_{H}
( \nabla \theta_{\mathfrak{H}}(s),\nabla q(s))_{H} ds \notag\\
&\quad + 2 \int_t^T ( \theta_{\mathfrak{H}}(s)-\theta_d, q(s))_{H} ds  + 2\beta \int_t^T \Big[
(\bm{\vartheta}(s)\cdot e_2, q(s))_{H}
- ( \bm{y}_{\mathfrak{H}}(s),\bm{\vartheta}(s))_{\mathbb{H}} 
( \bm{y}_{\mathfrak{H}}(s)\cdot e_2, q(s))_{H}
\Big] ds.
\label{adjoint-energy}
\end{align}
\end{theorem}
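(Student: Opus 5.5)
The plan is to treat the adjoint system \eqref{adjj} as a linear backward parabolic system with coefficients built from the fixed strong solution $(\bm y_{\mathfrak H},\theta_{\mathfrak H})$ of Theorem~\ref{well}. The change of variable $s=T-t$ turns it into a forward linear problem with initial data $\bm\vartheta(T)=\bm y_{\mathfrak H}(T)-\bm y_f\in\mathbb H$ and $q(T)=\theta_{\mathfrak H}(T)-\theta_f\in H$. These data lie in the right spaces because $\bm y_{\mathfrak H}\in C([0,T];\mathbb V)$ and $\theta_{\mathfrak H}\in C([0,T];V)$.

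\textbf{Existence.} I would use a Faedo--Galerkin scheme on the eigenbases $\{\bm w_k\}$ of $A$ and $\{e_k\}$ of $\mathcal A$, exactly as in Theorem~\ref{well}. Since the system is linear, the finite-dimensional problem has a global solution by standard linear ODE theory, because its coefficients are integrable in time. The a priori estimates \eqref{energy_p_1}--\eqref{adjoint-gronwall1} hold verbatim at the Galerkin level, since all cancellations used there ($(B(\bm y_{\mathfrak H},\bm\vartheta),\bm\vartheta)_{\mathbb H}=0$ and $(\bm y_{\mathfrak H}\cdot\nabla q,q)_H=0$) survive projection. The Gr\"onwall weight is integrable because $\bm y_{\mathfrak H}\in L^\infty(0,T;\mathbb V)$, $\theta_{\mathfrak H}\in L^\infty(0,T;V)\subset L^\infty(0,T;L^4)$ and $\mathfrak H\in L^2(0,T;\mathbb H)$. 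The source terms $\bm y_{\mathfrak H}-\bm y_d$, $A\bm y_{\mathfrak H}$ (used only through $(\nabla\bm y_{\mathfrak H},\nabla\phi)$, i.e.\ as an element of $L^2(0,T;\mathbb V')$) and $\theta_{\mathfrak H}-\theta_d$ are bounded in the appropriate norms. The duality estimates following \eqref{weak_p_1} and \eqref{weak_q_final} give uniform bounds on $\partial_t\bm\vartheta_m$ in $L^2(0,T;\mathbb V')$ and $\partial_t q_m$ in $L^2(0,T;V')$. The Banach--Alaoglu theorem then yields weak(-star) limits. Since every term is linear in $(\bm\vartheta,q)$ with fixed coefficients, passage to the limit in the weak formulation follows from weak convergence alone. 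For instance, $((\nabla\theta_{\mathfrak H})^\top q_m,\phi)_{\mathbb H}=-(q_m,\operatorname{div}(\ldots))$-type terms are continuous linear functionals of $q_m$ in $L^2(0,T;V)$, and similarly for $(B'(\bm y_{\mathfrak H}))^*$ via \eqref{eq:b_bound}. Aubin--Lions compactness is not even required.

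\textbf{Regularity, energy identities and the estimate.} The Lions--Magenes lemma gives $\bm\vartheta\in C([0,T];\mathbb H)$ and $q\in C([0,T];H)$. It also gives absolute continuity of $t\mapsto\|\bm\vartheta(t)\|^2_{\mathbb H}$ and $t\mapsto\|q(t)\|_H^2$ with $\frac{d}{dt}\|\bm\vartheta\|_{\mathbb H}^2=2\langle\partial_t\bm\vartheta,\bm\vartheta\rangle$. Testing the weak form with $\bm\vartheta$ and $q$ and integrating over $[t,T]$ yields the identities \eqref{adjoint-energy-p} and \eqref{adjoint-energy}. In doing so, $(A\bm y_{\mathfrak H},\bm\vartheta)_{\mathbb H}=(\nabla\bm y_{\mathfrak H},\nabla\bm\vartheta)_{\mathbb H}$ and the term $2\nu(\bm y_{\mathfrak H},\bm\vartheta)(A\bm y_{\mathfrak H},\bm\vartheta)$ are read in the weak form \eqref{weak_p_1}. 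The Gr\"onwall bound then follows by repeating \eqref{1st}--\eqref{adjoint-gronwall1} on these identities, now rigorously, and taking the supremum over $t$.

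\textbf{Uniqueness and the main obstacle.} For uniqueness, the difference of two weak solutions solves the homogeneous system with zero terminal data and zero sources. Its energy identity, together with the same estimates and the backward Gr\"onwall inequality, forces it to vanish. The main obstacle I expect is the cross-coupling term $((\nabla\theta_{\mathfrak H})^\top q,\bm\vartheta)_{\mathbb H}$, together with the absence of $\|\theta_{\mathfrak H}\|_{L^4}^4$ in the final exponent, whereas \eqref{energy-p-estimate} contains it. I would handle the coupling by integrating by parts to move the derivative onto $q$, as in \eqref{3rd}--\eqref{4th}, and absorbing $\frac{\kappa}{4}\|\nabla q\|^2$ into the $q$-equation's dissipation. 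The $L^4$ term can be dropped from the exponent using Ladyzhenskaya: $\|\theta_{\mathfrak H}\|_{L^4}^4\le C\|\theta_{\mathfrak H}\|_H^2\|\nabla\theta_{\mathfrak H}\|^2_{\mathbb H}=C\|\nabla\theta_{\mathfrak H}\|_{\mathbb H}^2$ since $\theta_{\mathfrak H}\in\mathcal M_2$. This makes the combined Gr\"onwall weight exactly the one stated.
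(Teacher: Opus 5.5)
Your proposal is correct and follows the same route as the paper: you test the adjoint equations with $(\bm{\vartheta},q)$, move the derivative onto $q$ in the coupling term $((\nabla\theta_{\mathfrak{H}})^{\top}q,\bm{\vartheta})_{\mathbb{H}}$, apply backward Gr\"onwall, bound $\partial_t\bm{\vartheta}$ and $\partial_t q$ by duality, and conclude with the Lions--Magenes lemma. Beyond the paper, you make explicit the Galerkin justification with passage to the limit by weak convergence, the uniqueness argument via the homogeneous problem, and the Ladyzhenskaya bound $\|\theta_{\mathfrak{H}}\|_{L^4}^4\le C\|\nabla\theta_{\mathfrak{H}}\|_{\mathbb{H}}^2$ (using $\|\theta_{\mathfrak{H}}\|_H=1$), which the paper uses silently when it drops the $L^4$ term from the exponent.
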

\section{First-order necessary optimality conditions}{\label{s4}}
We now derive a first-order necessary condition for the optimal control problem \eqref{opt} and express the optimal control using the associated adjoint variables. We first use the Hamiltonian to give a formal derivation of Pontryagin's minimum principle.
We define the Lagrangian as
\begin{align}
L(\bm{y},\theta,\mathfrak{H})
&:= \frac{1}{2}\|\bm{y} - \bm{y}_d\|_{\mathbb{H}}^2
+ \frac{1}{2}\|\nabla \bm{y}\|_{\mathbb{H}}^2
+ \frac{1}{2}\|\theta - \theta_d\|_{H}^2
+ \frac{1}{2}\|\mathfrak{H}\|_{\mathbb{H}}^2,
\end{align}
so that
\begin{align}
J(\bm{y},\theta,\mathfrak{H})
&= \int_0^T L(\bm{y}(t),\theta(t),\mathfrak{H}(t))\,dt
+ \frac{1}{2}\|\bm{y}(T)-\bm{y}_f\|_{\mathbb{H}}^2
+ \frac{1}{2}\|\theta(T)-\theta_f\|_{H}^2.
\end{align}
We introduce the Hamiltonian
\begin{align}
H(\bm{y},\theta,\mathfrak{H},\bm{\vartheta},q)
:= L(\bm{y},\theta,\mathfrak{H})
+ (\bm{\vartheta}, F(\bm{y},\theta,\mathfrak{H}) )_{\mathbb{H}}
+ ( q, G(\bm{y},\theta) )_{H},
\end{align}
where
\begin{align}
F(\bm{y},\theta,\mathfrak{H})
&= -\nu A\bm{y} + \nu\|\nabla \bm{y}\|_{\mathbb{H}}^2 \bm{y}
- B(\bm{y},\bm{y})
+ \beta {\textbf{Proj}_{\mathbb H}}(\theta e_2)
- \beta( \theta e_2,\bm{y})_{\mathbb{H}}\,\bm{y} \notag\\
&\quad + \mathfrak{H} - ( \mathfrak{H},\bm{y})_{\mathbb{H}}\,\bm{y},
\\
G(\bm{y},\theta)
&= -\kappa \mathcal{A}\theta
+ \kappa \|\nabla \theta\|_{\mathbb H}^2 \theta
- \bm{y}\cdot\nabla \theta.
\end{align}
The adjoint variables \(\bm{\vartheta}(\cdot)\) and \(q(\cdot)\) satisfy the system \eqref{addd} given below. Since \((\bm{y}^*,\theta^*,\mathfrak{H}^*)\) is an optimal triplet for problem \eqref{opt}, Pontryagin's minimum principle gives
\begin{align}
H(\bm{y}^*(t),\theta^*(t),\mathfrak{H}^*(t),\bm{\vartheta}(t),q(t))
\le H(\bm{y}^*(t),\theta^*(t),\bm{W},\bm{\vartheta}(t),q(t)),
\  \text{ for all } \ \bm{W} \in \mathcal{U}_{ad}, \ \text{ a.e. } \ t \in [0,T].
\end{align}
Thus, an optimal triplet \((\bm{y}^*,\theta^*,\mathfrak{H}^*) \in \mathcal{A}_{ad}\) satisfies the following minimum principle:
\begin{align}
&\frac{1}{2}\|\mathfrak{H}^*(t)\|_{\mathbb{H}}^2
+ \big(\bm{\vartheta}(t),\, \mathfrak{H}^*(t) - ( \mathfrak{H}^*(t),\bm{y}^*(t))_{\mathbb{H}}\bm{y}^*(t)\big)_{\mathbb{H}} \notag\\
&\qquad \le \frac{1}{2}\|\bm{W}\|_{\mathbb{H}}^2
+ \big(\bm{\vartheta}(t),\, \bm{W} - ( \bm{W},\bm{y}^*(t))_{\mathbb{H}}\bm{y}^*(t)\big)_{\mathbb{H}},
\end{align}
for all \(\bm{W} \in \mathcal{U}_{ad}\) and a.e.\ \(t \in [0,T]\).
Rearranging, we obtain the variational inequality
\begin{align}
\frac{1}{2}\|\bm{W}\|_{\mathbb{H}}^2 - \frac{1}{2}\|\mathfrak{H}^*(t)\|_{\mathbb{H}}^2
+ \big(\bm{\vartheta}(t) - (\bm{\vartheta}(t),\bm{y}^*(t))_{\mathbb{H}}\,\bm{y}^*(t),\, \bm{W} - \mathfrak{H}^*(t)\big)_{\mathbb{H}} \ge 0,
\end{align}
for all \(\bm{W} \in \mathcal{U}_{ad}\) and a.e.\ \(t \in [0,T]\).
This implies
\[
- \bm{\vartheta}(t) + (\bm{\vartheta}(t),\bm{y}^*(t))_{\mathbb{H}}\,\bm{y}^*(t)
\in \partial\!\left(\frac{1}{2}\|\mathfrak{H}^*(t)\|_{\mathbb{H}}^2\right),
\quad \text{for a.e. } t \in [0,T],
\]
where \(\partial h(\cdot)\) is the subdifferential of \(h(\cdot)\).
Since the functional \(\mathfrak{H} \mapsto \tfrac{1}{2}\|\mathfrak{H}\|_{\mathbb{H}}^2\) is Fréchet differentiable, its subdifferential is single-valued. Hence, we deduce
\begin{align}
\mathfrak{H}^*(t)
= -\bm{\vartheta}(t) + (\bm{\vartheta}(t),\bm{y}^*(t))_{\mathbb{H}}\,\bm{y}^*(t)
= -\pi_{\bm{y}^*(t)}(\bm{\vartheta}(t)),
\quad \text{for a.e.\ } t \in [0,T].
\end{align}
Moreover, the optimal control \(\mathfrak{H}^*\) is orthogonal to \(\bm{y}^*\) in \(\mathbb{H}\), that is,
\[
( \mathfrak{H}^*(t), \bm{y}^*(t) )_{\mathbb{H}} = 0,
\  \text{ for a.e. } \ t \in [0,T].
\]
Therefore, we infer
\[
\mathfrak{H}^*(t) \in T_{\bm{y}^*(t)}\mathcal{M}_1,
\  \text{ for a.e. } \ t \in [0,T].
\]
Recall that the tangent space at \(\bm{y} \in \mathcal{M}_1\) is given by
$
T_{\bm{y}} \mathcal{M}_1
= \big\{ \bm{\varrho} \in \mathbb{H} : ( \bm{y}, \bm{\varrho} )_{\mathbb{H}} = 0 \big\}.
$
The tangent bundle is defined as
\[
T\mathcal{M}_1 := \bigsqcup_{\bm{y} \in \mathcal{M}_1} T_{\bm{y}} \mathcal{M}_1.
\]
Since
$
\mathfrak{H}^*(t) \in T_{\bm{y}^*(t)}\mathcal{M}_1, 
\  \text{ for a.e. } \ t \in [0,T],
$
the map \(t \mapsto (\bm{y}^*(t), \mathfrak{H}^*(t))\) is a curve in the tangent bundle \(T\mathcal{M}_1\). We summarize the above analysis in the following theorem, which states the first-order necessary optimality conditions obtained from Pontryagin's maximum principle. The proof follows from analytical arguments similar to those developed in \cite{abergel,manil,Sritharan1998,MohanNS}.
\begin{theorem}{\label{main}}
Let \((\bm{y}^*, \theta^*, \mathfrak{H}^*) \in \mathcal{A}_{ad}\) be an optimal solution to the problem \eqref{opt} obtained in Theorem~\ref{optt}. Consequently, there is a unique weak solution \((\bm{\vartheta},q)\) of the adjoint system such that
\[
\bm{\vartheta} \in L^\infty(0,T;\mathbb{H}) \cap L^2(0,T;\mathbb{V}), \ \
q \in L^\infty(0,T;H) \cap L^2(0,T;V),
\]
with \(\partial_t \bm{\vartheta} \in L^2(0,T;\mathbb{V}')\) and \(\partial_t q \in L^2(0,T;V')\).

Moreover, \((\bm{\vartheta},q)\) satisfies, for a.e.\ \(t\in[0,T]\),
\begin{equation}{\label{addd}}
\begin{cases}
-\dfrac{d\bm{\vartheta}(t)}{dt} = -\nu A \bm{\vartheta}(t) + \nu \|\nabla \bm{y}^*(t)\|_{\mathbb{H}}^2 \bm{\vartheta}(t)
+ 2\nu ( \bm{y}^*(t),\bm{\vartheta}(t))_{\mathbb{H}} A \bm{y}^*(t)
- (B'(\bm{y}^*(t)))^*\bm{\vartheta}(t) \\[4pt]
\qquad - \beta ( \theta^*(t) e_2,\bm{y}^*(t))_{\mathbb{H}} \bm{\vartheta}(t)
- \beta ( \bm{y}^*(t),\bm{\vartheta}(t))_{\mathbb{H}} \theta^*(t) e_2
- ( \mathfrak{H}^*(t),\bm{y}^*(t))_{\mathbb{H}} \bm{\vartheta}(t)
- ( \bm{y}^*(t),\bm{\vartheta}(t))_{\mathbb{H}} \mathfrak{H}^*(t) \\[4pt]
\qquad - (\nabla \theta^*(t))^{\top} q(t) + (\bm{y}^*(t) - \bm{y}_d) + A \bm{y}^*(t) \ \text{ in }\ \mathbb{V}^{\prime}, \\[6pt]
-\dfrac{dq(t)}{dt} = -\kappa {\mathcal{A} q(t)} + \kappa \|\nabla \theta^*(t)\|_{\mathbb H}^2 q(t)
+ 2\kappa ( \theta^*(t),q(t))_{H} A \theta^*(t)
+ \bm{y}^*(t)\cdot\nabla q(t) \\[4pt]
\qquad + \beta (\bm{\vartheta}(t)\cdot e_2)
- \beta ( \bm{y}^*(t),\bm{\vartheta}(t))_{\mathbb{H}} (\bm{y}^*(t)\cdot e_2)
+ (\theta^*(t) - \theta_d)\ \text{ in }\ V',
\end{cases}
\end{equation}
with terminal conditions
\[
\bm{\vartheta}(T) = \bm{y}^*(T) - \bm{y}_f, \quad q(T) = \theta^*(T) - \theta_f.
\]
Furthermore, the optimal control is characterized by
$$
\mathfrak{H}^*(t) = -\bm{\vartheta}(t) + (\bm{\vartheta}(t),\bm{y}^*(t))_{\mathbb{H}}\,\bm{y}^*(t)= -\pi_{\bm{y}^*(t)}(\bm{\vartheta}(t)),
$$
for a.e.\ \(t \in [0,T]\), and satisfies
\(
( \mathfrak{H}^*(t),\bm{y}^*(t))_{\mathbb{H}} = 0.
\)
Consequently,
\(
\mathfrak{H}^*(t) \in T_{\bm{y}^*(t)}\mathcal{M}_1.
\)
\end{theorem}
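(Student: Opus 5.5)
The strategy is the standard reduced-cost argument: differentiate the control-to-state map, express the directional derivative of $J$ through the adjoint variables, and read off the optimality condition. Existence and uniqueness of $(\bm{\vartheta},q)$ with the stated regularity is already contained in the well-posedness theorem for the adjoint system \eqref{adjj}, applied with $\mathfrak{H}=\mathfrak{H}^*$ and $(\bm{y}_{\mathfrak{H}},\theta_{\mathfrak{H}})=(\bm{y}^*,\theta^*)$. That system is exactly \eqref{addd}, so the real content is the characterization of $\mathfrak{H}^*$.

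\emph{Step 1: differentiability of the control-to-state map.} Fix $\bm{W}\in L^2(0,T;\mathbb{H})$ and set $\mathfrak{H}_\varepsilon=\mathfrak{H}^*+\varepsilon\bm{W}$. Let $(\bm{y}_\varepsilon,\theta_\varepsilon)$ be the strong solution given by Theorem~\ref{well}. First, by the same energy method used for uniqueness, I will show the Lipschitz bound
\[
\|\bm{y}_\varepsilon-\bm{y}^*\|_{L^\infty(\mathbb{H})\cap L^2(\mathbb{V})}+\|\theta_\varepsilon-\theta^*\|_{L^\infty(H)\cap L^2(V)}\le C\varepsilon,
\]
with $C$ controlled by the uniform bounds \eqref{bd}, \eqref{aum-est}, \eqref{the_st}. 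Next, let $(\bm{\eta},\xi)$ solve the linearized system \eqref{linearized-system-a}--\eqref{linearized-system-b} around $(\bm{y}^*,\theta^*,\mathfrak{H}^*)$ with zero initial data and forcing $\bm{W}-(\bm{W},\bm{y}^*)_{\mathbb{H}}\bm{y}^*$. Its well-posedness is given by the theorem on the linearized problem. I then estimate the remainder $r_\varepsilon=(\bm{y}_\varepsilon-\bm{y}^*)/\varepsilon-\bm{\eta}$, together with the analogous remainder for $\theta$. It satisfies the same linear system with quadratic source terms of the type $B(\bm{y}_\varepsilon-\bm{y}^*,\bm{y}_\varepsilon-\bm{y}^*)/\varepsilon$, $\|\nabla(\bm{y}_\varepsilon-\bm{y}^*)\|^2\bm{y}_\varepsilon/\varepsilon$, $(\bm{W},\bm{y}_\varepsilon-\bm{y}^*)(\cdot)$, and so on. The Lipschitz bound shows these are $O(\varepsilon)$ in $L^2(0,T;\mathbb{V}')$ or $L^1(0,T;\mathbb{H})$. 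Gronwall as in \eqref{coupled-ineq} then gives $r_\varepsilon\to0$ in $C([0,T];\mathbb{H})\cap L^2(0,T;\mathbb{V})$, i.e.\ Gateaux (indeed Fr\'echet) differentiability.

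\emph{Step 2: duality identity.} By optimality, $\frac{d}{d\varepsilon}J(\bm{y}_\varepsilon,\theta_\varepsilon,\mathfrak{H}_\varepsilon)|_{\varepsilon=0}=0$, that is,
\[
\int_0^T\!\big[(\bm{y}^*-\bm{y}_d,\bm{\eta})+(\nabla\bm{y}^*,\nabla\bm{\eta})+(\theta^*-\theta_d,\xi)+(\mathfrak{H}^*,\bm{W})\big]dt+(\bm{y}^*(T)-\bm{y}_f,\bm{\eta}(T))+(\theta^*(T)-\theta_f,\xi(T))=0 .
\]
I pair the linearized equations with $(\bm{\vartheta},q)$ and integrate over $(0,T)$. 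The time-derivative pairing is justified by Lions--Magenes, since all four functions lie in $W^{1,2}(0,T;\mathbb{V},\mathbb{V}')$ or $W^{1,2}(0,T;V,V')$. Using $\bm{\eta}(0)=0$, $\xi(0)=0$ and the terminal conditions, each term of the linearized operator is transposed. For example, $2\nu(\nabla\bm{y}^*,\nabla\bm{\eta})(\bm{y}^*,\bm{\vartheta})$ becomes $2\nu(\bm{y}^*,\bm{\vartheta})(A\bm{y}^*,\bm{\eta})$, the term $(\bm{\eta}\cdot\nabla\theta^*,q)$ becomes $((\nabla\theta^*)^\top q,\bm{\eta})$, and the coupling terms cross over to the $q$-equation. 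Matching against \eqref{addd} leaves
\[
\int_0^T\big(\mathfrak{H}^*+\bm{\vartheta}-(\bm{\vartheta},\bm{y}^*)_{\mathbb{H}}\bm{y}^*,\bm{W}\big)_{\mathbb{H}}dt=0
\]
for every $\bm{W}$. Here $(\bm{\vartheta},\bm{W}-(\bm{W},\bm{y}^*)\bm{y}^*)=(\pi_{\bm{y}^*}\bm{\vartheta},\bm{W})$ by self-adjointness of the projection. Since $\bm{W}\in L^2(0,T;\mathbb{H})$ is arbitrary, $\mathfrak{H}^*=-\pi_{\bm{y}^*}\bm{\vartheta}$ a.e. Orthogonality then follows from $\|\bm{y}^*(t)\|_{\mathbb{H}}=1$:
\[
(\mathfrak{H}^*,\bm{y}^*)_{\mathbb{H}}=-(\bm{\vartheta},\bm{y}^*)+(\bm{\vartheta},\bm{y}^*)\|\bm{y}^*\|^2=0,
\]
so $\mathfrak{H}^*(t)\in T_{\bm{y}^*(t)}\mathcal{M}_1$.

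\emph{Main obstacle.} The hardest part is the remainder estimate in Step 1, together with the term-by-term bookkeeping in Step 2, in particular checking that the signs and coefficients in \eqref{addd} are exactly the formal adjoint of the linearization. For the remainder, the delicate pieces are the nonlocal cubic terms $\|\nabla\bm{y}\|^2\bm{y}$ and $\|\nabla\theta\|^2\theta$ and the convection $(\bm{y}_\varepsilon-\bm{y}^*)\cdot\nabla(\theta_\varepsilon-\theta^*)$. I would handle these with Ladyzhenskaya's inequality and the $L^2(\mathbb{V})$ Lipschitz bound, giving sources of size $\varepsilon\,\|\cdot\|_{L^2(\mathbb{V})}^2/\varepsilon\to0$. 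If the $L^\infty$-in-time control of $\|\nabla(\bm{y}_\varepsilon-\bm{y}^*)\|$ turns out to be needed, I would first upgrade the Lipschitz estimate to $L^\infty(\mathbb{V})$ using the $D(A)$ bounds from Theorem~\ref{well}.
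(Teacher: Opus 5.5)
Your proposal is correct and follows essentially the paper's route. You use the Lipschitz estimate of Lemma~\ref{incest}, the remainder estimate of Proposition~\ref{prop4.4}, and then pair the linearized system with the adjoint \eqref{addd} to obtain $G'(\mathfrak{H}^*;\mathfrak{H})=\int_0^T(\mathfrak{H},\pi_{\bm y^*}\bm\vartheta+\mathfrak{H}^*)_{\mathbb H}\,dt$. The only minor difference is that you place the quadratic remainder sources in $L^1(0,T;\mathbb{H})$ and $L^2(0,T;\mathbb{V}')$, so the lower-order Lipschitz bound suffices; the paper instead also proves the higher-order bound \eqref{lemma_high_order} in order to control terms such as $\int_0^T\|\nabla\widetilde{\bm y}\|_{\mathbb H}^4\,dt$.
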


\begin{remark}
Since \(( \mathfrak{H}^*(t), \bm{y}^*(t))_{\mathbb{H}} = 0\) for all \(t\in[0,T]\), under the optimal control \((\bm{y}^*,\theta^*,\mathfrak{H}^*)\), the state system reduces to
\begin{equation}
\begin{cases}
\dfrac{d\bm{y}^*(t)}{dt}
= -\nu A \bm{y}^*(t) + \nu \|\nabla \bm{y}^*(t)\|_{\mathbb{H}}^2 \bm{y}^*(t)
- B(\bm{y}^*(t),\bm{y}^*(t)) + \beta {\textbf{Proj}_{\mathbb H}}(\theta^*(t) e_2)
- \beta( \theta^*(t) e_2, \bm{y}^*(t))_{\mathbb{H}} \bm{y}^*(t)\\ \qquad\qquad + \mathfrak{H}^*(t) \ \text{ in } \mathbb{H}, \\[4pt]
\dfrac{d\theta^*(t)}{dt}
= -\kappa A \theta^*(t) + \kappa \|\nabla \theta^*(t)\|_{\mathbb H}^2 \theta^*(t)
- \bm{y}^*(t)\cdot\nabla \theta^*(t)\ \text{ in }\ H,
\\[4pt]
(\bm{y}^*,\theta^*)(0) = (\bm{y}_0,\theta_0)\in(\mathbb{V}\cap\mathcal{M}_1)\times (V\cap\mathcal{M}_2).
\end{cases}
\end{equation}
\allowdisplaybreaks
The adjoint system \eqref{addd} reduces to
\begin{equation}{\label{adjjj1}}
\begin{cases}
-\dfrac{d\bm{\vartheta}(t)}{dt}
= -\nu A \bm{\vartheta}(t) + \nu \|\nabla \bm{y}^*(t)\|_{\mathbb{H}}^2 \bm{\vartheta}(t)
+ 2\nu ( \bm{y}^*(t),\bm{\vartheta}(t))_{\mathbb{H}} A \bm{y}^*(t)
 - (B'(\bm{y}^*(t)))^*\bm{\vartheta}(t)  \\[4pt]
\qquad\qquad \quad- \beta ( \theta^*(t) e_2, \bm{y}^*(t))_{\mathbb{H}} \bm{\vartheta}(t)
- \beta ( \bm{y}^*(t),\bm{\vartheta}(t))_{\mathbb{H}} \theta^*(t) e_2 \\
\qquad\qquad\quad -\left[\bm{\vartheta}(t)
-(\bm{\vartheta}(t),\bm{y}^*(t))_{\mathbb H}\bm{y}^*(t)\right]
(\bm{y}^*(t),\bm{\vartheta}(t))_{\mathbb H}- (\nabla \theta^*(t))^{\top} q(t) + (\bm{y}^*(t) - \bm{y}_d) + A \bm{y}^*(t)\ \text{ in }\ \mathbb{V}', \\[6pt]
-\dfrac{dq(t)}{dt}
= -\kappa {\mathcal{A} q(t)} + \kappa \|\nabla \theta^*(t)\|_{\mathbb H}^2 q(t)
+ 2\kappa ( \theta^*(t),q(t))_{H} A \theta^*(t)
+ \bm{y}^*(t)\cdot\nabla q(t) \\[4pt]
\qquad\qquad  + \beta (\bm{\vartheta}(t)\cdot e_2)
- \beta ( \bm{y}^*(t),\bm{\vartheta}(t))_{\mathbb{H}} (\bm{y}^*(t)\cdot e_2)
+ (\theta^*(t) - \theta_d) \ \text{ in }\ V', \\[6pt]
\bm{\vartheta}(T) = \bm{y}^*(T) - \bm{y}_f, \quad
q(T) = \theta^*(T) - \theta_f.
\end{cases}
\end{equation}
\end{remark}
\begin{lemma}{\label{incest}}
Let \((\bm{y}^*,\theta^*,\mathfrak{H}^*) \in \mathcal{A}_{ad}\) be an optimal triplet, and let 
\((\bm{y}^*,\theta^*) = (\bm{y}_{\mathfrak{H}^*}, \theta_{\mathfrak{H}^*})\).
For sufficiently small \(\tau > 0\) and \(\mathfrak{H} \in \mathcal{U}_{ad}\), let 
\((\bm{y}^\tau,\theta^\tau) := (\bm{y}_{\mathfrak{H}^*+\tau \mathfrak{H}}, \theta_{\mathfrak{H}^*+\tau \mathfrak{H}})\) be the corresponding unique strong solution of \eqref{eq:constrained_Boussinesq} associated with \(\mathfrak{H}^*+\tau \mathfrak{H}\).
Then the following estimates hold:
\begin{align}
&\sup_{t\in[0,T]} \Big(
\|\bm{y}^\tau(t) - \bm{y}^*(t)\|_{\mathbb{H}}^2 
+ \|\theta^\tau(t) - \theta^*(t)\|_{H}^2
\Big) \nonumber\\
&\quad + \nu \int_0^T \|\nabla (\bm{y}^\tau(t) - \bm{y}^*(t))\|_{\mathbb{H}}^2 \, dt
+ \kappa \int_0^T \|\nabla (\theta^\tau(t) - \theta^*(t))\|_{H}^2 \, dt \nonumber\\
&\le \tau^2 K \int_0^T \|\mathfrak{H}(t)\|_{\mathbb{H}}^2 \, dt,
\label{lemma_low_order}
\end{align}
where \(K = K(\nu,\beta,\kappa,T,\|\bm{y}_0\|_{\mathbb{V}},\|\theta_0\|_{V},\|\mathfrak{H}^*\|_{L^2(0,T;\mathbb{H})},\|\mathfrak{H}\|_{L^2(0,T;\mathbb{H})})\).
Moreover,
\begin{align}
&\sup_{t\in[0,T]} \Big(
\|\nabla (\bm{y}^\tau(t) - \bm{y}^*(t))\|_{\mathbb{H}}^2 
+ \|\nabla (\theta^\tau(t) - \theta^*(t))\|_{H}^2
\Big) \nonumber\\
&\quad + \nu \int_0^T \|A(\bm{y}^\tau(t) - \bm{y}^*(t))\|_{\mathbb{H}}^2 \, dt
+ \kappa \int_0^T \|\mathcal A(\theta^\tau(t) - \theta^*(t))\|_{H}^2 \, dt \nonumber\\
&\le \tau^2 K^{(1)} \int_0^T \|\mathfrak{H}(t)\|_{\mathbb{H}}^2 \, dt,
\label{lemma_high_order}
\end{align}
where \(K^{(1)} = K^{(1)}(\nu,\beta,\kappa,T,\|\bm{y}_0\|_{\mathbb{V}},\|\theta_0\|_{V},\|\mathfrak{H}^*\|_{L^2(0,T;\mathbb{H})},\|\mathfrak{H}\|_{L^2(0,T;\mathbb{H})})\).
\end{lemma}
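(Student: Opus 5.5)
We work with the differences $\bm{w}:=\bm{y}^\tau-\bm{y}^*$ and $\xi:=\theta^\tau-\theta^*$. Subtracting the two copies of \eqref{eq:constrained_Boussinesq} gives a system of the form \eqref{eqw}--\eqref{eqxi}, with two differences: the initial data vanish, and the control terms produce an extra forcing
\[
\tau\bigl(\mathfrak{H}-(\mathfrak{H},\bm{y}^\tau)_{\mathbb H}\bm{y}^\tau\bigr)-\bigl((\mathfrak{H}^*,\bm{y}^\tau)_{\mathbb H}\bm{y}^\tau-(\mathfrak{H}^*,\bm{y}^*)_{\mathbb H}\bm{y}^*\bigr).
\]
Since $\bm{y}^\tau(t)\in\mathcal M_1$, the first part has $\mathbb H$-norm at most $\tau\|\mathfrak{H}\|_{\mathbb H}$. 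The second part is Lipschitz in $\bm{w}$ with constant $2\|\mathfrak{H}^*\|_{\mathbb H}$, exactly as in \eqref{eq:U_estimate}.

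Throughout we use Theorem~\ref{well}, in particular the bounds \eqref{bd}, \eqref{aum-est}, \eqref{the_st} and \eqref{eq:theta_H2_estimate}, applied with the controls $\mathfrak{H}^*$ and $\mathfrak{H}^*+\tau\mathfrak{H}$ for $\tau\le1$. These give bounds on $\bm{y}^\tau,\bm{y}^*$ in $L^\infty(0,T;\mathbb V)\cap L^2(0,T;D(A))$ and on $\theta^\tau,\theta^*$ in $L^\infty(0,T;V)\cap L^2(0,T;D(\mathcal A))$ that are uniform in $\tau$. The constants depend only on the quantities listed in $K$.

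\textbf{Lower-order estimate \eqref{lemma_low_order}.} We repeat the uniqueness argument of Theorem~\ref{well} verbatim. Testing the $\bm{w}$-equation with $\bm{w}$ and the $\xi$-equation with $\xi$ yields \eqref{weqq} and \eqref{xii}, plus one new term $\tau(\mathfrak{H},\bm{w})_{\mathbb H}\le \frac{\tau^2}{2}\|\mathfrak{H}\|_{\mathbb H}^2+\frac12\|\bm{w}\|_{\mathbb H}^2$. Adding the two inequalities gives
\[
\frac{d}{dt}\bigl(\|\bm w\|^2_{\mathbb H}+\|\xi\|^2_H\bigr)+\nu\|\nabla\bm w\|^2_{\mathbb H}+\kappa\|\nabla\xi\|^2_{\mathbb H}\le C(t)\bigl(\|\bm w\|^2_{\mathbb H}+\|\xi\|^2_H\bigr)+\tau^2\|\mathfrak{H}\|^2_{\mathbb H},
\]
where $C(\cdot)$ is the $L^1(0,T)$ function $C_{\bm y,\theta}$ from the uniqueness proof, now carrying $\|\mathfrak{H}^*\|_{\mathbb H}$. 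We will need $\int_0^T\|\theta^*\|^4_{L^4}\,dt$ to be bounded; this follows from Ladyzhenskaya's inequality and the $L^\infty(V)$ bound. Gr\"onwall's inequality with zero initial data then gives \eqref{lemma_low_order}, with $K=\exp\bigl(\int_0^TC\,dt\bigr)$.

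\textbf{Higher-order estimate \eqref{lemma_high_order}.} We test the $\bm w$-equation with $A\bm w$ and the $\xi$-equation with $\mathcal A\xi$. The linear terms produce $\nu\|A\bm w\|^2$ and $\kappa\|\mathcal A\xi\|^2$ on the left. The remaining terms are treated as follows.
\begin{itemize}
\item The bilinear difference $B(\bm w,\bm y^\tau)+B(\bm y^*,\bm w)$ is bounded using Agmon and Ladyzhenskaya as in \eqref{eq:B_estimate}:
\[
|(B(\bm w,\bm y^\tau),A\bm w)|\le C\|\bm w\|^{1/2}\|\nabla\bm w\|^{1/2}\|\nabla\bm y^\tau\|^{1/2}\|A\bm y^\tau\|^{1/2}\|A\bm w\|,
\]
\[
|(B(\bm y^*,\bm w),A\bm w)|\le C\|\bm y^*\|^{1/2}\|A\bm y^*\|^{1/2}\|\nabla\bm w\|\|A\bm w\|.
\]
After Young's inequality these become $\frac{\nu}{8}\|A\bm w\|^2+C\bigl(1+\|A\bm y^*\|^2+\|A\bm y^\tau\|^2\bigr)\|\nabla\bm w\|^2$, using Poincar\'e to replace $\|\bm w\|$ by $\|\nabla\bm w\|$.
\item The constraint terms, for example $\|\nabla\bm y^\tau\|^2\bm y^\tau-\|\nabla\bm y^*\|^2\bm y^*$ and the $\theta$, $\mathfrak{H}^*$ multiplier terms, are Lipschitz in $(\bm w,\xi)$ with respect to the $\mathbb V\times V$ norms, with coefficients bounded in $L^\infty$ or $L^2$ in time. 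After Young they are absorbed.
\item The forcing $\tau\mathfrak{H}$ contributes at most $\frac{\nu}{8}\|A\bm w\|^2+C\tau^2\|\mathfrak{H}\|^2$.
\item In the temperature equation, $(\bm w\cdot\nabla\theta^\tau+\bm y^*\cdot\nabla\xi,\mathcal A\xi)$ is bounded by
\[
\|\bm w\|_{\mathbb L^\infty}\|\nabla\theta^\tau\|\|\mathcal A\xi\|+\|\bm y^*\|_{\mathbb L^\infty}\|\nabla\xi\|\|\mathcal A\xi\|.
\]
\end{itemize}

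The first term in the last bullet is the main obstacle. This is because $(\bm y\cdot\nabla\theta,\mathcal A\theta)_H$ does not cancel, and $\|\bm w\|_{\mathbb L^\infty}$ requires $\|A\bm w\|$. We plan to use Agmon in the form $\|\bm w\|_{\mathbb L^\infty}^2\le C\|\bm w\|\|A\bm w\|$ and split with Young as
\[
\frac{\kappa}{4}\|\mathcal A\xi\|^2+\frac{\nu}{8}\|A\bm w\|^2+C\|\nabla\theta^\tau\|^4\|\bm w\|^2 .
\]
Since $\|\nabla\theta^\tau\|\in L^\infty(0,T)$, the last term is a source already controlled by \eqref{lemma_low_order}. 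An alternative is Gagliardo--Nirenberg, $\|\bm w\|_{\mathbb L^4}\|\nabla\theta^\tau\|_{\mathbb L^4}$, combined with $\|\nabla\theta^\tau\|_{\mathbb L^4}^2\le C\|\nabla\theta^\tau\|\|\mathcal A\theta^\tau\|$; this yields an $L^1$-in-time coefficient multiplying $\|\nabla\bm w\|^2$.

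After absorbing, we reach
\[
\frac{d}{dt}\bigl(\|\nabla\bm w\|^2+\|\nabla\xi\|^2\bigr)+\nu\|A\bm w\|^2+\kappa\|\mathcal A\xi\|^2\le \Lambda(t)\bigl(\|\nabla\bm w\|^2+\|\nabla\xi\|^2\bigr)+C\|\bm w\|^2+C\tau^2\|\mathfrak H\|^2 ,
\]
with $\Lambda\in L^1(0,T)$ bounded in terms of the uniform strong-solution bounds. Gr\"onwall's inequality with zero data, together with \eqref{lemma_low_order} for the $\|\bm w\|^2$ term, gives \eqref{lemma_high_order}. The computation is formal at the level of strong solutions. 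It is justified because $\bm w\in L^2(0,T;D(A))$ with $\partial_t\bm w\in L^2(0,T;\mathbb H)$, so the Lions--Magenes lemma applies, and likewise for $\xi$.
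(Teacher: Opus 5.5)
Your proposal is correct and follows essentially the same route as the paper: an $\mathbb H\times H$ energy estimate for the difference, closed by Gr\"onwall with zero initial data, then testing with $A\bm w$ and $\mathcal A\xi$, all under strong-solution bounds uniform in $\tau$. In particular, the non-cancelling term $(\bm w\cdot\nabla\theta^\tau,\mathcal A\xi)_H$ is handled by Agmon and Young, producing the source $C\|\nabla\theta^\tau\|_{\mathbb H}^4\|\bm w\|_{\mathbb H}^2$, which the lower-order estimate controls. The only differences are cosmetic: for example, you bound $(B(\bm w,\bm y^\tau),A\bm w)_{\mathbb H}$ via Ladyzhenskaya, Gagliardo--Nirenberg and Poincar\'e with an $L^1$-in-time coefficient on $\|\nabla\bm w\|_{\mathbb H}^2$, whereas the paper uses Agmon to place $\|\nabla\bm y^*\|_{\mathbb H}^4$ on $\|\widetilde{\bm y}\|_{\mathbb H}^2$.
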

\begin{proof}
Let \((\bm{y}^*,\theta^*)=(\bm{y}_{\mathfrak{H}^*},\theta_{\mathfrak{H}^*})\) and \((\bm{y}^\tau,\theta^\tau)=(\bm{y}_{\mathfrak{H}^*+\tau \mathfrak{H}},\theta_{\mathfrak{H}^*+\tau \mathfrak{H}})\), and set \(\widetilde{\bm{y}}=\bm{y}^\tau-\bm{y}^*\), \(\widetilde{\theta}=\theta^\tau-\theta^*\).
Then \((\widetilde{\bm{y}},\widetilde{\theta})\) satisfies the system below for a.e. $t\in(0,T)$:
\begin{equation}\label{deff}
\begin{cases}
\displaystyle
\frac{d\widetilde{\bm{y}}(t)}{dt}
= -\nu A\widetilde{\bm{y}}(t)
+ \nu\Big( \|\nabla \bm{y}^\tau(t)\|_{\mathbb{H}}^2\, \bm{y}^\tau(t) - \|\nabla \bm{y}^*(t)\|_{\mathbb{H}}^2\, \bm{y}^*(t) \Big)
- \Big( B(\bm{y}^\tau(t),\bm{y}^\tau(t)) - B(\bm{y}^*(t),\bm{y}^*(t)) \Big) \\[4pt]
\qquad
+ \beta {\textbf{Proj}_{\mathbb H}}(\widetilde{\theta}(t) e_2)
- \beta\Big( ( \theta^\tau(t) e_2, \bm{y}^\tau(t))_{\mathbb{H}}\,\bm{y}^\tau(t) - ( \theta^*(t) e_2, \bm{y}^*(t))_{\mathbb{H}}\,\bm{y}^*(t) \Big)
+ \tau \mathfrak{H}(t) \\[4pt]
\qquad
- \Big( ( \mathfrak{H}^*(t)+\tau \mathfrak{H}(t), \bm{y}^\tau(t))_{\mathbb{H}}\,\bm{y}^\tau(t) - ( \mathfrak{H}^*(t),\bm{y}^*(t))_{\mathbb{H}}\,\bm{y}^*(t) \Big),
\quad \text{in } \mathbb{V'}, \\[8pt]

\displaystyle
\frac{d\widetilde{\theta}(t)}{dt}
= -\kappa {\mathcal{A}\widetilde{\theta}}(t)
+ \kappa\Big( \|\nabla \theta^\tau(t)\|_{\mathbb H}^2\, \theta^\tau(t)
- \|\nabla \theta^*(t)\|_{\mathbb H}^2\, \theta^*(t) \Big) \\[4pt]
\qquad
- \Big( \bm{y}^\tau(t) \cdot \nabla \theta^\tau(t) - \bm{y}^*(t) \cdot \nabla \theta^*(t) \Big),
\quad \text{in } V', \\[8pt]

\widetilde{\bm{y}}(0)=0, \ \ \widetilde{\theta}(0)=0.
\end{cases}
\end{equation}
Let $(\bm{y}^*,\theta^*,\mathfrak{H}^*)\in\mathcal A_{\rm ad}$ be an optimal
triplet associated with the optimal control problem \eqref{opt}.
Then the following estimate holds:
\begin{align}{\label{neww}}
&\sup_{0\le t\le T}\|\nabla \bm{y}_{\mathfrak{H}^*}(t)\|_{H}^{2}
+\int_{0}^{T}
\left\|
\frac{d\bm{y}_{\mathfrak{H}^*}(t)}{dt}
\right\|_{\mathbb H}^{2}\,dt
+\nu\int_{0}^{T}
\|A\bm{y}_{\mathfrak{H}^*}(t)\|_{\mathbb H}^{2}\,dt
\nonumber\\
&\quad
+\sup_{0\le t\le T}\|\nabla\theta_{\mathfrak{H}^*}(t)\|_{H}^{2}
+\int_{0}^{T}
\left\|
\frac{d\theta_{\mathfrak{H}^*}(t)}{dt}
\right\|_{H}^{2}\,dt
+\kappa\int_{0}^{T}
\|A\theta_{\mathfrak{H}^*}(t)\|_{H}^{2}\,dt
\nonumber\\
&\le
C\left(
\nu,\beta,\kappa,T,
\|\bm{y}_0\|_{\mathbb V},
\|\theta_0\|_{V},
\|\mathfrak{H}^*\|_{L^{2}(0,T;\mathbb H)}
\right).
\end{align}

Moreover, for every fixed $\mathfrak{H}\in \mathcal U_{\rm ad}$ and sufficiently small
$\tau>0$, the perturbed state
$(\bm{y}_{\mathfrak{H}^*+\tau\mathfrak{H}},\theta_{\mathfrak{H}^*+\tau\mathfrak{H}})$
also satisfies the above estimate, with a constant
\[
C=C\left(
\nu,\beta,\kappa,T,
\|\bm{y}_0\|_{\mathbb V},
\|\theta_0\|_{V},
\|\mathfrak{H}^*\|_{L^{2}(0,T;\mathbb H)},
\|\mathfrak{H}\|_{L^{2}(0,T;\mathbb H)}
\right)
\]
independent of $\tau$.

Taking the $\mathbb{H}$-inner product of the first equation in \eqref{deff} with \(\widetilde{\bm{y}}(\cdot)\), we have
\begin{align}
&\frac{1}{2}\frac{d}{dt}\|\widetilde{\bm{y}}(t)\|_{\mathbb{H}}^2 + \nu \|\nabla \widetilde{\bm{y}}(t)\|_{\mathbb{H}}^2\notag
\\& = \nu \|\nabla \bm{y}^*(t)\|_{\mathbb{H}}^2 \|\widetilde{\bm{y}}(t)\|_{\mathbb{H}}^2 + \nu \big(\|\nabla \bm{y}^\tau(t)\|_{\mathbb{H}}^2 - \|\nabla \bm{y}^*(t)\|_{\mathbb{H}}^2\big)
( \bm{y}^\tau(t), \widetilde{\bm{y}}(t))_{\mathbb{H}} - ( B(\widetilde{\bm{y}}(t),\bm{y}^{\tau}(t)), \widetilde{\bm{y}}(t))_{\mathbb{H}} 
\notag\\
&\quad + \beta ( {\textbf{Proj}}_{\mathbb H}(\widetilde{\theta}(t)e_2), \widetilde{\bm{y}}(t))_{\mathbb{H}} - \beta (\widetilde{\theta}(t)e_2, \bm{y}^{\tau}(t))_{\mathbb{H}}
( \bm{y}^*(t), \widetilde{\bm{y}}(t))_{\mathbb{H}} - \beta ( \theta^*(t)e_2, \widetilde{\bm{y}}(t))_{\mathbb{H}}
( \bm{y}^*(t), \widetilde{\bm{y}}(t))_{\mathbb{H}}\notag\\
&\quad -\beta ( \theta^{\tau}(t)e_2, \bm{y}^{\tau}(t))_{\mathbb{H}}
\|\widetilde{\bm{y}}(t)\|_{\mathbb{H}}^2 + \tau ( \mathfrak{H}(t), \widetilde{\bm{y}}(t))_{\mathbb{H}} - \tau ( \mathfrak{H}(t), \bm{y}^\tau(t))_{\mathbb{H}}
( \bm{y}^\tau(t), \widetilde{\bm{y}}(t))_{\mathbb{H}} \notag\\
&\quad - ( \mathfrak{H}^*(t), \widetilde{\bm{y}}(t))_{\mathbb{H}}
( \bm{y}^*(t), \widetilde{\bm{y}}(t))_{\mathbb{H}}- ( \mathfrak{H}^*(t), \bm{y}^{\tau}(t))_{\mathbb{H}}
\|\widetilde{\bm{y}}(t)\|_{\mathbb{H}}^2,
\label{tau-est-1}
\end{align}
for a.e. \(t\in(0,T)\).
We now estimate each term on the right-hand side of \eqref{tau-est-1}. Throughout, we use the facts that \(\bm{y}^\tau,\bm{y}^*\in\mathcal{M}_1\) and \(\theta^\tau,\theta^*\in\mathcal{M}_2\), together with the uniform bound given by \eqref{neww}.
Applying the Cauchy-Schwarz inequality followed by Young's inequality yields
\[
\begin{aligned}
\nu \big(\|\nabla \bm{y}^\tau\|_{\mathbb{H}}^2 - \|\nabla \bm{y}^*\|_{\mathbb{H}}^2\big)
( \bm{y}^\tau, \widetilde{\bm{y}})_{\mathbb{H}}
&\le \frac{\nu}{6}\|\nabla \widetilde{\bm{y}}\|_{\mathbb{H}}^2
+ C_\nu \big(\|\nabla \bm{y}^\tau\|_{\mathbb{H}}^2 + \|\nabla \bm{y}^*\|_{\mathbb{H}}^2\big)
\|\widetilde{\bm{y}}\|_{\mathbb{H}}^2.
\end{aligned}
\]
Using Hölder's, Ladyzhenskaya's, and Young's inequalities, the convective term is bounded as
\[
\begin{aligned}
- ( B(\widetilde{\bm{y}},\bm{y}^{\tau}), \widetilde{\bm{y}})_{\mathbb{H}}
&\le \frac{\nu}{6}\|\nabla \widetilde{\bm{y}}\|_{\mathbb{H}}^2
+ C_\nu \|\nabla \bm{y}^{\tau}\|_{\mathbb{H}}^2 \|\widetilde{\bm{y}}\|_{\mathbb{H}}^2.
\end{aligned}
\]
For the temperature coupling terms, we use the continuity of the projection operator \({\textbf{Proj}_{\mathbb H}}\), the Cauchy-Schwarz inequality, and Young's inequality to find 
\[
\beta ( {\textbf{Proj}}_{\mathbb H}(\widetilde{\theta} e_2), \widetilde{\bm{y}})_{\mathbb{H}}
\le \beta \|\widetilde{\theta}\|_{H}\|\widetilde{\bm{y}}\|_{\mathbb H}
\le C_\beta \|\widetilde{\theta}\|_{H}^2 + \frac{1}{4}\|\widetilde{\bm{y}}\|_{\mathbb H}^2.
\]
Similarly, using \(\bm{y}^\tau, \bm{y}^* \in \mathcal{M}_1\), we deduce
\[
\begin{aligned}
- \beta (\widetilde{\theta} e_2, \bm{y}^{\tau})_{\mathbb{H}}
( \bm{y}^*, \widetilde{\bm{y}})_{\mathbb{H}}
&\le \beta \|\widetilde{\theta}\|_{H}\|\bm{y}^\tau\|_{\mathbb H}\|\bm{y}^*\|_{\mathbb H}\|\widetilde{\bm{y}}\|_{\mathbb H}\le \beta \|\widetilde{\theta}\|_{H}\|\widetilde{\bm{y}}\|_{\mathbb H}
\le C_\beta \|\widetilde{\theta}\|_{H}^2 + \frac{1}{4}\| \widetilde{\bm{y}}\|_{\mathbb H}^2,
\end{aligned}
\]
and, using \(\bm{y}^*, \bm{y}^\tau \in \mathcal{M}_1\) and \(\theta^*, \theta^\tau \in \mathcal{M}_2\), we get
\[
\begin{aligned}
- \beta ( \theta^* e_2, \widetilde{\bm{y}})_{\mathbb{H}}
( \bm{y}^*, \widetilde{\bm{y}})_{\mathbb{H}}
\le \beta \|\widetilde{\bm{y}}\|_{\mathbb H}^2 \ \text{ and }\ 
- \beta ( \theta^{\tau} e_2, \bm{y}^{\tau})_{\mathbb{H}}
\|\widetilde{\bm{y}}\|_{\mathbb{H}}^2
\le \beta \|\widetilde{\bm{y}}\|_{\mathbb H}^2.
\end{aligned}
\]
Applying the Cauchy-Schwarz and Young inequalities, we find
\[
\tau ( \mathfrak{H}, \widetilde{\bm{y}})_{\mathbb{H}}
\le \tau \|\mathfrak{H}\|_{\mathbb H}\|\widetilde{\bm{y}}\|_{\mathbb H}
\le C\tau^2 \|\mathfrak{H}\|_{\mathbb H}^2 + \frac{1}{4} \|\widetilde{\bm{y}}\|_{\mathbb H}^2.
\]
Using \(\bm{y}^\tau \in \mathcal{M}_1\) and Young's inequality, we obtain
\[
- \tau ( \mathfrak{H}, \bm{y}^\tau)_{\mathbb{H}}
( \bm{y}^\tau, \widetilde{\bm{y}})_{\mathbb{H}}
\le \tau \|\mathfrak{H}\|_{\mathbb H}\|\bm{y}^\tau\|_{\mathbb H}\|\bm{y}^\tau\|_{\mathbb H}\|\widetilde{\bm{y}}\|_{\mathbb H}
\le C\tau^2 \|\mathfrak{H}\|_{\mathbb H}^2 + \frac{1}{4} \|\widetilde{\bm{y}}\|_{\mathbb H}^2.
\]
Finally, using \(\bm{y}^*, \bm{y}^\tau \in \mathcal{M}_1\), the remaining terms are bounded since 
\begin{align*}
- ( \mathfrak{H}^*, \widetilde{\bm{y}})_{\mathbb{H}}
( \bm{y}^*, \widetilde{\bm{y}})_{\mathbb{H}}
&\le \|\mathfrak{H}^*\|_{\mathbb H}\|\widetilde{\bm{y}}\|_{\mathbb H}\|\bm{y}^*\|_{\mathbb H}\|\widetilde{\bm{y}}\|_{\mathbb H}
\le \|\mathfrak{H}^*\|_{\mathbb H}\|\widetilde{\bm{y}}\|_{\mathbb H}^2,\\
- ( \mathfrak{H}^*, \bm{y}^{\tau})_{\mathbb{H}}
\|\widetilde{\bm{y}}\|_{\mathbb{H}}^2
&\le \|\mathfrak{H}^*\|_{\mathbb H}\|\bm{y}^\tau\|_{\mathbb H}\|\widetilde{\bm{y}}\|_{\mathbb H}^2
\le \|\mathfrak{H}^*\|_{\mathbb H}\|\widetilde{\bm{y}}\|_{\mathbb H}^2.
\end{align*}
Combining the above estimates and absorbing the terms like \(\frac{\nu}{6}\|\nabla \widetilde{\bm{y}}(t)\|_{\mathbb{H}}^2\) into the left-hand side of \eqref{tau-est-1}, we obtain
\begin{align}\label{final-ineq}
&\frac{1}{2}\frac{d}{dt}\|\widetilde{\bm{y}}(t)\|_{\mathbb{H}}^2 + \frac{2\nu}{3} \|\nabla \widetilde{\bm{y}}(t)\|_{\mathbb{H}}^2
\nonumber\\&\le C_{\nu,\beta} \big(1 + \|\nabla \bm{y}^\tau(t)\|_{\mathbb{H}}^2 + \|\nabla \bm{y}^*(t)\|_{\mathbb{H}}^2  + \|\mathfrak{H}^*(t)\|_{\mathbb H}\big) \|\widetilde{\bm{y}}(t)\|_{\mathbb{H}}^2
+ C_\beta \|\widetilde{\theta}(t)\|_{H}^2 + C \tau^2 \|\mathfrak{H}(t)\|_{\mathbb H}^2,
\end{align}
for a.e. \(t\in[0,T]\).
Now taking the \(H\)-inner product of the second equation in \eqref{deff} with \(\widetilde{\theta}(\cdot)\), we obtain
\begin{align}
\frac{1}{2}\frac{d}{dt}\|\widetilde{\theta}(t)\|_{H}^2
+ \kappa \|\nabla \widetilde{\theta}(t)\|_{\mathbb H}^2
&= \kappa \|\nabla \theta^*(t)\|_{\mathbb H}^2 \|\widetilde{\theta}(t)\|_{H}^2 
+ \kappa \Big(
\|\nabla \theta^\tau(t)\|_{\mathbb H}^2 - \|\nabla \theta^*(t)\|_{\mathbb H}^2
\Big)
( \theta^\tau(t), \widetilde{\theta}(t) )_{H} \nonumber\\ &\quad - \Big(
\bm{y}^\tau(t)\cdot\nabla \theta^\tau(t)
- \bm{y}^*(t)\cdot\nabla \theta^*(t),
\widetilde{\theta}(t)
\Big)_{H},
\end{align}
for a.e. \(t\in[0,T]\).
Using \( \bm{y}^\tau \cdot \nabla\theta^\tau-\bm{y}^* \cdot \nabla\theta^*
= \widetilde{\bm{y}} \cdot \nabla\theta^\tau+\bm{y}^* \cdot \nabla\widetilde{\theta} \)
and \( (\bm{y}^* \cdot \nabla\widetilde{\theta},\widetilde{\theta})_H=0 \),
we write
\[
-(\bm{y}^\tau \cdot \nabla\theta^\tau-\bm{y}^* \cdot \nabla\theta^*,\widetilde{\theta})_H
= -(\widetilde{\bm{y}} \cdot \nabla\theta^\tau,\widetilde{\theta})_H - (\bm{y}^* \cdot \nabla\widetilde{\theta},\widetilde{\theta})_H
= -(\widetilde{\bm{y}} \cdot \nabla\theta^\tau,\widetilde{\theta})_H.
\]
We now estimate the remaining term using Hölder's, Ladyzhenskaya's, and Young's inequalities:
\begin{align}
\big|(\widetilde{\bm{y}}\cdot\nabla\theta^\tau,\widetilde{\theta})_{H}\big|
&=\big|(\widetilde{\bm{y}}\cdot\nabla\widetilde{\theta},\theta^\tau)_{H}\big|
\leq \|\widetilde{\bm{y}}\|_{\mathbb{L}^{4}}
\|\nabla\widetilde{\theta}\|_{\mathbb H}
\|\theta^\tau\|_{L^{4}}
\nonumber\\
&\leq \frac{\kappa}{8}\|\nabla\widetilde{\theta}\|_{\mathbb H}^{2}
+C_{\kappa}\|\widetilde{\bm{y}}\|_{\mathbb{H}}
\|\nabla\widetilde{\bm{y}}\|_{\mathbb{H}}
\|\theta^\tau\|_{L^{4}}^{2}
\nonumber\\
&\leq \frac{\kappa}{8}\|\nabla\widetilde{\theta}\|_{\mathbb H}^{2}
+\frac{\nu}{12}\|\nabla\widetilde{\bm{y}}\|_{\mathbb{H}}^{2}
+C_{\nu,\kappa}\|\theta^\tau\|_{L^{4}}^{4}
\|\widetilde{\bm{y}}\|_{\mathbb{H}}^{2},\\
\big|(\widetilde{\bm{y}}\cdot\nabla\theta^*,\widetilde{\theta})_{H}\big|
&=\big|(\widetilde{\bm{y}}\cdot\nabla\widetilde{\theta},\theta^*)_{H}\big|
\leq \|\widetilde{\bm{y}}\|_{\mathbb{L}^{4}}
\|\nabla\widetilde{\theta}\|_{\mathbb H}
\|\theta^*\|_{L^{4}}
\nonumber\\
&\leq \frac{\kappa}{8}\|\nabla\widetilde{\theta}\|_{\mathbb H}^{2}
+C_{\kappa}\|\widetilde{\bm{y}}\|_{\mathbb{H}}
\|\nabla\widetilde{\bm{y}}\|_{\mathbb{H}}
\|\theta^*\|_{L^{4}}^{2}
\nonumber\\
&\leq \frac{\kappa}{8}\|\nabla\widetilde{\theta}\|_{\mathbb H}^{2}
+\frac{\nu}{12}\|\nabla\widetilde{\bm{y}}\|_{\mathbb{H}}^{2}
+C_{\nu,\kappa}\|\theta^*\|_{L^{4}}^{4}
\|\widetilde{\bm{y}}\|_{\mathbb{H}}^{2}.
\end{align}
Finally, for the nonlinear term, we estimate
\begin{align}
&\kappa \Big(
\|\nabla \theta^\tau\|_{\mathbb H}^2 - \|\nabla \theta^*\|_{\mathbb H}^2
\Big)
( \theta^\tau, \widetilde{\theta} )_{H} \nonumber \\
&\le \kappa \big(\|\nabla \theta^\tau\|_{\mathbb H} + \|\nabla \theta^*\|_{\mathbb H}\big)
\|\nabla \widetilde{\theta}\|_{H} \|\theta^\tau\|_{H} \|\widetilde{\theta}\|_{H} \nonumber \\
&\le \frac{\kappa}{4}\|\nabla \widetilde{\theta}\|_{H}^2
+ C_{\kappa}\big(\|\nabla \theta^\tau\|_{\mathbb H} + \|\nabla \theta^*\|_{\mathbb H}\big)^2
\|\widetilde{\theta}\|_{H}^2.
\end{align}
Combining the above estimates, we obtain
\begin{align}{\label{theta_f}}
&\frac{1}{2}\frac{d}{dt}\|\widetilde{\theta}(t)\|_{H}^{2}
+\frac{\kappa}{2}\|\nabla\widetilde{\theta}(t)\|_{H}^{2}
\nonumber\\&\leq
C_{\kappa}\Big(
\|\nabla\theta^\tau(t)\|_{\mathbb H}^{2}
+\|\nabla\theta^*(t)\|_{\mathbb H}^{2}
+1
\Big)
\|\widetilde{\theta}(t)\|_{H}^{2}
+C_{\nu,\kappa}\|\theta^*\|_{L^{4}}^{4}\|\widetilde{\bm{y}}(t)\|_{\mathbb{H}}^{2}
+\frac{\nu}{6}
\|\nabla\widetilde{\bm{y}}(t)\|_{\mathbb{H}}^{2},
\end{align}
for a.e. \(t\in[0,T]\).
Adding \eqref{final-ineq} and \eqref{theta_f}, we obtain for a.e. \(t\in[0,T]\)
\begin{align}
&\frac{d}{dt}\Big(\|\widetilde{\bm{y}}(t)\|_{\mathbb{H}}^2 + \|\widetilde{\theta}(t)\|_{H}^2\Big)
+ \nu\|\nabla \widetilde{\bm{y}}(t)\|_{\mathbb{H}}^2
+ \kappa\|\nabla \widetilde{\theta}(t)\|_{\mathbb H}^2 \nonumber\\
&\le C_{\nu,\beta,\kappa}\Big(
1 + \|\nabla \bm{y}^\tau(t)\|_{\mathbb{H}}^2
+ \|\nabla \bm{y}^*(t)\|_{\mathbb{H}}^2
+ \|\nabla \theta^\tau(t)\|_{\mathbb H}^2
+ \|\nabla \theta^*(t)\|_{\mathbb H}^2+\|\theta^*(t)\|_{L^{4}}^{4}
+ \|\mathfrak{H}^*(t)\|_{\mathbb{H}}
\Big)\nonumber\\
&\qquad\times\Big(\|\widetilde{\bm{y}}(t)\|_{\mathbb{H}}^2 + \|\widetilde{\theta}(t)\|_{H}^2\Big)  + C_{\nu}\tau^2\|\mathfrak{H}(t)\|_{\mathbb{H}}^2.
\end{align}
Consequently, we obtain
\begin{align}{\label{ytest}}
\sup_{0\le t\le T}&\left(\|\widetilde{\bm{y}}(t)\|_{\mathbb{H}}^2 + \|\widetilde{\theta}(t)\|_{H}^2\right)
+ \nu \int_0^T \|\nabla \widetilde{\bm{y}}(t)\|_{\mathbb{H}}^2 dt
+ \kappa \int_0^T \|\nabla \widetilde{\theta}(t)\|_{\mathbb H}^2 dt \notag \\
&\le \bigg(C_{\nu,\beta,\kappa}\,\tau^2
\int_0^T \|\mathfrak{H}(s)\|_{\mathbb{H}}^2 ds\bigg)
\exp\!\left(
C_{\nu,\beta,\kappa}\int_0^T \Big(
1 + \|\nabla \bm{y}^\tau(s)\|_{\mathbb{H}}^2 + \|\nabla \bm{y}^*(s)\|_{\mathbb{H}}^2 \right. \nonumber\\
&\hspace{1cm}
+ \|\nabla \theta^\tau(s)\|_{H}^2 + \|\nabla \theta^*(s)\|_{\mathbb H}^2+\|\theta^*(s)\|_{L^{4}}^{4}
+ \|\mathfrak{H}^*(s)\|_{\mathbb{H}}
\Big)\,ds
\Bigg)\notag\\
&\le K \tau^2 \int_0^T \|\mathfrak{H}(s)\|_{\mathbb{H}}^2 ds,
\end{align}
for all \(t \in [0, T]\), provided that the initial data satisfy \(\|\widetilde{\bm{y}}(0)\|_{\mathbb{H}}^2 + \|\widetilde{\theta}(0)\|_{H}^2 = 0\), where
\[
K = K\Big(\nu,\beta,\kappa,T,\|\bm{y}_0\|_{\mathbb{V}},\|\theta_0\|_{V},\|\mathfrak{H}^*\|_{L^2(0,T;\mathbb{H})},\|\mathfrak{H}\|_{L^2(0,T;\mathbb{H})}\Big)
\]
is a positive constant depending on the indicated terms.
Taking the inner product in \(\mathbb{H}\) of the velocity equation in \eqref{deff} with \(A\widetilde{\bm{y}}(\cdot)\), we obtain for a.e. \(t \in [0,T]\),
\begin{align}
&\frac{1}{2}\frac{d}{dt}\|\nabla \widetilde{\bm{y}}(t)\|_{\mathbb{H}}^2
+ \nu \|A\widetilde{\bm{y}}(t)\|_{\mathbb{H}}^2
\\&= \nu \|\nabla \bm{y}^*(t)\|_{\mathbb{H}}^2 \|\nabla \widetilde{\bm{y}}(t)\|_{\mathbb{H}}^2 + \nu \Big(
\|\nabla \bm{y}^\tau(t)\|_{\mathbb{H}}^2 - \|\nabla \bm{y}^*(t)\|_{\mathbb{H}}^2
\Big)
( \nabla \bm{y}^\tau(t), \nabla \widetilde{\bm{y}}(t) )_{\mathbb{H}}+ \beta ( {\textbf{Proj}_{\mathbb H}}(\widetilde{\theta}(t) e_2), A\widetilde{\bm{y}}(t) )_{\mathbb{H}} \nonumber \\
&\quad -\Big(
B(\bm{y}^\tau(t),\bm{y}^\tau(t)) - B(\bm{y}^*(t),\bm{y}^*(t)),
A\widetilde{\bm{y}}(t)
\Big)_{\mathbb{H}}
 - \beta \Big(
( \theta^\tau(t) e_2,\bm{y}^\tau(t))_{\mathbb{H}}\bm{y}^\tau(t)
- ( \theta^*(t) e_2,\bm{y}^*(t))_{\mathbb{H}}\bm{y}^*(t),
A\widetilde{\bm{y}}(t)
\Big)_{\mathbb{H}} \nonumber \\
&\quad + \tau ( \mathfrak{H}(t), A\widetilde{\bm{y}}(t) )_{\mathbb{H}} - \Big(( \mathfrak{H}^*(t)+\tau \mathfrak{H}(t),\bm{y}^\tau(t))_{\mathbb{H}}\bm{y}^\tau(t)
- ( \mathfrak{H}^*(t),\bm{y}^*(t))_{\mathbb{H}}\bm{y}^*(t),
A\widetilde{\bm{y}}(t)
\Big)_{\mathbb{H}}.
\end{align}
Using the Cauchy--Schwarz inequality, we obtain
\[
\begin{aligned}
\nu\Big(\|\nabla\bm{y}^\tau\|_{\mathbb H}^2-\|\nabla\bm{y}^*\|_{\mathbb H}^2\Big)
(\nabla\bm{y}^\tau,\nabla\widetilde{\bm{y}})_{\mathbb H}&\leq
\nu\Big|\|\nabla\bm{y}^\tau\|_{\mathbb H}^2-\|\nabla\bm{y}^*\|_{\mathbb H}^2\Big|
\|\nabla\bm{y}^\tau\|_{\mathbb H}
\|\nabla\widetilde{\bm{y}}\|_{\mathbb H}\\
& \leq
\nu\big(\|\nabla\bm{y}^\tau\|_{\mathbb H}
+\|\nabla\bm{y}^*\|_{\mathbb H}\big)
\|\nabla\bm{y}^\tau\|_{\mathbb H}
\|\nabla\widetilde{\bm{y}}\|_{\mathbb H}^2.
\end{aligned}
\]
Next, using Agmon's and Young's inequalities, we infer
\begin{align}
|(B(\widetilde{\bm{y}},\bm{y}^*),
A\widetilde{\bm{y}})_{\mathbb H}|
&\le
\frac{\nu}{16}\|A\widetilde{\bm{y}}\|_{\mathbb H}^2
+C_\nu\|\nabla\bm{y}^*\|_{\mathbb H}^4
\|\widetilde{\bm{y}}\|_{\mathbb H}^2,\\
|(B(\bm{y}^\tau,\widetilde{\bm{y}}),
A\widetilde{\bm{y}})_{\mathbb H}|
&\le
\frac{\nu}{16}\|A\widetilde{\bm{y}}\|_{\mathbb H}^2
+C_\nu\|\bm{y}^\tau\|_{\mathbb L^\infty}^2
\|\nabla\widetilde{\bm{y}}\|_{\mathbb H}^2.
\end{align}

Using the Cauchy-Schwarz and Young inequalities, we estimate
\begin{align}
\tau ( \mathfrak{H}, A\widetilde{\bm{y}} )_{\mathbb{H}}
&\le \tau \|\mathfrak{H}\|_{\mathbb{H}} \|A\widetilde{\bm{y}}\|_{\mathbb{H}}
\le \frac{\nu}{16}\|A\widetilde{\bm{y}}\|_{\mathbb{H}}^2
+ C_{\nu}\tau^2\|\mathfrak{H}\|_{\mathbb{H}}^2.
\end{align}
Similarly, using \(\bm{y}^\tau \in \mathcal{M}_1\), we obtain
\begin{align}
- \tau ( \mathfrak{H}, \bm{y}^\tau )_{\mathbb{H}} ( \bm{y}^\tau, A\widetilde{\bm{y}} )_{\mathbb{H}}
&\le \tau \|\mathfrak{H}\|_{\mathbb{H}} \|A\widetilde{\bm{y}}\|_{\mathbb{H}}
\le \frac{\nu}{16}\|A\widetilde{\bm{y}}\|_{\mathbb{H}}^2
+ C_{\nu}\tau^2\|\mathfrak{H}\|_{\mathbb{H}}^2.
\end{align}
Moreover, using \(\bm{y}^\tau, \bm{y}^* \in \mathcal{M}_1\), we estimate
\begin{align}
&-(\mathfrak{H}^*,\widetilde{\bm{y}})_{\mathbb H}
(\bm{y}^\tau,A\widetilde{\bm{y}})_{\mathbb H}
-(\mathfrak{H}^*,\bm{y}^*)_{\mathbb H}
\|\nabla\widetilde{\bm{y}}\|_{\mathbb H}^2
\notag\\
&\quad\le
\|\mathfrak{H}^*\|_{\mathbb H}
\|\widetilde{\bm{y}}\|_{\mathbb H}
\|A\widetilde{\bm{y}}\|_{\mathbb H}
+\|\mathfrak{H}^*\|_{\mathbb H}
\|\nabla\widetilde{\bm{y}}\|_{\mathbb H}^2
\notag\\
&\quad\le
\frac{\nu}{16}\|A\widetilde{\bm{y}}\|_{\mathbb H}^2
+C_\nu\|\mathfrak{H}^*\|_{\mathbb H}^2
\|\widetilde{\bm{y}}\|_{\mathbb H}^2
+\|\mathfrak{H}^*\|_{\mathbb H}
\|\nabla\widetilde{\bm{y}}\|_{\mathbb H}^2.
\end{align}
For the temperature coupling terms involving \(A\widetilde{\bm{y}}\), we deduce
\begin{align}
\beta ( {\textbf{Proj}}_{\mathbb H}(\widetilde{\theta} e_2), A\widetilde{\bm{y}} )_{\mathbb{H}}
&\le \beta \|\widetilde{\theta}\|_{H} \|A\widetilde{\bm{y}}\|_{\mathbb{H}}
\le \frac{\nu}{16}\|A\widetilde{\bm{y}}\|_{\mathbb{H}}^2
+ C_{\nu,\beta} \|\widetilde{\theta}\|_{H}^2,
\end{align}
and
\begin{align}
-& \beta \Big(
( \theta^\tau e_2,\bm{y}^\tau)_{\mathbb{H}}\bm{y}^\tau
- ( \theta^* e_2,\bm{y}^*)_{\mathbb{H}}\bm{y}^*,
A\widetilde{\bm{y}}
\Big)_{\mathbb{H}} \nonumber\\
&= - \beta \Big(
(\widetilde{\theta} e_2,\bm{y}^\tau)_{\mathbb{H}}\bm{y}^*
+ ( \theta^* e_2,\widetilde{\bm{y}})_{\mathbb{H}}\bm{y}^*
+ ( \theta^{\tau} e_2,\bm{y}^{\tau})_{\mathbb{H}}\widetilde{\bm{y}},
A\widetilde{\bm{y}}
\Big)_{\mathbb{H}} \nonumber\\
&\le \frac{\nu}{16}\|A\widetilde{\bm{y}}\|_{\mathbb H}^2
+ C_{\nu,\beta} \|\widetilde{\theta}\|_{H}^2
+ C_{\nu,\beta} \|\widetilde{\bm{y}}\|_{\mathbb H}^2.
\end{align}
where we have used \(\bm{y}^\tau, \bm{y}^* \in \mathcal{M}_1\), \(\theta^*, \theta^\tau \in \mathcal{M}_2\), and Young's inequality.
Combining all the above estimates, we find for a.e. \(t\in[0,T]\),
\begin{align}{\label{autilde}}
&\frac12\frac{d}{dt}
\|\nabla\widetilde{\bm{y}}(t)\|_{\mathbb H}^2
+\nu\|A\widetilde{\bm{y}}(t)\|_{\mathbb H}^2
\notag\\
&\quad\le
C_\nu\Big(
\|\bm{y}^\tau(t)\|_{\mathbb L^\infty}^2
+\|\nabla\bm{y}^\tau(t)\|_{\mathbb H}^2
+\|\nabla\bm{y}^*(t)\|_{\mathbb H}^2
+\|\mathfrak{H}^*(t)\|_{\mathbb H}
\Big)\|\nabla\widetilde{\bm{y}}(t)\|_{\mathbb H}^2
+\frac{7\nu}{16}\|A\widetilde{\bm{y}}(t)\|_{\mathbb H}^2
\notag\\
&\qquad+C_{\nu,\beta}\|\widetilde\theta(t)\|_H^2
+C_{\nu,\beta}\Big(
1+\|\mathfrak{H}^*(t)\|_{\mathbb H}^2
+\|\nabla\bm{y}^*(t)\|_{\mathbb H}^4
\Big)\|\widetilde{\bm{y}}(t)\|_{\mathbb H}^2
+C_\nu\tau^2\|\mathfrak{H}(t)\|_{\mathbb H}^2.
\end{align}
Taking the inner product in \(H\) of the second equation in system \eqref{deff} with \({\mathcal{A}\widetilde{\theta}}(\cdot)\), we obtain for a.e. \(t\in[0,T]\),
\begin{align}
\frac{1}{2}\frac{d}{dt}\|\nabla \widetilde{\theta}(t)\|_{\mathbb H}^2
+ \kappa \|{\mathcal{A}\widetilde{\theta}}(t)\|_{H}^2
&= \kappa \|\nabla \theta^*(t)\|_{\mathbb H}^2 \|\nabla \widetilde{\theta}(t)\|_{\mathbb H}^2 + \kappa\Big(
\|\nabla \theta^\tau(t)\|_{\mathbb H}^2 - \|\nabla \theta^*(t)\|_{\mathbb H}^2
\Big)( \nabla \theta^\tau(t), \nabla \widetilde{\theta}(t))_{H} \nonumber\\
&\quad- ( \bm{y}^\tau(t)\cdot\nabla\theta^\tau(t) - \bm{y}^*(t)\cdot\nabla \theta^*(t),
{\mathcal{A}\widetilde{\theta}}(t))_{H}.
\end{align}
Applying H\"older's and Young's inequalities, we infer
\[
\kappa\Big(\|\nabla\theta^\tau(t)\|_{\mathbb H}^2-\|\nabla\theta^*(t)\|_{\mathbb H}^2\Big)
(\nabla\theta^\tau(t),\nabla\widetilde{\theta}(t))_{H}
\leq \frac{\kappa}{6}\|\mathcal{A}\widetilde{\theta}(t)\|_{H}^2
+C_{\kappa}\big(\|\nabla\theta^\tau(t)\|_{\mathbb H}+\|\nabla\theta^*(t)\|_{\mathbb H}\big)^2
\|\nabla\widetilde{\theta}(t)\|_{H}^2.
\]
Next, using \(\bm{y}^\tau\cdot\nabla\theta^\tau-\bm{y}^*\cdot\nabla\theta^*
=\widetilde{\bm{y}}\cdot\nabla\theta^\tau+\bm{y}^*\cdot\nabla\widetilde{\theta}\), we obtain
\[
-(\bm{y}^\tau\cdot\nabla\theta^\tau-\bm{y}^*\cdot\nabla\theta^*,\mathcal{A}\widetilde{\theta})_{H}
=-(\widetilde{\bm{y}}\cdot\nabla\theta^\tau,\mathcal{A}\widetilde{\theta})_{H}
-(\bm{y}^*\cdot\nabla\widetilde{\theta},\mathcal{A}\widetilde{\theta})_{H}.
\]
Using Hölder's, Agmon's, and Young's inequalities, we further have
\begin{align}
|(\widetilde{\bm{y}}\cdot\nabla \theta^\tau, {\mathcal{A}\widetilde{\theta}})_{H}|
&\le \|\widetilde{\bm{y}}\|_{\mathbb L^\infty}\|\nabla \theta^\tau\|_{\mathbb H}\|{\mathcal{A}\widetilde{\theta}}\|_{H} \nonumber\\
&\le C \|\widetilde{\bm{y}}\|_{\mathbb{H}}^{1/2}\|A\widetilde{\bm{y}}\|_{\mathbb{H}}^{1/2}
\|\nabla \theta^\tau\|_{\mathbb H}\|{\mathcal{A}\widetilde{\theta}}\|_{H} \nonumber\\
&\le \frac{\kappa}{6}\|{\mathcal{A}\widetilde{\theta}}\|_{H}^2
+ \frac{\nu}{16}\|A\widetilde{\bm{y}}\|_{\mathbb{H}}^2
+ C_{\nu,\kappa}\|\nabla \theta^\tau\|_{\mathbb H}^4
\|\widetilde{\bm{y}}\|_{\mathbb{H}}^2.
\end{align}
Similarly, using Agmon's and Young's inequalities, we get
\begin{align*}
|(\bm{y}^*\cdot\nabla\widetilde\theta,
\mathcal A\widetilde\theta)_H|
&\le
\|\bm{y}^*\|_{\mathbb L^\infty}
\|\nabla\widetilde\theta\|_H
\|\mathcal A\widetilde\theta\|_H\\
&\le
\frac{\kappa}{6}\|\mathcal A\widetilde\theta\|_H^2
+C_\kappa\|\bm{y}^*\|_{\mathbb L^\infty}^2
\|\nabla\widetilde\theta\|_H^2\\
&\le
\frac{\kappa}{6}\|\mathcal A\widetilde\theta\|_H^2
+C_\kappa\|\bm{y}^*\|_{\mathbb H}
\|A\bm{y}^*\|_{\mathbb H}
\|\nabla\widetilde\theta\|_H^2.
\end{align*}
Combining the above estimates, we deduce
\begin{align}{\label{thl}}
\frac12\frac{d}{dt}\|\nabla\widetilde\theta(t)\|_H^2
+\frac{\kappa}{2}\|\mathcal A\widetilde\theta(t)\|_H^2
&\le
C_\kappa\Big(
\|\nabla\theta^\tau(t)\|_H^2
+\|\nabla\theta^*(t)\|_H^2
+\|\bm{y}^*(t)\|_{\mathbb H}\|A\bm{y}^*(t)\|_{\mathbb H}
\Big)\|\nabla\widetilde\theta(t)\|_H^2
\notag\\
&\qquad
+C_{\nu,\kappa}\|\nabla\theta^\tau(t)\|_H^4
\|\widetilde{\bm{y}}(t)\|_{\mathbb H}^2
+\frac{\nu}{16}\|A\widetilde{\bm{y}}(t)\|_{\mathbb H}^2.
\end{align}
for a.e. \(t \in [0,T]\).
Adding \eqref{autilde} and \eqref{thl}, and absorbing the terms involving \(\|A\widetilde{\bm{y}}(t)\|_{\mathbb{H}}^2\) and \(\|{\mathcal{A}\widetilde{\theta}}(t)\|_{H}^2\) to the left hand side, we obtain for a.e. \(t\in[0,T]\),
\begin{align}{\label{authe}}
&\frac12\frac{d}{dt}\Big(
\|\nabla\widetilde{\bm{y}}(t)\|_{\mathbb H}^2
+\|\nabla\widetilde\theta(t)\|_H^2
\Big)
+\frac{\nu}{2}\|A\widetilde{\bm{y}}(t)\|_{\mathbb H}^2
+\frac{\kappa}{2}\|\mathcal A\widetilde\theta(t)\|_H^2
\notag\\
&\quad\le
C_\nu\Big(
\|\bm{y}^\tau(t)\|_{\mathbb L^\infty}^2
+\|\nabla\bm{y}^\tau(t)\|_{\mathbb H}^2
+\|\nabla\bm{y}^*(t)\|_{\mathbb H}^2
+\|\mathfrak{H}^*(t)\|_{\mathbb H}
\Big)\|\nabla\widetilde{\bm{y}}(t)\|_{\mathbb H}^2
\notag\\
&\qquad
+C_\kappa\Big(
\|\nabla\theta^\tau(t)\|_H^2
+\|\nabla\theta^*(t)\|_H^2
+\|\bm{y}^*(t)\|_{\mathbb H}\|A\bm{y}^*(t)\|_{\mathbb H}
\Big)\|\nabla\widetilde\theta(t)\|_H^2
\notag\\
&\qquad
+C_{\nu,\beta}\|\widetilde\theta(t)\|_H^2
+C_{\nu,\beta,\kappa}\Big(
1+\|\mathfrak{H}^*(t)\|_{\mathbb H}^2
+\|\nabla\bm{y}^*(t)\|_{\mathbb H}^4
+\|\nabla\theta^\tau(t)\|_H^4
\Big)\|\widetilde{\bm{y}}(t)\|_{\mathbb H}^2
+C_\nu\tau^2\|\mathfrak{H}(t)\|_{\mathbb H}^2,
\end{align}
for a.e. \(t \in [0,T]\).
From the previous estimate \eqref{ytest}, we have
\begin{align}
\sup_{0\le t\le T}\left(\|\widetilde{\bm{y}}(t)\|_{\mathbb{H}}^2 + \|\widetilde{\theta}(t)\|_{H}^2\right)
&+ \nu \int_0^T \|\nabla \widetilde{\bm{y}}(t)\|_{\mathbb{H}}^2 dt
+ \kappa \int_0^T \|\nabla \widetilde{\theta}(t)\|_{\mathbb H}^2 dt \le K \tau^2 \int_0^T \|\mathfrak{H}(s)\|_{\mathbb{H}}^2 ds,
\end{align}
for all \(t \in [0,T]\).
From \eqref{autilde} and \eqref{thl}, we obtain for a.e. \(t\in[0,T]\),
\[
\frac{d}{dt}\Big(\|\nabla \widetilde{\bm{y}}(t)\|_{\mathbb{H}}^2 + \|\nabla \widetilde{\theta}(t)\|_{\mathbb H}^2\Big)
\le \Phi(t) \Big(\|\nabla \widetilde{\bm{y}}(t)\|_{\mathbb{H}}^2 + \|\nabla \widetilde{\theta}(t)\|_{\mathbb H}^2\Big)
+ S(t)\big(\|\widetilde{\bm{y}}(t)\|_{\mathbb{H}}^2 + \|\widetilde{\theta}(t)\|_{H}^2\big)
+ C_{\nu}\tau^2\|\mathfrak{H}(t)\|_{\mathbb{H}}^2,
\]
where
\begin{align*}
\Phi(t)
&:=C_{\nu,\kappa}\Big(
1+\|\bm{y}^\tau(t)\|_{\mathbb L^\infty}^2
+\|\nabla\bm{y}^\tau(t)\|_{\mathbb H}^2
+\|\nabla\bm{y}^*(t)\|_{\mathbb H}^2
+\|\mathfrak{H}^*(t)\|_{\mathbb H}\\
&\qquad\qquad
+\|\nabla\theta^\tau(t)\|_{\mathbb H}^2
+\|\nabla\theta^*(t)\|_{\mathbb H}^2
+\|\bm{y}^*(t)\|_{\mathbb H}\|A\bm{y}^*(t)\|_{\mathbb H}
\Big),\\
S(t)
&:=C_{\nu,\beta,\kappa}\Big(
1+\|\mathfrak{H}^*(t)\|_{\mathbb H}^2
+\|\nabla\bm{y}^*(t)\|_{\mathbb H}^4
+\|\nabla\theta^\tau(t)\|_H^4
\Big).
\end{align*}
Since \(\|\nabla \widetilde{\bm{y}}(0)\|_{\mathbb{H}}^2 + \|\nabla \widetilde{\theta}(0)\|_{H}^2 = 0\), applying Grönwall's inequality to the above differential inequality yields
\[
\begin{aligned}
&\sup_{0\le t\le T}\left(\|\nabla \widetilde{\bm{y}}(t)\|_{\mathbb{H}}^2 + \|\nabla \widetilde{\theta}(t)\|_{\mathbb H}^2\right) \le C_{\nu}\tau^2 \int_0^T \|\mathfrak{H}(s)\|_{\mathbb{H}}^2
\exp\left(\int_s^T \Phi(r)\,dr\right) ds \\
&\quad + \int_0^T S(s)\big(\|\widetilde{\bm{y}}(s)\|_{\mathbb{H}}^2 + \|\widetilde{\theta}(s)\|_{H}^2\big)
\exp\left(\int_s^T \Phi(r)\,dr\right) ds.
\end{aligned}
\]
Hence we deduce
\[
\sup_{0\le t\le T}\left(\|\nabla \widetilde{\bm{y}}(t)\|_{\mathbb{H}}^2 + \|\nabla \widetilde{\theta}(t)\|_{\mathbb H}^2\right)
+ \nu \int_0^T \|A\widetilde{\bm{y}}(t)\|_{\mathbb{H}}^2 dt
+ \kappa \int_0^T \|{\mathcal{A}\widetilde{\theta}}(t)\|_{H}^2 dt
\le K^{(1)}\,\tau^2 \int_0^T \|\mathfrak{H}(s)\|_{\mathbb{H}}^2\,ds,
\]
where \(K^{(1)}\) is a positive constant depending on the same parameters as \(K\), namely,
\[
K^{(1)} = K^{(1)}\Big(\nu,\beta,\kappa,T,\|\bm{y}_0\|_{\mathbb{V}},\|\theta_0\|_{V},\|\mathfrak{H}^*\|_{L^2(0,T;\mathbb{H})},\|\mathfrak{H}\|_{L^2(0,T;\mathbb{H})}\Big),
\]
which completes the proof. 
\end{proof}
\begin{proposition}{\label{prop4.4}}
Let \((\bm{y}_0,\theta_0) \in (\mathbb{V}\cap \mathcal{M}_1) \times (V\cap \mathcal{M}_2)\) be given. Then the control-to-state mapping
\[
\mathcal{U}_{ad} \ni \mathfrak{H}^* \mapsto (\bm{y}_{\mathfrak{H}^*}, \theta_{\mathfrak{H}^*})
\in \big(C([0,T];\mathbb{H})\cap L^2(0,T;\mathbb{V})\big)
\times \big(C([0,T];H)\cap L^2(0,T;V)\big)
\]
is Fréchet differentiable.
Moreover, for any \(\mathfrak{H} \in \mathcal{U}_{ad}\), we have
\begin{align}{\label{inc1}}
\lim_{\tau \downarrow 0}
\frac{\|\bm{y}_{\mathfrak{H}^*+\tau \mathfrak{H}}(t)-\bm{y}_{\mathfrak{H}^*}(t)-\tau\bm{\eta}(t)\|_{\mathbb{H}}}{\tau} = 0,
\quad
\lim_{\tau \downarrow 0}
\frac{\|\theta_{\mathfrak{H}^*+\tau \mathfrak{H}}(t)-\theta_{\mathfrak{H}^*}(t)-\tau\xi(t)\|_{H}}{\tau} = 0,
\end{align}
for all \(t\in[0,T]\), and
\begin{align}{\label{inc2}}
\lim_{\tau \downarrow 0}
\frac{\|\bm{y}_{\mathfrak{H}^*+\tau \mathfrak{H}}-\bm{y}_{\mathfrak{H}^*}-\tau\bm{\eta}\|_{L^2(0,T;\mathbb{V})}}{\tau} = 0,
\quad
\lim_{\tau \downarrow 0}
\frac{\|\theta_{\mathfrak{H}^*+\tau \mathfrak{H}}-\theta_{\mathfrak{H}^*}-\tau\xi\|_{L^2(0,T;V)}}{\tau} = 0.
\end{align}

Here, \((\bm{\eta},\xi)\) denotes the unique weak solution to the linearized system:
\begin{equation}
\begin{cases}
\dfrac{d\bm{\eta}(t)}{dt}
= -\nu A\bm{\eta}(t) 
+ \nu \|\nabla \bm{y}_{\mathfrak{H}^*}(t)\|_{\mathbb{H}}^2 \bm{\eta}(t)
+ 2\nu ( \nabla \bm{y}_{\mathfrak{H}^*}(t), \nabla \bm{\eta}(t) )_{\mathbb{H}} \bm{y}_{\mathfrak{H}^*}(t)- B'(\bm{y}_{\mathfrak{H}^*}(t))\bm{\eta}(t) 
+ \beta {\textbf{Proj}_{\mathbb H}}(\xi(t) e_2) \\
\qquad - \beta \Big[
( \xi(t) e_2,\bm{y}_{\mathfrak{H}^*}(t))_{\mathbb{H}} \bm{y}_{\mathfrak{H}^*}(t)
+ ( \theta_{\mathfrak{H}^*}(t) e_2,\bm{\eta}(t))_{\mathbb{H}} \bm{y}_{\mathfrak{H}^*}(t)
+ ( \theta_{\mathfrak{H}^*}(t) e_2,\bm{y}_{\mathfrak{H}^*}(t))_{\mathbb{H}} \bm{\eta}(t)
\Big] \\
\qquad + \mathfrak{H}(t) - ( \mathfrak{H}(t),\bm{y}_{\mathfrak{H}^*}(t))_{\mathbb{H}} \bm{y}_{\mathfrak{H}^*}(t)  - \Big[
( \mathfrak{H}^*(t),\bm{y}_{\mathfrak{H}^*}(t))_{\mathbb{H}} \bm{\eta}(t)
+ ( \mathfrak{H}^*(t),\bm{\eta}(t))_{\mathbb{H}} \bm{y}_{\mathfrak{H}^*}(t)
\Big],\ \text{ in }\ \mathbb{V}^{\prime},
\\[6pt]
\dfrac{d\xi(t)}{dt}
= -\kappa A\xi(t)
+ \kappa \|\nabla \theta_{\mathfrak{H}^*}(t)\|_{\mathbb H}^2 \xi(t)
+ 2\kappa ( \nabla \theta_{\mathfrak{H}^*}(t), \nabla \xi(t) )_{H} \theta_{\mathfrak{H}^*}(t) - \bm{\eta}(t) \cdot \nabla \theta_{\mathfrak{H}^*}(t)
\\ \qquad - \bm{y}_{\mathfrak{H}^*}(t) \cdot \nabla \xi(t),\  \text{ in } \ V',
\\
\bm{\eta}(0)=\boldsymbol{0}, \ \ \xi(0)=0,
\end{cases}
\end{equation}
for a.e.\ \(t \in [0,T]\), where \((\bm{y}_{\mathfrak{H}^*},\theta_{\mathfrak{H}^*})\) and \((\bm{y}_{\mathfrak{H}^*+\tau \mathfrak{H}},\theta_{\mathfrak{H}^*+\tau \mathfrak{H}})\) denote the unique strong solutions of \eqref{eq:constrained_Boussinesq} corresponding to the controls \(\mathfrak{H}^*\) and \(\mathfrak{H}^*+\tau \mathfrak{H}\), respectively.
Furthermore, \(\bm{\eta}(t) \in T_{\bm{y}_{\mathfrak{H}^*}(t)}\mathcal{M}_1\) and \(\xi(t) \in T_{\theta_{\mathfrak{H}^*}(t)}\mathcal{M}_2\) for all \(t\in[0,T]\), i.e.,
\[
( \bm{\eta}(t), \bm{y}_{\mathfrak{H}^*}(t))_{\mathbb{H}} = 0,
\ \
( \xi(t), \theta_{\mathfrak{H}^*}(t))_{H} = 0,
\]
so that \((\bm{\eta},\xi)\) lies in the tangent cone of admissible perturbations.
\end{proposition}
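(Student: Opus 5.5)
We study the remainder
\[
\bm r^\tau:=\bm{y}^\tau-\bm{y}^*-\tau\bm{\eta},\qquad \rho^\tau:=\theta^\tau-\theta^*-\tau\xi ,
\]
with $(\bm{y}^\tau,\theta^\tau)=(\bm{y}_{\mathfrak{H}^*+\tau\mathfrak{H}},\theta_{\mathfrak{H}^*+\tau\mathfrak{H}})$ and $(\bm{\eta},\xi)$ the weak solution of the linearized system given by the well-posedness theorem of Subsection~\ref{s3.1} with $\bm{W}=\mathfrak{H}-(\mathfrak{H},\bm{y}^*)_{\mathbb H}\bm{y}^*$ and zero data. We subtract the equation \eqref{deff} for $(\widetilde{\bm{y}},\widetilde\theta)$ from $\tau$ times the linearized system. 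Then $(\bm r^\tau,\rho^\tau)$ solves the same linear system with zero forcing and zero initial data, plus a remainder $(\bm R^\tau,R^\tau_\theta)$ made only of quadratic expressions in $(\widetilde{\bm{y}},\widetilde\theta)$. Expanding each nonlinearity around $(\bm{y}^*,\theta^*)$ gives these remainders:
\begin{itemize}
\item from the convection term, $B(\widetilde{\bm{y}},\widetilde{\bm{y}})$;
\item from the energy-projection term, $\nu\big[\|\nabla\widetilde{\bm{y}}\|^2\bm{y}^\tau+2(\nabla\bm{y}^*,\nabla\widetilde{\bm{y}})\widetilde{\bm{y}}\big]$;
\item from the buoyancy-projection term, $\beta$ times the trilinear expansion of $(\theta e_2,\bm{y})\bm{y}$, i.e.\ products of two of $\widetilde\theta,\widetilde{\bm{y}}$;
\item from the control-projection term, $(\mathfrak{H}^*,\widetilde{\bm{y}})\widetilde{\bm{y}}+\tau(\mathfrak{H},\bm{y}^\tau)\bm{y}^\tau-\tau(\mathfrak{H},\bm{y}^*)\bm{y}^*$; this is also quadratic, since $\tau\mathfrak{H}$ times $\widetilde{\bm{y}}$ is $O(\tau^2)$;
\item in the temperature equation, $\kappa\big[\|\nabla\widetilde\theta\|^2\theta^\tau+2(\nabla\theta^*,\nabla\widetilde\theta)\widetilde\theta\big]$ and $\widetilde{\bm{y}}\cdot\nabla\widetilde\theta$.
\end{itemize}

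Next we repeat the $\mathbb H\times H$ energy estimate that produced \eqref{coupled-ineq}, now for $(\bm r^\tau,\rho^\tau)$. We test the velocity equation with $\bm r^\tau$ and the temperature equation with $\rho^\tau$. The remainders are paired in $\mathbb V'\times\mathbb V$ or $V'\times V$, and we absorb $\|\nabla\bm r^\tau\|$ and $\|\nabla\rho^\tau\|$ by Young's inequality. Gronwall's inequality, with the same integrable weight $A(t)$, then gives
\[
\sup_t\big(\|\bm r^\tau\|^2+\|\rho^\tau\|^2\big)+\int_0^T\big(\nu\|\nabla\bm r^\tau\|^2+\kappa\|\nabla\rho^\tau\|^2\big)\le C\int_0^T\big(\|\bm R^\tau\|_{\mathbb V'}^2+\|R^\tau_\theta\|_{V'}^2\big)dt .
\]
It therefore suffices to show that the right-hand side is $O(\tau^4)$, or at least $o(\tau^2)$. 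For this we use Lemma~\ref{incest}, which gives both $\sup_t\|\nabla\widetilde{\bm{y}}\|^2,\ \sup_t\|\nabla\widetilde\theta\|^2\le K^{(1)}\tau^2$ and $\int_0^T\|A\widetilde{\bm{y}}\|^2\lesssim\tau^2$. The typical term is bounded via Ladyzhenskaya's inequality and \eqref{eq:b_bound}:
\[
\|B(\widetilde{\bm{y}},\widetilde{\bm{y}})\|_{\mathbb V'}\le C\|\widetilde{\bm{y}}\|_{\mathbb H}\|\nabla\widetilde{\bm{y}}\|_{\mathbb H}\lesssim\tau^2 .
\]
The term $\|\nabla\widetilde{\bm{y}}\|^2\bm{y}^\tau$ is bounded by $\sup_t\|\nabla\widetilde{\bm{y}}\|^2\lesssim\tau^2$ pointwise in time. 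The control terms are bounded by $\|\mathfrak{H}^*\|\,\|\widetilde{\bm{y}}\|^2$ and $\tau\|\mathfrak{H}\|\,\|\widetilde{\bm{y}}\|$, both $O(\tau^2)$ and square integrable. The temperature convection term $\widetilde{\bm{y}}\cdot\nabla\widetilde\theta$ is handled by the same Ladyzhenskaya argument. Summing, the right-hand side is $O(\tau^4)$, so after dividing by $\tau^2$ and letting $\tau\downarrow0$ we obtain \eqref{inc1} and \eqref{inc2}. The main technical point is organizing the expansions of the projection terms so that every non-quadratic piece cancels against the linearized operator. This is pure algebra, but the signs must match the linearized system exactly.

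For genuine Fréchet (rather than Gâteaux) differentiability, we replace $\tau\mathfrak{H}$ by an arbitrary increment $\bm h$ with $\|\bm h\|_{L^2(0,T;\mathbb H)}\to0$. The estimates of Lemma~\ref{incest} hold with $\tau^2\|\mathfrak{H}\|^2$ replaced by $\|\bm h\|^2$, with constants uniform for $\|\bm h\|\le1$. The argument above then yields a remainder of size $O(\|\bm h\|^2)=o(\|\bm h\|)$. The map $\bm h\mapsto(\bm{\eta},\xi)$ is linear and bounded by the linear estimate, so it is the Fréchet derivative.

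For tangency, we differentiate $\|\bm{y}^\tau(t)\|_{\mathbb H}^2=\|\bm{y}^*(t)\|_{\mathbb H}^2=1$. Writing $0=\|\bm{y}^\tau\|^2-\|\bm{y}^*\|^2=2(\widetilde{\bm{y}},\bm{y}^*)+\|\widetilde{\bm{y}}\|^2$, dividing by $\tau$ and using \eqref{inc1} together with $\|\widetilde{\bm{y}}\|^2=O(\tau^2)$ gives $(\bm{\eta}(t),\bm{y}^*(t))_{\mathbb H}=0$ for every $t$. The same computation gives $(\xi(t),\theta^*(t))_H=0$. As a consistency check, one can also test the linearized equations with $\bm{y}^*$ and $\theta^*$: the scalar $(\bm{\eta},\bm{y}^*)$ satisfies a linear ODE with zero data, so it vanishes identically.
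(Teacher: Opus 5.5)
Your proof is correct, and its core is the paper's own argument. The paper likewise sets $\bm{\alpha}=\widetilde{\bm{y}}-\tau\bm{\eta}$ and $z=\widetilde{\theta}-\tau\xi$. It isolates the same quadratic remainders: $B(\widetilde{\bm{y}},\widetilde{\bm{y}})$, the energy-projection terms, the buoyancy and control cross terms, and $\widetilde{\bm{y}}\cdot\nabla\widetilde{\theta}$. It then tests with $(\bm{\alpha},z)$, bounds every remainder contribution by $\mathcal{O}(\tau^4)$ through Lemma~\ref{incest}, and closes with Gronwall. Packaging the remainder as a forcing in $L^2(0,T;\mathbb{V}')\times L^2(0,T;V')$ instead of estimating term by term is cosmetic.

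You depart from the paper in two places.

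\emph{Tangency.} The paper works intrinsically on the linearized system. It differentiates $\phi(t)=(\bm{\eta}(t),\bm{y}^*(t))_{\mathbb{H}}$ using the linearized and state equations and observes that the viscous, trilinear, buoyancy and control terms cancel. This gives $\phi'=2\bigl(\nu\|\nabla\bm{y}^*\|_{\mathbb{H}}^2-\beta(\theta^*e_2,\bm{y}^*)_{\mathbb{H}}-(\mathfrak{H}^*,\bm{y}^*)_{\mathbb{H}}\bigr)\phi$ with $\phi(0)=0$, and $\psi'=2\kappa\|\nabla\theta^*\|_{\mathbb{H}}^2\psi$ for $\psi=(\xi,\theta^*)_H$. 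This is exactly your \emph{consistency check}. Your main argument instead differentiates the constraint $\|\bm{y}^\tau(t)\|_{\mathbb{H}}^2=1$ along the nonlinear flow. It is shorter and needs no cancellation algebra, but it depends on the invariance of $\mathcal{M}$ from Theorem~\ref{well} and on \eqref{inc1} having been proved first. The paper's version shows that tangency is a structural property of the linearized system itself, independent of the approximation: it holds for any forcing $\bm{W}$ with $(\bm{W},\bm{y}^*)_{\mathbb{H}}=0$.

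\emph{Fr\'echet differentiability.} The paper only verifies the directional limits \eqref{inc1}--\eqref{inc2} and asserts that this yields Fr\'echet differentiability. You replace $\tau\mathfrak{H}$ by a general increment $\bm{h}$, keep the constants of Lemma~\ref{incest} uniform for $\|\bm{h}\|_{L^2(0,T;\mathbb{H})}\le 1$, and obtain a remainder of order $\|\bm{h}\|^2$. That step is what actually justifies the Fr\'echet claim, so on this point your proposal is more complete than the paper's proof.
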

\begin{proof}
We show that the mapping
\[
\mathcal{U}_{ad} \ni \mathfrak{H}^* \mapsto (\bm{y}_{\mathfrak{H}^*},\theta_{\mathfrak{H}^*})
\]
is Fréchet differentiable, it suffices to verify the conditions \eqref{inc1} and \eqref{inc2}.
Set \(\bm{y}^\tau := \bm{y}_{\mathfrak{H}^*+\tau\mathfrak{H}}\), \(\theta^\tau := \theta_{\mathfrak{H}^*+\tau\mathfrak{H}}\), and
\(\widetilde{\bm{y}} := \bm{y}^\tau - \bm{y}_{\mathfrak{H}^*}\), \(\widetilde{\theta} := \theta^\tau - \theta_{\mathfrak{H}^*}\).
In addition, set \(\bm{\alpha} := \widetilde{\bm{y}} - \tau\bm{\eta}\) and \(z := \widetilde{\theta} - \tau\xi\).
\allowdisplaybreaks
The pair \((\bm{\alpha}, z)\) satisfies the following system for a.e. $t\in(0,T)$:
\begin{equation}\label{eq:y-z-system}
\begin{aligned}
\dfrac{d\bm{\alpha}(t)}{dt}
&= -\nu A\bm{\alpha}(t)
+ \nu \|\nabla \bm{y}^*(t)\|_{\mathbb{H}}^2\, \bm{\alpha}(t)  + \nu \Big(\|\nabla \bm{y}^{\tau}(t)\|_{\mathbb{H}}^{2}  - \|\nabla \bm{y}^*(t)\|_{\mathbb{H}}^{2}\Big)\,\widetilde{\bm{y}}(t) \\
&\quad + \nu \Big[
\Big(\|\nabla \bm{y}^{\tau}(t)\|_{\mathbb{H}}^{2}
      - \|\nabla \bm{y}^*(t)\|_{\mathbb{H}}^{2}\Big)
- 2( \nabla \bm{y}^*(t),\nabla \widetilde{\bm{y}}(t))_{\mathbb{H}}
\Big]\bm{y}^*(t) \\
&\quad
+ 2\nu ( \nabla \bm{y}^*(t),\nabla \bm{\alpha}(t))_{\mathbb{H}}\,\bm{y}^*(t)
- B'(\bm{y}^*(t))\,\bm{\alpha}(t)-\Big[
B(\bm{y}^{\tau}(t),\bm{y}^{\tau}(t))
- B(\bm{y}^*(t),\bm{y}^*(t))
- B'(\bm{y}^*(t))\,\widetilde{\bm{y}}(t)
\Big] \\
&\quad + \beta{\textbf{Proj}_{\mathbb H}}(z(t) e_2)
- \beta\Big[
(\widetilde{\theta}(t) e_2,\bm{y}^{\tau}(t))_{\mathbb{H}}\widetilde{\bm{y}}(t)
+ ( \theta^*(t) e_2,\widetilde{\bm{y}}(t))_{\mathbb{H}}\widetilde{\bm{y}}(t) \\
&\qquad + (\widetilde{\theta}(t) e_2,\widetilde{\bm{y}}(t))_{\mathbb{H}}\bm{y}^*(t)
+ ( z(t) e_2,\bm{y}^*(t))_{\mathbb H}\bm{y}^*(t)
+ ({\theta}^*(t) e_2,\bm{\alpha}(t))_{\mathbb H}\bm{y}^*(t)+ ( \theta^*(t) e_2,\bm{y}^*(t))_{\mathbb{H}}\bm{\alpha}(t)
\Big] \\
&\quad
\quad-\tau ( \mathfrak{H}(t),\widetilde{\bm{y}}(t))_{\mathbb{H}}\,\bm{y}^{\tau}(t)
-\tau ( \mathfrak{H}(t),\bm{y}^*(t))_{\mathbb{H}}\,\widetilde{\bm{y}}(t)
-( \mathfrak{H}^*(t),\bm{y}^*(t))_{\mathbb{H}}\,\bm{\alpha}(t) \\
&\quad
\quad-( \mathfrak{H}^*(t),\bm{\alpha}(t))_{\mathbb{H}}\,\bm{y}^*(t)
-( \mathfrak{H}^*(t),\widetilde{\bm{y}}(t))_{\mathbb{H}}\,\widetilde{\bm{y}}(t), \ \text{ in }\ \mathbb{V}^{\prime},\\[8pt]
\dfrac{dz(t)}{dt}
&= -\kappa A z(t)
+ \kappa \|\nabla \theta^*(t)\|_{\mathbb H}^2\, z(t) \\
&\quad + \kappa \Big(\|\nabla \theta^{\tau}(t)\|_{H}^{2} - \|\nabla \theta^*(t)\|_{\mathbb H}^{2}\Big)\,\widetilde{\theta}(t)  + \kappa \Big[
\Big(\|\nabla \theta^{\tau}(t)\|_{H}^{2} - \|\nabla \theta^*(t)\|_{\mathbb H}^{2}\Big)
- 2( \nabla \theta^*(t), \nabla \widetilde{\theta}(t))_{H}
\Big]\theta^*(t) \\
&\quad + 2\kappa ( \nabla \theta^*(t), \nabla z(t))_{H}\,\theta^*(t)
- \widetilde{\bm{y}}(t)\cdot\nabla\widetilde{\theta}(t)
-  \bm{y}^*(t)\cdot\nabla{z}(t)
- \bm{\alpha}(t)\cdot\nabla \theta^*(t), \ \text{ in }\ V', \\[6pt]
\bm{\alpha}(0) &= \bm 0, \ \ z(0) = 0.
\end{aligned}
\end{equation}
Taking the inner product in $\mathbb{H}$ of the first equation with $\bm{\alpha}(s)$ and integrating over $[0,t]$, we obtain
\allowdisplaybreaks
\begin{align}
&\frac{1}{2}\|\bm{\alpha}(t)\|_{\mathbb{H}}^2 + \nu \int_0^t \|\nabla \bm{\alpha}(s)\|_{\mathbb{H}}^2\,ds\notag
\\\quad&= \frac{1}{2}\|\bm{\alpha}(0)\|_{\mathbb{H}}^2
+ \nu \int_0^t \|\nabla \bm{y}^*(s)\|_{\mathbb{H}}^2 \|\bm{\alpha}(s)\|_{\mathbb{H}}^2\,ds+ \nu \int_0^t \Big(\|\nabla \bm{y}^{\tau}(s)\|_{\mathbb{H}}^2 - \|\nabla \bm{y}^*(s)\|_{\mathbb{H}}^2\Big) (\widetilde{\bm{y}}(s), \bm{\alpha}(s))_{\mathbb{H}}\,ds \notag \\
&\quad + \nu \int_0^t \Big[
\Big(\|\nabla \bm{y}^{\tau}(s)\|_{\mathbb{H}}^2 - \|\nabla \bm{y}^*(s)\|_{\mathbb{H}}^2\Big)
- 2( \nabla \bm{y}^*(s), \nabla \widetilde{\bm{y}}(s))_{\mathbb{H}}
\Big] ( \bm{y}^*(s), \bm{\alpha}(s))_{\mathbb{H}}\,ds \notag \\
&\quad + 2\nu \int_0^t ( \nabla \bm{y}^*(s), \nabla \bm{\alpha}(s))_{\mathbb{H}} ( \bm{y}^*(s), \bm{\alpha}(s))_{\mathbb{H}}\,ds - \int_0^t ( B'(\bm{y}^*(s)) \bm{\alpha}(s), \bm{\alpha}(s))_{\mathbb{H}}\,ds   \notag \\
&\quad- \int_0^t \Big( B(\bm{y}^{\tau}(s), \bm{y}^{\tau}(s)) - B(\bm{y}^*(s), \bm{y}^*(s)) - B'(\bm{y}^*(s)) \widetilde{\bm{y}}(s), \bm{\alpha}(s)\Big)_{\mathbb{H}}\,ds + \beta \int_0^t ( {\textbf{Proj}_{\mathbb H}}(z(s) e_2), \bm{\alpha}(s))_{\mathbb{H}}\,ds  \notag \\
&\quad - \beta \int_0^t \Big[
(\widetilde{\theta}(s) e_2, \bm{y}^{\tau}(s))_{\mathbb{H}} (\widetilde{\bm{y}}(s), \bm{\alpha}(s))_{\mathbb{H}} 
+ ( \theta^*(s) e_2, \widetilde{\bm{y}}(s))_{\mathbb{H}} (\widetilde{\bm{y}}(s), \bm{\alpha}(s))_{\mathbb{H}} + (\widetilde{\theta}(s) e_2, \widetilde{\bm{y}}(s))_{\mathbb{H}} ( \bm{y}^*(s), \bm{\alpha}(s))_{\mathbb{H}} 
 \notag \\
&\quad\quad+ ( z(s) e_2, \bm{y}^*(s))_{\mathbb{H}} ( \bm{y}^*(s), \bm{\alpha}(s))_{\mathbb{H}} + (\theta^*(s)e_2,\bm{\alpha}(s))_{\mathbb H}
(\bm{y}^*(s),\bm{\alpha}(s))_{\mathbb H} 
+ ( \theta^*(s) e_2, \bm{y}^*(s))_{\mathbb{H}} \|\bm{\alpha}(s)\|_{\mathbb{H}}^2
\Big]\,ds \notag \\
&\quad - \tau \int_0^t ( \mathfrak{H}(s), \widetilde{\bm{y}}(s))_{\mathbb{H}} ( \bm{y}^{\tau}(s), \bm{\alpha}(s))_{\mathbb{H}}\,ds 
- \tau \int_0^t ( \mathfrak{H}(s), \bm{y}^*(s))_{\mathbb{H}} (\widetilde{\bm{y}}(s), \bm{\alpha}(s))_{\mathbb{H}}\,ds \notag \\
&\quad - \int_0^t ( \mathfrak{H}^*(s), \bm{y}^*(s))_{\mathbb{H}} \|\bm{\alpha}(s)\|_{\mathbb{H}}^2\,ds 
 - \int_0^t ( \mathfrak{H}^*(s), \bm{\alpha}(s))_{\mathbb{H}} ( \bm{y}^*(s), \bm{\alpha}(s))_{\mathbb{H}}\,ds\notag \\
&\quad - \int_0^t ( \mathfrak{H}^*(s), \widetilde{\bm{y}}(s))_{\mathbb{H}} (\widetilde{\bm{y}}(s), \bm{\alpha}(s))_{\mathbb{H}}\,ds, 
\end{align}
for all \(t\in[0,T].\)
We now estimate the terms on the right-hand side of the above equality. In the following estimates, we shall repeatedly use the following estimates (see Lemma \ref{incest}).
\begin{align*}
    \sup_{0\le t\le T}\|\widetilde{\bm{y}}(t)\|_{\mathbb{H}}^2
+ \sup_{0\le t\le T}\|\nabla \widetilde{\bm{y}}(t)\|_{\mathbb{H}}^2
+ \int_0^T \|A\widetilde{\bm{y}}(t)\|_{\mathbb{H}}^2\,dt
\le C\,\mathcal{O}(\tau^2),\\
\sup_{0\le t\le T}\|\widetilde{\theta}(t)\|_{H}^2
+ \sup_{0\le t\le T}\|\nabla \widetilde{\theta}(t)\|_{\mathbb H}^2
+ \int_0^T \|\mathcal A\widetilde{\theta}(t)\|_{H}^2\,dt
\le C\,\mathcal{O}(\tau^2).
\end{align*}
We will use these facts throughout the following estimates without further mention. Using the Cauchy-Schwarz and Young inequalities, we find
\[
\begin{aligned}
&\nu \int_0^t \Big(\|\nabla \bm{y}^{\tau}(s)\|_{\mathbb{H}}^2 - \|\nabla \bm{y}^*(s)\|_{\mathbb{H}}^2\Big) (\widetilde{\bm{y}}(s), \bm{\alpha}(s))_{\mathbb{H}}\,ds \\
&\le \frac{\nu^2}{2} \int_0^t \big(\|\nabla \bm{y}^{\tau}(s)\|_{\mathbb{H}} + \|\nabla \bm{y}^*(s)\|_{\mathbb{H}}\big)^2\|\bm{\alpha}(s)\|_{\mathbb{H}}^2\,ds
+ C_\nu \int_0^t \|\nabla\widetilde{\bm{y}}(s)\|_{\mathbb{H}}^2\|\widetilde{\bm{y}}(s)\|_{\mathbb{H}}^2\,ds \\
&\le \frac{\nu^2}{2} \int_0^t \big(\|\nabla \bm{y}^{\tau}(s)\|_{\mathbb{H}} + \|\nabla \bm{y}^*(s)\|_{\mathbb{H}}\big)^2\|\bm{\alpha}(s)\|_{\mathbb{H}}^2\,ds
+ C_\nu\,\mathcal{O}(\tau^4).
\end{aligned}
\]
Using the identity
\[
\|\nabla \bm{y}^{\tau}\|_{\mathbb{H}}^2 - \|\nabla \bm{y}^*\|_{\mathbb{H}}^2 - 2( \nabla \bm{y}^*, \nabla \widetilde{\bm{y}})_{\mathbb{H}}
= \|\nabla \widetilde{\bm{y}}\|_{\mathbb{H}}^2,
\]
and by the Cauchy--Schwarz and Young inequalities, we get
\[
\begin{aligned}
&\nu \int_0^t \Big[
\Big(\|\nabla \bm{y}^{\tau}(s)\|_{\mathbb{H}}^2 - \|\nabla \bm{y}^*(s)\|_{\mathbb{H}}^2\Big)
- 2( \nabla \bm{y}^*(s), \nabla \widetilde{\bm{y}}(s))_{\mathbb{H}}
\Big] ( \bm{y}^*(s), \bm{\alpha}(s))_{\mathbb{H}}\,ds \\
&= \nu \int_0^t \|\nabla \widetilde{\bm{y}}(s)\|_{\mathbb{H}}^2 ( \bm{y}^*(s), \bm{\alpha}(s))_{\mathbb{H}}\,ds \le \frac{\nu^2}{2} \int_0^t \|\bm{\alpha}(s)\|_{\mathbb{H}}^2\,ds + C_\nu \int_0^t \|\nabla \widetilde{\bm{y}}(s)\|_{\mathbb{H}}^4\,ds \notag
\\& \le \frac{\nu^2}{2} \int_0^t \|\bm{\alpha}(s)\|_{\mathbb{H}}^2\,ds + C_\nu\,\mathcal{O}(\tau^4).
\end{aligned}
\]
By the Cauchy-Schwarz and Young inequalities, we write
\[
\begin{aligned}
2\nu \int_0^t ( \nabla \bm{y}^*(s), \nabla \bm{\alpha}(s))_{\mathbb{H}} ( \bm{y}^*(s), \bm{\alpha}(s))_{\mathbb{H}}\,ds
&\le 2\nu \int_0^t \|\nabla \bm{y}^*(s)\|_{\mathbb{H}} \|\nabla \bm{\alpha}(s)\|_{\mathbb{H}} \|\bm{\alpha}(s)\|_{\mathbb{H}}\,ds \\
&\le \frac{\nu}{8} \int_0^t \|\nabla \bm{\alpha}(s)\|_{\mathbb{H}}^2\,ds + C_\nu \int_0^t \|\nabla \bm{y}^*(s)\|_{\mathbb{H}}^2 \|\bm{\alpha}(s)\|_{\mathbb{H}}^2\,ds.
\end{aligned}
\]
Using the skew-symmetry property of the trilinear form and Ladyzhenskaya's inequality, we have
\[
\begin{aligned}
\int_0^t ( B(\bm{\alpha}(s), \bm{y}^*(s)), \bm{\alpha}(s))_{\mathbb{H}}\,ds 
&\le C \int_0^t \|\bm{\alpha}(s)\|_{\mathbb L^4}^2 \|\nabla \bm{y}^*(s)\|_{\mathbb{H}}\,ds\le C \int_0^t \|\bm{\alpha}(s)\|_{\mathbb{H}} \|\nabla \bm{\alpha}(s)\|_{\mathbb{H}} \|\nabla \bm{y}^*(s)\|_{\mathbb{H}}\,ds \\
&\le \frac{\nu}{8} \int_0^t \|\nabla \bm{\alpha}(s)\|_{\mathbb{H}}^2\,ds + C_\nu \int_0^t \|\nabla \bm{y}^*(s)\|_{\mathbb{H}}^2 \|\bm{\alpha}(s)\|_{\mathbb{H}}^2\,ds.
\end{aligned}
\]
\allowdisplaybreaks
A similar calculation yields
\[
\begin{aligned}
&- \int_0^t \Big( B(\bm{y}^{\tau}(s), \bm{y}^{\tau}(s)) - B(\bm{y}^*(s), \bm{y}^*(s)) - B'(\bm{y}^*(s)) \widetilde{\bm{y}}(s), \bm{\alpha}(s)\Big)_{\mathbb{H}}\,ds = -\int_0^t ( B(\widetilde{\bm{y}}(s), \widetilde{\bm{y}}(s)), \bm{\alpha}(s))_{\mathbb{H}}\,ds \\
&\le C \int_0^t \|\widetilde{\bm{y}}(s)\|_{\mathbb L^4}^2 \|\nabla {\bm{\alpha}}(s)\|_{\mathbb{H}}\,ds\le \frac{\nu}{8} \int_0^t \|\nabla \bm{\alpha}(s)\|_{\mathbb{H}}^2\,ds + C_\nu \int_0^t \|\widetilde{\bm{y}}(s)\|_{\mathbb{H}}^2 \|\nabla \widetilde{\bm{y}}(s)\|_{\mathbb{H}}^2\,ds \\
& \le \frac{\nu}{8} \int_0^t \|\nabla \bm{\alpha}(s)\|_{\mathbb{H}}^2\,ds + C_\nu\,\mathcal{O}(\tau^4).
\end{aligned}
\]
For the temperature coupling term, using the continuity of the projection operator \({\textbf{Proj}}_{\mathbb H}\), the Cauchy-Schwarz inequality, and Young's inequality, we get
\[
\int_0^t \beta ( {\textbf{Proj}_{\mathbb H}}(z(s) e_2), \bm{\alpha}(s))_{\mathbb{H}}\,ds
\le C_\beta \int_0^t \|z(s)\|_{H}^2\,ds + \frac{1}{2}\int_0^t \|\bm{\alpha}(s)\|_{\mathbb{H}}^2\,ds.
\]
\allowdisplaybreaks
Using \(\bm{y}^{\tau}, \bm{y}^* \in \mathcal{M}_1\) and \(\theta^{\tau}, \theta^* \in \mathcal{M}_2\), together with the Cauchy-Schwarz and Young inequalities, we estimate
\begin{align*}
&-\beta\int_0^t\Big[
(\widetilde\theta(s)e_2,\bm{y}^\tau(s))_{\mathbb H}
(\widetilde{\bm{y}}(s),\bm{\alpha}(s))_{\mathbb H}
+(\theta^*(s)e_2,\widetilde{\bm{y}}(s))_{\mathbb H}
(\widetilde{\bm{y}}(s),\bm{\alpha}(s))_{\mathbb H}\\
&\qquad
+(\widetilde\theta(s)e_2,\widetilde{\bm{y}}(s))_{\mathbb H}
(\bm{y}^*(s),\bm{\alpha}(s))_{\mathbb H}
+(z(s)e_2,\bm{y}^*(s))_{\mathbb H}
(\bm{y}^*(s),\bm{\alpha}(s))_{\mathbb H}\\
&\qquad
+(\theta^*(s)e_2,\bm{\alpha}(s))_{\mathbb H}
(\bm{y}^*(s),\bm{\alpha}(s))_{\mathbb H}
+(\theta^*(s)e_2,\bm{y}^*(s))_{\mathbb H}
\|\bm{\alpha}(s)\|_{\mathbb H}^2
\Big]\,ds\\
&\quad\le
C_\beta\int_0^t\|z(s)\|_H^2\,ds
+C_\beta\int_0^t\|\bm{\alpha}(s)\|_{\mathbb H}^2\,ds
+C\mathcal{O}(\tau^4).
\end{align*}
For the control involving terms, we use the Cauchy--Schwarz and Young's inequalities to obtain
\[
\begin{aligned}
-& \tau \int_0^t ( \mathfrak{H}(s), \widetilde{\bm{y}}(s))_{\mathbb{H}} ( \bm{y}^{\tau}(s), \bm{\alpha}(s))_{\mathbb{H}}\,ds 
- \tau \int_0^t ( \mathfrak{H}(s), \bm{y}^*(s))_{\mathbb{H}} (\widetilde{\bm{y}}(s), \bm{\alpha}(s))_{\mathbb{H}}\,ds \\
&\le C_\nu \tau^2 \int_0^t \|\mathfrak{H}(s)\|_{\mathbb{H}}^2 \|\widetilde{\bm{y}}(s)\|_{\mathbb{H}}^2\,ds + \frac{\nu}{4}\int_0^t \| \bm{\alpha}(s)\|_{\mathbb{H}}^2\,ds\le C_\nu \int_0^t \|\bm{\alpha}(s)\|_{\mathbb{H}}^2\,ds + C_\nu\,\mathcal{O}(\tau^4).
\end{aligned}
\]
Using the Cauchy-Schwarz inequality and the fact that \(\bm{y}^* \in \mathcal{M}_1\), we obtain
\[
\begin{aligned}
&- \int_0^t ( \mathfrak{H}^*(s), \bm{y}^*(s))_{\mathbb{H}} \|\bm{\alpha}(s)\|_{\mathbb{H}}^2\,ds 
- \int_0^t ( \mathfrak{H}^*(s), \bm{\alpha}(s))_{\mathbb{H}} ( \bm{y}^*(s), \bm{\alpha}(s))_{\mathbb{H}}\,ds \le C \int_0^t \|\mathfrak{H}^*(s)\|_{\mathbb{H}} \|\bm{\alpha}(s)\|_{\mathbb{H}}^2\,ds.
\end{aligned}
\]
Finally, applying the Cauchy-Schwarz and Young inequalities, we infer
\[
- \int_0^t ( \mathfrak{H}^*(s), \widetilde{\bm{y}}(s))_{\mathbb{H}} (\widetilde{\bm{y}}(s), \bm{\alpha}(s))_{\mathbb{H}}\,ds
\le C \int_0^t \|\bm{\alpha}(s)\|_{\mathbb{H}}^2\,ds + C_\nu\,\mathcal{O}(\tau^4).
\]
Collecting all the above estimates and absorbing terms like \(\|\nabla \bm{\alpha}(s)\|_{\mathbb{H}}^2\) into the left-hand side, we obtain
\begin{align}\label{est-y}
\frac{1}{2}\|\bm{\alpha}(t)\|_{\mathbb{H}}^2 + \frac{5\nu}{8}\int_0^t \|\nabla \bm{\alpha}(s)\|_{\mathbb{H}}^2\,ds
&\le \frac{1}{2}\|\bm{\alpha}(0)\|_{\mathbb{H}}^2
+ C_{\nu,\beta}\int_0^t \Big(1 + \|\nabla \bm{y}^{\tau}(s)\|_{\mathbb{H}}^2 + \|\nabla \bm{y}^*(s)\|_{\mathbb{H}}^2\nonumber\\& \quad + \|\mathfrak{H}^*(s)\|_{\mathbb{H}}\Big)\|\bm{\alpha}(s)\|_{\mathbb{H}}^2\,ds+ C_{\nu,\beta}\int_0^t \|z(s)\|_{H}^2\,ds + C \mathcal{O}(\tau^4),
\end{align}
\allowdisplaybreaks
for a.e. \(t \in [0,T]\).

Taking the $H$-inner product of the second equation in
\eqref{eq:y-z-system} with $z(\cdot)$, using
$(\bm{y}^*\cdot\nabla z,z)_H=0$, and integrating over
$(0,t)$, we obtain
\[
\begin{aligned}
&\frac{1}{2}\|z(t)\|_{H}^2 + \kappa \int_0^t \|\nabla z(s)\|_{\mathbb H}^2\,ds\\
&\quad= \frac{1}{2}\|z(0)\|_{H}^2
+ \kappa \int_0^t \|\nabla \theta^*(s)\|_{\mathbb H}^2 \|z(s)\|_{H}^2\,ds + \kappa \int_0^t \Big(\|\nabla \theta^{\tau}(s)\|_{\mathbb H}^2 - \|\nabla \theta^*(s)\|_{\mathbb H}^2\Big) (\widetilde{\theta}(s), z(s))_{H}\,ds \\
&\quad \quad+ \kappa \int_0^t \Big[
\Big(\|\nabla \theta^{\tau}(s)\|_{\mathbb H}^2 - \|\nabla \theta^*(s)\|_{\mathbb H}^2\Big)
- 2( \nabla \theta^*(s), \nabla \widetilde{\theta}(s))_{H}
\Big] ( \theta^*(s), z(s))_{H}\,ds \\
&\quad\quad  + 2\kappa \int_0^t ( \nabla \theta^*(s), \nabla z(s))_{H} ( \theta^*(s), z(s))_{H}\,ds- \int_0^t (\widetilde{\bm{y}}(s) \cdot \nabla \widetilde{\theta}(s), z(s))_{H}\,ds 
\nonumber\\&\qquad- \int_0^t (\bm{\alpha}(s) \cdot \nabla \theta^*(s), z(s))_{H}\,ds,
\end{aligned}
\]
for a.e. \(t\in[0,T].\)
We now estimate each term on the right-hand side. Applying the Cauchy--Schwarz and Young's inequalities,
and noting that \(\theta^*(s) \in \mathcal{M}_2\), we have
\[
\begin{aligned}
&\kappa \int_0^t \Big(\|\nabla \theta^{\tau}(s)\|_{\mathbb H}^2 - \|\nabla \theta^*(s)\|_{\mathbb H}^2\Big) (\widetilde{\theta}(s), z(s))_{H}\,ds \\
&\le C_\kappa \int_0^t \big(\|\nabla \theta^{\tau}(s)\|_{\mathbb H}^2 + \|\nabla \theta^*(s)\|_{\mathbb H}^2\big)\|z(s)\|_{H}^2\,ds
+ C_\kappa \int_0^t \|\nabla\widetilde{\theta}(s)\|_{\mathbb H}^2\|\widetilde{\theta}(s)\|_{H}^2\,ds \\
&\le C_\kappa \int_0^t \big(\|\nabla \theta^{\tau}(s)\|_{\mathbb H}^2 + \|\nabla \theta^*(s)\|_{\mathbb H}^2\big)\|z(s)\|_{H}^2\,ds
+ C_\kappa\,\mathcal{O}(\tau^4).
\end{aligned}
\]
Using the identity
$
\|\nabla \theta^{\tau}(s)\|_{\mathbb H}^2 - \|\nabla \theta^*(s)\|_{\mathbb H}^2 - 2( \nabla \theta^*(s), \nabla \widetilde{\theta}(s))_{\mathbb H}
= \|\nabla \widetilde{\theta}(s)\|_{\mathbb H}^2,$
and applying the Cauchy-Schwarz and Young inequalities, we estimate
\begin{align*}
&\kappa \int_0^t \Big[
\Big(\|\nabla \theta^{\tau}(s)\|_{\mathbb H}^2 - \|\nabla \theta^*(s)\|_{\mathbb H}^2\Big)
- 2( \nabla \theta^*(s), \nabla \widetilde{\theta}(s))_{H}
\Big] ( \theta^*(s), z(s))_{H}\,ds \\
&= \kappa \int_0^t \|\nabla \widetilde{\theta}(s)\|_{\mathbb H}^2 ( \theta^*(s), z(s))_{H}\,ds\le C_\kappa \int_0^t \|z(s)\|_{H}^2\,ds + C_\kappa \int_0^t \|\nabla \widetilde{\theta}(s)\|_{\mathbb H}^4\,ds \nonumber\\&\le C_\kappa \int_0^t \|z(s)\|_{H}^2\,ds + C_\kappa\,\mathcal{O}(\tau^4).
\end{align*}
By the Cauchy-Schwarz and Young inequalities, we get
\begin{align}
2\kappa \int_0^t ( \nabla \theta^*(s), \nabla z(s))_{H} ( \theta^*(s), z(s))_{H}\,ds
&\le \frac{\kappa}{6} \int_0^t \|\nabla z(s)\|_{\mathbb H}^2\,ds + C_\kappa \int_0^t \|\nabla \theta^*(s)\|_{\mathbb H}^2 \|z(s)\|_{H}^2\,ds.
\end{align}
Invoking H\"older's, Ladyzhenskaya's, and Young's inequalities, we write
\begin{align}
- \int_0^t (\widetilde{\bm{y}}(s) \cdot \nabla \widetilde{\theta}(s), z(s))_{H}\,ds
&\le \int_0^t \|\widetilde{\bm{y}}(s)\|_{\mathbb L^4} \|\nabla \widetilde{\theta}(s)\|_{\mathbb H} \|z(s)\|_{L^4}\,ds\nonumber \\
&\le C \int_0^t \|\widetilde{\bm{y}}(s)\|_{\mathbb{H}}^{1/2} \|\nabla \widetilde{\bm{y}}(s)\|_{\mathbb{H}}^{1/2} \|\nabla \widetilde{\theta}(s)\|_{\mathbb H} \|z(s)\|_{H}^{1/2} \|\nabla z(s)\|_{\mathbb H}^{1/2}\,ds\nonumber \\
&\le \frac{\kappa}{6} \int_0^t \|\nabla z(s)\|_{\mathbb H}^2\,ds + C_{\kappa} \int_0^t \|\nabla \widetilde{\bm{y}}(s)\|_{\mathbb{H}} \|\widetilde{\bm{y}}(s)\|_{\mathbb{H}}\|\nabla \widetilde{\theta}(s)\|_{\mathbb H}^2\,ds \nonumber\\&\quad+ C_{\kappa} \int_0^t \|z(s)\|_{H}^2\,ds\nonumber\\
&\le C_{\kappa} \int_0^t \|z(s)\|_{H}^2\,ds + \frac{\kappa}{6} \int_0^t \|\nabla z(s)\|_{\mathbb H}^2\,ds + C_{\kappa}\,\mathcal{O}(\tau^4),
\end{align}
and
\begin{align*}
\Big|\int_0^t
(\bm{\alpha}(s)\cdot\nabla\theta^*(s),z(s))_{H}\,ds\Big|
&=
\Big|\int_0^t
(\bm{\alpha}(s)\cdot\nabla z(s),\theta^*(s))_{H}\,ds\Big|
\leq
\int_0^t
\|\bm{\alpha}(s)\|_{\mathbb{L}^{4}}
\|\nabla z(s)\|_{\mathbb H}
\|\theta^*(s)\|_{L^{4}}\,ds
\\
&\leq
\frac{\kappa}{6}
\int_0^t\|\nabla z(s)\|_{\mathbb H}^{2}\,ds
+
C_{\kappa}
\int_0^t
\|\bm{\alpha}(s)\|_{\mathbb{H}}
\|\nabla\bm{\alpha}(s)\|_{\mathbb{H}}
\|\theta^*(s)\|_{L^{4}}^{2}\,ds
\nonumber\\&\leq
\frac{\kappa}{6}
\int_0^t\|\nabla z(s)\|_{\mathbb H}^{2}\,ds
+
\frac{\nu}{8}
\int_0^t\|\nabla\bm{\alpha}(s)\|_{\mathbb{H}}^{2}\,ds
+C_{\nu,\kappa}
\int_0^t
\|\theta^*(s)\|_{L^{4}}^{4}
\|\bm{\alpha}(s)\|_{\mathbb{H}}^{2}\,ds.
\end{align*}
Collecting the above estimates, we obtain, for every $t\in[0,T]$,
\begin{equation}\label{est-z-integrated}
\begin{aligned}
\frac{1}{2}\|z(t)\|_{H}^2 + \frac{\kappa}{2}\int_0^t \|\nabla z(s)\|_{\mathbb H}^2\,ds
&\le \frac{1}{2}\|z(0)\|_{H}^2 + C_{\nu,\kappa}\int_0^t \Big(1 + \|\nabla \theta^{\tau}(s)\|_{\mathbb H}^2 + \|\nabla \theta^*(s)\|_{\mathbb H}^2\Big)\|z(s)\|_{H}^2\,ds \\
&\quad + \frac{\nu}{8}\int_0^t \|\nabla \bm{\alpha}(s)\|_{\mathbb{H}}^2\,ds +C_{\nu,\kappa}\int_0^t
\|\theta^*(s)\|_{L^4}^4\|\bm{\alpha}(s)\|_{\mathbb H}^2\,ds+ C_{\nu,\kappa}\,\mathcal{O}(\tau^4).
\end{aligned}
\end{equation}
Adding \eqref{est-y} and \eqref{est-z-integrated}, we obtain
\begin{equation}\label{eq:gronwall-ready-integrated}
\begin{aligned}
&\frac12\Big(\|\bm{\alpha}(t)\|_{\mathbb H}^2+\|z(t)\|_H^2\Big)
+\frac{\nu}{2}\int_0^t\|\nabla\bm{\alpha}(s)\|_{\mathbb H}^2\,ds
+\frac{\kappa}{2}\int_0^t\|\nabla z(s)\|_{\mathbb H}^2\,ds\\
&\quad\le
\frac12\Big(\|\bm{\alpha}(0)\|_{\mathbb H}^2+\|z(0)\|_H^2\Big)
+\int_0^t a(s)
\Big(\|\bm{\alpha}(s)\|_{\mathbb H}^2+\|z(s)\|_H^2\Big)\,ds
+C\mathcal{O}(\tau^4),
\end{aligned}
\end{equation}
where
\begin{align*}
a(s):=C_{\nu,\kappa,\beta}\Big(
&1+\|\nabla\bm{y}^\tau(s)\|_{\mathbb H}^2
+\|\nabla\bm{y}^*(s)\|_{\mathbb H}^2
+\|\nabla\theta^\tau(s)\|_{\mathbb H}^2\\
&+\|\nabla\theta^*(s)\|_{\mathbb H}^2
+\|\theta^*(s)\|_{L^4}^4+\|\mathfrak{H}^*(s)\|_{\mathbb H}
\Big).
\end{align*}
The uniform estimates (See \eqref{neww}) imply that
$\|a\|_{L^1(0,T)}<\infty.$ 
From Gr\"onwall's inequality, it follows that
\begin{equation}\label{eq:gronwall-final}
\sup_{0\leq s\leq t}\bigl(\|\bm{\alpha}(s)\|_{\mathbb{H}}^2+\|z(s)\|_{H}^2\bigr)
+\int_0^t\bigl(\|\nabla\bm{\alpha}(s)\|_{\mathbb{H}}^2+\|\nabla z(s)\|_{\mathbb H}^2\bigr)\,ds
\leq C\bigl(\|\bm{\alpha}(0)\|_{\mathbb{H}}^2+\|z(0)\|_{H}^2\bigr)
+C\mathcal{O}(\tau^4)
\end{equation}
for all \(t \in [0,T]\).
Since \(\|\bm{\alpha}(0)\|_{\mathbb{H}}^2+\|z(0)\|_{H}^2=0\), we obtain
\begin{equation}\label{eq:gronwall-est}
\sup_{0\leq s\leq t}\bigl(\|\bm{\alpha}(s)\|_{\mathbb{H}}^2+\|z(s)\|_{H}^2\bigr)
\leq C_T\mathcal{O}(\tau^4),
\end{equation}
for all \(t\in[0,T]\), where \(C_T\) depends on \(T\) but is independent of \(\tau\). 
Consequently, dividing by \(\tau^2\) and letting \(\tau\downarrow0\), we conclude that
\begin{equation}\label{eq:limit-pointwise}
\frac{\|\bm{\alpha}(t)\|_{\mathbb{H}}}{\tau}\longrightarrow0
 \ \text{ and }\ 
\frac{\|z(t)\|_{H}}{\tau}\longrightarrow0
\  \text{ as }\ \tau\downarrow0,
\end{equation}
for each \(t\in[0,T]\). Moreover, using \eqref{eq:gronwall-final} and \eqref{eq:gronwall-est}, we obtain
\begin{equation}\label{eq:limit-L2}
\frac{\|\bm{\alpha}\|_{L^2(0,T;\mathbb{V})}}{\tau}\longrightarrow0
 \ \text{ and }\ 
\frac{\|z\|_{L^2(0,T;V)}}{\tau}\longrightarrow0
 \ \text{ as }\ \tau\downarrow0.
\end{equation}
This completes the proof of \eqref{inc1} and \eqref{inc2}.
\qed

Now we will show that \((\bm{\eta}, \xi)\) lies in the tangent cone of admissible perturbations, that is, we establish that \(\bm{\eta}(t) \in T_{\bm{y}_{\mathfrak{H}^*}(t)} \mathcal{M}_1\) and \(\xi(t) \in T_{\theta_{\mathfrak{H}^*}(t)} \mathcal{M}_2\), i.e., 
\[
( \bm{\eta}(t), \bm{y}_{\mathfrak{H}^*}(t) )_{\mathbb H} = 0 
\quad \text{and} \quad
( \xi(t), \theta_{\mathfrak{H}^*}(t) )_{H} = 0
\quad \text{for all } t \in [0,T].
\]
We start by establishing the velocity part. Because \(\bm{y}_{\mathfrak{H}^*}\in C([0,T];\mathbb{V})\) and \(\bm{\eta}\in C([0,T];\mathbb{H})\), we use them as test functions in the weak formulations associated with \(\bm{\eta}\) and \(\bm{y}_{\mathfrak{H}^*}\), respectively. Now, we define
\[
\phi(t) := ( \bm{\eta}(t), \bm{y}_{\mathfrak{H}^*}(t) )_{\mathbb H}.
\]
Then, for a.e. \(t \in [0,T]\),
\[
\begin{aligned}
\frac{d}{dt} \phi(t)
&= \left( \frac{d\bm{\eta}(t)}{dt}, \bm{y}_{\mathfrak{H}^*}(t) \right)_{\mathbb H}
+ \left( \bm{\eta}(t), \frac{d\bm{y}_{\mathfrak{H}^*}(t)}{dt} \right)_{\mathbb H}.
\end{aligned}
\]
\allowdisplaybreaks
Using the linearized system \eqref{linearized-system-a} for \(\bm{\eta}\) and the state equation \eqref{eq:constrained_Boussinesq} for \(\bm{y}_{\mathfrak{H}^*}\), we obtain
\[
\begin{aligned}
\frac{d}{dt}\phi(t)
&=
\Big(
-\nu A\bm{\eta}(t)
+\nu\|\nabla \bm{y}_{\mathfrak{H}^*}(t)\|_{\mathbb H}^{2}\bm{\eta}(t)
+2\nu(\nabla \bm{y}_{\mathfrak{H}^*}(t),\nabla \bm{\eta}(t))_{\mathbb H}\bm{y}_{\mathfrak{H}^*}(t)
-B(\bm{y}_{\mathfrak{H}^*}(t),\bm{\eta}(t))-B(\bm{\eta}(t),\bm{y}_{\mathfrak{H}^*}(t))  \\
&\qquad
+\beta{\textbf{Proj}_{\mathbb H}}(\xi(t) e_2)
-\beta\Big[
(\xi(t) e_2,\bm{y}_{\mathfrak{H}^*}(t))_{\mathbb H}\bm{y}_{\mathfrak{H}^*}(t)
+( \theta_{\mathfrak{H}^*}(t)e_2,\bm{\eta}(t))_{\mathbb H}\bm{y}_{\mathfrak{H}^*}(t)
+( \theta_{\mathfrak{H}^*}(t)e_2,\bm{y}_{\mathfrak{H}^*}(t))_{\mathbb H}\bm{\eta}(t)
\Big]  \\
&\qquad
+\mathfrak{H}(t)-( \mathfrak{H}(t),\bm{y}_{\mathfrak{H}^*}(t))_{\mathbb H}\bm{y}_{\mathfrak{H}^*}(t)
-( \mathfrak{H}^*(t),\bm{y}_{\mathfrak{H}^*}(t))_{\mathbb H}\bm{\eta}(t)
-( \mathfrak{H}^*(t),\bm{\eta}(t))_{\mathbb H}\bm{y}_{\mathfrak{H}^*}(t),
\,\bm{y}_{\mathfrak{H}^*}(t)
\Big)_{\mathbb H}  \\
&\quad
+\Big(\bm{\eta}(t),
-\nu A\bm{y}_{\mathfrak{H}^*}(t)
+\nu\|\nabla \bm{y}_{\mathfrak{H}^*}(t)\|_{\mathbb H}^{2}\bm{y}_{\mathfrak{H}^*}(t)
-B(\bm{y}_{\mathfrak{H}^*}(t),\bm{y}_{\mathfrak{H}^*}(t))
+\beta{\textbf{Proj}_{\mathbb H}}(\theta_{\mathfrak{H}^*}(t)e_2) \\
&\qquad
-\beta( \theta_{\mathfrak{H}^*}(t)e_2,\bm{y}_{\mathfrak{H}^*}(t))_{\mathbb H}\bm{y}_{\mathfrak{H}^*}(t)
+\mathfrak{H}^*(t)
-( \mathfrak{H}^*(t),\bm{y}_{\mathfrak{H}^*}(t))_{\mathbb H}\bm{y}_{\mathfrak{H}^*}(t)
\Big)_{\mathbb H}.
\end{aligned}
\]
Consequently,
\[
\frac{d}{dt}\phi(t)
=
2\Bigl(
\nu\|\nabla \bm{y}_{\mathfrak{H}^*}(t)\|_{\mathbb H}^{2}
-\beta( \theta_{\mathfrak{H}^*}(t)e_2,\bm{y}_{\mathfrak{H}^*}(t))_{\mathbb H}
-( \mathfrak{H}^*(t),\bm{y}_{\mathfrak{H}^*}(t))_{\mathbb H}
\Bigr)\phi(t)
\]
for a.e. \(t\in[0,T]\).
By the variation-of-constants formula, we get
\[
\phi(t)
=
\phi(0)
\exp\!\left(
2\int_0^t
\Bigl(
\nu\|\nabla \bm{y}_{\mathfrak{H}^*}(s)\|_{\mathbb H}^{2}
-\beta( \theta_{\mathfrak{H}^*}(s)e_2,\bm{y}_{\mathfrak{H}^*}(s))_{\mathbb H}
-( \mathfrak{H}^*(s),\bm{y}_{\mathfrak{H}^*}(s))_{\mathbb H}
\Bigr)\,ds
\right).
\]
Since \(\bm{\eta}(0)=\boldsymbol{0}\), we have \(\phi(0)=0\). Moreover, we have
\[
\int_0^T \left|
\nu\|\nabla \bm{y}_{\mathfrak{H}^*}(s)\|_{\mathbb H}^{2}
-\beta( \theta_{\mathfrak{H}^*}(s)e_2,\bm{y}_{\mathfrak{H}^*}(s))_{\mathbb H}
-( \mathfrak{H}^*(s),\bm{y}_{\mathfrak{H}^*}(s))_{\mathbb H}
\right| ds < \infty.
\]
Therefore, we infer
\[
\bm{\eta}(t)\in T_{\bm{y}_{\mathfrak{H}^*}(t)}\mathcal M_1,
\qquad t\in[0,T].
\]
We now prove the temperature part. Define 
\[
\psi(t) := ( \xi(t), \theta_{\mathfrak{H}^*}(t) )_{H}.
\]
Then, for a.e. \(t \in [0,T]\),
\[
\frac{d}{dt}\psi(t)
= \left(\frac{d\xi(t)}{dt}, \theta_{\mathfrak{H}^*}(t) \right)_{H}
+ \left( \xi(t), \frac{d\theta_{\mathfrak{H}^*}(t)}{dt} \right)_{H}.
\]
Using the linearized equation for \(\xi\) from \eqref{linearized-system-b} and the state equation for \(\theta_{\mathfrak{H}^*}\) from \eqref{eq:constrained_Boussinesq}, we obtain
\[
\begin{aligned}
\frac{d}{dt}\psi(t)
&=
\Big(
-\kappa A\xi(t)
+\kappa\|\nabla \theta_{\mathfrak{H}^*}(t)\|_{\mathbb H}^{2}\xi(t)
+2\kappa( \nabla \theta_{\mathfrak{H}^*}(t),\nabla \xi(t))_{\mathbb H}\theta_{\mathfrak{H}^*}(t)
-\bm{y}_{\mathfrak{H}^*}(t)\cdot\nabla \xi(t) \\
&\quad-\bm{\eta}(t)\cdot\nabla \theta_{\mathfrak{H}^*}(t),
\theta_{\mathfrak{H}^*}(t)
\Big)_{H}+
\Big(
\xi(t),
-\kappa A\theta_{\mathfrak{H}^*}(t)
+\kappa\|\nabla \theta_{\mathfrak{H}^*}(t)\|_{\mathbb H}^{2}\theta_{\mathfrak{H}^*}(t)
-\bm{y}_{\mathfrak{H}^*}(t)\cdot\nabla \theta_{\mathfrak{H}^*}(t)
\Big)_{H}.
\end{aligned}
\]
Expanding the inner products and using the fact that \(\|\theta_{\mathfrak{H}^*}\|_{H}=1\), we get
\[
\begin{aligned}
\frac{d}{dt}\psi(t)
&=
2\kappa\|\nabla \theta_{\mathfrak{H}^*}(t)\|_{\mathbb H}^{2}\psi(t)
-( \bm{y}_{\mathfrak{H}^*}(t)\cdot\nabla \xi(t), \theta_{\mathfrak{H}^*}(t))_{H}
-( \bm{\eta}(t)\cdot\nabla \theta_{\mathfrak{H}^*}(t), \theta_{\mathfrak{H}^*}(t))_{H}
 \\
&\quad-( \xi(t), \bm{y}_{\mathfrak{H}^*}(t)\cdot\nabla \theta_{\mathfrak{H}^*}(t))_{H},
\end{aligned}
\]
for a.e. \(t \in [0,T]\).
Therefore, we have
\[
\frac{d}{dt}\psi(t)
=
2\kappa\|\nabla \theta_{\mathfrak{H}^*}(t)\|_{\mathbb H}^{2}\psi(t),
\]
for a.e. \(t \in [0,T]\). By the variation-of-constants formula, we get
\[
\psi(t)=\psi(0)\exp\left(2\kappa\int_0^t \|\nabla \theta_{\mathfrak{H}^*}(s)\|_{\mathbb H}^{2}\,ds\right).
\]
Since \(\xi(0)=0\), we have \(\psi(0)=0\). Consequently, we deduce
\[
\psi(t) = 0 \ \ \text{for all } t \in [0,T].
\]
Thus, we infer 
\[
(\xi(t),\theta_{\mathfrak{H}^*}(t))_{H}=0,
\ \ t\in[0,T].
\]
Combining this with the velocity part, we conclude that
\[
(\bm{\eta}(t),\xi(t))
\in
T_{(\bm{y}_{\mathfrak{H}^*}(t),\theta_{\mathfrak{H}^*}(t))}\mathcal M,
\ \  t\in[0,T],
\]
which completes the proof.
\end{proof}

\subsection{Proof of Theorem \ref{main}} Finally, we provide a proof of  Theorem \ref{main}. 
\begin{proof}[Proof of Theorem \ref{main}]
Let $(\bm{y}^*,\theta^*,\mathfrak{H}^*)\in \mathcal{A}_{ad}$ be an optimal triplet corresponding to the control problem \eqref{opt}. Define the reduced cost functional
\[
G(\mathfrak{H}) := J(\bm{y}_{\mathfrak{H}},\theta_{\mathfrak{H}},\mathfrak{H}),
\]
where $(\bm{y}_{\mathfrak{H}},\theta_\mathfrak{H})$ is the unique strong solution to the controlled system \eqref{eq:constrained_Boussinesq} corresponding to the control $\mathfrak{H} \in \mathcal{U}_{ad}$.

Let $\tau > 0$ be sufficiently small such that $\mathfrak{H}^* + \tau\mathfrak{H} \in \mathcal{U}_{ad}$ and
$(\bm{y}_{\mathfrak{H}^*+\tau\mathfrak{H}},\,\theta_{\mathfrak{H}^*+\tau\mathfrak{H}},\,
\mathfrak{H}^*+\tau\mathfrak{H}) \in \mathcal{A}_{ad}$.
With these conditions, we have the following relation (cf.\ \cite{manil}):
\begin{align}
&G(\mathfrak{H}^*+\tau\mathfrak{H})-G(\mathfrak{H}^*)
\notag\\&=J(\bm{y}_{\mathfrak{H}^*+\tau\mathfrak{H}},
\theta_{\mathfrak{H}^*+\tau\mathfrak{H}},\mathfrak{H}^*+\tau\mathfrak{H})
-J(\bm{y}_{\mathfrak{H}^*},\theta_{\mathfrak{H}^*},\mathfrak{H}^*)
\notag\\
&=\frac12\int_0^T
\|\bm{y}_{\mathfrak{H}^*+\tau\mathfrak{H}}(t)-\bm{y}_{\mathfrak{H}^*}(t)\|_{\mathbb H}^2\,dt
+\int_0^T
(\bm{y}_{\mathfrak{H}^*+\tau\mathfrak{H}}(t)-\bm{y}_{\mathfrak{H}^*}(t),
\bm{y}_{\mathfrak{H}^*}(t)-\bm{y}_d(t))_{\mathbb H}\,dt
\notag\\
&\quad+\frac12\int_0^T
\|\nabla(\bm{y}_{\mathfrak{H}^*+\tau\mathfrak{H}}(t)-\bm{y}_{\mathfrak{H}^*}(t))\|_{\mathbb H}^2\,dt
+\int_0^T
(\bm{y}_{\mathfrak{H}^*+\tau\mathfrak{H}}(t)-\bm{y}_{\mathfrak{H}^*}(t),
A\bm{y}_{\mathfrak{H}^*}(t))_{\mathbb H}\,dt
\notag\\
&\quad+\frac12\int_0^T
\|\theta_{\mathfrak{H}^*+\tau\mathfrak{H}}(t)-\theta_{\mathfrak{H}^*}(t)\|_H^2\,dt
+\int_0^T
(\theta_{\mathfrak{H}^*+\tau\mathfrak{H}}(t)-\theta_{\mathfrak{H}^*}(t),
\theta_{\mathfrak{H}^*}(t)-\theta_d(t))_H\,dt
\notag\\
&\quad+\tau\int_0^T(\mathfrak{H}(t),\mathfrak{H}^*(t))_{\mathbb H}\,dt
+\frac{\tau^2}{2}\int_0^T\|\mathfrak{H}(t)\|_{\mathbb H}^2\,dt
\notag\\
&\quad+\frac12
\|\bm{y}_{\mathfrak{H}^*+\tau\mathfrak{H}}(T)-\bm{y}_{\mathfrak{H}^*}(T)\|_{\mathbb H}^2
+(\bm{y}_{\mathfrak{H}^*+\tau\mathfrak{H}}(T)-\bm{y}_{\mathfrak{H}^*}(T),
\bm{y}_{\mathfrak{H}^*}(T)-\bm{y}_f)_{\mathbb H}
\notag\\
&\quad+\frac12
\|\theta_{\mathfrak{H}^*+\tau\mathfrak{H}}(T)-\theta_{\mathfrak{H}^*}(T)\|_H^2
+(\theta_{\mathfrak{H}^*+\tau\mathfrak{H}}(T)-\theta_{\mathfrak{H}^*}(T),
\theta_{\mathfrak{H}^*}(T)-\theta_f)_H.
\label{eq:cost-expansion}
\end{align}
Using Lemma~\ref{incest} and taking the limit as $\tau \downarrow 0$, we infer
\begin{align}
\|\bm{y}_{\mathfrak{H}^*+\tau\mathfrak{H}} - \bm{y}_{\mathfrak{H}^*}\|_{{{L}^2(0,T;\mathbb{H})}}
+ \|\bm{y}_{\mathfrak{H}^*+\tau\mathfrak{H}} - \bm{y}_{\mathfrak{H}^*}\|_{L^2(0,T;\mathbb{V})}
+ \|\bm{y}_{\mathfrak{H}^*+\tau\mathfrak{H}}(T) - \bm{y}_{\mathfrak{H}^*}(T)\|_{\mathbb{H}}
&\to 0,
\label{eq:conv-u}\\
\|\theta_{\mathfrak{H}^*+\tau\mathfrak{H}} - \theta_{\mathfrak{H}^*}\|_{L^2(0,T;H)}
+ \|\theta_{\mathfrak{H}^*+\tau\mathfrak{H}} - \theta_{\mathfrak{H}^*}\|_{L^2(0,T;V)}
+ \|\theta_{\mathfrak{H}^*+\tau\mathfrak{H}}(T) - \theta_{\mathfrak{H}^*}(T)\|_{H}
&\to 0.
\label{eq:conv-theta}
\end{align}

Let $G'(\mathfrak{H}^*;\mathfrak{H})$ denote the directional derivative of $G$ at $\mathfrak{H}^*$ in the direction $\mathfrak{H}$, and let $(\bm{\eta},\xi)$ solve the linearized system \eqref{linearized-system-a}--\eqref{linearized-system-b}. Letting $\tau \downarrow 0$ and using Lemma~\ref{incest} and Proposition~\ref{prop4.4}, we obtain
\begin{align}
0 \leq G'(\mathfrak{H}^*;\mathfrak{H})
&= \lim_{\tau \downarrow 0}
   \frac{G(\mathfrak{H}^*+\tau\mathfrak{H}) - G(\mathfrak{H}^*)}{\tau} \notag\\
&= \int_0^T
   ( \bm{\eta}(t),\, \bm{y}^*(t) - \bm{y}_d(t) )_{\mathbb{H}}\,dt
 + \int_0^T
   ( \bm{\eta}(t),\, A\bm{y}^*(t) )_{\mathbb{H}}\,dt \notag\\
&\quad
 + \int_0^T
   ( \xi(t),\, \theta^*(t) - \theta_d(t) )_{H}\,dt
 + \int_0^T ( \mathfrak{H}^*(t),\, \mathfrak{H}(t) )_{\mathbb{H}}\,dt \notag\\
&\quad
 + ( \bm{\eta}(T),\, \bm{y}^*(T) - \bm{y}_f )_{\mathbb{H}}
 + ( \xi(T),\, \theta^*(T) - \theta_f )_{H}.
\label{dir-derivative-corrected}
\end{align}
Using these identities, the directional derivative and the linearized system \eqref{linearized-system-a}--\eqref{linearized-system-b}, we get
\begin{align}
G'(\mathfrak{H}^*;\mathfrak{H})
&= \int_0^T ( \mathfrak{H}(t), \bm{\vartheta}(t))_{\mathbb{H}} dt
- \int_0^T ( \mathfrak{H}(t),\bm{y}^*(t))_{\mathbb{H}}( \bm{y}^*(t),\bm{\vartheta}(t))_{\mathbb{H}} dt
+ \int_0^T ( \mathfrak{H}^*(t), \mathfrak{H}(t))_{\mathbb{H}} dt \notag\\
&= \int_0^T ( \mathfrak{H}(t), \bm{\vartheta}(t) - (\bm{\vartheta}(t),\bm{y}^*(t))_{\mathbb{H}}\bm{y}^*(t) + \mathfrak{H}^*(t))_{\mathbb{H}} dt,
\label{G-derivative-final}
\end{align}
for all \(\mathfrak{H}\in {{L}^2(0,T;\mathbb{H})}\).
The above relation implies that
\[
{\mathfrak{H}^*(t) = -\bm{\vartheta}(t) + (\bm{\vartheta}(t),\bm{y}^*(t))_{\mathbb{H}}\bm{y}^*(t),
\quad \text{for a.e. } t\in[0,T],}
\]
which completes the proof.
\end{proof}

\textbf{Acknowledgments.} The third author is grateful to the Indian Institute of Science Education and Research Thiruvananthapuram (IISER Thiruvananthapuram) for providing the necessary financial assistance. Support for M. T. Mohan's research received from the National Board of Higher Mathematics (NBHM), Department of Atomic Energy, Government of India (Project No. 02011/13/2025/NBHM(R.P)/R\&D II/1137). 

\printbibliography
\end{document}